\newtheorem{theorem}{Theorem}[section]
\newtheorem{proposition}[theorem]{Proposition}
\newtheorem{lemma}[theorem]{Lemma}
\newtheorem{corollary}[theorem]{Corollary}
\theoremstyle{definition}
\newtheorem{definition}[theorem]{Definition}
\newtheorem{example}[theorem]{Example}
\newtheorem{construction}[theorem]{Construction}
\newtheorem{remark}[theorem]{Remark}
\newtheorem{situation}[theorem]{Situation}
\renewcommand{\th}[1]{#1\textsuperscript{th}}
\newcommand{\<}{\left<}
\renewcommand{\>}{\right>}
\newcommand{\N}{\mathbb{N}}
\newcommand{\Z}{\mathbb{Z}}
\newcommand{\Q}{\mathbb{Q}}
\newcommand{\C}{\mathbb{C}}
\newcommand{\G}{\mathbb{G}}
\newcommand{\A}{\mathbb{A}}
\renewcommand{\P}{\mathbb{P}}
\renewcommand{\O}{\mathcal{O}}
\newcommand{\I}{\mathcal{I}}
\newcommand{\M}{\mathcal{M}}
\newcommand{\R}{\mathcal{R}}
\renewcommand{\AA}{\mathcal{A}}
\newcommand{\CC}{\mathcal{C}}
\newcommand{\DD}{\mathcal{D}}
\newcommand{\EE}{\mathcal{E}}
\newcommand{\FF}{\mathcal{F}}
\newcommand{\GG}{\mathcal{G}}
\newcommand{\HH}{\mathcal{H}}
\newcommand{\NN}{\mathcal{N}}
\newcommand{\X}{\mathfrak{X}}
\newcommand{\Y}{\mathfrak{Y}}
\newcommand{\ii}{\mathfrak{i}}
\newcommand{\D}{\mathsf{D}}
\newcommand{\DF}{\mathsf{DF}}
\newcommand{\Ani}{\mathsf{Ani}}
\newcommand{\Sp}{\mathsf{Sp}}
\newcommand{\Cat}{\mathsf{Cat}}
\newcommand{\Fun}{\mathsf{Fun}}
\renewcommand{\Pr}{\mathsf{Pr}}
\newcommand{\PS}{\mathsf{P}_\Sigma}
\newcommand{\Ring}{\mathsf{Ring}}
\newcommand{\Mod}{\mathsf{Mod}}
\newcommand{\CAlg}{\mathsf{CAlg}}
\newcommand{\Gr}{\mathsf{Gr}}
\newcommand{\Fil}{\mathsf{Fil}}
\newcommand{\Bun}{\mathsf{VB}}
\newcommand{\Pair}{\mathsf{Pair}}
\renewcommand{\Gauge}{\mathsf{Gauge}}
\newcommand{\FGauge}{{F\text{-}\mathsf{Gauge}}}
\newcommand{\Fin}{\mathsf{Fin}}
\newcommand{\E}{\mathrm{E}}
\renewcommand{\H}{\mathrm{H}}
\newcommand{\RG}{\mathrm{R}\Gamma}
\renewcommand{\L}{\mathrm{L}}
\newcommand{\LL}{\mathrm{L}\Lambda}
\newcommand{\LO}{\mathrm{L}\Omega}
\newcommand{\B}{\mathrm{B}}
\renewcommand{\d}{\mathrm{d}}
\newcommand{\dlog}{\mathrm{dlog}}
\newcommand{\op}{\mathrm{op}}
\newcommand{\GL}{\mathrm{GL}}
\newcommand{\aff}{\mathrm{Aff}}
\newcommand{\syn}{\mathrm{syn}}
\newcommand{\et}{\mathrm{\acute{e}t}}
\newcommand{\proet}{\mathrm{pro\acute{e}t}}
\newcommand{\bl}{\mathrm{Bl}}
\newcommand{\Th}{\mathrm{Th}}
\newcommand{\dr}{\mathrm{dR}}
\newcommand{\HT}{\mathrm{HT}}
\newcommand{\hod}{\mathrm{Hod}}
\newcommand{\wcart}{\mathrm{WCart}}
\newcommand{\OD}{\Omega^{\not{D}}}
\newcommand{\st}{\mathrm{st}}
\newcommand{\arc}{\mathrm{arc}}
\newcommand{\cyc}{\mathrm{cyc}}
\newcommand{\perf}{\mathrm{perf}}
\newcommand{\cc}{\mathrm{c}}
\newcommand{\cl}{\mathrm{Cl}}
\newcommand{\THH}{\mathrm{THH}}
\newcommand{\TC}{\mathrm{TC}}
\newcommand{\sel}{\mathrm{Sel}}
\newcommand{\h}{\mathrm{h}}
\newcommand{\reg}{\mathrm{reg}}
\newcommand{\pair}{\mathrm{Pair}}
\newcommand{\closed}{\mathrm{Closed}}
\newcommand{\open}{\mathrm{Open}}
\newcommand{\DNC}{\mathrm{DNC}}
\DeclareMathOperator{\id}{id}
\DeclareMathOperator{\Hom}{Hom}
\DeclareMathOperator{\RHom}{RHom}
\DeclareMathOperator{\inhom}{\underline{Hom}}
\DeclareMathOperator*{\colim}{colim}
\DeclareMathOperator{\spec}{Spec}
\DeclareMathOperator{\spf}{Spf}
\DeclareMathOperator{\spa}{Spa}
\DeclareMathOperator{\spd}{Spd}
\DeclareMathOperator{\proj}{Proj}
\DeclareMathOperator{\fib}{fib}
\DeclareMathOperator{\cofib}{cofib}
\DeclareMathOperator{\eq}{eq}
\DeclareMathOperator{\fil}{Fil}
\DeclareMathOperator{\gr}{gr}
\DeclareMathOperator{\const}{const}
\DeclareMathOperator{\triv}{triv}
\DeclareMathOperator{\rk}{rk}
\DeclareMathOperator{\lsym}{LSym}
\DeclareMathOperator{\tor}{Tor}
\DeclareSymbolFontAlphabet{\mathbb}{AMSb}
\DeclareSymbolFontAlphabet{\mathbbl}{bbold}
\newcommand{\Prism}{{\mathlarger{\mathbbl{\Delta}}}}
\begin{document}

\title{Syntomic cycle classes and prismatic Poincar\'e duality}
\author{Longke Tang}

\begin{abstract}
We introduce $F$-gauges over a prism, construct syntomic cycle classes, and prove the prismatic Poincar\'e duality for proper smooth schemes. 
\end{abstract}

\maketitle

\setcounter{tocdepth}{1}

\tableofcontents

\section{Introduction}

\subsection{Goal}
Poincar\'e duality is one of the most classical theorems in algebraic topology, saying that if $X$ is a compact oriented manifold of dimension $d$, then its singular cohomologies with coefficient in a field $k$ satisfy
$$\H^i(X,k)^\vee\cong\H^{d-i}(X,k),$$
where $-^\vee$ denotes dual $k$-vector spaces. 

If $X$ is now a projective complex manifold of dimension $d$, then the Hodge decomposition theorem states that
$$\H^i(X,\Z)\otimes_\Z\C\cong\bigoplus_{p+q=i}\H^q(X,\Omega^p_{X/\C}),$$
where $\Omega^p_{X/\C}$ is the sheaf of holomorphic $p$-forms on $X$, and $\H^q(X,-)$ means the sheaf cohomology. From the proof of the above isomorphism, it is straightforward to see that the Poincar\'e duality on the left and the Serre duality on the right intertwine. Therefore, in terms of Hodge structures, one can write the duality as
\begin{equation}\label{complexdual}
    \RG(X,\Z)^\vee\cong\RG(X,\Z)(d)[2d],
\end{equation}
where $-^\vee$ denotes the dual functor $\RHom(-,\Z)$ in the derived category of Hodge structures. See for example \cite[Example 3.1.5, Theorem 3.1.17]{cht}. 

An important consequence of the Poincar\'e duality is the cycle class, which associates a cohomology class $\cl_{Y/X}\in\H^r(X,\Z)$ to a compact oriented submanifold $Y$ of codimension $r$ in a compact oriented manifold $X$. 

If $X$ is again a projective complex manifold and $Y$ is its closed submanifold of complex codimension $r$, then the above cycle class, after tensoring with $\C$, actually lies in the direct summand $\H^r(X,\Omega^r_{X/\C})$ in the Hodge decomposition. See for example \cite[Lemma 3.1.27]{cht}. In terms of Hodge structures, this amounts to saying that the cycle class defines a map of Hodge structures
\begin{equation}\label{complexcycle}
    \cl_{Y/X}\colon\Z\to\H^{2r}(X,\Z)(r).
\end{equation}

This paper aims to tell the above story for integral $p$-adic Hodge theory. 

\subsection{Results}
In their remarkable paper \cite{bs19}, Bhatt and Scholze defined the prismatic cohomology for schemes over a $p$-adic base, which specializes to various $p$-adic cohomologies known before, including \'etale, de Rham, and crystalline, also generalizing their previous works \cite{bms1} and \cite{bms2} joint with Morrow. 

As its name suggests, prismatic cohomology is based on prisms, which are roughly triples $(A,I,\varphi)$, where $A$ is a ring, $I$ is an invertible ideal, $\varphi\colon A\to A$ is a Frobenius lift, $A$ is $(p,I)$-complete, and finally $p\in(I,\varphi(I))$. See \cite[Definition 3.2]{bs19} for a precise definition. Given such a structure, prismatic cohomology associates to every $A/I$-scheme $X$ a complex $\RG_\Prism(X/A)\in\D(A)$, along with a $\varphi$-semilinear Frobenius $F\colon\RG_\Prism(X/A)\to\RG_\Prism(X/A)$ and a so-called Nygaard filtration $\fil^\bullet\varphi^*\RG_\Prism(X/A)$, from which one can recover the \'etale, de Rham, and crystalline cohomologies of $X$; see \cite[Theorem 1.8, Theorem 1.16]{bs19}. 

Given the above construction, the reader might have noticed that, in the $p$-adic setting, prismatic cohomology should play the role of singular cohomology with a Hodge structure, and the outputs of prismatic cohomology should play the role of Hodge structures. This is indeed the case. The structures in these outputs are summarized in \S\ref{secfg} as follows: 

\begin{definition}[Definitions \ref{deffg}, \ref{hts}, \ref{drs}, \ref{ets}; Examples \ref{bk}, \ref{pc}]
An \emph{$F$-gauge} over a prism $(A,I)$ is a triple $(M,\fil^\bullet\varphi^*M,F)$ where $\fil^\bullet\varphi^*M$ is a filtration on $\varphi^*M$ and $F\colon\fil^\bullet\varphi^*M\to I^\bullet M$ exhibits $I^\bullet M$ as a filtered base change of $\fil^\bullet\varphi^*M$ along $A\to A[1/I]$. The category of $F$-gauges over $(A,I)$ is denoted $\FGauge(A,I)$. 

The outputs of prismatic cohomology are $F$-gauges, and there are various specialization functors from $\FGauge(A,I)$, corresponding to Hodge--Tate, de Rham, and \'etale specializations of prismatic cohomology. In addition, there are \emph{Breuil--Kisin twists} $A\{n\}$ of $F$-gauges for all $n\in\Z$ that are analogous to Tate twists $\Z(n)$ of complex Hodge structures. 
\end{definition}

With $F$-gauges in hand, one can now easily write down the $p$-adic analog of the isomorphism (\ref{complexdual}), the prismatic Poincar\'e duality:

\begin{theorem}[Theorem \ref{prismdual}]
For a proper smooth $A/I$-scheme $X$ of dimension $d$, there is a canonical isomorphism of $F$-gauges
$$\RG_\Prism(X/A)^\vee\cong\RG_\Prism(X/A)\{d\}[2d].$$
\end{theorem}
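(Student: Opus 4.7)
The plan is to construct a candidate duality map via a cycle class for the diagonal and then show it is an equivalence by reduction to a known duality on a simpler specialization.

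First, applying the syntomic cycle class construction from the earlier part of the paper to the diagonal $\Delta\colon X\hookrightarrow X\times_{A/I}X$, a regular closed immersion of codimension $d$, I would produce a morphism of $F$-gauges
$$\cl_\Delta\colon A\longrightarrow\RG_\Prism(X\times_{A/I}X/A)\{d\}[2d].$$
The Künneth formula for prismatic cohomology, which I expect to upgrade to an equivalence of $F$-gauges on proper smooth inputs, identifies the right-hand side with $\RG_\Prism(X/A)\otimes^\L_A\RG_\Prism(X/A)\{d\}[2d]$. Currying $\cl_\Delta$ then yields the candidate duality map
$$\mu\colon\RG_\Prism(X/A)^\vee\longrightarrow\RG_\Prism(X/A)\{d\}[2d]$$
in $\FGauge(A,I)$. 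Since both $\cl_\Delta$ and the Künneth identification are morphisms of $F$-gauges, $\mu$ is automatically one, so it suffices to check the underlying $A$-linear map is an equivalence.

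For this, I would pass to the Hodge--Tate specialization. Both source and target are derived $(p,I)$-complete and perfect as $A$-modules, so by a derived Nakayama argument it is enough to check the statement after tensoring with $\bar A:=A/I$. The Hodge--Tate complex $\RG_\Prism(X/A)\otimes^\L_A\bar A$ carries the conjugate filtration with graded pieces $\RG(X,\Omega^i_{X/\bar A})[-i]$, up to Breuil--Kisin twists, and under these identifications $\mu\otimes^\L_A\bar A$ should restrict on graded pieces to the Serre duality pairing between $\Omega^i$ and $\Omega^{d-i}$ with trace landing in $\H^d(X,\Omega^d_{X/\bar A})$. Serre duality being perfect on the graded pieces, a filtered Nakayama argument assembles them into perfectness of $\mu\otimes^\L_A\bar A$ and hence of $\mu$ itself.

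The main obstacle is verifying two compatibilities on which the last step rests. First, one must show that the syntomic cycle class of the diagonal, transported through Künneth, really represents the identity endomorphism of $\RG_\Prism(X/A)$; this is the prismatic analog of the classical Lefschetz fixed-point calculation and should follow from functoriality of the cycle class together with a projection formula for the trace. Second, one must identify the Hodge--Tate degeneration of $\mu$ with Serre duality on the correct Breuil--Kisin twist, which amounts to computing the trace map in top Hodge--Tate cohomology and matching it with the coherent trace on $\H^d(X,\Omega^d_{X/\bar A})$. Both steps are classical in spirit but require care to execute in the $F$-gauge setting, because one must track the Nygaard filtration and Frobenius through the cycle class construction rather than merely their shadows on each specialization.
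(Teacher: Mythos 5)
Your overall strategy is the same as the paper's: form the copairing from the syntomic cycle class of the diagonal via K\"unneth, then check perfectness by passing to a specialization and reducing to a Hodge--Serre duality statement. The two "compatibilities" you flag at the end are handled in the paper by Theorem~\ref{hodgedual} (Hodge Poincar\'e duality, whose proof follows \cite[Lecture 16]{dr}) together with Remark~\ref{syntosect} (which records precisely that the syntomic cycle class specializes to the Hodge cycle class).

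However, there is a genuine gap in the reduction step. You assert that since $\mu$ is a morphism of $F$-gauges, it suffices to check that the underlying $A$-linear map is an equivalence. That is false: the forgetful functor $\FGauge(A,I)\to\D(A)$ is not conservative, because the Nygaard filtration $\fil^\bullet M^{(1)}$ is genuine extra data not determined by the underlying $A$-module. Your Hodge--Tate argument (conjugate filtration, graded pieces are $\RG(X,\Omega^i)[-i]$ up to twist, Serre duality, filtered Nakayama) establishes that $\mu\otimes_A\bar A$ is an isomorphism, hence that $\mu$ is an isomorphism in $\D(A)$; it does not establish that $\fil^\bullet\mu^{(1)}$ is an isomorphism in $\DF(A)$, which is what an isomorphism of $F$-gauges requires. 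The paper instead invokes Proposition~\ref{cons}, the conservativity of the pair of functors $N\mapsto(\fil_\bullet N^\HT,\fil^\bullet\bar N)$ on $\DF(A)$, so one must check \emph{both} the Hodge--Tate and the de Rham specializations as filtered $\bar A$-modules; completeness and finitely many nonzero graded pieces come from Proposition~\ref{dual}/Corollary~\ref{cohdual}, and then one passes to graded pieces. The good news is that the fix costs you nothing: by Remark~\ref{grcp} the graded pieces of the de Rham specialization agree with those of the Hodge--Tate specialization up to a twist by $I/I^2$, so exactly the same Hodge duality input closes the de Rham side. You should therefore replace the step "it suffices to check the underlying $A$-module map" by the conservativity criterion and run your graded-piece argument for both specializations.
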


To prove the above theorem is not as easy as to state it. In the case of \'etale cohomology, with the help of its 6-functor formalism, one essentially localizes, factorizes, and finally treats $\A^1$. While it might be possible, we choose not to develop a prismatic 6-functor formalism here. In the case of de Rham cohomology, \cite[\texttt{0FW3}]{stacks} tried hard and managed to construct a trace map $\RG_\dr(X/k)\to k$ that induces a perfect pairing after the cup product; we are not sure whether this method will work here, either. Instead, we follow and improve the method of the recent lecture notes \cite{dr}, which first constructed the de Rham cycle class, then applied it to the diagonal, obtaining a copairing
$$k\to\RG_\dr(X/k)\otimes\RG_\dr(X/k)[2d],$$
and finally proved its perfectness by reduction to the Hodge case. Note that the logical order here is reverse to that in classical algebraic topology, where one constructs the cycle class using the Poincar\'e duality. 

As suggested by the map (\ref{complexcycle}) in the complex case, the prismatic cycle class should actually be an $F$-gauge map 
$$\cl_{Y/X}^\Prism\colon A\to\RG_\Prism(X/A)\{r\}[2r],$$
instead of merely an element of $\H^{2r}_\Prism(X/A)$. This brings us to consider the following:

\begin{definition}[Proposition \ref{glsect}, cf.\ {\cite[\S 4]{ktannounce}}]
For an $A/I$-scheme $X$, its \emph{\th{$n$} syntomic cohomology relative to $A$}, denoted $\Z_p(n)(X/A)$, is the complex
$$\RHom_{\FGauge(A,I)}(A,\RG_\Prism(X/A)\{n\})\in\D(\Z_p).$$
\end{definition}

While the relative syntomic cohomology seems hard to control, its absolute counterpart has long been considered by $K$-theorists as the $p$-adic motivic cohomology, cf.\ \cite{nizmot}, and has recently been redefined using the absolute prismatic cohomology by \cite{bms2} and \cite{apc}. Since the absolute syntomic cohomology naturally maps to the relative one, construction of the relative syntomic cycle class boils down to that of the absolute one, which is another main result of this paper:

\begin{theorem}[Definitions \ref{defcyc}, \ref{cyclemap}; Remarks \ref{unicyc}, \ref{mult}]
There is a unique functorial way to associate a syntomic cohomology class $\cl_{Y/X}^\syn\in\H^{2r}_\syn(X,\Z_p(r))$ to every regular immersion $Y\to X$ of codimension $r$, satifying:
\begin{description}
    \item[Multiplicativity] For regular immersions $Y\to X$ and $Y'\to X'$, we have $\cl_{Y/X}^\syn\boxtimes\cl_{Y'/X'}^\syn=\cl_{(Y\times Y')/(X\times X')}^\syn$.
    \item[Normalization for divisors] In codimension one, $\cl_{Y/X}^\syn=c_1(\O(Y))$ is the first Chern class defined in \cite[\S 7.5, \S 8.4]{apc}. 
\end{description}
Moreover, the syntomic cycle class can be upgraded to a syntomic Gysin map
\begin{equation}\label{cycmapintro}
    \RG_\syn(Y,\Z_p(n))\to\RG_\syn(X,\Z_p(n+r))[2r].
\end{equation}
\end{theorem}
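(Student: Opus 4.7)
The plan is to follow the strategy of \cite{dr} for de Rham cohomology, suitably adapted to the $F$-gauge setting, and to construct the cycle class by reducing, via deformation to the normal cone, to the zero section of a vector bundle where Chern classes from \cite[\S 7.5, \S 8.4]{apc} are available.

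\textbf{Reduction to vector bundles via $\DNC$.} For a regular closed immersion $i\colon Y\hookrightarrow X$ of codimension $r$, the deformation to the normal cone $\DNC(Y/X)\to\A^1$ provides a regular immersion $Y\times\A^1\hookrightarrow\DNC(Y/X)$ whose fiber over $1\in\A^1$ recovers $(Y,X)$ and whose fiber over $0$ recovers $(Y,N_{Y/X})$ with $Y$ the zero section. Exploiting the $\G_m$-equivariance of $\DNC$, the cycle class of $i$ should agree with that of the zero section $Y\hookrightarrow N_{Y/X}$; this reduces the construction to the case where $X=E$ is the total space of a rank-$r$ vector bundle and $Y$ its zero section.

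\textbf{Construction for zero sections.} For the zero section of $E\to Y$, compactify to $\bar E=\P(E\oplus\O)$. The projective bundle formula for absolute prismatic, and hence syntomic, cohomology established in \cite{apc} expresses $\RG_\syn(\bar E,\Z_p(n))$ as a free module over $\RG_\syn(Y,\Z_p(n-*))$ on powers of $c_1(\O(1))$. The Euler class $e(E)$ is then defined as the top Chern class of the tautological quotient, and the cycle class of the zero section arises by identifying it with the class on $\bar E$ supported on $Y\subset\bar E$, pulled back along $E\hookrightarrow\bar E$.

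\textbf{Uniqueness, multiplicativity, and normalization.} Uniqueness reduces to the codimension-one case via the splitting principle: after pullback along a flag bundle over $Y$, the normal bundle $N_{Y/X}$ splits as a sum of line bundles, so any class satisfying multiplicativity and normalization for divisors is determined. Multiplicativity is built into the construction, since both $\DNC$ and the projective bundle formula are compatible with external products. Normalization in codimension one is tautological from the definition above, which reduces to $c_1(\O(Y))$ as in \cite[\S 7.5]{apc}.

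\textbf{The cycle map and the main obstacle.} To upgrade the class to the cycle map (\ref{cycmapintro}), I would establish a syntomic purity isomorphism
$$\RG_\syn(Y,\Z_p(n))\xrightarrow{\sim}\fib\bigl(\RG_\syn(X,\Z_p(n+r))\to\RG_\syn(X\setminus Y,\Z_p(n+r))\bigr)[2r],$$
after which the cycle map is the composite of its inverse with the natural map back to $\RG_\syn(X,\Z_p(n+r))[2r]$, and the cycle class is its image on $1\in\H^0_\syn(Y,\Z_p)$. The hardest step is precisely this purity statement at the level of $F$-gauges: it reduces by a local calculation to the standard regular embedding $V(x_1,\ldots,x_r)\hookrightarrow\A^r$, but requires tracking the Frobenius and Nygaard filtration on the relevant Koszul resolutions in prismatic cohomology. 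A lower-tech alternative, more faithful to \cite{dr}, is to bypass the general purity isomorphism by applying the already-constructed cycle class to the graph $Y\hookrightarrow Y\times X$ and using multiplicativity together with a K\"unneth-type map to extract the cycle map, shifting the difficulty into checking pushforward-compatibility of the axiomatically defined class.
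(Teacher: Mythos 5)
Your high-level scaffold (deformation to the normal cone to reduce to zero sections, Chern/Euler classes for vector bundles, splitting-principle uniqueness) is the right one, but there are two genuine gaps where the proposal as written would fail for syntomic cohomology.

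First, your DNC reduction is stated in the unweighted form: you take $\DNC(Y/X)\to\A^1$ and compare the fibers over $1$ and $0$. But passing the cycle class from the fiber over $1$ to the fiber over $0$ requires $\A^1$-invariance of the cohomology theory, which syntomic cohomology does \emph{not} satisfy. The paper's key workaround is to quotient the whole DNC by $\G_m$ and work over $\A^1/\G_m$: the open ``fiber'' is then $Y\to X$ itself, the closed fiber is the weighted normal bundle $(\NN_{Y/X}/\G_m)$ over $Y\times\B\G_m$, and what is actually needed and proved (Theorem \ref{syndnc}, resting on the weighted homotopy invariance of Theorem \ref{synwt}, in turn resting on the graded computations for Hodge, \'etale, and perfect prismatic cohomology) is that the \emph{relative} syntomic cohomology $\RG_\syn(\X,\X\setminus\Y,\Z_p(n))$ of the weighted DNC restricts isomorphically to that of its closed fiber. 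You gesture at ``exploiting the $\G_m$-equivariance'' but this is precisely the nontrivial theorem; without the weighted invariance input, the reduction step does not go through. The paper also organizes the construction through the classifying stack $\pair_r$ of regular pairs (Theorem \ref{charclasspair}), which makes functoriality and uniqueness structural rather than pointwise; your direct construction would still need to be shown functorial.

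Second, your proposed route to the cycle map via a syntomic purity isomorphism
$$\RG_\syn(Y,\Z_p(n))\xrightarrow{\sim}\fib\bigl(\RG_\syn(X,\Z_p(n+r))\to\RG_\syn(X\setminus Y,\Z_p(n+r))\bigr)[2r]$$
is not what the paper does, and I do not think it is available at this level of generality. The paper's cycle map (Definition \ref{cyclemap}) is the composite
$$\FF^\star(Y)\to\FF^\star(\Y_0)\to\FF^{\star+r}(\X_0,\X_0\setminus\Y_0)=\FF^{\star+r}(\X,\X\setminus\Y)\to\FF^{\star+r}(X,X\setminus Y),$$
where the middle ``$=$'' is Theorem \ref{syndnc}, the second arrow is the Thom isomorphism for the weighted vector bundle $\NN_{Y/X}/\G_m\to Y\times\B\G_m$, and the first and last arrows are honest non-isomorphic restriction/projection maps. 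The composite is a map, not a purity isomorphism, and precisely because it is not an isomorphism the construction avoids having to prove any Gysin/purity statement for regular immersions of $p$-adic formal stacks. Your ``lower-tech alternative'' via the graph $Y\hookrightarrow Y\times X$ is circular: to extract a map $\RG_\syn(Y)\to\RG_\syn(X)[2r]$ from a class on $Y\times X$ you need a K\"unneth-compatible pushforward along $\mathrm{pr}_X\colon Y\times X\to X$, which is essentially the cycle map you are trying to build. (The analogous trick does work for Poincar\'e duality with the diagonal, as in Theorem \ref{prismdual}, because there one only needs a copairing and not a pushforward.) The clean path is the Thom isomorphism in the $\G_m$-quotiented setting, which is available because only the \emph{weighted} homotopy invariance, not $\A^1$-invariance, is needed.

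Your uniqueness argument (splitting principle) is in the right spirit, but the paper applies it at the universal level: the coconnectivity of $\FF^{\le0}$ pins down the universal Thom class in $\pi_0\FF^r(\E_r,\E_r\setminus0)$ (Theorem \ref{thom} and Remark \ref{unithomadd}), and the isomorphism $\FF(\pair_r,\open_r)\cong\FF(\E_r,\E_r\setminus0_r)$ of Theorem \ref{charclasspair} then forces uniqueness for all regular pairs; this avoids having to invoke a flag bundle over an arbitrary base $X$, which need not be smooth.
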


We also relate the syntomic Gysin map above to the pushforward map in $K$-theory via the motivic filtration. In the following theorem, let $K$ denote the $p$-completed \'etale $K$-theory, which is motivic filtered by $\Z_p(n)$ by \cite[\S 7.4]{bms2}. 

\begin{theorem}[Theorem \ref{cyctc}]
There is a unique way to enhance the pushforward $K(Y)\to K(X)$ to a motivic filtered map $\fil^\bullet K(Y)\to\fil^{\bullet+r}K(X)$ functorially for all regular immersions $Y\to X$ of codimension $r$ of $p$-formal stacks. Moreover, its associated graded map coincides with the map (\ref{cycmapintro}). 
\end{theorem}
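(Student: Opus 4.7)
The plan is to mirror the construction of the syntomic cycle class itself: reduce to the zero section of a vector bundle via deformation to the normal cone, where the motivic-filtered projective bundle formula supplies a canonical filtered pushforward. First I would treat the universal case. For a rank-$r$ vector bundle $\pi\colon E\to Y$ with zero section $s\colon Y\to E$, I pass to the projective completion $\bar E=\P_Y(E\oplus\O)$. The motivic-filtered projective bundle formula (already exploited in the constructions of \cite{bms2} and \cite{apc}) identifies $\fil^\bullet K(\bar E)$ with $\bigoplus_{i=0}^r\fil^{\bullet-i}K(Y)\cdot c_1(\O(1))^i$, and the Koszul resolution presents $s_*\O_Y$ via the Euler class of $E^\vee$. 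This yields a canonical map $s_*\colon\fil^\bullet K(Y)\to\fil^{\bullet+r}K(E)$ lifting the K-theoretic pushforward.

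For a general regular immersion $i\colon Y\to X$ of codimension $r$, I would transport the vector bundle case along the deformation to the normal cone $\DNC(Y,X)\to\A^1$, whose fiber at $t=0$ is the zero section of $N_{Y/X}$ and whose fiber at $t=1$ is $i$, with a copy of $Y\times\A^1$ sitting regularly inside the total space. Applying the vector bundle construction relatively over $\A^1$ and pulling back to the generic fiber produces the desired filtered lift of $i_*$; functoriality is automatic because $(Y\to X)\mapsto\DNC(Y,X)$ is itself a functorial operation on the category of regular immersions.

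On the $n$-th associated graded, the vector bundle construction evaluates tautologically to the Thom class of $E$ in syntomic cohomology, and transporting via $\DNC$ reproduces the syntomic cycle map for $i$ previously constructed, whose definition also proceeded through $\DNC$ and reduction to divisors; the two maps therefore agree by comparing the two parallel constructions step by step. For uniqueness, the same $\DNC$ and projective-bundle reductions pin down the filtered lift by its behavior in codimension one, where functoriality plus the normalization of \cite{apc} force any lift of the pushforward along a divisor inclusion to equal the first Chern class. I expect the main obstacle to be the filtered projective bundle formula over the non-smooth total space $\DNC(Y,X)$ and, relatedly, verifying that specialization to $t=0$ and $t=1$ intertwines pushforwards at the filtered level: motivic-filtered $K$-theory is not literally $\A^1$-invariant, so this compatibility is not formal and will most cleanly be encoded by setting up $\fil^\bullet K$ as a bivariant theory on the subcategory of regular immersions, in which the filtered pushforward is a structural datum rather than an ad hoc construction.
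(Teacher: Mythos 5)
Your proposal takes a genuinely different route from the paper, but it has a real gap at exactly the point you flag as ``the main obstacle,'' and the obstacle is not resolved by the suggestion you offer.

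The paper does not attempt to \emph{construct} the filtered pushforward via the projective bundle formula; instead it proves directly that the composite
\[
\fil^n\TC(Y)\to\TC(Y)\to\TC(X,X\setminus Y)\to\TC(X,X\setminus Y)/\fil^{n'+r}\TC(X,X\setminus Y)
\]
has a \emph{unique functorial nullhomotopy}, which simultaneously yields existence and uniqueness of the filtered lift. The engine is a connectivity estimate: after reducing to the zero section of a vector bundle via the \emph{weighted} deformation to the normal cone (quotient by $\G_m$), Theorem \ref{syndnc}, and the Thom isomorphism, one needs that any functorial map $\fil^n\TC(Y)\to\Z_p(i)(Y\times\B\G_m)[2i]$ with $i<n'$ is uniquely nullhomotopic, and this follows because quasisyntomic-locally $\fil^n\TC$ is $2n$-connective while $\Z_p(i)[2i]$ sits in degree $2i<2n$. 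Your proposal contains no analogue of this connectivity argument, which is what actually pins down both existence and uniqueness; the remark that ``functoriality plus the normalization \ldots force any lift \ldots to equal the first Chern class'' does not explain why the space of functorial filtered lifts is discrete in the first place.

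The concrete failure point you already identified is fatal to the construction as written: you transport the vector-bundle case along $\DNC(Y,X)\to\A^1$, but $\fil^\bullet\TC$ is not $\A^1$-invariant, so specialization from $t=1$ to $t=0$ will not match the pushforwards at the filtered level. The paper circumvents this precisely by using the $\G_m$-weighted DNC together with strong weighted homotopy invariance (Theorem \ref{synwt}) and the resulting DNC isomorphism for syntomic cohomology (Theorem \ref{syndnc}), neither of which appears in your argument. Your proposed fix --- ``setting up $\fil^\bullet K$ as a bivariant theory on the subcategory of regular immersions, in which the filtered pushforward is a structural datum'' --- is circular: the theorem being proved is exactly the statement that such a filtered pushforward exists (and uniquely), so it cannot be assumed as input. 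If you want to keep a constructive approach, you would at minimum need to replace $\DNC\to\A^1$ with the weighted version $\DNC\to\A^1/\G_m$, invoke Theorem \ref{syndnc} to compare local cohomologies of the two fibers (and this requires working with the local cohomology $\TC(X,X\setminus Y)$ rather than $\TC(X)$, another detail your writeup elides), and then supply an argument --- most naturally the connectivity estimate --- showing that the resulting filtered lift is independent of choices and unique.
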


\begin{remark}
    The functoriality in the above two theorems is with respect to only pullbacks of regular immersions rather than compositions of them. We are not going to pursue the latter functoriality in this work, but we hope to address it in a future project, where a more general Gysin map construction based on the $\P^1$-motivic homotopy theory developed in \cite{ahi23p1} will be given. 
\end{remark}

\subsection{Conventions}
Fix a prime $p$ throughout. By ``rings'' we mean commutative rings. Everything is animated or derived unless otherwise stated, that is: 
\begin{itemize}
    \item By ``categories'' we mean $\infty$-categories, unless we say ``$1$-categories''; 
    \item By ``modules'' we mean complexes, unless we say ``classical modules''; 
    \item By ``rings'' we mean animated rings, unless we say ``classical rings''; 
    \item \ldots
\end{itemize}
But by ``sets'' we still mean $0$-groupoids, because we have ``animas'' for $\infty$-groupoids. 

\subsection{Acknowledgements}
First, I heartily thank my advisor Bhargav Bhatt for encouraging me to work on prismatic Poincar\'e duality and suggesting that I should imitate the lecture notes \cite{dr}. Next, I would like to thank Dustin Clausen for these lecture notes that give a beautiful proof of the duality in the de Rham case. Moreover, I thank Toni Annala, Johannes Anschütz, Benjamin Antieau, Arthur-César Le Bras, Elden Elmanto, Ofer Gabber, Haoyang Guo, Lars Hesselholt, Marc Hoyois, Teruhisa Koshikawa, Dmitry Kubrak, Yixiao Li, Shiji Lyu, Lucas Mann, Akhil Mathew, Emanuel Reinecke, Yuchen Wu, Bogdan Zavyalov, and Daming Zhou for helpful discussions. Finally, I would also like to thank the referee for many helpful comments improving the exposition and precision of the paper, and the Hausdorff Research Institute for Mathematics for their hospitality and support during the trimester program on the arithmetic of the Langlands program. 

\section{\texorpdfstring{$F$}{F}-gauges over a prism}\label{secfg}

We start by defining the notions of gauges and $F$-gauges over a prism, which are meant to capture the bunch of structures in the relative prismatic cohomology. Thus in this section we fix an animated prism $(A,I)$ as defined in \cite[Definition 2.4]{pst}, and everything we are going to talk about will be $(p,I)$-complete whenever it is over $A$, unless otherwise stated. For example, $\D(A)$ will denote the $(p,I)$-complete derived category of $A$. Readers not familiar with animated prisms can assume that $(A,I)$ is a bounded classical prism. Let $\bar{A}=A/I$. 

\subsection{Reminders on filtered objects}
In this subsection we recall the definition of filtered objects and some basic constructions on them, roughly following \cite[\S 1.2.2]{ha} and \cite[\S 3.1, \S 3.2]{rak20}. Let $\CC$ be a stable $\infty$-category. 

\begin{definition}[Filtered objects]\label{dfft}
    A \emph{filtered object} $\fil^\bullet M$ of $\CC$ is a functor $\Z^\op\to\CC$, where $\Z=(\Z,\le)$ is the poset viewed as a category. We denote its value on $n\in\Z$ by $\fil^nM$. The \emph{\th{$n$} graded piece} of $\fil^\bullet M$ is defined as $\fil^nM/\fil^{n+1}M$ and is denoted $\gr^nM$. If $\fil^\bullet M$ has all its graded pieces vanishing, namely it takes all arrows in $\Z^\op$ to isomorphisms, then we call it a \emph{constant} filtered object. 
    
    The \emph{category of filtered objects} of $\CC$ is defined as the functor category $\Fun(\Z^\op,\CC)$ and is denoted $\Fil(\CC)$. All the graded pieces assemble to a functor $\gr\colon\Fil(\CC)\to\CC^\Z$, called taking \emph{associated graded objects}. Taking constant objects is also a functor $\const\colon\CC\to\Fil(\CC)$. 
\end{definition}

\begin{remark}[Increasing filtrations]
    What Definition \ref{dfft} defines are the so-called \emph{decreasing filtrations}, in that if we do the same construction for an abelian category and require all the transition maps to be injective, then $\fil^nM$ gets smaller when $n$ gets larger. One can similarly define \emph{increasing filtrations} as functors $\Z\to\CC$, where the same theory applies with indices reversed. We will sometimes use increasing filtrations below, and these will be denoted by $\fil_\bullet M$. 
\end{remark}

\begin{definition}[Underlying objects]
    Suppose $\CC$ admits countable colimits. Then $\const\colon\CC\to\Fil(\CC)$ has an obvious left adjoint, given by
    $$\fil^\bullet M\mapsto\colim_{n\to-\infty}\fil^nM.$$
    We call it the \emph{underlying object} of $\fil^\bullet M$ and denote it by $M$. 
\end{definition}

\begin{definition}[Day convolution]\label{day}
    Suppose $\CC$ admits countable colimits, and let $\otimes$ be a symmetric monoidal structure on it compatible with countable colimits. Then there is a natural symmetric monoidal structure on $\Fil(\CC)$ called \emph{Day convolution}, defined as
    $$\fil^r(M\otimes N)=\colim_{m+n\ge r}(\fil^mM\otimes\fil^nN),$$
    where the colimit is taken over the full subcategory of $\Z^\op\times\Z^\op$ spanned by $\{(m,n)\mid m+n\ge r\}$. For a rigorous $\infty$-categorical definition, see \cite[\S 2.2.6]{ha}. 

    By our compatibility assumption, it is easy to see that taking underlying objects is symmetric monoidal, so our notation above is compatible. It is also worth noting that taking associated graded objects is symmetric monoidal with respect to the tensor product
    $$(X\otimes Y)_r=\bigoplus_{m+n=r}X_m\otimes Y_n$$
    for graded objects. 
\end{definition}

\begin{definition}[Completion]\label{dfcpl}
    Suppose $\CC$ admits countable limits. We say that a filtered object $\fil^\bullet M$ of $\CC$ is \emph{complete} if
    $$\lim_{n\to+\infty}\fil^nM=0.$$
    We denote the full subcategory of complete filtered objects of $\CC$ by $\widehat{\Fil}(\CC)$, and write $\widehat{\DF}(-)$ for $\widehat{\Fil}(\D(-))$ as above. The inclusion functor $\widehat{\Fil}(\CC)\to\Fil(\CC)$ has an obvious left adjoint, given by
    $$\fil^\bullet M\mapsto\frac{\fil^\bullet M}{\lim_{n\to+\infty}\fil^nM},$$
    called \emph{completion}. Note that a filtered object is constant if and only if its completion is zero, and the completion functor is a localization with kernel the constant objects. 
\end{definition}

\begin{remark}[Monoidality of completion]
    Suppose $(\CC,\otimes)$ satisfies the assumptions of both Definition \ref{day} and Definition \ref{dfcpl}. Obviously, constant objects form a tensor ideal under Day convolution; therefore, by \cite[Proposition 2.2.1.9]{ha} there is a natural symmetric monoidal structure on $\widehat{\Fil}(\CC)$, where the completion functor is symmetric monoidal, and the inclusion functor is lax symmetric monoidal. 
\end{remark}

\begin{definition}[Trivial filtration]
    For $M\in\CC$, we call the filtered object
    $$\begin{array}{cc}
    \fil^nM=\begin{cases}M,&m\le0;\\0,&m>0;\end{cases}&
    \end{array}$$
    \emph{$M$ with trivial filtration}, and by abuse of notation we often denote it by $M$. This is natural in $M$, giving rise to a functor $\triv\colon\CC\to\Fil(\CC)$, which is easily seen to be the left adjoint of taking $\fil^0$. When $\CC$ satisfies the assumption of Definition \ref{day}, it is also easy to see that $\triv$ is symmetric monoidal. 
\end{definition}

\begin{definition}[Translation]
    For a filtered object $\fil^\bullet M$, its \emph{translation by $n$} is defined as $\fil^{\bullet+n}M$. Sometimes we also denote this translation functor by $-(n)$. 
\end{definition}

\begin{definition}[Filtered $\mathbb{E}_\infty$-algebras]
    Suppose $(\CC,\otimes)$ satisfies the assumption of Definition \ref{day}. Then a \emph{filtered $\mathbb{E}_\infty$-algebra} in $\CC$ is defined as an $\mathbb{E}_\infty$-algebra in $\Fil(\CC)$ with respect to Day convolution. By the above, one can take underlying $\mathbb{E}_\infty$-algebras, associated graded $\mathbb{E}_\infty$-algebras, and completions of filtered $\mathbb{E}_\infty$-algebras, and make trivially filtered $\mathbb{E}_\infty$-algebras starting with $\mathbb{E}_\infty$-algebras in $\CC$. 

    For a filtered $\mathbb{E}_\infty$-algebra $R$, we often call its modules in $\Fil(\CC)$ \emph{filtered $R$-modules}, and denote the category of them by $\DF(R)$. This is compatible with the notation in \cite[\S 5.1]{bms2}, as when $R$ is an $\mathbb{E}_\infty$-algebra in $\CC$ that is trivially filtered, $\DF(R)$ is just $\Fil(\D(R))$, where $\D(R)$ denotes the category of $R$-modules in $\CC$. 
\end{definition}

\begin{definition}[Filtered rings]
    Using \cite[Definition 4.2.22, Construction 4.3.4]{rak20}, we define the category of \emph{derived filtered rings} to be $\mathsf{DAlg}(\DF(\Z))$, and that of \emph{animated filtered rings} to be the full subcategory spanned by those derived filtered rings $R$ with $\fil^nR$ connective for all $n$. By \cite[Remark 4.2.24]{rak20}, one can alternatively define the category of animated filtered rings to be the animation, namely $\mathsf{P}_\Sigma$, of the $1$-category of classical filtered rings of the form $\lsym(M)$, where $M$ runs through finite direct sums of translations of the trivially filtered $\Z$. Passing through the Rees algebra construction, one can also view an animated filtered ring as an animated graded algebra over $\Z[t]$ (cf.\ Definition \ref{dtnc} below). 
\end{definition}

\begin{remark}
    By our convention, ``filtered rings'' will mean animated filtered rings. However, this filtered animation is never really used in this section and only rarely used in \S\ref{cycle}. Readers can safely replace filtered rings with filtered $\mathbb{E}_\infty$-rings in this section if they are willing to. 
\end{remark}

\subsection{Generalities on filtered modules}
In this subsection we discuss general properties of filtered $A$-modules that do not depend on the $\delta$-ring structure on $A$. 

We view both $A$ and $A[1/I]$ as filtered rings with the $I$-adic filtrations, i.e.\ 
$$\begin{array}{cc}
\fil^mA=\begin{cases}I^m,&m\ge0;\\A,&m<0;\end{cases}&
\fil^mA[1/I]=I^m.
\end{array}$$
Accordingly, $\DF(A)$ and $\DF(A[1/I])$ refer to the filtered derived categories with respect to the $I$-adic filtrations. Note that we are not breaking our convention that everything is $(p,I)$-complete; $A[1/I]$ is just a filtered $(p,I)$-complete $A$-algebra with underlying object $0$. We view $\bar{A}$ as with the trivial filtration, i.e.\
$$\fil^m\bar{A}=\begin{cases}\bar{A},&m\le0;\\0,&m>0;\end{cases}$$
then the reduction map $A\to\bar{A}$ is naturally a filtered map. 

\begin{proposition}\label{f0e}
Taking $\fil^0$ gives an equivalence $\DF(A[1/I])\cong\D(A)$, whose inverse is $M\mapsto I^\bullet M$.
\end{proposition}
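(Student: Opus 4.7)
The strategy is to directly verify that the two functors are inverse. Let $L \colon \D(A) \to \DF(A[1/I])$ be $M \mapsto I^\bullet M$, with filtered pieces $\fil^m L(M) = I^m \otimes_A M$ and the evident $\fil^\bullet A[1/I]$-action coming from the multiplications $I^k \otimes_A (I^m \otimes_A M) \cong I^{k+m} \otimes_A M$, and let $R := \fil^0$. The composite $R \circ L$ is the identity because $I^0 \otimes_A M = M$, so the content is to produce a natural isomorphism $L \circ R \cong \id_{\DF(A[1/I])}$.

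Given $F^\bullet \in \DF(A[1/I])$, I would construct the comparison $\alpha \colon I^\bullet F^0 \to F^\bullet$ whose level-$m$ component $\alpha_m \colon I^m \otimes_A F^0 \to F^m$ is the action map for $\fil^m A[1/I] = I^m$ on $\fil^0 F = F^0$. The key point is that the actions of $\fil^{\pm 1} A[1/I] = I^{\pm 1}$ are mutually inverse up to the canonical isomorphism $I \otimes_A I^{-1} \cong A$: associativity for the $\fil^\bullet A[1/I]$-module structure forces the composite $I^{-1} \otimes_A I \otimes_A F^m \to I^{-1} \otimes_A F^{m+1} \to F^m$ to be the $A$-action $A \otimes_A F^m \to F^m$, which is the identity, and symmetrically in the other direction. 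Iterating, each $\alpha_m$ is an equivalence. Compatibility with the filtered transitions $F^{m+1} \to F^m$ follows from the filtered-module axiom relating the raising action of $d \in \fil^1 A$ with the level-preserving action of $d \in \fil^0 A$, which identifies the transition with the inclusion $I^{m+1} \hookrightarrow I^m$ tensored with $F^0$.

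The main obstacle will be making this precise in the derived $(p,I)$-complete setting, since $A[1/I] = 0$ in $\D(A)$ and so $\DF(A[1/I])$ must be interpreted as the category of $\fil^\bullet A[1/I]$-modules in $\DF(A)$. A clean repackaging I would use is the Rees construction: $\fil^\bullet A[1/I]$ corresponds to the graded $A$-algebra $A[u^{\pm 1}]$ with $u$ in degree $1$ (where locally $u$ is the Rees coordinate of a generator of $I$), and graded $A[u^{\pm 1}]$-modules in $\D(A)$ are equivalent to $A$-modules via the degree-$0$ piece since $u$ acts invertibly between graded components. Under Rees this recovers exactly the pair $(L, R)$ above, and the invertibility of $I$ in the prism is precisely what lets the completion interact well with the unbounded filtration.
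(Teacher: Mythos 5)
Your proposal is correct and follows the same route as the paper: show $\fil^0\circ(I^\bullet\otimes-)=\id$, then observe that the filtered action maps $I^m\otimes\fil^n N\to\fil^{m+n}N$ are forced to be equivalences because the $I^{\pm1}$-actions are mutually inverse, giving $\fil^\bullet N\cong I^\bullet\fil^0 N$. You spell out the invertibility argument in more detail than the paper (which just says ``by their compatibility one can easily see they are isomorphisms'') and add the Rees-construction repackaging as a way to make the derived completeness bookkeeping clean, but the underlying idea is the same.
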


\begin{proof}
$M\mapsto I^\bullet M$ is obviously right inverse to $\fil^0(-)$. Conversely, for $\fil^\bullet N\in\DF(A[1/I])$, from the filtered multiplication map $I^\bullet\otimes\fil^\bullet N\to\fil^\bullet N$ one gets maps $I^m\otimes\fil^n N\to\fil^{m+n}N$ for all $m,n\in\Z$. By their compatibility one can easily see that they are all isomorphisms, so $\fil^\bullet N=I^\bullet\fil^0N$. 
\end{proof}

Now we introduce two specializations of filtered $A$-modules, both of which take values in filtered $\bar{A}$-modules. 

\begin{definition}[Hodge--Tate specialization]\label{htf}
For $\fil^\bullet N\in\DF(A)$, the associated graded object $\gr^\bullet N$ is a graded module over $\gr^\bullet A=\bigoplus_{m\in\N}I^m/I^{m+1}$. Twist the \th{$m$} grade by $(I/I^2)^{\otimes(-m)}$; the module $\gr^\bullet N\{-\bullet\}$ we get is a graded module over $\gr^\bullet A\{-\bullet\}=\bar{A}[u]$ where $u$ is in the $1$\textsuperscript{st} grade. Note that such a graded module corresponds to an increasingly filtered $\bar{A}$-module, since multiplying by $u$ gives us the maps. This increasingly filtered $\bar{A}$-module is called the \emph{Hodge--Tate specialization} of $N$, denoted by $\fil_\bullet N^\HT\in\DF(\bar{A})$. 
\end{definition}

\begin{definition}[De Rham specialization]\label{drf}
The \emph{de Rham specialization} of $\fil^\bullet N\in\DF(A)$ is its base change along the filtered map $A\to\bar{A}$, denoted by $\fil^\bullet\bar{N}\in\DF(\bar{A})$. 
\end{definition}

\begin{remark}\label{grcp}
Since taking associated graded objects commutes with filtered base change, and the associated graded of the map $A\to\bar{A}$ is the map $\bigoplus_{m\in\N}I^m/I^{m+1}\to\bar{A}$ that kills all the $m>0$ summands, we can naturally identify the graded pieces of the Hodge--Tate specialization with those of the de Rham specialization up to twists by $I/I^2$. More specifically, for $N\in\DF(A)$, we have
$$\gr_\bullet N^\HT\{\bullet\}\cong\gr^\bullet\bar{N}.$$
In Example \ref{pc} below, this corresponds to the fact that the Hodge--Tate filtration on $\overline{\Prism_{R/A}}$ has graded pieces $\Omega_{R/\bar{A}}^\bullet\{-\bullet\}$ while the de Rham filtration on $\overline{\Prism_{R/A}^{(1)}}$ has graded pieces $\Omega_{R/\bar{A}}^\bullet$. 
\end{remark}

\begin{remark}[The underlying object of the Hodge--Tate specialization]\label{uht}
If we view a graded $\bar{A}[u]$-module $X_\bullet$ as an increasing filtration, then by definition its underlying object is the colimit of $X_n$ along multiplying $u$, which is the same as the \th{$0$} grade of $X_\bullet[u^{-1}]$. Accordingly, the underlying object $N^\HT$ of $\fil_\bullet N^\HT$ is the same as the \th{$0$} grade of the base change of $\gr^\bullet N$ from $\gr^\bullet A=\bigoplus_{m\in\N}I^m/I^{m+1}$ to $\gr^\bullet A[1/I]=\bigoplus_{m\in\Z}I^m/I^{m+1}$, which means that
$$N^\HT=\gr^0(N[1/I])=\fil^0(N[1/I])\otimes_A\bar{A},$$
where $\fil^\bullet N[1/I]$ denotes the filtered base change of $\fil^\bullet N$ from $A$ to $A[1/I]$, and the last equality comes from Proposition \ref{f0e}. 
\end{remark}

Finally we give some dualizability criteria. 

\begin{proposition}\label{cpdl}
Let $\CC$ be a presentably symmetric monoidal category. Then: 
\begin{itemize}
    \item If $1_\CC$ is compact, then dualizable objects in $\CC$ are compact. 
    \item If $\CC$ is compactly generated by its dualizable objects, then compact objects in $\CC$ are dualizable. 
\end{itemize}
\end{proposition}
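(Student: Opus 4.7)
The plan is to treat the two bullets separately using formal consequences of dualizability.

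For the first bullet, when $X$ is dualizable with dual $X^\vee$, the adjunction $X^\vee \otimes - \dashv X \otimes -$ supplied by the evaluation and coevaluation of the duality yields the natural equivalence
$$\Hom_\CC(X, Y) \simeq \Hom_\CC(1_\CC, X^\vee \otimes Y).$$
As a left adjoint, $X^\vee \otimes -$ preserves all colimits, while compactness of $1_\CC$ means $\Hom_\CC(1_\CC, -)$ preserves filtered colimits. Composing shows $\Hom_\CC(X, -)$ preserves filtered colimits, so $X$ is compact.

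For the second bullet, I would proceed in two steps. First, I would verify that the full subcategory $\CC^{\mathrm{dbl}} \subseteq \CC$ of dualizable objects is a stable subcategory closed under retracts: closure under retracts is immediate from the characterization of dualizability, and closure under shifts and cofibers uses only that $\otimes$ preserves finite colimits in each variable (automatic since $\CC$ is presentably symmetric monoidal). Second, given a set $S$ of dualizable generators of $\CC$, I would invoke the first bullet to see that the objects of $S$ are compact (the needed compactness of $1_\CC$ being discussed below). Then $\CC$ is compactly generated by $S$, and the standard structure theorem for compactly generated stable presentable categories identifies the compact objects with retracts of finite colimits of objects in $S$. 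By the closure properties of $\CC^{\mathrm{dbl}}$, any such retract is dualizable.

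The main technical obstacle is bookkeeping around compactness of $1_\CC$ in the second bullet, which is required to apply the first. I expect this to be resolved by the observation that $1_\CC$ is self-dual and may be included in any chosen set of dualizable generators, so that the natural functor $\mathrm{Ind}(\CC^{\mathrm{dbl}}) \to \CC$ is essentially surjective under the generation hypothesis and thereby forces $1_\CC$, and hence every dualizable generator, to be compact. Past this point, both arguments are purely formal, resting on the duality adjunction and the standard description of compact objects in a compactly generated presentable category as retracts of finite colimits of compact generators.
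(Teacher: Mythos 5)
Your first bullet coincides with the paper's argument. For the second bullet you take essentially the same route as the paper---dualizable objects in a stable symmetric monoidal category are closed under finite colimits and retracts, and compact objects are retracts of finite colimits of the generating set---but you rightly observe that the latter step, when routed through compact generation as you do, needs the generators (equivalently $1_\CC$) to be compact, which is not among the stated hypotheses of the second bullet. Your attempted repair, however, does not work: essential surjectivity of $\mathrm{Ind}(\CC^{\mathrm{dbl}}) \to \CC$ does not force $1_\CC$ to be compact. For any presentable $\CC$ the colimit functor $\mathrm{Ind}(\CC) \to \CC$ is essentially surjective, yet $\CC$ need not consist of compact objects; more to the point, in the category of complete filtered complexes over a discrete ring $R$, the translates $R(n)$ are dualizable and generate, so the corresponding $\mathrm{Ind}$-functor is essentially surjective, but $1 = R(0)$ is not compact there, since $\fil^0$ fails to commute with filtered colimits computed by completion. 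The clean fix is to add the hypothesis that $1_\CC$ is compact to the second bullet---in all of the paper's applications, e.g.\ Corollary~\ref{fd}, this is verified first---after which your compact-generation argument closes. The paper's own phrasing, which simply asserts that compacts are retracts of finite colimits of generators, tacitly relies on the same compact-generation point that you made explicit.
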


\begin{proof}
If $1$ is compact and $X\in\CC$ is dualizable with dual $X^\vee$, then since
$$\Hom(X,-)=\Hom(1,\inhom(X,-))=\Hom(1,X^\vee\otimes-)$$
commutes with filtered colimits, $X$ is compact. 

If $\CC$ is stable, then finite colimits of dualizable objects are dualizable: for finite direct sums it is obvious, and for cofibers one easily verify that $\cofib(X\to Y)^\vee=\fib(Y^\vee\to X^\vee)$. In general, retracts of dualizable objects are dualizable. Thus, if $\CC$ is compactly generated by its dualizable objects, then compact objects, as retracts of finite colimits of generators, are dualizable. 
\end{proof}

\begin{corollary}\label{fd}
Let $R$ be a filtered ring. Then in $\DF(R)$, dualizable objects coincide with compact objects. If moreover the filtration of $R$ is trivial, then they are exactly the complete filtered objects with only finitely many nonzero graded pieces that are all dualizable in $\D(R)$. 
\end{corollary}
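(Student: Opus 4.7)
The plan is to handle the two assertions separately, using Proposition~\ref{cpdl} for the first and the symmetric monoidal associated-graded functor for the second.

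For the assertion that dualizable objects equal compact objects in $\DF(R)$, I would verify the two hypotheses of Proposition~\ref{cpdl} applied to $\CC = \DF(R)$. The unit $R \in \DF(R)$ (with its own filtration) is compact because $\Hom_{\DF(R)}(R,-) = \fil^0(-)$, and taking $\fil^0$ commutes with filtered colimits. For the second hypothesis, the filtered twists $R(n)$ defined by $\fil^m R(n) = \fil^{m-n} R$ are invertible under Day convolution and hence dualizable; the collection $\{R(n)[k]\}_{n,k \in \Z}$ generates $\DF(R)$ because $\Hom(R(n)[k], M) = \pi_k \fil^n M$, so joint vanishing forces $\fil^n M = 0$ for every $n$, i.e.\ $M = 0$. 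Proposition~\ref{cpdl} then yields the equality of dualizables and compacts.

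For the trivial-filtration case, assume now $\fil^0 R = R$ and $\fil^n R = 0$ for $n > 0$. For necessity, I would push a dualizable $M$ through the symmetric monoidal associated-graded functor $\gr^\bullet$ from $\DF(R)$ to graded $R$-modules, which sends the unit to $R$ in grade $0$; the image $\gr^\bullet M$ is then dualizable, and by the same argument as in the first claim (applied to graded modules) this forces $\gr^\bullet M$ to be finitely supported with each component dualizable in $\D(R)$. Completeness of $M$ is automatic from the convention that $\DF(R)$ consists of complete filtered objects (or is part of the hypothesis if one drops this convention). Conversely, suppose $M$ is complete with graded pieces concentrated in $[a,b]$ and each dualizable in $\D(R)$. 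Completeness forces $\fil^k M = 0$ for $k > b$ and $\fil^k M = \fil^a M$ for $k \le a$, so, introducing filtered truncations $M_{\ge k}$ (with $\fil^n M_{\ge k} = \fil^n M$ for $n \ge k$ and $= \fil^k M$ otherwise), the cofiber sequences $M_{\ge k+1} \to M_{\ge k} \to R(k) \otimes_R \gr^k M$ in $\DF(R)$ assemble $M = M_{\ge a}$ as a finite iterated extension starting from $M_{\ge b+1} = 0$. Each step tensors the invertible $R(k)$ against a trivially-filtered dualizable $\D(R)$-module, which is dualizable in $\DF(R)$, and finite iterated cofibers of dualizables are dualizable.

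The main subtle step I expect is verifying dualizability of the ``trivially-filtered dualizable $\D(R)$-module'' appearing in the iterated extension. This should follow from the fact that the insertion functor from $\D(R)$ to $\DF(R)$ sending $X$ to the filtered object with $\fil^n = X$ for $n \le 0$ and $\fil^n = 0$ for $n > 0$ is symmetric monoidal under Day convolution (one checks directly on the unit and on tensor products), and therefore preserves dualizables. Once this is established, everything else reduces to formal manipulation in a presentably symmetric monoidal stable category.
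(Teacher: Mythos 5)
Your handling of the first assertion (dualizable $=$ compact in $\DF(R)$) follows the paper's argument exactly: verify the two hypotheses of Proposition~\ref{cpdl} using that $\fil^0$ commutes with colimits and that the translates $R(n)$ are dualizable and jointly conservative. Your converse direction in the trivial-filtration case (writing $M$ as a finite iterated extension of $R(k)\otimes\iota(\gr^k M)$ via the truncations $M_{\ge k}$, using that the insertion $\iota\colon\D(R)\to\DF(R)$ is symmetric monoidal) is also correct, and is a mild repackaging of the paper's extension, which filters by the terms $\fil^m M$ rather than by twisted graded pieces. Your necessity argument via the symmetric monoidal $\gr^\bullet$ is a valid alternative to what the paper has in mind.

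There is, however, one genuine gap: \emph{completeness} of a dualizable $M$. You dismiss it as ``automatic from the convention that $\DF(R)$ consists of complete filtered objects'' or alternatively ``part of the hypothesis,'' and neither is the case. In this paper $\DF(R)$ is the plain filtered derived category, not the complete one (otherwise the word ``complete'' in the statement would be redundant, and the paper's own proof would have no reason to remark that compact objects are ``obviously complete''); and completeness is part of the \emph{conclusion} being proved, not a standing hypothesis. Your $\gr^\bullet$ argument shows only that $\gr^k M=0$ for $|k|\gg0$, i.e.\ that the transition maps $\fil^{k+1}M\to\fil^k M$ are eventually isomorphisms; without completeness this does not force $\fil^k M$ to vanish for $k\gg0$ (the constant filtration with every $\fil^k M=R$ has vanishing associated graded but is nonzero). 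The fix is short and is what the paper's ``obviously'' is pointing at: having shown dualizable $=$ compact, observe that a compact object is a retract of a finite colimit of shifts of the generators $R(n)$; each $R(n)$ is complete (its filtration is eventually zero), and finite colimits and retracts preserve completeness. With that supplied, the rest of your argument goes through.
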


\begin{proof}
Note that: 
\begin{itemize}
    \item $1_{\DF(R)}=R$ and $\Hom(R,-)=\fil^0(-)$ commutes with colimits, so $1_{\DF(R)}$ is compact. 
    \item The translations $R(n)$ are all dualizable with $R(n)^\vee=R(-n)$, and form a family of compact generators of $\DF(R)$, since the functor $\DF(R)\to\Sp^\Z$, $\fil^\bullet N\mapsto(\fil^nN)_{n\in\Z}$ is clearly conservative. 
\end{itemize}
If the filtration of $R$ is trivial, then it is in particular complete, so every $R(n)$ is complete, with only one nonzero graded piece $\gr^{-n}R(n)=R$. Therefore, the compact objects in $\DF(R)$, as retracts of finite colimits of objects of the form $R(n)[d]$, are also complete, with only finitely many nonzero graded pieces that are all compact in $\D(R)$. Conversely, let $\fil^\bullet M\in\DF(R)$ be complete with only finitely many nonzero graded pieces that are all compact in $\D(R)$. Then for $m\gg0$ we have $\fil^mM=0$ and $\fil^{-m}M=M$. Also the terms $\fil^mM$ are all compact in $\D(R)$, being a finite extension of compact objects. Now it is easy to write $\fil^\bullet M$ as a finite extension of compact objects in $\D(R)$ with trivial filtration, which are clearly compact in $\DF(R)$. 
\end{proof}

\begin{lemma}\label{rfdl}
Let $\CC$ and $\DD$ be presentably symmetric monoidal categories and $F\colon\CC\to\DD$ be a conservative symmetric monoidal functor that preserves internal Homs. Then $c\in\CC$ is dualizable if and only if $Fc\in\DD$ is dualizable. 
\end{lemma}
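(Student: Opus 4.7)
My plan is to reduce the nontrivial direction—dualizability of $Fc$ implies dualizability of $c$—to verifying that a single canonical natural transformation in $\CC$ is an isomorphism, and then to invoke conservativity of $F$. The forward direction is formal: any symmetric monoidal functor sends the data $(c^\vee,\text{ev},\text{coev})$ and triangle identities witnessing dualizability of $c$ to the corresponding data for $Fc$ in $\DD$, with no hypothesis on $F$ beyond monoidality.

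For the converse, take as candidate dual $c^\vee := \inhom_\CC(c, 1_\CC)$. Since $F$ is monoidal one has $F(1_\CC) \simeq 1_\DD$, and preservation of internal Homs gives $F(c^\vee) \cong \inhom_\DD(Fc, 1_\DD) = (Fc)^\vee$. The key object is the canonical natural transformation
$$\phi_x \colon c^\vee \otimes x \longrightarrow \inhom_\CC(c, x),$$
obtained by adjointness from $c^\vee \otimes x \otimes c \cong c^\vee \otimes c \otimes x \to 1_\CC \otimes x \cong x$, where the middle arrow is the evaluation $c^\vee \otimes c \to 1_\CC$. I would show $\phi_x$ is an isomorphism for every $x \in \CC$.

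To verify this, I apply $F$. The monoidality of $F$ together with preservation of internal Homs identifies $F(\phi_x)$ with the analogous canonical map $(Fc)^\vee \otimes Fx \to \inhom_\DD(Fc, Fx)$ in $\DD$. Because $Fc$ is dualizable, this latter map is an isomorphism, and conservativity of $F$ then forces $\phi_x$ itself to be an isomorphism in $\CC$. Once the natural equivalence $c^\vee \otimes - \simeq \inhom_\CC(c, -)$ is established, the standard argument in closed symmetric monoidal categories promotes it to dualizability of $c$: the adjunction $(-\otimes c) \dashv \inhom_\CC(c,-)$ becomes $(-\otimes c) \dashv (c^\vee \otimes -)$, whose unit and counit at $1_\CC$ supply evaluation $c^\vee \otimes c \to 1_\CC$ and coevaluation $1_\CC \to c \otimes c^\vee$; the triangle identities can be extracted from the adjunction, or alternatively pulled back from $\DD$ by conservativity since they hold for $Fc$.

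The only real obstacle I anticipate is notational, namely spelling out why $F(\phi_x)$ matches the candidate map in $\DD$. This amounts to checking that the comparison data making $F$ monoidal and making $F$ preserve internal Homs are compatible with the adjointness used to define $\phi_x$; once that square is in place, conservativity closes the argument with no further computation.
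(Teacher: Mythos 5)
Your proposal is correct and follows essentially the same route as the paper's proof: both set $c^\vee = \inhom_\CC(c,1_\CC)$, reduce dualizability of $c$ to showing the canonical map $c^\vee\otimes - \to \inhom_\CC(c,-)$ is an isomorphism, and check this after applying $F$ via conservativity and preservation of internal Homs. You spell out the adjunction bookkeeping more explicitly, but the underlying argument is identical.
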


\begin{proof}
If $c\in\CC$ is dualizable, clearly so is $Fc\in\DD$ since $F$ is symmetric monoidal. Conversely if $Fc\in\DD$ is dualizable, let $c^\vee=\inhom_\CC(c,1_\CC)$, then since $F$ is symmetric monoidal and preserves internal Homs, $F(c^\vee)=(Fc)^\vee$. To prove that $c\in\CC$ is dualizable, it suffices to show that the natural transformation $c^\vee\otimes_\CC-\to\inhom_\CC(c,-)$ is an isomorphism. This can be checked after applying $F$ by conservativity, whence it becomes obvious. 
\end{proof}

\begin{remark}\label{cam}
Lemma \ref{rfdl} applies when $A\in\CAlg(\CC)$ is dualizable in $\CC$, $\DD=\Mod_\CC(A)$ is the module category and the base change functor $F=-\otimes_\CC A$ is conservative, since
$$\inhom_\DD(X\otimes_\CC A,Y\otimes_\CC A)=\inhom_\CC(X,Y\otimes_\CC A)=\inhom_\CC(X,Y)\otimes_\CC A$$
by dualizability. 
\end{remark}

\begin{corollary}\label{abd}
An object in $\DF(\bar{A})$ is dualizable if and only if it is complete and has only finitely many nonzero graded pieces that are all dualizable in $\D(\bar{A})$. 
\end{corollary}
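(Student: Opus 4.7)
The plan is to derive Corollary \ref{abd} directly from Corollary \ref{fd} specialized to $R = \bar{A}$. Under the conventions fixed at the start of \S\ref{secfg}, $\bar{A}$ carries the trivial filtration $\fil^m\bar{A} = \bar{A}$ for $m \le 0$ and $\fil^m\bar{A} = 0$ for $m > 0$, so the hypothesis of the second clause of Corollary \ref{fd} is met. That clause asserts precisely the stated characterization: dualizable objects in $\DF(\bar{A})$ are the complete filtered objects with only finitely many nonzero graded pieces, each dualizable in $\D(\bar{A})$.

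Spelled out, the chain of reasoning already carried out in Corollary \ref{fd} runs as follows. First one invokes the compactness of the unit $\bar{A}\in\DF(\bar{A})$ — since $\Hom(\bar{A},-) = \fil^0(-)$ commutes with colimits — together with the fact that the twists $\bar{A}(n)$ are dualizable and jointly generate $\DF(\bar{A})$; Proposition \ref{cpdl} then identifies dualizable and compact objects in $\DF(\bar{A})$. Next, the trivial-filtration half of Corollary \ref{fd} exhibits any such compact object as a finite extension of pure filtered objects whose underlying modules are dualizable in $\D(\bar{A})$, which is exactly what the corollary asserts. The only sanity check is that $\D(\bar{A})$ denotes the same category in both corollaries; since $I$ vanishes in $\bar{A}$, the $(p,I)$-completion coincides with the $p$-completion, so there is no discrepancy between the ambient categories.

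There is no genuine obstacle in the proof of this corollary — it is a bookkeeping specialization of Corollary \ref{fd}. The lemmas between Corollary \ref{fd} and Corollary \ref{abd}, namely Lemma \ref{rfdl} and Remark \ref{cam}, are not invoked here; they are developed for later use, where dualizability will need to be checked after base change. The strategic value of Corollary \ref{abd} is that dualizability of a filtered $\bar{A}$-module may be read off from completeness together with the graded pieces, which will streamline dualizability arguments for the Hodge–Tate and de Rham specializations of $F$-gauges later in the paper.
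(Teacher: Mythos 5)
There is a genuine gap, and your attempted "sanity check" at the end misses it. Under the paper's conventions, $\bar A$ is $p$-complete, so $\D(\bar A)$ and $\DF(\bar A)$ are the $p$-complete derived categories. The argument in Corollary \ref{fd} does \emph{not} go through in this setting: it begins by observing that $1_{\DF(R)}=R$ is compact (since $\Hom(R,-)=\fil^0(-)$ commutes with colimits), and it concludes by matching compact objects with those whose graded pieces are dualizable in $\D(R)$. Both steps silently use that $\D(R)$ is an ordinary derived category, where the unit is a compact generator and "compact $=$ dualizable." In a $p$-complete derived category such as $\D(\bar A)$ (think $\bar A=\Z_p$), filtered colimits are $p$-completed colimits, the forgetful functor to spectra does not preserve them, the unit is \emph{not} compact, and dualizable objects (perfect $p$-complete complexes) are not compact. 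So the second clause of Corollary \ref{fd} is not directly available for $R=\bar A$, and the concern is not the $(p,I)$- versus $p$-completion distinction you flagged.

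This is precisely why the paper does not take the route you propose. The paper's proof first invokes Lemma \ref{rfdl} (together with Remark \ref{cam}, since $\bar A/p$ is dualizable in $\D(\bar A)$ as the cofiber of $p\colon\bar A\to\bar A$, and reduction mod $p$ is conservative on the $p$-complete category) to reduce dualizability in $\DF(\bar A)$ to dualizability of $M/p$ in $\DF(\bar A/p)$. Over $\bar A/p$ every module is killed by $p$, so the $p$-complete and ordinary derived categories coincide, and \emph{then} Corollary \ref{fd} applies. Your remark that Lemma \ref{rfdl} and Remark \ref{cam} "are not invoked here; they are developed for later use" is therefore exactly backwards: they are the essential input for this corollary. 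To repair your proof, insert the reduction $M\mapsto M/p$ and check that completeness, finiteness of the nonzero graded range, and dualizability of graded pieces pass between $M$ and $M/p$ (using $p$-completeness, Nakayama, and Lemma \ref{rfdl} once more at the level of graded pieces).
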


\begin{proof}
By Lemma \ref{rfdl}, $M\in\DF(\bar{A})$ is dualizable if and only if $M/p\in\DF(\bar{A}/p)$ is. Now the corollary follows from Corollary \ref{fd}. 
\end{proof}

\begin{proposition}\label{cons}
The functor $N\mapsto(\fil_\bullet N^\HT,\fil^\bullet\bar{N})$ from $\DF(A)$ to $\DF(\bar{A})\times\DF(\bar{A})$ is conservative.
\end{proposition}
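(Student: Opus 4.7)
My plan is to identify the de Rham specialization concretely and combine the two vanishing hypotheses via a derived Nakayama argument. Assume $I=(d)$ is principal for simplicity; the general invertible case reduces to this by working Zariski-locally on $\spec A$.

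The main computation is the existence of a cofiber sequence $A(-1)\xrightarrow{d}A\to\bar A$ in $\DF(A)$, where $A(-1)$ is $A$ with filtration shifted so that $\fil^nA(-1)=\fil^{n-1}A$, and the first map is multiplication by $d$. This is verified degree-wise: the fiber of the filtered map $A\to\bar A$ at level $n\le 0$ is $I$, and at level $n\ge 1$ it is $I^n$, both of which identify with $\fil^nA(-1)$ via multiplication by $d$. Tensoring with $N\in\DF(A)$ preserves cofiber sequences and yields $N(-1)\xrightarrow{d}N\to\bar N$ in $\DF(\bar A)$. Reading off the degree-$n$ part gives
$$\fil^n\bar N\simeq\cofib\bigl(d\colon\fil^{n-1}N\to\fil^nN\bigr).$$

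Now suppose $\fil_\bullet N^\HT=0$ and $\fil^\bullet\bar N=0$. The first vanishing says every $\gr^nN$ is zero, so each filtration transition $\fil^{n+1}N\to\fil^nN$ is an equivalence in $\D(A)$. The second, via the formula just derived, says $d\colon\fil^{n-1}N\to\fil^nN$ is an equivalence for every $n$. Putting these together, the multiplication-by-$d$ endomorphism of $\fil^nN$ factors as
$$\fil^nN\xrightarrow{d}\fil^{n+1}N\xrightarrow{\sim}\fil^nN,$$
both arrows of which are equivalences. Hence $d$ acts invertibly on $\fil^nN$. Since $\fil^nN$ lies in the $(p,I)$-complete $\D(A)$, it is in particular $d$-complete, and a $d$-complete object on which $d$ acts invertibly must vanish. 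Thus $\fil^nN=0$ for every $n$, as required.

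The main conceptual hurdle is the correct identification of $\fil^n\bar N$. A naive guess would be $\fil^nN\otimes_A^L\bar A$, which would already make the de Rham specialization alone conservative by derived Nakayama; but the true Day-convolution answer relates only \emph{adjacent} filtration levels through multiplication by $d$, and the Hodge--Tate vanishing is genuinely needed to promote this into an equivalence of endomorphisms on each $\fil^nN$.
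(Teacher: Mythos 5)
Your proof is correct and agrees with the paper's in spirit: both reduce to the observation that Hodge--Tate vanishing makes every transition map $\fil^{n+1}N\to\fil^nN$ an equivalence, that de Rham vanishing kills $N$ modulo $I$ (equivalently, forces $d$ to act invertibly), and that $(p,I)$-completeness then yields $N=0$ by derived Nakayama. You make the filtered cofiber sequence $A(-1)\xrightarrow{d}A\to\bar A$ and the resulting identification $\fil^n\bar N\simeq\cofib(d\colon\fil^{n-1}N\to\fil^nN)$ explicit and argue level by level, whereas the paper compresses this by noting that once the filtration is constant the de Rham specialization is simply $\bar N$; the underlying mechanism is the same.
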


\begin{proof}
Assume $\fil_\bullet N^\HT=\fil^\bullet\bar{N}=0$; we want to prove that $N=0$. This is easy: $\fil_\bullet N^\HT=0$ just means $\gr^\bullet N=0$ by definition, so the filtration is constant; then $\fil^\bullet\bar{N}$ is also constant, and that it is zero implies that $N$ is zero by $I$-completeness. 
\end{proof}

\begin{remark}
The conservativity above has a geometric interpretation: view filtered $A$-modules as graded modules over the Rees algebra $\R=\bigoplus_{m\in\Z}I^mt^{-m}$ where $t$ is in the $(-1)$\textsuperscript{st} grade. For simplicity assume $I=(d)$ is principal, so $\R=A[t,u]/(tu-d)$. Then the two specializations correspond to taking quotients by $t$ and $u$, so they are jointly conservative since $tu=d$ and everything is $d$-complete. 
\end{remark}

\begin{proposition}\label{dfad}
An object $N\in\DF(A)$ is dualizable if and only if its two specializations $\fil_\bullet N^\HT$ and $\fil^\bullet\bar{N}$ are both dualizable in $\DF(\bar{A})$, equivalently they are both complete and has only finitely many nonzero graded pieces that are all dualizable in $\D(\bar{A})$. 
\end{proposition}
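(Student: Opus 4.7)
The second, ``equivalently'' clause is Corollary \ref{abd} applied separately to each of $\fil_\bullet N^\HT$ and $\fil^\bullet \bar{N}$, so only the first equivalence requires proof.

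The forward direction is straightforward: both specializations are symmetric monoidal functors $\DF(A) \to \DF(\bar{A})$---the de Rham specialization is a filtered base change, and the Hodge--Tate specialization is the associated graded functor $\gr^\bullet$ (symmetric monoidal under Day convolution) followed by the twist by the graded line $(I/I^2)^{\otimes -\bullet}$. Such functors preserve dualizable objects.

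For the reverse direction, assume both specializations are dualizable. Combining Corollary \ref{abd} with Remark \ref{grcp} then gives that $\fil^\bullet N$ is complete, has only finitely many nonzero graded pieces $\gr^a N,\dots,\gr^b N$, and each $\gr^n N$ is dualizable in $\D(\bar{A})$. Completeness together with finite support lets me present $\fil^\bullet N$ as a finite tower of extensions in $\DF(A)$,
$$0 = \fil^{\geq b+1} N \to \fil^{\geq b} N \to \cdots \to \fil^{\geq a} N = \fil^\bullet N,$$
whose successive cofibers are the ``pure'' filtered objects $(\gr^n N)(n)$ concentrated in a single filtration degree $n$ (meaning $\fil^k = \gr^n N$ for $k \leq n$ and $0$ for $k > n$, with $I$ acting through $A \to \bar{A}$). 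Since finite extensions of dualizable objects in a stable symmetric monoidal category remain dualizable, it suffices to show each $(\gr^n N)(n)$ is dualizable in $\DF(A)$. A translation of filtration relates $(\gr^n N)(n)$ to $(\gr^n N)(0)$ by tensoring with a translation $A(\pm n)$ of the unit, which is dualizable by Corollary \ref{fd}. Moreover, since $\gr^n N \in \D(\bar{A})$ is dualizable (hence a finite iterated cofiber/retract of $\bar{A}$) and the functor $G \mapsto G(0)$ from $\D(\bar{A})$ to $\DF(A)$ is exact, the problem reduces to showing that $\bar{A}$, viewed as a trivially filtered $A$-module, is dualizable in $\DF(A)$.

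The principal obstacle is thus this last assertion. I would verify it by an explicit computation using the cofiber sequence $I \to A \to \bar{A}$ in $\D(A)$: lifted to $\DF(A)$, this identifies the candidate dual of trivially filtered $\bar{A}$ with a shift/twist of $\bar{A}$ itself (essentially $\bar{A}\{-1\}[-1]$ with an appropriate filtration shift), and one checks directly that the evaluation pairing is nondegenerate. An alternative route applies Lemma \ref{rfdl} to the combined specialization $F = (\HT, \dr)\colon \DF(A) \to \DF(\bar{A}) \times \DF(\bar{A})$, which is conservative by Proposition \ref{cons} and monoidal, but verifying that $F$ preserves internal Homs is a computation of essentially the same nature.
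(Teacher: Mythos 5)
Your argument is correct in outline but takes a genuinely different route from the paper. The paper's proof is a one-step application of Lemma \ref{rfdl}: it checks that the combined specialization $N\mapsto(\fil_\bullet N^\HT,\fil^\bullet\bar{N})$ is conservative (Proposition \ref{cons}) and preserves internal Homs (via Remark \ref{cam}, by observing that $\bar{A}$ and $\gr^\bullet A$ are dualizable algebra objects of $\DF(A)$, exhibited as cofibers of $I\to A$ and $A\langle1\rangle\to A$). You instead run a d\'evissage: express $\fil^\bullet N$ as a finite extension of pure graded pieces $\gr^n N$ placed in single filtration degrees, peel off translations $A(\pm n)$ using Corollary \ref{fd}, and use that dualizable objects of $\D(\bar{A})$ are finitely built from $\bar{A}$ to reduce everything to the dualizability of trivially filtered $\bar{A}$ in $\DF(A)$. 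This is more concrete and avoids Lemma \ref{rfdl}, and it only needs $\bar{A}$ (not also $\gr^\bullet A$) to be dualizable; the cost is that it is slightly longer and has two spots that want more care. First, to conclude that $\fil^\bullet N$ itself is complete and two-sidedly bounded (so the tower really is finite and you really have $\fil^aN=N$), you should invoke $I$-completeness of each $\fil^nN$ and derived Nakayama to deduce $\fil^bN=0$ from $\fil^b\bar{N}=0$; Corollary \ref{abd} and Remark \ref{grcp} alone control the graded pieces but not completeness. Second, the crucial final assertion---that trivially filtered $\bar{A}$ is dualizable in $\DF(A)$---is precisely what the paper supplies via the cofiber sequence $I\to A\to\bar{A}$, where $I$ is dualizable as a filtered twist of the invertible module $I$; your sketch of this is believable but is the one substantive thing you owe a full argument for, and it is not ``essentially the same computation'' as verifying the Hom-preservation in Lemma \ref{rfdl} wholesale: the paper's Remark \ref{cam} reduces the latter to exactly this dualizability, so the two routes converge at this point. (There is also a harmless sign slip: with the paper's convention $\fil^mR(n)=\fil^{m+n}R$, the object concentrated in degree $n$ is $(\gr^nN)(-n)$, not $(\gr^nN)(n)$.)
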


\begin{proof}
It suffices to prove that the functor $N\mapsto(\fil_\bullet N^\HT,\fil^\bullet\bar{N})$ satisfies the assumptions of Lemma \ref{rfdl}. Note that the algebras $\bar{A}$ and $\gr^\bullet A$ in $\DF(A)$ are both dualizable, being cofibers of maps $I\to A$ and $A(1)\to A$. Hence the proposition follows from Remark \ref{cam} and Proposition \ref{cons}. 
\end{proof}

\subsection{\texorpdfstring{$F$}{F}-gauges over a prism}

\begin{definition}[Gauges and $F$-gauges]\label{deffg}
A \emph{gauge} over $(A,I)$ is a pair $(M,\fil^\bullet M^{(1)})$ where $M\in\D(A)$ and $\fil^\bullet M^{(1)}\in\DF(A)$ with underlying object $M^{(1)}=\varphi_A^*M$. An \emph{$F$-gauge} over $(A,I)$ is a triple $(M,\fil^\bullet M^{(1)},F)$ where $(M,\fil^\bullet M^{(1)})$ is a gauge over $(A,I)$ and $F\colon\fil^\bullet M^{(1)}[1/I]\to I^\bullet M$ is an isomorphism in $\DF(A[1/I])$. As above, $-[1/I]$ denotes the base change functor from $\DF(A)$ to $\DF(A[1/I])$ and $I^\bullet M$ denotes the obvious filtered module of $A[1/I]$ with underlying object $M[1/I]$.

Denote the categories of gauges and $F$-gauges over $(A,I)$ by $\Gauge(A,I)$ and $\FGauge(A,I)$, respectively. They are both presentably symmetric monoidal stable $\infty$-categories, since they are finite limits of such categories by definition. Clearly, the forgetful functor $\FGauge(A,I)\to\Gauge(A,I)$ is symmetric monoidal and preserves all colimits. 
\end{definition}

\begin{remark}\label{redeffg}
We often view $F$ as a morphism $F\colon\fil^\bullet M^{(1)}\to I^\bullet M$ in $\DF(A)$ that exhibits $I^\bullet M\in \DF(A[1/I])$ as the base change of $\fil^\bullet M^{(1)}\in\DF(A)$ to $A[1/I]$. Thus $\fil^\bullet M^{(1)}$ and $F$ actually determine $M$, as the $\fil^0$ of this base change. 
\end{remark}

\begin{remark}[Changing prisms]
Let $(A,I)\to(B,J)$ be a morphism of prisms. Then $(M,\fil^\bullet M^{(1)})\mapsto(M\otimes_AB,(\fil^\bullet M^{(1)})\otimes_{A^{(1)}}B^{(1)})$ gives base change functors $\Gauge(A,I)\to\Gauge(B,J)$ and $\FGauge(A,I)\to\FGauge(B,J)$. We denote them by $-\otimes_AB$ and call their right adjoints forgetful functors. Note that these forgetful functors commute with the forgetful functor $\D(B)\to\D(A)$ in the first component but not in the second, since for $N\in\D(B)$, $\varphi_B^*N$ and $\varphi_A^*N$ are in general different. 
\end{remark}

\begin{remark}[Descent]
Since each datum of a gauge or an $F$-gauge satisfies $(p,I)$-completely flat descent, so do gauges and $F$-gauges. In other words, $\Gauge$ and $\FGauge$ are sheaves of $\infty$-categories on the absolute prismatic site. 
\end{remark}

\begin{example}[Breuil--Kisin twists]\label{bk}
For $n\in\Z$, consider the Breuil--Kisin twist $A\{n\}$ defined in \cite[Definition 2.5.2, Notation 2.5.4]{apc}. It is an invertible $A$-module with canonical isomorphism $A\{n\}^{(1)}\cong I^{-n}A\{n\}$. We view it as an $F$-gauge over $(A,I)$ by defining
$$\fil^mA\{n\}^{(1)}=\begin{cases}
I^{m+n}A\{n\}^{(1)},&m\ge-n;\\
A\{n\}^{(1)},&m<-n;
\end{cases}$$
and taking $F$ to be the canonical isomorphism. Note that $\fil^mA\{n\}^{(1)}$ is chosen to correspond to $I^mA\{n\}$ under the canonical isomorphism for $m\ge-n$. 

Obviously, $A=A\{0\}$ is the tensor unit of $\FGauge(A,I)$, and $A\{n\}\otimes A\{n'\}=A\{n+n'\}$ for all $n,n'\in\Z$. By \cite[Remark 2.5.5]{apc}, the Breuil--Kisin twists commute with base changes, i.e.\ $A\{n\}\otimes_AB=B\{n\}$ for every prism $(B,J)$ over $(A,I)$.

For a gauge or an $F$-gauge $M$, let $M\{n\}$ denote the tensor product $M\otimes A\{n\}$ in the respective category. Then it is easy to see that
$$\fil^m(M\{n\}^{(1)})=(\fil^{m+n}M^{(1)})\{n\},$$
where the $\{n\}$ outside the brackets means the Breuil--Kisin twist relative to the animated prism $(A^{(1)},IA^{(1)})$. 
\end{example}

We discuss several specializations of $F$-gauges.

\begin{definition}[Hodge--Tate specialization]\label{hts}
For $M\in\Gauge(A,I)$, its \emph{Hodge--Tate specialization} is defined as the Hodge--Tate specialization of $M^{(1)}\in\DF(A)$ as in Definition \ref{htf}. For $M\in\FGauge(A,I)$, by Proposition \ref{f0e} and Remark \ref{uht}, the underlying object of its Hodge--Tate specialization is $\bar{M}=M\otimes_A\bar{A}$, so in this case we denote it by $\fil_\bullet\bar{M}$. 
\end{definition}

\begin{definition}[De Rham specialization]\label{drs}
For $M\in\Gauge(A,I)$, its \emph{de Rham specialization} is defined as the base change of $\fil^\bullet M^{(1)}\in\DF(A)$ to $\bar{A}$, denoted by $\fil^\bullet\overline{M^{(1)}}$. It is also a functor $\Gauge(A,I)\to\DF(\bar{A})$. 
\end{definition}

\begin{definition}[\'Etale specialization]\label{ets}
Here we suspend the convention that everything is $(p,I)$-complete. Suppose the prism $(A,I)$ is perfect with $R=A/p$, so $A=W(R)$ and $A[1/I]_p^\wedge=W(R[1/I])$. For $M\in\FGauge(A,I)$, since $M^{(1)}$ is just $M$ with the $A$-module structure twisted by Frobenius, we can view the isomorphism $F\colon M^{(1)}[1/I]_p^\wedge\to M[1/I]_p^\wedge$ as a Frobenius-semilinear automorphism of $M[1/I]_p^\wedge$. Now view $M[1/I]_p^\wedge$ as a sheaf on $\spa(R[1/I],R)_\proet\cong\spd(\bar{A}[1/p],\bar{A})_\proet$, and form the equalizer of $\id$ and $F$. This defines a functor $M\mapsto(M[1/I]_p^\wedge)^{F=1}: \FGauge(A,I)\to\D_\proet(\spd(\bar{A}[1/p],\bar{A}),\Z_p)$, which we call the \emph{\'etale specialization}.
\end{definition}

\begin{example}[Prismatic cohomology]\label{pc}
Let $X$ be a smooth formal scheme over $\spf(\bar{A})$. Then \cite[Theorem 1.8, Theorem 1.16]{bs19} gives the prismatic cohomology $\RG_\Prism(X/A)$ an $F$-gauge structure, whose Hodge--Tate specialization is the conjugate-filtered Hodge--Tate cohomology, whose de Rham specialization is the Hodge-filtered de Rham cohomology, and when $(A,I)$ is perfect whose \'etale specialization is the \'etale pushforward of $\Z_p$ along the diamond generic fiber of $X\to\spf(\bar{A})$. This can be generalized to general stacks over $\spf(\bar{A})$ using \cite[\S 5.1, \S 5.2]{apc}. 
\end{example}

\begin{remark}[Relative gauges and $F$-gauges]\label{relcoef}
It is possible to define for every formal scheme $X$ over $\spf(\bar{A})$ the categories $\Gauge(X/A)$ and $\FGauge(X/A)$ that serve as coefficients for the prismatic cohomology of $X$ over $(A,I)$: First for $X=\spf(R)$ with $R$ a large quasisyntomic $\bar{A}$-algebra as in \cite[Definition 15.1]{bs19}, define
$$\Gauge(X/A)=\{(M,\fil^\bullet M^{(1)})\mid M\in\D(\Prism_{R/A}),\,\fil^\bullet M^{(1)}\in\DF(\fil^\bullet\Prism_{R/A}^{(1)})\},$$
$$\FGauge(X/A)=\{(M,\fil^\bullet M^{(1)},F)\mid(M,\fil^\bullet M^{(1)})\in\Gauge(X/A),$$
$$F\colon\fil^\bullet M^{(1)}\otimes_{\Prism_{R/A}^{(1)},\varphi}\Prism_{R/A}[1/I]\cong M[1/I]\};$$
next for smooth affine $X$ use quasisyntomic descent from the large algebras; then for general affine $X$ use left Kan extension from the smooth ones; finally for general $X$ use descent again. 

$\FGauge(X/A)$ should have specialization to relative prismatic crystals (i.e.\ quasicoherent $\O_\Prism$-modules on the site $(X/A)_\Prism$), Hodge--Tate specialization to Higgs modules, de Rham specialization to $\DD$-modules, and when $(A,I)$ is perfect, \'etale specialization to \'etale sheaves on the generic fiber. Also, the Poincar\'e duality should hold for proper smooth $X$ with dualizable coefficients in these categories. However we will not explore these in this paper. 
\end{remark}

The following proposition computes global sections of gauges and $F$-gauges, with which we can reinterpret the relative syntomic cohomology $\Z_p(n)(X/A)$ in \cite{ktannounce} as $\Hom_{\FGauge(A,I)}(A,\RG_\Prism(X/A)\{n\})$, cf.\ Example \ref{bk} and Example \ref{pc}. 

\begin{proposition}\label{glsect}
If $M$ is a gauge, then
$$\Hom_{\Gauge(A,I)}(A,M)=M\times_{M^{(1)}}\fil^0M^{(1)},$$
where the map $M\to M^{(1)}=M\otimes_{A,\varphi}A$ is $m\mapsto m\otimes1$. If $M$ is an $F$-gauge, then
$$\Hom_{\FGauge(A,I)}(A,M)=\eq(\fil^0M^{(1)}\rightrightarrows M^{(1)}),$$
where the two maps are the natural map and the composition $$\fil^0M^{(1)}\to\fil^0(M^{(1)}[1/I])=M\to M^{(1)}.$$
\end{proposition}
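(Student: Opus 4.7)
The plan is to exploit the fact that $\Gauge(A,I)$ is by construction a fiber product of $\D(A)$ and $\DF(A)$, so that $\Hom_{\Gauge(A,I)}(A,-)$ decomposes as a pullback of mapping spaces computable in $\D(A)$ and $\DF(A)$; for the $F$-gauge formula, I would then use the $F$-isomorphism to eliminate one of the two resulting variables.

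First, I would note that, as a gauge, $A=A\{0\}$ consists of the complex $A\in\D(A)$ together with the $I$-adic filtration $\fil^\bullet A\in\DF(A)$ on $\varphi_A^*A = A$, and that this $\fil^\bullet A$ is the tensor unit of $\DF(A)$. By the pullback description
$$\Gauge(A,I) = \D(A) \times_{\D(A)} \DF(A),$$
with right leg Frobenius pullback and left leg forgetting the filtration, the mapping space splits as
$$\Hom_{\Gauge(A,I)}(A,M) = \Hom_{\D(A)}(A,M) \times_{\Hom_{\D(A)}(A,M^{(1)})} \Hom_{\DF(A)}(\fil^\bullet A,\fil^\bullet M^{(1)}).$$
Here $\Hom_{\D(A)}(A,M)=M$, while $\Hom_{\DF(A)}(\fil^\bullet A,\fil^\bullet M^{(1)}) = \fil^0 M^{(1)}$, the latter because the tensor unit of $\DF(A)$ corepresents $\fil^0(-)$. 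The two structure maps into $M^{(1)}$ are the Frobenius pullback $m\mapsto m\otimes 1$ and the natural inclusion, giving the first formula.

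For the $F$-gauge formula, I would view $\FGauge(A,I)$ as a further pullback imposing an isomorphism $F$ in $\DF(A[1/I])$. The $F$-structure on $A\{0\}$ is the identity (Example \ref{bk}), so a gauge morphism $(f,g)\colon A \to M$ with $f\in M$ and $g\in\fil^0 M^{(1)}$ extends to an $F$-gauge morphism precisely when $F_M \circ g[1/I] = I^\bullet f$ in $\DF(A[1/I])$. By Proposition \ref{f0e} such a filtered equation is determined by its $\fil^0$-piece, and via $F_M$ one identifies $\fil^0(\fil^\bullet M^{(1)}[1/I])$ with $\fil^0(I^\bullet M) = M$. Under this identification the compatibility reduces to the single equation in $\D(A)$ saying that $f$ equals the image of $g$ under $\fil^0 M^{(1)} \to \fil^0(\fil^\bullet M^{(1)}[1/I]) \xrightarrow{F_M} M$. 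Thus $f$ is determined by $g$, and the remaining gauge constraint $\varphi^* f = (\text{natural image of } g \text{ in } M^{(1)})$ becomes exactly the equalizer between the natural map $\fil^0 M^{(1)} \to M^{(1)}$ and the composition $\fil^0 M^{(1)} \to M \to M^{(1)}$.

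The only real obstacle is the reduction of the filtered $F$-compatibility over $A[1/I]$ to its $\fil^0$-shadow, which is precisely where Proposition \ref{f0e} does its work. Once that reduction is in place, both formulas amount to routine bookkeeping with pullbacks of $\infty$-categories.
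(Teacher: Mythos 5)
Your proof is correct and is essentially the paper's proof (which just says ``clear from the definition'' and ``clear from Remark~\ref{redeffg}'') made explicit: you unpack the pullback description of $\Gauge(A,I)$, use that the tensor unit $\fil^\bullet A$ corepresents $\fil^0$ in $\DF(A)$, and then use Proposition~\ref{f0e} to collapse the $F$-compatibility in $\DF(A[1/I])$ to a single condition in $\D(A)$, contracting away the $M$-coordinate to obtain the equalizer. The only blemish is a harmless slip in labeling the legs of the cospan (the map $\D(A)\to\D(A)$ is the Frobenius pullback and $\DF(A)\to\D(A)$ forgets the filtration, not the other way round), which does not affect the argument.
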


\begin{proof}
The first sentence is clear from the definition of $\Gauge(A,I)$, and the second sentence is clear from Remark \ref{redeffg}. 
\end{proof}

We finally offer a dualizability criterion using Proposition \ref{dfad}. 

\begin{proposition}[Dualizability of gauges]\label{dual}
Let $M$ be a gauge or an $F$-gauge. Then $M$ is dualizable if and only if it satisfies the following conditions: 
\begin{enumerate}
    \item\label{dda} $M$ is dualizable in $\D(A)$. 
    \item\label{dht} The Hodge--Tate specialization of $M$ is dualizable, i.e.\ is complete with only finitely many graded pieces that are perfect complexes. 
    \item\label{ddr} The de Rham specialization of $M$ is dualizable, i.e.\ is complete with only finitely many graded pieces that are perfect complexes. 
\end{enumerate}
The perfectnesses of the graded pieces in (\ref{dht}) and (\ref{ddr}) are equivalent, and in the $F$-gauge case, (\ref{dht}) implies (\ref{dda}). 
\end{proposition}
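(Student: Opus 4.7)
The plan is to apply Lemma \ref{rfdl} twice, combined with Proposition \ref{dfad}, to reduce dualizability in $\Gauge(A,I)$ and $\FGauge(A,I)$ to the three stated conditions. Consider first a gauge $(M,\fil^\bullet M^{(1)})$. The category $\Gauge(A,I)$ is the pullback of $\D(A)$ with $\DF(A)$ over $\D(A)$ along the Frobenius $M\mapsto\varphi^*M$ and the forgetful functor $\fil^0$. Consequently, the joint forgetful functor
$$U\colon\Gauge(A,I)\to\D(A)\times\DF(A),\quad(M,\fil^\bullet M^{(1)})\mapsto(M,\fil^\bullet M^{(1)})$$
is conservative, symmetric monoidal, and preserves internal Homs, since internal Homs in a pullback of presentably symmetric monoidal categories along monoidal functors are computed componentwise. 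Lemma \ref{rfdl} then tells us that the gauge is dualizable iff both $M\in\D(A)$ and $\fil^\bullet M^{(1)}\in\DF(A)$ are dualizable. The first is precisely condition (\ref{dda}), while Proposition \ref{dfad} translates the second into the conjunction of (\ref{dht}) and (\ref{ddr}).

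For an $F$-gauge, I would apply Lemma \ref{rfdl} once more, this time to the forgetful $V\colon\FGauge(A,I)\to\Gauge(A,I)$, which is visibly conservative and symmetric monoidal. To see $V$ preserves internal Homs, I would put an $F$-structure on $\inhom_{\Gauge}(M,N)$ between two $F$-gauges $M,N$ by inverse-transposing $F_M$ against $F_N$, using that $\varphi^*$ and $-[1/I]$ are symmetric monoidal and that they commute with the relevant internal Homs on dualizables. Thus dualizability in $\FGauge$ coincides with dualizability of the underlying gauge, and the characterization via (\ref{dda})--(\ref{ddr}) carries over intact.

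The equivalence of graded-piece perfectness in (\ref{dht}) and (\ref{ddr}) follows directly from Remark \ref{grcp}: the natural identification $\gr_\bullet\bar{M}\{\bullet\}\cong\gr^\bullet\overline{M^{(1)}}$ together with the invertibility of Breuil--Kisin twists shows that the two sides are simultaneously perfect. For the final claim that (\ref{dht}) implies (\ref{dda}) in the $F$-gauge case, I would use that by Definition \ref{hts} the underlying object of the Hodge--Tate specialization of an $F$-gauge is exactly $\bar{M}=M\otimes_A\bar{A}$; this is where the $F$-structure is essential, entering through Remark \ref{uht} to identify $\fil^0(M^{(1)}[1/I])$ with $M$. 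Condition (\ref{dht}) together with Corollary \ref{abd} exhibits $\bar{M}$ as a finite extension of objects dualizable in $\D(\bar{A})$, hence dualizable, and a standard derived Nakayama argument for $(p,I)$-complete $A$-modules promotes this to dualizability of $M$ in $\D(A)$.

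The main obstacle is the second paragraph: checking that $V$ truly preserves internal Homs, which amounts to writing down the $F$-structure on the dual gauge and verifying the triangle identities after transporting through the equivalence $\DF(A[1/I])\cong\D(A)$ of Proposition \ref{f0e}. This is bookkeeping rather than a deep point, but requires a little care with the shifts relating $I^\bullet(-)^\vee$ and $I^{-\bullet}(-)^\vee$.
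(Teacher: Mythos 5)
The overall strategy (decompose via the pullback/equalizer presentations, reduce to Proposition~\ref{dfad}, use Remark~\ref{grcp} for the perfectness equivalence, and for the last clause pass through $\bar{M}$) is the same as the paper's, but there are concrete gaps in how you implement it.

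The main issue is your reliance on Lemma~\ref{rfdl}, whose hypothesis ``preserves internal Homs'' fails or is unverified in both places you invoke it. For $U\colon\Gauge(A,I)\to\D(A)\times\DF(A)$, your justification that ``internal Homs in a pullback of presentably symmetric monoidal categories along monoidal functors are computed componentwise'' is false in general: for that to hold, the transition functors $\varphi^*\colon\D(A)\to\D(A)$ and the underlying-object functor $\DF(A)\to\D(A)$ would themselves have to preserve internal Homs, and neither does. (Incidentally, that second transition functor is the colimit functor, not $\fil^0$.) For $V\colon\FGauge(A,I)\to\Gauge(A,I)$, the $F$-structure on $\inhom_{\Gauge}(M,N)$ would require the filtered base change $-[1/I]\colon\DF(A)\to\DF(A[1/I])$ to preserve internal Homs, but $A[1/I]$ is not dualizable in $\DF(A)$ (it has infinitely many nonzero graded pieces), so Remark~\ref{cam} does not help, and you offer no other argument. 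What you actually need here is weaker and more elementary: in any finite limit of symmetric monoidal $\infty$-categories along strong monoidal functors, an object is dualizable iff each component is, because the dual object, its unit, and its counit can all be assembled componentwise -- this is a statement about duals, not internal Homs. This is what the paper uses (implicitly) in both the pullback description of $\Gauge(A,I)$ and the equalizer description $\FGauge(A,I)\to\Gauge(A,I)\rightrightarrows\D(A)$, and it sidesteps your internal-Hom worries entirely.

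A second gap is in the last clause. You correctly observe that (\ref{dht}) together with Corollary~\ref{abd} makes $\bar{M}$ a finite extension of dualizables, hence dualizable in $\D(\bar{A})$. But ``a standard derived Nakayama argument'' only gives conservativity of $-\otimes_A\bar{A}$; to promote dualizability of $\bar{M}$ to dualizability of $M$ you need more, namely that $-\otimes_A\bar{A}$ also preserves internal Homs. That is precisely where Lemma~\ref{rfdl} genuinely applies: $\bar{A}$ is dualizable over $A$ (it is the cofiber of $I\to A$), so Remark~\ref{cam} gives the internal-Hom preservation, and conservativity comes from $(p,I)$-completeness. So Lemma~\ref{rfdl} is the right tool -- just here, not in the first two steps.
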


\begin{proof}
Recall that the symmetric monoidal category of gauges is defined as the pullback
$$\begin{tikzcd}
\Gauge(A,I)\ar[r]\ar[d]&\DF(A)\ar[d]\\
\D(A)\ar[r]&\D(A)
\end{tikzcd}$$
where the right vertical arrow is taking underlying objects and the lower horizontal arrow is base change along $\varphi$. Therefore, $M\in\Gauge(A,I)$ is dualizable if and only if its two images in $\fil^\bullet M^{(1)}\in\DF(A)$ and $M\in\D(A)$ are dualizable. Now the proposition follows from Proposition \ref{dfad}. The two perfectnesses are equivalent because of Remark \ref{grcp}. 

Recall that by Proposition \ref{f0e}, the symmetric monoidal category of $F$-gauges is defined as the equalizer
$$\FGauge(A,I)\to\Gauge(A,I)\rightrightarrows\D(A)$$
where the two arrows send a gauge $(M,\fil^\bullet M^{(1)})\in\Gauge(A,I)$ to $M\in\D(A)$ and $\fil^\bullet M^{(1)}[1/I]\in\DF(A[1/I])\cong\D(A)$, respectively. Therefore, an $F$-gauge $M$ is dualizable if and only if it is dualizable as a gauge. Since $\bar{M}\in\D(\bar{A})$ is the underlying object of the Hodge--Tate specialization, the dualizability of the Hodge--Tate specialization implies that of $\bar{M}$, which in turn implies that of $M$ by Lemma \ref{rfdl}. 
\end{proof}

\begin{corollary}\label{cohdual}
Let $X$ be a proper smooth formal algebraic space over $\spf(\bar{A})$. Then $\RG_\Prism(X/A)\in\FGauge(A,I)$ is dualizable. 
\end{corollary}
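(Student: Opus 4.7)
The plan is to apply the dualizability criterion for $F$-gauges established in Proposition \ref{dual}. That proposition reduces the problem to verifying that the Hodge--Tate specialization $\fil_\bullet\overline{\RG_\Prism(X/A)}$ and the de Rham specialization $\fil^\bullet\overline{\RG_\Prism(X/A)^{(1)}}$ are both dualizable in $\DF(\bar A)$, i.e.\ complete filtered objects with finitely many nonzero graded pieces, each of which is perfect over $\bar A$.

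First, I would invoke Example \ref{pc} (extended to proper smooth formal algebraic spaces via the descent of \cite[\S 5.1, \S 5.2]{apc}, as sketched in Remark \ref{relcoef}) to identify these two specializations with familiar objects: the Hodge--Tate specialization is the conjugate-filtered Hodge--Tate cohomology, and the de Rham specialization is the Hodge-filtered de Rham cohomology. By Remark \ref{grcp}, it is enough to verify perfectness and finiteness for one of the two specializations, since their graded pieces agree up to Breuil--Kisin twists. Concretely, the graded pieces of the Hodge filtration on $\RG_\dr(X/\bar A)$ are (shifts of) the Hodge cohomologies $\RG(X,\Omega^i_{X/\bar A})$.

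Next, let $d$ denote the relative dimension of $X$ over $\spf(\bar A)$. Since $\Omega^i_{X/\bar A}=0$ for $i>d$, only finitely many graded pieces are nonzero, which in particular implies the filtrations are complete (they are eventually constant in both directions). Each nonzero graded piece $\RG(X,\Omega^i_{X/\bar A})$ is a perfect complex over $\bar A$: the sheaves $\Omega^i_{X/\bar A}$ are locally free of finite rank by smoothness of $X$, and proper pushforward of a vector bundle along a proper smooth morphism of formal algebraic spaces yields a perfect complex on the base. This verifies conditions (\ref{dht}) and (\ref{ddr}) of Proposition \ref{dual}; condition (\ref{dda}) then follows automatically in the $F$-gauge case, as noted at the end of that proposition.

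The main subtlety I anticipate is not the perfectness or vanishing statements themselves, which are standard, but rather the step of transporting the $F$-gauge structure from the case of smooth formal schemes (where Example \ref{pc} directly applies) to proper smooth formal algebraic spaces. This is handled by descent: one covers $X$ by a smooth affine formal scheme, notes that both $\Gauge$ and $\FGauge$ are sheaves on the absolute prismatic site, and uses the descent result of \cite[\S 5.1, \S 5.2]{apc} to assemble the cohomology into an $F$-gauge whose specializations are exactly the known Hodge--Tate and Hodge-filtered de Rham cohomologies. Once this identification is in place, the dualizability criterion finishes the proof immediately.
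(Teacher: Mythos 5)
Your proposal follows exactly the same route as the paper: apply the dualizability criterion of Proposition \ref{dual}, observe that both the Hodge--Tate and de Rham specializations of $\RG_\Prism(X/A)$ are complete with graded pieces the Hodge cohomologies $\RG(X,\Omega^i_{X/\bar A}[-i])$, which vanish for $i>\dim X$ and are perfect complexes by properness and smoothness (the Stacks project reference \texttt{0A1P}). The extra discussion of descent for formal algebraic spaces and of Remark \ref{grcp} is correct but not logically necessary; the paper treats those points as already absorbed into Example \ref{pc} and Proposition \ref{dual}.
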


\begin{proof}
By Proposition \ref{dual} we only need to show that the Hodge--Tate specialization and the de Rham specialization of $\RG_\Prism(X/A)$ are both dualizable. This is clear: both are complete with the graded pieces zero outside $[0,\dim(X)]$; the nonzero graded pieces are $\RG(X,\Omega^\star_{X/\bar{A}}[-\star])$, which is dualizable by \cite[\texttt{0A1P}]{stacks}. 
\end{proof}

\section{Weighted homotopy invariance of cohomologies}

One remarkable property that the $\ell$-adic \'etale cohomology enjoys is the $\A^1$-invariance, namely $\RG_\et(\A^1_X,\Z_\ell)=\RG_\et(X,\Z_\ell)$ for $\ell$ invertible on $X$. Not only does this fact help us compute the \'etale cohomology for many schemes, it has also been a crucial input to the norm residue isomorphism theorem of Suslin and Voevodsky; see for example \cite{vsbk}. 

However, various cohomologies studied in $p$-adic Hodge theory, including Hodge, de Rham, prismatic, and syntomic, usually do not satisfy the $\A^1$-invariance. Fortunately, as \cite{dr} suggests, they actually enjoy a weaker property, the \emph{weighted $\A^1$-invariance}, namely $\FF(X\times\A^1/\G_m)=\FF(X\times\B\G_m)$ for $\FF$ being some of the above cohomologies. In this section, we will formulate this invariance property in general, and prove it for some cohomologies, in order for later sections to exploit. 

\subsection{Big \'etale sheaves}
We first study the notion of big \'etale sheaves. 

\begin{definition}[Big \'etale sheaves]
Let $\CC$ be a presentable $\infty$-category. A \emph{big \'etale sheaf} with values in $\CC$ is an accessible functor $\FF\colon\Ring\to\CC$ that satisfies \'etale descent, i.e.\ a $\CC$-object of the big \'etale topos. A \emph{stack} in this paper is a big \'etale sheaf of animas. For a stack $S$, a \emph{big \'etale sheaf over $S$} with values in $\CC$ is an accessible functor $\FF\colon\Ring_{/S}\to\CC$ that satisfies \'etale descent, i.e.\ a $\CC$-object of the slice topos over $S$. A big \'etale sheaf of animas over $S$ is the same as a stack with a map to $S$, and thus is called a \emph{stack over $S$}. 
\end{definition}

\begin{remark}[Small \'etale sheaves]
    Let $A$ be a ring and let $X$ be a sheaf of animas on the small \'etale site $A_\et$, i.e.\ the \'etale site of \'etale $A$-algebras. Then one can view it as a stack over $A$ by pulling back along the topos map $(\Ring_{/A})_\et\to A_\et$. Such a stack is called an \emph{\'etale stack over $A$}. We call a map of stacks \emph{\'etale} if it is \'etale after every pullback to a representable. 
\end{remark}

\begin{example}
All geometric objects that appear in this paper are stacks, including: 
\begin{itemize}
    \item Rings, i.e.\ affine schemes. For a ring $A$, the functor $R\mapsto\Hom(A,R)$ is a stack, denoted $\spec(A)$. We often identify $A$ and $\spec(A)$. 
    \item Deligne--Mumford stacks. They are just stacks that are affine schemes \'etale locally. Namely, a Deligne--Mumford stack is a stack $X$ that is covered by a family of \'etale maps from affine schemes, cf.\ \cite[\S 1.6.4]{sag}. 
    \item Algebraic spaces. They are just those Deligne--Mumford stacks that take classical rings to sets, cf.\ \cite[Definition 1.6.8.1]{sag}. 
    \item Affine formal schemes. For a ring $A$ and a finitely generated ideal $I\subseteq\pi_0A$, the functor 
    $$R\mapsto\{f\in\Hom(A,R)\mid\text{$\pi_0(f)(I)$ is nilpotent}\}$$
    is a stack, denoted $\spf(A,I)$, or $\spf(A)$ if not confusing. We often identify $A$ and $\spf(A)$ whenever $A$ is complete, so $\D(A)$ will mean the category of derived $I$-complete complexes, and in particular this applies to $\D(\Z_p)$.  
    \item Formal Deligne--Mumford stacks. They are just stacks that are affine formal schemes \'etale locally, in the same way as above. 
    \item Formal algebraic spaces. They are just formal Deligne--Mumford stacks that take classical rings to sets. 
    \item Quotient stacks. Let $X$ be a stack and $G$ be a group scheme smooth over $\Z$ acting on $X$. Then 
    $$X/G=(R\mapsto\{(E,f)\mid\text{$E\to\spec(R)$ is a $G$-torsor, $f\colon E\to X$ is $G$-equivariant}\})$$
    is a stack that receives an effective epimorphism from $X$, since smooth maps have sections \'etale locally. Alternatively, $X/G=\colim_{n\in\Delta^\op}(X\times G^n)$, where the simplicial object is defined by the group action. 
\end{itemize}
\end{example}

\begin{remark}[Taking values in stacks]\label{valsta}
Let $\CC$ be a presentable category and let $S$ be a stack. Let $\FF$ be a big \'etale sheaf over $S$ and let $X$ be a stack over $S$. Viewing $X$ as a big \'etale sheaf over $S$ with values in $\Ani$, since $\CC$ is naturally powered over $\Ani$, the anima $\Hom_S(X,\FF)$ is naturally enhanced to an object $\FF(X)\in\CC$, which we call the \emph{value of $\FF$ on $X$}. Equivalently, 
$$\FF(X)=\lim_{\spec(R)\to X}\FF(R).$$
Note that $\FF(X)$ takes colimits in $X$ to limits, since it is $\Hom$ out of it. 
\end{remark}

The following notion is central in this section. 

\begin{definition}[Weighted homotopy invariance]\label{wtinv}
We say that a big \'etale sheaf $\FF$ over a stack $S$ satisfies \emph{strong weighted homotopy invariance} if, for every $\N$-graded ring $R=\bigoplus_{n\in\N}R_n$ over $S$, viewed as acted by $\G_m$ naturally by grading, 
$$\FF(\spec(R)/\G_m)=\FF(\spec(R_0)/\G_m).$$
In this case, we have in particular $\FF(V/\G_m)=\FF(X/\G_m)$ for any stack $X$ and vector bundle $V$ over it, where $\G_m$ acts on $X$ trivially and acts on $V$ by scaling. This follows from take the limit of the definition over all $\spec(R_0)\to X$. Therefore, we say that $\FF$ satifies \emph{smooth weighted homotopy invariance} if, for every vector bundle $V\to X$ with $X$ smooth over $S$, we have $\FF(V/\G_m)=\FF(X/\G_m)$. 
\end{definition}

\begin{remark}[Quasicoherent sheaves on stacks]\label{qcoh}
Let $\Pr_\st^\L$ denote the category of presentable stable categories with morphisms the left adjoints. Then $R\mapsto\D(R)$ satisfies \'etale descent, and is nearly a big \'etale sheaf except that $\Pr_\st^\L$ is itself not presentable. Yet $\Pr_\st^\L$ admits arbitrary small limits, so Remark \ref{valsta} gives rise to a presentable stable category $\D(X)$ for any stack $X$, called the \emph{derived category of $X$}, and its object is called a \emph{quasicoherent sheaf on $X$}. By construction, for a stack map $f\colon X\to S$, we have the adjoint pair $(f^*,f_*)\colon\D(S)\rightleftarrows\D(X)$. We sometimes write $\RG_{X/S}$ instead of $f_*$.  
\end{remark}

\begin{remark}[Small \'etale $\Z_p$-sheaves on stacks]
Similarly, $R\mapsto\D_\et(\spec(R),\Z_p)$ satisfies \'etale descent, so Remark \ref{valsta} gives rise to a presentable stable category $\D_\et(X,\Z_p)$ for any stack $X$, whose object is called an \emph{\'etale $\Z_p$-sheaf on $X$}. Here we also have an adjoint pair $(f^*,f_*)\colon\D_\et(S,\Z_p)\rightleftarrows\D_\et(X,\Z_p)$ for every stack map $X\to S$. Alternatively, $\D_\et(-,\Z_p)$ is the big \'etale sheafification of the constant functor $\Ring\to\Pr_\st^\L$ with value $\D(\Z_p)$. 
\end{remark}

\begin{example}[\'Etale cohomology]\label{etcoh}
Let $S$ be a stack and let $\FF$ be an \'etale $\Z_p$-sheaf on $S$. Then the \emph{\'etale cohomology} $X\mapsto\RG_\et(X,f^*\FF)$ is a big \'etale sheaf over $S$ with values in $\D(\Z_p)$, where $f\colon X\to S$ is the structure map. By abuse of notation, we denote this big \'etale sheaf still by $\FF$. Some important special cases of this construction are:
\begin{enumerate}
    \item $S=\spec(\Z[1/p])$ and $\FF=\Z_p(n)$ is the Tate twist, where $n\in\Z$.
    \item\label{j!Zp} $S=\spec(\Z)$ and $\FF=j_!\Z_p(n)$ where $j\colon\spec(\Z[1/p])\to\spec(\Z)$. 
\end{enumerate}
\end{example}

\begin{example}[\'Etale cohomology of the adic generic fiber]\label{etadic}
Let $S$ be a stack over $\spec(\Z[t])$ and let $\FF$ be an \'etale $\Z_p$-sheaf on $S$. Then the \emph{\'etale cohomology of the adic generic fiber} $R\mapsto\RG_\et(\hat{R}_t[1/t],f^*\FF)$ is a big \'etale sheaf over $S$ with values in $\D(\Z_p)$, where $\hat{R}_t$ is the $t$-completion of $R$ and $f\colon\spec(\hat{R}_t[1/t])\to S$ is the obvious map. In fact, by \cite[Corollary 6.17]{arc}, it is even an $\arc_t$ sheaf. The most important special case is $S=\spec(\Z)$ with structure map $t\mapsto p$. 
\end{example}

\begin{example}[Hodge cohomology]\label{hodgecoh}
Let $S$ be a stack. The \emph{Hodge cohomology} $X\mapsto\RG_{X/S}(\LO_{X/S}^\star)[-\star]$ is a big \'etale sheaf over $S$ with values in $\Gr(S)$. We denote it by $\LO_{-/S}^\star[-\star]$. When $S=\spec(\Z)$ we often omit the ``$/S$''. 
\end{example}

\begin{example}[De Rham cohomology]
Let $S$ be a stack. The \emph{de Rham cohomology} $X\mapsto\fil^\bullet\dr_{X/S}$ is a big \'etale sheaf over $S$ with values in $\DF(S)$. Note that its associated graded object is the Hodge cohomology. 
\end{example}

\begin{example}[Relative prismatic cohomology]
Let $(A,I)$ be a prism and let $\bar{A}=A/I$. The \emph{relative prismatic cohomology} $R\mapsto\Prism_{R/A}$ is a big \'etale sheaf over $\bar{A}$ with values in $\FGauge(A,I)$. Its Hodge--Tate specialization $\fil_\bullet\overline{\Prism}_{-/A}$ (Definition \ref{hts}) is called the \emph{relative Hodge--Tate cohomology}, and its de Rham specialization (Definition \ref{drs}) is the de Rham cohomology over $\spf(\bar{A})$. 
\end{example}

\begin{example}[Absolute prismatic cohomology]
Let $\wcart$ and $\wcart^\HT$ be the Cartier--Witt stack and its Hodge--Tate divisor, as in \cite[\S 3.3,\S 3.4]{apc}. The \emph{prismatic cohomology sheaf} $\HH_\Prism$ defined in \cite[Construction 4.4.1]{apc} is a big \'etale sheaf with values in $\D(\wcart)$, and the (twisted and Nygaard filtered) \emph{absolute prismatic cohomology} is the sheaf $\Prism_-\{n\}=\RG(\wcart,\HH_\Prism\{n\})\in\DF(\Z_p)$ defined in \cite[Construction 4.4.10]{apc} with the Nygaard filtration $\fil^\bullet\Prism_-\{n\}$ as in \cite[Construction 5.5.3]{apc}, for $n\in\Z$. The (conjugate filtered) \emph{Hodge--Tate cohomology sheaf} $\fil_\bullet\HH_{\overline{\Prism}}$ defined in \cite[Construction 4.5.1]{apc} is a big \'etale sheaf with values in $\DF(\wcart^\HT)$, and similarly the (twisted and conjugate filtered) \emph{absolute Hodge--Tate cohomology} is the sheaf $\fil_\bullet\overline{\Prism}_-\{n\}=\RG(\wcart^\HT,\fil_\bullet\HH_{\overline{\Prism}}\{n\})\in\DF(\Z_p)$, for $n\in\Z$. 
\end{example}

\begin{example}[Diffracted Hodge cohomology]\label{difhodge}
The \emph{diffracted Hodge cohomology} $\fil_\bullet\OD_-$, defined in \cite[\S 4.9]{apc}, is a big \'etale sheaf with values in $\DF(\B\G_m^\sharp)$. Rationally it is the Hodge cohomology with the filtration na\"ively defined by the grading, under the identifications $\G^\sharp_{m,\Q}=\G_{m,\Q}$ and $\D(\B\G_{m,\Q})=\Gr(\Q)$, while after $p$-completion it is the Hodge--Tate cohomology sheaf, under the identification $\B\G^\sharp_{m,\Z_p}=\wcart^\HT$ (see \cite[Theorem 3.4.13]{apc}). Its associated graded object is the Hodge cohomology, where the $\G_m^\sharp$-action on the \th{$n$} grade comes from the $\G_m$-action of weight $-n$. 
\end{example}

\begin{example}[Syntomic cohomology]\label{syncoh}
The \emph{syntomic cohomology} $\Z_p(\star)$, defined in \cite[\S 8.4]{apc}, is a big \'etale sheaf with values in $\Gr(\Z_p)$. By \cite[Construction 7.4.1]{apc}, for $n\in\Z$ and $X$ over $\spf(\Z_p)$, $\Z_p(n)(X)$ is the equalizer of the maps $\varphi\{n\}$ and $\iota$ from $\fil^n\Prism_X\{n\}$ to $\Prism_X\{n\}$, while by \cite[Remark 8.4.4]{apc}, for a stack $X$ with $p$-completion $\hat{X}$, there is a fiber sequence
$$j_!\Z_p(n)(X)\to\Z_p(n)(X)\to\Z_p(n)(\hat{X})$$
where the first term is as in Example \ref{etcoh}. 
\end{example}

\begin{remark}
As \cite{apc} has constructed, all the examples above except \ref{etcoh}(\ref{j!Zp}) have symmetric algebra structures in their respective categories, i.e.\ they actually have values in $\CAlg(\Z_p)$, $\CAlg\DF(\Z_p)$, and so on. We have omitted this clumsy ``$\CAlg$'' notation in above.
\end{remark}

\begin{remark}
Almost every cohomology theory that we encounter is left Kan extended from its restriction to smooth $\Z$-algebras, at least when formulated properly. See \cite[Proposition 4.5.8, Proposition 4.5.10, Proposition 4.9.6, Remark 5.5.10, Proposition 8.4.10]{apc}. Therefore, our generality of allowing arbitrary animated rings to be ``test objects'' of stacks may be redundant; in most cases smooth $\Z$-algebras suffice. However, in order not to introduce more complication, we refrain from restricting test objects. 
\end{remark}

\subsection{Comparing cohomology}
This subsection aims to abstract the arguments of \cite[Proposition 9.2.9]{apc} and \cite[\S 9.4]{apc}. We first generalize the notion of quasisyntomic rings in \cite[\S 4.2]{bms2} to animated rings. Before that let us recall the notion of $\tor$ amplitude. Note that unlike \cite[\S 4.1]{bms2}, we use the homological indexing here. 

\begin{definition}[$\tor$ amplitude]
Let $A$ be a ring and let $M\in\D(A)$. For $a,b\in\Z$, we say that the \emph{($p$-complete) $\tor$ amplitude} of $M$ is in $[a,b]$ if, for every classical $A$-module $N$ (with $pN=0$), $M\otimes_AN$ has nonzero homotopy groups only in degree $[a,b]$. Similarly we have the notion of ($p$-complete) $\tor$ amplitude in $\ge a$ or $\le b$. If an $A$-module $M$ has ($p$-complete) $\tor$ amplitude concentrated at $0$, we say that it is \emph{($p$-completely) flat}. 
\end{definition}

Obviously, ($p$-complete) $\tor$ amplitude is preserved by arbitrary base change. 

\begin{proposition}\label{torampbound}
Let $n\in\N$ and let $A$ be a ring whose homotopy groups of degrees $>n$ are zero. Let $a,b\in\Z$ and let $M\in\D(A)$ have $\tor$ amplitude in $[a,b]$ (resp.\ $\ge a$, $\le b$). Then $M$ has nonzero homotopy groups only in degree $[a,b+n]$ (resp.\ $\ge a$, $\le b+n$). 
\end{proposition}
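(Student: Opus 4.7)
The plan is to induct on $n$. The base case $n=0$ covers classical rings $A$: then $A$ itself is a classical $A$-module, so plugging $N=A$ into the Tor amplitude hypothesis immediately yields that $M=M\otimes_AA$ has homotopy in $[a,b]$ (resp.\ $\ge a$, $\le b$), as desired.

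For the inductive step, I would exploit the Postnikov fiber sequence of $A$-modules
$$\pi_n(A)[n]\to A\to\tau_{\le n-1}A,$$
available because $A$ is $n$-truncated by hypothesis, with $\tau_{\le n-1}A$ an animated ring whose homotopy is concentrated in $[0,n-1]$. Tensoring with $M$ over $A$ produces the fiber sequence
$$M\otimes_A\pi_n(A)[n]\to M\to M\otimes_A\tau_{\le n-1}A.$$
Since $\pi_n(A)$ is a classical $A$-module (via the $\pi_0(A)$-action), the Tor amplitude hypothesis gives that $M\otimes_A\pi_n(A)$ has homotopy in $[a,b]$ (resp.\ $\ge a$, $\le b$), so the leftmost term has homotopy in $[a+n,b+n]$ (resp.\ $\ge a+n$, $\le b+n$).

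For the rightmost term, $M\otimes_A\tau_{\le n-1}A$ is naturally a $\tau_{\le n-1}A$-module; moreover, since $\pi_0(\tau_{\le n-1}A)=\pi_0(A)$, every classical $\tau_{\le n-1}A$-module $N$ is also a classical $A$-module with $(M\otimes_A\tau_{\le n-1}A)\otimes_{\tau_{\le n-1}A}N=M\otimes_AN$, so $M\otimes_A\tau_{\le n-1}A$ inherits the same Tor amplitude over $\tau_{\le n-1}A$. The inductive hypothesis then bounds its homotopy in $[a,b+n-1]$ (resp.\ $\ge a$, $\le b+n-1$), and the long exact sequence on homotopy coming from the fiber sequence squeezes $\pi_i(M)$ between zeros outside the range $[a,b+n]$ (resp.\ $\ge a$, $\le b+n$), finishing the induction. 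The only point needing a little care—though it is standard for the t-structure on animated rings—is that $\tau_{\le n-1}A$ is genuinely an animated ring and the Postnikov triangle is a fiber sequence of $A$-modules; beyond this bookkeeping I do not expect a serious obstacle.
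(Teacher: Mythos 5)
Your argument is correct and rests on the same essential idea as the paper's: decompose $A$ along its Postnikov tower into the shifted classical $A$-modules $(\pi_iA)[i]$ for $0\le i\le n$, then apply the Tor-amplitude hypothesis to each piece. The paper compresses your induction into a single line by viewing $A=M\otimes_AA$ directly as an iterated extension of the $M\otimes_A(\pi_iA)[i]$ in $\D(A)$, which sidesteps the bookkeeping you needed about $\tau_{\le n-1}A$ being an animated ring and its classical modules agreeing with those of $A$.
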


\begin{proof}
This is simply because $A$ itself is an iterated extension of $(\pi_iA)[i]$ for $i=0,1,\ldots,n$, and each $\pi_iA$ is a classical $A$-module. 
\end{proof}

The following lemmas discuss commutation of cosimplicial totalizations and filtered colimits, under some coconnectivity conditions. Let $I$ be a filtered index category and $\Delta$ be the opposite category of ordered nonempty finite sets, which is also called the category of standard simplices. 

\begin{lemma}\label{colimlimcoco}
Let $M$ be a diagram of spectra indexed by $\Delta\times I$. Suppose $\tau_{>0}M$ is ind-zero as a diagram $I\to\Fun(\Delta,\Sp)$, i.e.\ for every $i\in I$, there is a map $i\to j$ in $I$ such that the map $\tau_{>0}M_i\to\tau_{>0}M_j$ is zero in $\Fun(\Delta,\Sp)$. Then
$$\colim_I\lim_{\Delta}M=\lim_{\Delta}\colim_IM$$
and is coconnective. 
\end{lemma}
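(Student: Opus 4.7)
The plan is to split $M$ along the functorial fiber sequence $\tau_{>0}M\to M\to\tau_{\le 0}M$, first reducing to the levelwise coconnective case and then commuting totalization with filtered colimit via partial totalizations.

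Since filtered colimits in $\Fun(\Delta^\op,\Sp)$ are computed levelwise in $n$, the hypothesis gives $\colim_I\tau_{>0}M=0$ as a cosimplicial spectrum. Hence $\colim_I M\simeq\colim_I\tau_{\le 0}M$ is levelwise coconnective, and so $\lim_{\Delta^\op}\colim_I M$ is coconnective---this already proves the last sentence of the lemma. Crucially, the functoriality in $n$ of the nullhomotopies implies they survive $\lim_{\Delta^\op}$, so the map $\lim_{\Delta^\op}\tau_{>0}M_{\bullet,i}\to\lim_{\Delta^\op}\tau_{>0}M_{\bullet,j}$ is also null, giving $\colim_I\lim_{\Delta^\op}\tau_{>0}M=0$ and hence $\colim_I\lim_{\Delta^\op}M\simeq\colim_I\lim_{\Delta^\op}\tau_{\le 0}M$. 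Both sides of the desired equivalence are therefore unchanged by replacing $M$ with $\tau_{\le 0}M$, and we may assume $M$ is levelwise coconnective.

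For a levelwise coconnective cosimplicial spectrum $X$, let $\mathrm{Tot}_kX=\lim_{\Delta^\op_{\le k}}X$ denote the $k$-th partial totalization. A standard Bousfield--Kan tower estimate shows that the fiber of $\mathrm{Tot}_{m+1}X\to\mathrm{Tot}_mX$ is an $(m+1)$-fold loop of a finite limit of terms $X_\ell\in\Sp^{\le 0}$, hence lies in $\Sp^{\le -m-1}$; consequently the fiber of $\lim_{\Delta^\op}X\to\mathrm{Tot}_kX$ lies in $\Sp^{\le -k-1}$, so this map induces an isomorphism on $\pi_{-j}$ whenever $k$ is sufficiently large compared to $j$. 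Moreover, $\mathrm{Tot}_k$ itself is a finite iterated limit of the $X_m$ for $m\le k$, and therefore commutes with filtered colimits.

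Combining these, for each $j$ we pick $k$ sufficiently large and compute
$$\pi_{-j}\colim_I\lim_{\Delta^\op}M\cong\colim_I\pi_{-j}\mathrm{Tot}_kM\cong\pi_{-j}\mathrm{Tot}_k\colim_IM\cong\pi_{-j}\lim_{\Delta^\op}\colim_IM,$$
using the approximation estimate for the outer isomorphisms and the commutation of $\mathrm{Tot}_k$ with filtered colimits (together with that of $\pi_{-j}$ with filtered colimits) for the middle. Both sides are coconnective, so agreement on all non-positive homotopy groups implies the canonical map is an equivalence. The main technical input is the Bousfield--Kan estimate for partial totalizations of coconnective cosimplicial spectra; the conceptual crux is the reduction to the coconnective case, which is exactly where functoriality of the nullhomotopies in $n$ is used.
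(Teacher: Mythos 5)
Your proof is correct and follows essentially the same route as the paper's: reduce to the levelwise coconnective case using the functorial nullhomotopies (this is exactly where the hypothesis is used), then compare the full totalization with finite partial totalizations, which commute with filtered colimits, using the fact that for coconnective cosimplicial spectra the error in $\mathrm{Tot}_k$ is concentrated in degrees $\le -k$. The only cosmetic difference is that you verify the equivalence homotopy group by homotopy group, whereas the paper shows directly that the fiber of the canonical comparison map lands in $\Sp_{\le -n}$ for every $n$ and is therefore zero; these are interchangeable.
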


\begin{proof}
There is an obvious map from the left hand side to the right hand side. The fiber sequence $\tau_{>0}M\to M\to\tau_{\le0}M$ of diagrams induces fiber sequences for both sides. The assumption implies that
$$\colim_I\lim_{\Delta}\tau_{>0}M=\lim_{\Delta}\colim_I\tau_{>0}M=0.$$
Therefore, one can assume that $M=\tau_{\le0}M$ is coconnective. For $n\in\N$, let $M_{\le n}$ be the restriction of $M$ to $\Delta_{\le n}\times I$, where $\Delta_{\le n}$ is the full subcategory of $\Delta$ generated by the simplices of dimension $\le n$. Then for every $n\in\N$: 
\begin{itemize}
    \item For every $i\in I$ we have 
    $$\fib(\lim_{\Delta}M_i\to\lim_{\Delta_{\le n}}M_{\le n,i})\in\Sp_{\le-n};$$
    since filtered colimits respect coconnectivity, we can take $\colim_I$ and get
    $$\fib(\colim_I\lim_{\Delta}M\to\colim_I\lim_{\Delta_{\le n}}M_{\le n})\in\Sp_{<-n}.$$
    \item Also by the fact that filtered colimits respect coconnectivity,  
    $$\fib(\lim_{\Delta}\colim_IM\to\lim_{\Delta_{\le n}}\colim_IM_{\le n})\in\Sp_{<-n}.$$
    \item Since $\Delta_{\le n}$ is finite and in a stable $\infty$-category finite limits are just shifts of finite colimits,
    $$\colim_I\lim_{\Delta_{\le n}}M_{\le n}=\lim_{\Delta_{\le n}}\colim_IM_{\le n}.$$
\end{itemize}
Combining, we get
$$\fib(\colim_I\lim_{\Delta}M\to\lim_{\Delta}\colim_IM)\in\Sp_{\le-n},$$
but the fiber is independent of $n$, so it is zero, and the lemma follows. 
\end{proof}

\begin{lemma}
Let $A$ be a homotopically bounded ring, i.e.\ $\pi_nA=0$ for $n\gg0$. Let $M$ be a diagram of objects in $\D(A)$ with $\tor$ amplitude $\le 0$, indexed by $\Delta\times I$. Then
$$\colim_I\lim_{\Delta}M=\lim_{\Delta}\colim_IM.$$
\end{lemma}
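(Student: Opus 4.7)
The plan is to reduce everything to the partial totalizations $\lim_{\Delta^\op_{\le m}}$, where finite limits commute with filtered colimits in the stable $\infty$-category $\D(A)$, and then to control the error between the partial and full totalizations using the $\tor$ amplitude hypothesis together with the homotopical boundedness of $A$.

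First, fix $n\in\N$ with $\pi_kA=0$ for $k>n$, available by homotopical boundedness. The $\tor$ amplitude hypothesis combined with Proposition \ref{torampbound} forces each $M_{k,i}$ to lie in $\D(A)_{\le n}$. Because $\pi_k(-)$ commutes with filtered colimits (and $p$-completion of a bounded-above object stays in the same range of degrees), the colimits $\colim_I M_{k,\bullet}$ also lie in $\D(A)_{\le n}$. So both cosimplicial objects $M_i$ (for each $i\in I$) and $\colim_I M$ have all levels uniformly in $\D(A)_{\le n}$.

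Next, I would invoke the standard Bousfield--Kan estimate: for any cosimplicial object $X^\bullet$ in $\D(A)$ with each $X^k\in\D(A)_{\le n}$, the fiber of the natural map $\lim_{\Delta^\op}X\to\lim_{\Delta^\op_{\le m}}X$ lies in $\D(A)_{\le n-m-1}$ (this is visible from the Bousfield--Kan spectral sequence, or equivalently from the fact that the successive fibers of the partial-totalization tower are shifts $N^k(X)[-k]$ of normalized sub-objects of the $X^k$). Since $\Delta^\op_{\le m}$ is a finite category, $\lim_{\Delta^\op_{\le m}}$ commutes with the filtered colimit over $I$:
\[
\colim_I\lim_{\Delta^\op_{\le m}}M_i=\lim_{\Delta^\op_{\le m}}\colim_IM.
\]
The canonical comparison map $f\colon\colim_I\lim_{\Delta^\op}M_i\to\lim_{\Delta^\op}\colim_IM$ then fits into a commutative triangle with third vertex $\lim_{\Delta^\op_{\le m}}\colim_IM$. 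Both the source-to-vertex map and the target-to-vertex map have fiber in $\D(A)_{\le n-m-1}$ (for the source-to-vertex one uses that filtered colimits preserve upper bounds on homotopy, so that $\colim_I$ of the Bousfield--Kan fibers remains in $\D(A)_{\le n-m-1}$). By the two-out-of-three property for fibers, $\fib(f)\in\D(A)_{\le n-m-1}$ for every $m$, which forces $\fib(f)=0$ and hence that $f$ is an equivalence.

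The main obstacle is verifying the Bousfield--Kan bound on the Tot-tower fiber; everything else is formal from stability of $\D(A)$, the preservation of homotopy groups under filtered colimits, and the finiteness of $\Delta^\op_{\le m}$.
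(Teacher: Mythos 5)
Your proof is correct and is essentially the paper's own argument unrolled: the paper proves this lemma by citing Proposition~\ref{torampbound} together with a shift of Lemma~\ref{colimlimcoco}, and your Bousfield--Kan/partial-totalization squeeze is precisely the coconnective-case content of that lemma's proof (after the shift by $n$, the nullhomotopy hypothesis there becomes vacuous, which is the case you are in). The only blemish is your parenthetical claim that $p$-completion preserves the upper homotopical bound --- this can actually increase the bound by one (Tate modules on $\pi_n$ can contribute to $\pi_{n+1}$ of the completion) --- but since the squeeze only needs \emph{some} uniform upper bound independent of $m$, this does not affect the argument.
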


\begin{proof}
Combine Proposition \ref{torampbound} and (some shift of) Lemma \ref{colimlimcoco}. 
\end{proof}

The terms below are taken from \cite[\S 4.1]{evfil}. Their apparent incompatibility originates from \cite[Definition 4.10]{bms2}. 

\begin{definition}
Let $A\to R$ be a ring map. We say that it is \emph{($p$-)quasi-lci} if $\L_{R/A}\in\D(R)$ has ($p$-complete) $\tor$ amplitude $\le1$. If it is moreover ($p$-completely) flat, we say that it is \emph{($p$-)quasisyntomic}. A \emph{($p$-)quasisyntomic} ring means a ring that is ($p$-)quasi-lci over $\Z$. 
\end{definition}

If $A\to R$ is a ($p$-)quasi-lci ring map, since $\L_{R/A}$ is connective, $\L_{R/A}$ actually has ($p$-complete) $\tor$ amplitude in $[0,1]$. Therefore, its wedge product $\LO^n_{R/A}$ has ($p$-complete) $\tor$ amplitude in $[0,n]$, for all $n\in\N$. 

Now we begin to compare the cohomology of different stacks. In what follows, let $(X_i)_{i\in I}$ be a finite diagram of stacks, and let $X$ be a stack receiving compatible maps from them. Rigorously speaking, the data should be a diagram $(X_i)_{i\in I\star\Delta^0}$, where $X=X_{\Delta^0}$. Then for any big \'etale sheaf $\FF$ with values in any presentable category $\CC$, we have a natural map
\begin{equation}\label{maptolim}
    \FF(X)\to\lim_{i\in I}\FF(X_i).
\end{equation}
The following lemmas discuss conditions under which the map is an isomorphism. 

\begin{lemma}\label{limiso}
If the map (\ref{maptolim}) is an isomorphism for $\FF=\LO^\star$, then so is it for $\FF(R)=\RG_\syn(\spf(\hat{R}),\Z_p(\star))$. If it is moreover an isomorphism either for $\FF=j_!\Z_p(\star)$ or for $\FF$ being both $R\mapsto\RG_\et(R[1/p],\Z_p(\star))$ and $R\mapsto\RG_\et(\hat{R}[1/p],\Z_p(\star))$, then so is it for $\FF=\Z_p(\star)$. 
\end{lemma}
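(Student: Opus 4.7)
The plan is to combine the two basic fiber sequences for syntomic cohomology from Example \ref{syncoh} with a filtered analysis of absolute prismatic cohomology. Since the indexing category $I$ in (\ref{maptolim}) is finite, the property of being an isomorphism on (\ref{maptolim}) is preserved under fibers, finite limits, shifts, and invertible twists; call a big \'etale sheaf \emph{good} if (\ref{maptolim}) is an isomorphism for it. The task is then to deduce goodness of $\Z_p(\star)(\hat{-})$ and $\Z_p(\star)$ from goodness of $\LO^\star$ together with (for the second claim) one of the supplementary hypotheses.

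For the first claim, Example \ref{syncoh} presents $\Z_p(n)(\hat X)$ as the equalizer of $\varphi\{n\}$ and $\iota$, so it suffices to show that both $\fil^n\Prism_-\{n\}$ and $\Prism_-\{n\}$, viewed as big \'etale sheaves after $p$-completion, are good. Both carry complete Nygaard filtrations, so goodness reduces to that of the Nygaard graded pieces $\gr_N^k\Prism_-\{n\}$ for all relevant $k$. These graded pieces identify, up to shift and Breuil--Kisin twist, with pieces of the conjugate filtration on absolute Hodge--Tate cohomology; completeness of the conjugate filtration together with Remark \ref{grcp} then identifies the resulting graded pieces, again up to shift and twist, with the Hodge cohomology $\LO^\star$. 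Since $\LO^\star$ is assumed good, backtracking up the filtrations and through the equalizer delivers goodness of $\Z_p(n)(\hat-)$.

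For the second claim, the fiber sequence
$$j_!\Z_p(n)(X)\to\Z_p(n)(X)\to\Z_p(n)(\hat X)$$
combined with goodness of $\Z_p(n)(\hat-)$ already established reduces the problem to showing that $j_!\Z_p(n)$ is good. If the hypothesis provides this directly, we are done; otherwise, both $R\mapsto\RG_\et(R[1/p],\Z_p(n))$ and $R\mapsto\RG_\et(\hat R[1/p],\Z_p(n))$ are good by hypothesis, and a further fiber sequence
$$j_!\Z_p(n)(X)\to\RG_\et(X[1/p],\Z_p(n))\to\RG_\et(\hat X[1/p],\Z_p(n))$$
(coming from the description of $j_!$ on $\spec(\Z)$ and the arc-descent for the adic generic fiber recalled in Example \ref{etadic}) reduces the goodness of $j_!\Z_p(n)$ to that of its two outer terms. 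The main obstacle lies in the filtered analysis in the first claim: one must verify completeness of the Nygaard and conjugate filtrations uniformly across all of $X$ and the $X_i$ and unpack the identification of the resulting graded pieces with $\LO^\star$. For non-smooth stacks this typically proceeds by left Kan extension from smooth algebras, where the Hodge--Tate comparison delivers the needed identification.
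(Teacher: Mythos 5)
The overall skeleton is right (reduce to Nygaard graded pieces, then to Hodge cohomology, then assemble $\Z_p(n)$ from the equalizer and the fiber sequence), but there is one genuine gap and one misattribution.

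The gap is the claim that $\Prism_-\{n\}$ and $\fil^n\Prism_-\{n\}$ ``carry complete Nygaard filtrations.'' For a general stack appearing in the diagram (\ref{maptolim}), the Nygaard filtration on the absolute prismatic cohomology is \emph{not} complete; this is precisely why the paper's later remark points out that absolute prismatic cohomology fails strong weighted homotopy invariance even though its Nygaard graded pieces satisfy it. Your reduction ``goodness of the graded pieces implies goodness of the whole'' is therefore not available for $\fil^n\Prism_-\{n\}$ itself. The correct move is to pass to the Nygaard-completed sheaf $\fil^\bullet\hat\Prism_-\{\star\}$, for which the completeness is built in (finite extensions of graded pieces, then a limit), and then invoke \cite[Proposition 7.4.6]{apc}, which says that the equalizer computing $\Z_p(n)$ is unchanged by replacing $\fil^n\Prism_-\{n\}$ and $\Prism_-\{n\}$ by their Nygaard completions. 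Without that last input the argument does not close.

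Secondarily, the identification of the Nygaard graded pieces with Hodge cohomology should not run through Remark~\ref{grcp}, which concerns Hodge--Tate versus de~Rham specializations of filtered modules over a fixed prism. The relevant inputs here are the fiber sequence $\gr^m\Prism_R\{n\}\to\fil_m\widehat{\OD_R}\to\fil_{m-1}\widehat{\OD_R}$ from \cite[Remark 5.5.8]{apc} together with $\fil_{<0}\OD_-=0$ and $\gr_n\OD_-=\LO^n_-[-n]$ from \cite[Remark 4.9.3]{apc}. These give goodness of $\fil_m\OD_-$ (a finite extension of Hodge cohomologies), hence of the $p$-completion $\fil_m\widehat{\OD_-}$, hence of $\gr^m\Prism_-\{n\}$ as a fiber. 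Your description of this step via conjugate-filtration completeness plus Remark~\ref{grcp} is not the mechanism at work. Finally, the fiber sequence you write down for $j_!\Z_p(n)$ in the second claim is the right thing to use, but its justification is \cite[Construction 8.4.1, Remark 8.4.4]{apc}, not arc-descent.
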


\begin{proof}
We prove the lemma in steps: 
\begin{itemize}
    \item For $\FF=\fil_\bullet\OD_-$, (\ref{maptolim}) is an isomorphism. This is because $\fil_{<0}\OD_-=0$ and $\gr_n\OD_-=\LO_-^n[-n]$, by \cite[Remark 4.9.3]{apc}. 
    \item For $\FF=\gr^\bullet\Prism_-\{\star\}$, (\ref{maptolim}) is an isomorphism. This is due to the fiber sequence
    $$\gr^m\Prism_R\{n\}\to\fil_m\widehat{\OD_R}\to\fil_{m-1}\widehat{\OD_R}$$
    in \cite[Remark 5.5.8]{apc}. Here the hat denotes $p$-completion. 
    \item For $\FF=\fil^\bullet\hat{\Prism}_-\{\star\}$, where the hat denotes Nygaard completion, (\ref{maptolim}) is an isomorphism. This follows from forming finite extensions of $\gr^\bullet\Prism_-\{\star\}$ and then taking limits. 
    \item Finally, the lemma follows from \cite[Construction 7.4.1, Proposition 7.4.6, Construction 8.4.1, Remark 8.4.4]{apc}. \qedhere
\end{itemize}
\end{proof}

\begin{lemma}\label{qsynlimiso}
Assume each of the $X_i$ and $X$ is a simplicial colimit of the form
$$\colim\left(\begin{tikzcd}
\cdots\ar[r,shift left=2]\ar[r]\ar[r,shift right=2]&\spec(R_1)\ar[l,shift left]\ar[l,shift right]\ar[r,shift left]\ar[r,shift right]&\spec(R_0)\ar[l]
\end{tikzcd}\right)$$
with the face maps all ($p$-completely) flat and the $R_m$ all ($p$-)quasisyntomic and ($p$-completely) homotopically bounded. Then if the map (\ref{maptolim}) is an isomorphism for $\FF=\LO^\star$, so is it for $\FF$ being (the $p$-completions of) $\OD_-$, $\HH_\Prism$, and $\fil^\bullet\Prism_-\{\star\}$. 
\end{lemma}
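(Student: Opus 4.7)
The plan is to reduce each of the three cohomology theories to the hypothesis on $\LO^\star$ by means of filtrations whose graded pieces are shifts of wedge powers of the cotangent complex, and to use the quasisyntomic and homotopical-boundedness hypotheses to commute the resulting colimits and limits with the simplicial limit $\lim_\Delta$ and the finite limit $\lim_{i \in I}$.

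First I would treat $\widehat{\OD_-}$ via the conjugate filtration of Remark 4.9.3 of APC, which satisfies $\fil_{<0} = 0$, $\gr_n \widehat{\OD_-} = \widehat{\LO^n_-}[-n]$, and has colimit $\widehat{\OD_-}$. Each $\fil_n \widehat{\OD_-}$ is a finite iterated extension of $\widehat{\LO^k_-}[-k]$ for $0 \le k \le n$, so the hypothesis on $\LO^\star$ (preserved under $p$-completion) gives that (\ref{maptolim}) is an isomorphism for each $\fil_n \widehat{\OD_-}$. To pass to $\widehat{\OD_-}$ itself, one must commute the colimit $\colim_n$ with the limits $\lim_\Delta$ defining $\widehat{\OD}(X)$ and $\widehat{\OD}(X_i)$. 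Since each $R_m$ is ($p$-)quasisyntomic, $\LO^k_{R_m}$ has $p$-complete $\tor$ amplitude in $[0,k]$, so $\LO^k_{R_m}[-k]$ has amplitude $\le 0$, and each $\fil_n \widehat{\OD}_{R_m}$ does as well; combined with homotopical boundedness of $R_m$, the preceding unlabelled lemma on commuting totalizations with filtered colimits yields
\begin{equation*}
    \widehat{\OD}(Y) = \lim_\Delta \widehat{\OD}(R_m) = \colim_n \lim_\Delta \fil_n \widehat{\OD}(R_m)
\end{equation*}
for $Y \in \{X\} \cup \{X_i\}$. Since $I$ is finite, $\lim_{i \in I}$ also commutes with $\colim_n$, reducing (\ref{maptolim}) for $\widehat{\OD_-}$ to the iso already established at each filtration level.

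Next, for $\fil^\bullet \Prism_-\{\star\}$, the fiber sequence
\begin{equation*}
    \gr^m \Prism_-\{n\} \to \fil_m \widehat{\OD_-} \to \fil_{m-1} \widehat{\OD_-}
\end{equation*}
of Remark 5.5.8 of APC gives (\ref{maptolim}) for each $\gr^m \Prism_-\{n\}$ by the previous case, and finite extensions then give the iso for each Nygaard quotient $\fil^m \Prism_-\{n\} / \fil^M \Prism_-\{n\}$ with $m < M$. Since the $R_m$ are quasisyntomic, $\fil^m \Prism_{R_m}\{n\}$ is Nygaard-complete, hence the limit over $M$ of these quotients. In the stable setting, limits commute with each other and with cofibers, so the limit over $M$ also computes $\fil^m \Prism_Y\{n\}$ for $Y \in \{X\} \cup \{X_i\}$; the finite $\lim_{i \in I}$ then commutes through $\lim_M$, and the iso on each quotient finishes this case at each filtration level. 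For $\HH_\Prism$, valued in $\D(\wcart)$, it suffices to test (\ref{maptolim}) after taking global sections twisted by all Breuil--Kisin weights $\{n\}$, which recovers the absolute prismatic cohomology $\Prism_-\{n\} = \fil^0 \Prism_-\{n\}$ already treated.

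The main obstacle throughout is the bookkeeping of limits and colimits: the $\tor$-amplitude control from $p$-quasisyntomicity and the homotopical boundedness of the $R_m$ are precisely what make the conjugate and Nygaard filtrations convergent and compatible with simplicial descent, so that the iso for $\LO^\star$ propagates cleanly through each filtration step.
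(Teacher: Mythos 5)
Your treatment of $\widehat{\OD_-}$ is essentially the paper's: conjugate filtration, $\tor$-amplitude control from $p$-quasisyntomicity plus $p$-complete homotopical boundedness, and Lemma~\ref{colimlimcoco} to commute the exhaustive colimit with the totalization. From there, however, the paper proceeds in the order $\HH_\Prism \Rightarrow \Prism_-\{\star\} \Rightarrow \fil^\bullet\Prism_-\{\star\}$: it checks the map for $\HH_\Prism$ directly at the level of sheaves on $\wcart$ by using that $\wcart$ is complete along $\wcart^\HT$ (so the map is an isomorphism iff its restriction to $\wcart^\HT$ is, where one has $\HH_{\overline\Prism}=\widehat{\OD_-}$), then twists by $\O_\wcart\{\star\}$ and takes global sections to get $\Prism_-\{\star\}$, and finally combines with the graded-piece isomorphism to obtain the Nygaard-filtered version. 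You reverse this: $\fil^\bullet\Prism_-\{\star\}\Rightarrow\HH_\Prism$, first invoking Nygaard completeness of $\Prism_{R_m}$ for $p$-quasisyntomic $R_m$ to commute the limit over Nygaard stages with $\lim_\Delta$, then deducing $\HH_\Prism$ from $\Prism_-\{\star\}$ by asserting conservativity of twisted global sections on $\D(\wcart)$.

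This costs you two nontrivial inputs that the paper's route avoids. First, Nygaard completeness of $\Prism_{R_m}$ for $p$-bounded $p$-quasisyntomic $R_m$ is a real theorem (it is why the paper treats $\fil^\bullet\hat\Prism_-\{\star\}$ and $\fil^\bullet\Prism_-\{\star\}$ as separate cases in Lemma~\ref{limiso}), so at minimum it needs a citation. Second, and more seriously, the claim that it ``suffices to test after taking global sections twisted by all Breuil--Kisin weights'' is precisely the assertion that $\{M\mapsto\RG(\wcart,M\{n\})\}_{n\in\Z}$ is jointly conservative on $\D(\wcart)$. This is not obvious and is nowhere used in the paper. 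By HT-completeness of $\wcart$ the question reduces to $\D(\wcart^\HT)=\D(\B\G_m^\sharp)$, but $\G_m^\sharp$ is a divided-power formal group and not linearly reductive, so there is no weight decomposition of $\D(\B\G_m^\sharp)$ as there is for $\D(\B\G_m)$; whatever version of this conservativity is true requires genuine Sen-theoretic input. The paper's argument sidesteps all of this: it compares the two sides as sheaves on $\wcart$ and uses a purely geometric completeness statement about $\wcart$, with no claim about detection by global sections. You should either supply a reference for the conservativity of twisted global sections on $\D(\wcart)$, or simply adopt the paper's order and argue via restriction to $\wcart^\HT$.
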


\begin{proof}
We first show that, for any stack $X=\colim_{m\in\Delta^\op}\spec(R_m)$ as assumed, we have $\OD_X=\colim_n\fil_n\OD_X$ ($p$-completely). For this, take $N\in\N$ such that $\pi_{>N}R_0=0$ ($p$-completely). Then by flatness, $\pi_{>N}R_n=0$ ($p$-completely) for every $n\in\N$. By Proposition \ref{torampbound} one can see that $\LO^n_{R_m}[-n]\in\Sp_{\le N}$ ($p$-completely), and thus so is $\fil_n\OD_{R_m}$. Now by Remark \ref{valsta} and Lemma \ref{colimlimcoco}, we have ($p$-completely)
\begin{align*}
    \OD_X=\lim_{m\in\Delta}\OD_{R_m}&=\lim_{m\in\Delta}\colim_n\fil_n\OD_{R_m}\\
    &=\colim_n\lim_{m\in\Delta}\fil_n\OD_{R_m}=\colim_n\fil_n\OD_X.
\end{align*}

With this in hand, we now prove the lemma: 
\begin{description}
    \item[$\OD_-$] This follows from the previous paragraph and the proof of Lemma \ref{limiso}. 
    \item[$\HH_\Prism$] This is because $\HH_{\overline{\Prism}}$ is the $p$-completion of $\OD_-$ (cf.\ Example \ref{difhodge}), and $\wcart$ is complete along $\wcart^\HT$. 
    \item[$\fil^\bullet\Prism_-\{\star\}$] For $\Prism_-\{\star\}$ this follows by twisting $\HH_\Prism$ by $\O_\wcart\{\star\}$ and taking global sections. For $\gr^\bullet\Prism_-\{\star\}$ this follows from the proof of Lemma \ref{limiso}. Combine them and get what we want. \qedhere
\end{description}
\end{proof}

\begin{remark}\label{qsyndescent}
The proof of Lemma \ref{qsynlimiso} shows that, for $X=\colim_{m\in\Delta^\op}\spec(R_m)$ as assumed and $N\in\N$ with $\pi_{>N}R_0=0$ ($p$-completely), the spectra $\OD_X$, $\HH_\Prism(X)$, $\fil^i\Prism_X\{n\}$, and $\RG_\syn(X,\Z_p(n))$ are in $\Sp_{\le N}$ ($p$-completely) for all $i,n\in\Z$, since the \'etale cohomology is always coconnective. 
\end{remark}

\begin{remark}
The proof of Lemma \ref{qsynlimiso} also shows that (the $p$-completions of) $\OD_-$, $\HH_\Prism$, and $\fil^\bullet\Prism_-\{\star\}$ satisfy descent along a ($p$-completely) quasisyntomic map between ($p$-)quasisyntomic and ($p$-completely) homotopically bounded rings. 
\end{remark}

\subsection{Cohomologies of graded rings}
In this subsection, we finally establish the strong weighted homotopy invariance of the syntomic cohomology. 

First we discuss the Hodge cohomology. The first part of the following definition is taken from \cite[Lecture 13]{dr}; beware that it is expected to be somewhat reasonable only when $Z$ is an Artin stack. 

\begin{definition}[Cotangent complex]
    For a stack $Z$ over a ring $A$, let
    $$\L_{Z/A}=\lim_{g\colon\spec(R)\to Z}g_*\L_{R/A}\in\D(Z),$$
    where $g_*$ denotes the quasicoherent pushforward as in Remark \ref{qcoh}. We often write $\L_Z$ for $\L_{Z/\Z}$. For a map of stacks $X\to Y$, let 
    $$\L_{X/Y}=\lim_{f\colon\spec(R)\to Y}(f_X)_*\L_{(X\times_Y\spec(R))/R}\in\D(X),$$
    where $f_X$ is the base change of $f$ along $X\to Y$. 
\end{definition}

\begin{remark}\label{computeL}
    Note that the assignment $R\mapsto(\D(R),\L_{R/A})$ is a big \'etale sheaf over $A$ of pointed categories, and the above definition for $\L_{Z/A}$ is just Remark \ref{valsta} expanded, so $\L_{-/A}$ takes colimits to limits of pushforwards, and so do $\L_{-/Y}$ and $\L_{X\times_Y-/-}$. In particular, to compute $\L_{Z/A}$, one can write $Z$ as a colimit of affines and take the limit just along these affines, rather than take the whole limit as in the definition. Similarly, one can replace the limit in the definition of $\L_{X/Y}$ by a smaller one, as long as the colimit of the affines is $Y$. 
\end{remark}

Let $A$ be a ring and $R=\bigoplus_{n\in\Z}R_n$ be a graded $A$-algebra, considered as acted by $\G_m$ naturally by the grading. In the following proposition, we view quasicoherent sheaves over $\spec(R)/\G_m$ as graded modules of $R$, for example by \cite[Theorem 4.1]{mou} and \cite[Proposition 2.5.1.2]{sag}.

\begin{proposition}\label{gradeddescent}
    Let $Y=\spec(R)/\G_m$ and $(Y_n)$, $g_n\colon Y_n\to Y$ be the \v{C}ech nerve of the cover $\spec(R)\to\spec(R)/\G_m$. Then the simplicial object $(g_n)_*\L_{Y_n/A}$ has its partial totalization tower $(\lim_{\Delta_{\le n}}(g_n)_*\L_{Y_n/A})_{n\in\N}$ pro-constant. 
\end{proposition}

\begin{proof}
    View stacks affine over $Y$ as graded $R$-algebras. Then $Y_n$ corresponds to $S^{(n)}=R[x_0^{\pm1},\ldots,x_n^{\pm1}]$ with $\deg(x_i)=1$, while absolutely $Y_n$ is the spectrum of its \th{$0$} grade part $S^{(n)}_0$. Since $S^{(n)}=S^{(n)}_0[x_0^{\pm1}]$, it is easy to see that for an $S^{(n)}_0$-module $M$, the pushforward along $g_n$ of $M$ is the graded module $M[x_0^{\pm1}]$. Hence there is a fiber sequence of graded modules
    $$(g_n)_*\L_{Y_n/A}=\L_{S^{(n)}_0/A}\otimes_{S^{(n)}_0}S^{(n)}\to\L_{S^{(n)}/A}\to\L_{S^{(n)}/S^{(n)}_0}=S^{(n)}.$$
    Now consider both $(\L_{S^{(n)}/A})$ and $(S^{(n)})$ as cosimplicial objects; then they are functorially made from the \v{C}ech nerve of $R\to S^{(0)}=R[x_0^{\pm1}]$; but this has a left inverse as a map of ungraded rings, so both $(\L_{S^{(n)}/A})$ and $S^{(n)}$ split as cosimplicial ungraded modules, and in particular their pro-objects of partial totalizations are pro-constant ungradedly; now this forces them to also be pro-constant gradedly, and hence so is the pro-object of partial totalizations of the fiber $((g_n)_*\L_{Y_n/A})$.
\end{proof}

\begin{proposition}\label{wt1f}
Consider the natural graded map $\L_{R/A}\to R$ defined as the left Kan extension of the map $\Omega_{R/A}\to R$, $\d r\mapsto\deg(r)r$. Then there is a natural isomorphism
$$\L_{(\spec(R)/\G_m)/A}\cong\fib(\L_{R/A}\to R)$$
of graded $R$-modules. 
\end{proposition}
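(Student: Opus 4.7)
The plan is to use the cotangent fiber sequence for the composition
\[
\spec(R)/\G_m \longrightarrow \B\G_{m,A} \longrightarrow \spec(A),
\]
where $\B\G_{m,A}=\B\G_m\times_{\spec(\Z)}\spec(A)$, and to identify each term as a graded $R$-module via the equivalence between quasicoherent sheaves on $\spec(R)/\G_m$ and graded $R$-modules.

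First I would observe that $\spec(R)/\G_m\to\B\G_{m,A}$ is representable: pulling it back along the smooth cover $\spec(A)\to\B\G_{m,A}$ (the trivial $\G_m$-torsor) yields $\spec(R)\to\spec(A)$. It follows that $\L_{(\spec(R)/\G_m)/\B\G_{m,A}}$ pulls back to $\L_{R/A}$ along $\spec(R)\to\spec(R)/\G_m$, and combined with the $\G_m$-equivariance coming from the grading of $R$, this identifies it with $\L_{R/A}$ as a graded $R$-module. Next I would compute $\L_{\B\G_{m,A}/A}=\O_{\B\G_{m,A}}[-1]$, using that $\G_m$ is a smooth one-dimensional commutative group with trivial coadjoint representation; its pullback to $\spec(R)/\G_m$ is $R[-1]$ in weight $0$. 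Plugging these into the cotangent fiber sequence and rotating produces a fiber sequence of graded $R$-modules
\[
\L_{(\spec(R)/\G_m)/A} \longrightarrow \L_{R/A} \xrightarrow{\partial} R,
\]
so the proposition reduces to identifying $\partial$ with the Euler map $\d r\mapsto\deg(r)r$.

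For this last step I would use that both maps are natural in $R$ and preserve sifted colimits of graded $A$-algebras (the cotangent complex and fibers of cotangent sequences preserve sifted colimits, and the right-hand map is by construction left Kan extended from polynomial algebras), so it suffices to treat $R=A[x]$ with $x$ placed in a single weight $n\in\Z$. A direct computation in this case, for instance by identifying $\partial$ with the derivation dual to the infinitesimal $\G_m$-action on $\A^1$ of weight $n$ (from $\d(t^nx)=nx\,\d t+t^n\,\d x$ one reads off $\d x\mapsto nx$), gives $\partial(\d x)=nx=\deg(x)\cdot x$.

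The hard part will be this last computation, where both the weight and the integer scalar of $\partial(\d x)$ must be pinned down: the weight is controlled by the $\G_m$-action on $\A^1$, while the scalar $n$ enters through the canonical identification $\Lie(\G_m)^\vee=\Z$ used in $\L_{\B\G_m}[1]=\O_{\B\G_m}$. Neither piece is deep, but carefully tracking signs and weights through the rotation of the cotangent sequence requires some bookkeeping.
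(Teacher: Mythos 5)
Your proof is correct, and it uses a genuinely different decomposition than the paper's. The paper applies the cotangent fiber sequence to the composition $\spec(R)\to\spec(R)/\G_m\to\spec(A)$: the key point there is to identify the relative term $\L_{\spec(R)/(\spec(R)/\G_m)}$, which the paper does by viewing $\spec(R)$ as the affine $\spec(R)/\G_m$-scheme corresponding to the graded algebra $R[t^{\pm1}]$ and extracting the $0$th graded piece of $\L_{R[t^{\pm1}]/R}=R[t^{\pm1}]\,\d t$; the map $\L_{R/A}\to\L_{\spec(R)/(\spec(R)/\G_m)}$ then visibly sends $r\in R_n$ to $nrt^{n-1}\d t$. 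You instead apply the cotangent sequence to $\spec(R)/\G_m\to\B\G_{m,A}\to\spec(A)$, trading the identification of that relative term for the more transparent computation $\L_{\B\G_{m,A}/A}=\O_{\B\G_{m,A}}[-1]$; the price is that you now must identify a \emph{boundary} map $\partial\colon\L_{R/A}\to R$ (rather than a direct pushforward) with the Euler derivation, which you handle by reducing via sifted colimits to $R=A[x]$ in a single weight and reading off the coefficient from the coaction $x\mapsto t^nx$. Both routes ultimately hinge on the same explicit $\d(t^nx)$ computation; your version makes the role of $\L_{\B\G_m}$ conceptually cleaner (the extra summand $R$ plainly ``comes from the $\B\G_m$ direction''), while the paper's is marginally more direct in that it never needs to invoke left Kan extension or a rotation of the triangle. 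One small point worth tightening if you write this up: when you write $\d(t^nx)=nx\,\d t+t^n\,\d x$ and ``read off $\d x\mapsto nx$'', you are implicitly restricting the coaction differential to $t=1$ (the general formula is $nt^{n-1}x\,\d t+t^n\,\d x$); this is fine, but it is exactly the place where the identification $\Lie(\G_m)^\vee\cong\Z$ enters, so say so.
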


\begin{proof}
Descend the fiber sequence in the proof of Proposition \ref{gradeddescent} and we have
$$\L_{(\spec(R)/\G_m)/A}=\fib(\L_{R/A}\to\L_{R/(\spec(R)/\G_m)})$$
as graded $R$-modules. Now view stacks affine over $\spec(R)/\G_m$ as graded rings over $R$. Then by definition, the stack $\spec(R)$ over $\spec(R)/\G_m$ corresponds to the ring $R[t^{\pm1}]$ with $\deg(t)=1$. Note that here $R=\RG(\spec(R),\O_{\spec(R)})$ is identified with the \th{$0$} grade part of $R[t^{\pm1}]$, which is $\bigoplus_{n\in\Z}R_nt^n$, while on the other hand $\L_{R/(\spec(R)/\G_m)}=\RG(\spec(R),\L_{R/(\spec(R)/\G_m)})$ is identified with that of $\L_{R[t^{\pm1}]/R}=R[t^{\pm1}]\d t$, which is $\bigoplus_{n\in\Z}R_nt^{n-1}\d t$. Therefore, the map $\L_{R/A}\to\L_{R/(\spec(R)/\G_m)}$ sends $r\in R_n$ to $\d(rt^n)=nrt^{n-1}\d t$, which gives the identification of $\L_{(\spec(R)/\G_m)/A}$ in the proposition. 
\end{proof}

The following corollary is \cite[Proposition B.9]{apc}, which was left as an exercise by the authors there. 

\begin{corollary}\label{grh}
If $R$ is $\N$-graded, then $\L_{(\spec(R)/\G_m)/A}$ is also $\N$-graded, with \th{$0$} grade part $R_0[-1]\oplus\L_{R_0/A}$, which is the same as $\L_{(\spec(R_0)/\G_m)/A}$. Therefore, for every integer $n$, the natural map
$$\RG(\spec(R_0)/\G_m,\LO_{(\spec(R_0)/\G_m)/A}^n)\to\RG(\spec(R)/\G_m,\LO_{(\spec(R)/\G_m)/A}^n)$$
is an isomorphism. Namely, the Hodge cohomology satisfies strong weighted homotopy invariance. 
\end{corollary}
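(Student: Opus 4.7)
The plan is to use Proposition \ref{wt1f} to write $\L_{(\spec(R)/\G_m)/A}$ as the fiber of an explicit map of $\N$-graded $R$-modules, compute its degree-$0$ piece, and then recognize that Hodge cohomology on $\spec(R)/\G_m$ is obtained by extracting degree-$0$ parts.

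First I would verify that the fiber $\fib(\L_{R/A} \to R)$ of Proposition \ref{wt1f} is $\N$-graded. Choose a simplicial resolution $P_\bullet \to R$ by $\N$-graded polynomial $A$-algebras whose generators sit in nonnegative degrees; then $\L_{R/A}$ is computed from $\Omega_{P_\bullet/A} \otimes_{P_\bullet} R$ and is visibly $\N$-graded, and the map $\d r \mapsto \deg(r) r$ preserves the grading. Next I would take the degree-$0$ part: since $(P_\bullet)_0$ is itself a polynomial resolution of $R_0$ over $A$, we get $(\L_{R/A})_0 = \L_{R_0/A}$, and since every element of $R_0$ has degree $0$, the restricted map $\L_{R_0/A} \to R_0$ is zero. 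Hence
$$(\L_{(\spec(R)/\G_m)/A})_0 \cong \L_{R_0/A} \oplus R_0[-1],$$
which by a second application of Proposition \ref{wt1f} to the trivially graded ring $R_0$ is naturally identified with $\L_{(\spec(R_0)/\G_m)/A}$.

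To finish, I would use that global sections on $\spec(S)/\G_m$ of a quasicoherent sheaf associated to a graded $S$-module $N$ extract $N_0$, so the task becomes computing the degree-$0$ part of $\LO^n_R \L_{(\spec(R)/\G_m)/A}$. For an $\N$-graded module $M$ over an $\N$-graded ring $S$ one has $(\LO^n_S M)_0 = \LO^n_{S_0} M_0$, which reduces via graded free resolutions to the analogous identity for tensor products (degree-$0$ components of a derived tensor product of $\N$-graded things only receive contributions from degree-$0$ factors). Combining with the previous step, both $\RG(\spec(R)/\G_m, \LO^n_{(\spec(R)/\G_m)/A})$ and $\RG(\spec(R_0)/\G_m, \LO^n_{(\spec(R_0)/\G_m)/A})$ compute the same $R_0$-complex $\LO^n_{R_0}(\L_{R_0/A} \oplus R_0[-1])$, and the natural map induced by $R_0 \to R$ acts as the identity by functoriality.

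The main thing requiring care is making sure that the identities $(\L_{R/A})_0 = \L_{R_0/A}$ and $(\LO^n_R M)_0 = \LO^n_{R_0} M_0$ hold derivedly, not only classically; both are handled uniformly by committing to graded polynomial resolutions throughout. I do not anticipate any other substantive difficulty.
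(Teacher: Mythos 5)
Your proposal is correct and follows essentially the same route as the paper: identify the degree-$0$ piece of $\fib(\L_{R/A}\to R)$ as $R_0[-1]\oplus\L_{R_0/A}$ via grading, and use that degree-$0$ parts of wedge powers of $\N$-graded modules depend only on the degree-$0$ piece. The only difference is that you spell out the graded polynomial resolution argument that the paper leaves as ``Clearly.''
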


\begin{proof}
Clearly, if $R$ is $\N$-graded, then so is $\L_{R/A}$, with \th{$0$} graded part $\L_{R_0/A}$, and thus so is $\L_{(\spec(R)/\G_m)/A}=\fib(\L_{R/A}\to R)$, with \th{$0$} graded part $\fib(0\colon\L_{R_0/A}\to R_0)=R_0[-1]\oplus\L_{R_0/A}$. Note that this only depends on $R_0$, so if we replace $R$ by another $\N$-graded $A$-algebra with the same $R_0$, the cotangent complex will be the same. This applies in particular to $R_0$, as a graded ring purely in the \th{$0$} grade. Now by Proposition \ref{gradeddescent}, one can commute the wedge power with the \v{C}ech descent and conclude that 
$$\LO_{(\spec(R)/\G_m)/A}^n=\LL^n\L_{(\spec(R)/\G_m)/A},$$
where the wedge power is over $\spec(R)/\G_m$. So the desired isomorphism follows from the fact that the wedge power of an $\N$-graded $R$-module $M$ is still $\N$-graded, whose \th{$0$} graded part is just the wedge power of the $R_0$-module $M_0$. 
\end{proof}

\begin{example}\label{bgh}
Take $A=R=\Z$. We get
$$\L_{\B\G_m/\Z}=\fib(0\to\Z)=\Z[-1].$$
\end{example}

\begin{example}\label{a1h}
Take $A=\Z$ and $R=\Z[t]$ with $\deg(t)=1$. We get
$$\L_{(\A^1/\G_m)/\Z}=\fib(\Z[t]\d t\to\Z[t])=\Z[-1]$$
as a graded $\Z[t]$-module purely in \th{$0$} grade, since the map here is $\d t\mapsto t$.
\end{example}

Next we discuss the \'etale cohomology. 

\begin{lemma}\label{eta1}
Let $S$ be a $\Z[1/p]$-scheme, $\pi\colon L\to S$ be a line bundle, and $i_0\colon S\to L$ be the zero section. Let $\M\in\D_{\et}(S,\Z_p)$. Then $\pi$ or equivalently $i_0$ induces an isomorphism
$$\RG_\et(S,\M)\cong\RG_\et(L,\pi^*\M).$$
\end{lemma}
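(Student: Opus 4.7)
The plan is to reduce this to the classical $\A^1$-homotopy invariance of $p$-adic \'etale cohomology over a $\Z[1/p]$-base. Because $\pi\circ i_0=\id_S$, the pullback $i_0^*$ is a left inverse to $\pi^*$ on cohomology, so showing $\pi^*$ is an isomorphism is equivalent to showing $i_0^*$ is; it therefore suffices to treat $\pi^*$. By adjunction, I would instead prove the stronger statement that the unit $\M\to R\pi_*\pi^*\M$ is an isomorphism in $\D_\et(S,\Z_p)$; applying $\RG_\et(S,-)$ to this isomorphism then yields the desired identification.

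Next, since \'etale pullback commutes with arbitrary pushforward of \'etale sheaves, the assertion that this unit is an isomorphism is \'etale-local, indeed Zariski-local, on $S$. Passing to a Zariski cover of $S$ on which $L$ trivializes, I would reduce to the case $L=\A^1_S$, where the statement becomes the $\A^1$-homotopy invariance of \'etale cohomology with coefficients of order invertible on the base. For each truncation $\M/p^n$ this is the classical result from SGA 4 (XV.2.2), and the version for $\Z_p$-coefficients follows by taking a derived inverse limit along $n$, using that $\D_\et(-,\Z_p)$ is by definition the category of $p$-complete objects.

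There is essentially no serious obstacle here: the reduction is formal, and $\A^1$-invariance for torsion \'etale coefficients on a $\Z[1/p]$-scheme is a standard input. The only point requiring mild care is ensuring that the inverse limit along $\M/p^n$ is formed correctly and commutes with $R\pi_*$; this is automatic from the definition of $\D_\et(-,\Z_p)$ adopted in the paper and the fact that the individual torsion statements are uniform in $n$.
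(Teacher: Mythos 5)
Your argument is correct, but it follows a genuinely different route from the paper. You invoke $\A^1$-homotopy invariance of \'etale cohomology with torsion coefficients prime to the residue characteristics (via smooth base change and acyclicity of $\A^1$ over a separably closed field, SGA 4), first upgrading the statement to the sheaf-level assertion that the unit $\M\to R\pi_*\pi^*\M$ is an equivalence, which makes it Zariski-local on $S$ and hence reducible to the trivial bundle $\A^1_S$; then you pass from $\Z/p^n$ to $\Z_p$ by a derived limit, which is legitimate since $R\pi_*$ is a right adjoint and the relevant categories are $p$-complete. The paper instead compactifies $L$ to $P=\P(L\oplus\O_S)$, writes the fiber sequence ${i_\infty}_!i_\infty^!\pi_P^*\M\to\pi_P^*\M\to{j_\infty}_*\pi^*\M$, identifies $i_\infty^!\pi_P^*\M=\M(-1)[-2]$ via the \'etale Poincar\'e duality $\pi_P^*\M\cong\pi_P^!\M(-1)[-2]$, and then applies $\RG_\et(P,-)$ together with the computation of $\H^*_\et(\P^1)$. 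These are different key inputs: yours is $\A^1$-acyclicity/smooth base change, the paper's is relative Poincar\'e duality for the $\P^1$-compactification. The paper's choice is a deliberate stylistic one, since the very same compactify-and-dualize pattern recurs verbatim in Proposition \ref{etgr} and especially Proposition \ref{etdnc} (where $\A^1$-invariance alone would not suffice and the Poincar\'e-duality bookkeeping is actually needed); your approach is slightly more economical here but would not extend as uniformly to those later arguments. One small point of hygiene: when you say ``\'etale pullback commutes with arbitrary pushforward,'' what you are using is base change along an \'etale morphism of the base for $R\pi_*$, which is indeed unconditional and gives the Zariski-local reduction you want.
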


\begin{proof}
Let $P=\P(L\oplus\O_S)$ be the compactification of $L$ and $\pi_P\colon P\to S$ be the projection. Let $j_\infty\colon L\to P$ and $i_\infty\colon S\to P$ be the inclusions of $L$ and $\infty$, respectively. Then there is a natural fiber sequence
$${i_\infty}_!i_\infty^!\pi_P^*\M\to\pi_P^*\M\to{j_\infty}_*\pi^*\M.$$
By the \'etale Poincar\'e duality, $\pi_P^*\M=\pi_P^!\M(-1)[-2]$, so $i_\infty^!\pi_P^*\M=\M(-1)[-2]$. Now take $\RG_\et(P,-)$ of the above sequence, and the lemma follows from the classical computation of the \'etale cohomology of the projective line. 
\end{proof}

Let $R=\bigoplus_{i\in\N}R_i$ be a graded $\Z[1/p]$-algebra, considered as acted by $\G_m$ naturally by the grading. Let $\pi\colon\spec(R)\to\spec(R_0)$ and $i\colon\spec(R_0)\to\spec(R)$ denote the obvious maps. 

\begin{proposition}\label{etgr}
Let $\M\in\D_\et(\spec(R_0),\Z_p)$. Then $\pi$ or equivalently $i$ induces an isomorphism
$$\RG_\et(\spec(R_0),\M)\cong\RG_\et(\spec(R),\pi^*\M).$$
\end{proposition}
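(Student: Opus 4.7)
The plan is to realize $\pi$ as an $\A^1$-homotopy equivalence with inverse $i$, the homotopy being the scaling extension of the $\G_m$-action, and then to invoke Lemma \ref{eta1} as the required form of $\A^1$-invariance for \'etale cohomology with $\Z_p$-coefficients. Since $\pi\circ i=\id_{\spec(R_0)}$ gives $i^*\pi^*=\id$ on $\RG_\et(\spec(R_0),\M)$, it will suffice to establish $\pi^*i^*=\id$ on $\RG_\et(\spec(R),\pi^*\M)$.

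Write $X=\spec(R)$. Because $R$ is $\N$-graded, the $\G_m$-action on $X$ extends to a monoid action $\mu\colon\A^1\times X\to X$ corresponding to the ring map $R\to R[t]$, $r\in R_n\mapsto rt^n$. Let $p_2\colon\A^1\times X\to X$ be the projection and $s_0,s_1\colon X\to\A^1\times X$ be the sections at $t=0$ and $t=1$. One checks directly that $\mu\circ s_1=p_2\circ s_0=p_2\circ s_1=\id_X$ while $\mu\circ s_0=i\pi$. The crucial observation is the identity $\pi\circ\mu=\pi\circ p_2\colon\A^1\times X\to\spec(R_0)$: both correspond to the ring map $R_0\hookrightarrow R[t]$, $r_0\mapsto r_0$, since $R_0$ sits in degree zero and is therefore fixed by scaling. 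Writing $q=\pi\mu=\pi p_2$, this yields a canonical identification $\mu^*\pi^*\M=q^*\M=p_2^*\pi^*\M$ in $\D_\et(\A^1\times X,\Z_p)$.

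Now apply Lemma \ref{eta1} to the trivial line bundle $p_2\colon\A^1\times X\to X$: this says $p_2^*$ induces an equivalence $\RG_\et(X,\pi^*\M)\cong\RG_\et(\A^1\times X,p_2^*\pi^*\M)$. Since $p_2\circ s_0=p_2\circ s_1=\id$, both $s_0^*$ and $s_1^*$ are left inverses to this equivalence and therefore coincide. Under the identification $\mu^*\pi^*\M=p_2^*\pi^*\M$ they also coincide as maps $\RG_\et(\A^1\times X,\mu^*\pi^*\M)\to\RG_\et(X,\pi^*\M)$, and precomposing with $\mu^*$ gives
$$\pi^*i^*=(\mu s_0)^*=s_0^*\mu^*=s_1^*\mu^*=(\mu s_1)^*=\id,$$
as desired. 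The only subtle point is the identity $\pi\mu=\pi p_2$, which encodes the fact that the scaling homotopy preserves $\spec(R_0)$ set-theoretically and trivially on functions; once that is recognized, the rest is a formal consequence of the line bundle invariance already established in Lemma \ref{eta1}.
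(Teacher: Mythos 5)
Your proof is correct, but it takes a genuinely different route from the paper. The paper works with the decomposition $\pi^*\M$ via the fiber sequence ${j_U}_!\pi_U^*\M\to\pi^*\M\to i_*\M$ where $U=\spec(R)\setminus\spec(R_0)$, reduces to showing $\RG_\et(\spec(R),{j_U}_!\pi_U^*\M)=0$, and then realizes $U$ as the complement of the exceptional divisor inside the blowup, which is the total space of $\O(-1)$ on $\proj(R)$, so that Lemma~\ref{eta1} finishes the argument (it also first reduces to classical $R$ since blowup is a classical construction). You instead prove directly that $\pi^*$ and $i^*$ are mutually inverse by exhibiting the $\A^1$-homotopy $\mu\colon\A^1\times X\to X$ that extends the $\G_m$-action to the scaling monoid; the essential observations are the degeneracy $\pi\mu=\pi p_2$ (giving $\mu^*\pi^*\M\cong p_2^*\pi^*\M$) and the boundary values $\mu s_1=\id_X$, $\mu s_0=i\pi$, after which Lemma~\ref{eta1} for the trivial line bundle $\A^1\times X\to X$ forces $s_0^*=s_1^*$ and hence $\pi^*i^*=\id$. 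Your argument avoids the blowup and the sheaf decomposition entirely, works verbatim in the animated setting, and is more transparently the ``affine cone contracts to its vertex'' argument; the paper's argument is closer to the method it also uses for the deformation-to-the-normal-cone computations later (Proposition~\ref{etdnc}). Both proofs correctly invoke Lemma~\ref{eta1} as the one nontrivial input, and both are sound.
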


\begin{proof}
Since the \'etale cohomology is insensitive to thickenings, without loss of generality we can assume that $R$ is classical. Let $U=\spec(R)\setminus\spec(R_0)$ and let $\pi_U\colon U\to\spec(R_0)$ and $j_U\colon U\to\spec(R)$ denote the obvious maps. By the exact triangle
$${j_U}_!\pi_U^*\M\to\pi^*\M\to i_*\M,$$
it suffices to prove $\RG_\et(\spec(R),{j_U}_!\pi_U^*\M)=0$. Let $X$ be the blowup of $\spec(R_0)$ in $\spec(R)$, and let $D$ denote the exceptional divisor. Then $U$ is also $X\setminus D$, and it suffices to prove $\RG_\et(X,{j_U}_!\pi_U^*\M)=0$, where $j_U$ now denotes the open immersion $U\to X$. Let $\O(-1)$ denote the invertible sheaf on $\proj(R)$ as usual. Then by the construction of blowup, $X$ is the total space of $\O(-1)$ and $D$ is $\proj(R)$ itself, so the proposition follows from Lemma \ref{eta1}. 
\end{proof}

\begin{corollary}\label{etwt}
For every $n\in\Z$, 
$$\RG_\et(\spec(R_0)/\G_m,\Z_p(n))\cong\RG_\et(\spec(R)/\G_m,\Z_p(n)).$$
Namely, the \'etale cohomology satisfies strong weighted homotopy invariance. 
\end{corollary}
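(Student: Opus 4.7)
The plan is to reduce Corollary \ref{etwt} to the affine statement of Proposition \ref{etgr} by smooth descent along the $\G_m$-torsor $\spec(R)\to\spec(R)/\G_m$ (and the analogous torsor for $R_0$).

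Concretely, since this torsor is smooth and surjective, and the extension of étale cohomology to stacks from Remark \ref{valsta} satisfies smooth descent, I would express
$$\RG_\et(\spec(R)/\G_m,\Z_p(n))=\lim_{k\in\Delta}\RG_\et(\spec(R)\times\G_m^k,\Z_p(n))$$
as the totalization of the Čech nerve of this cover, and similarly for $R_0$. The $\G_m$-equivariant projection $\pi\colon\spec(R)\to\spec(R_0)$ then descends to a map $\spec(R)/\G_m\to\spec(R_0)/\G_m$ and extends to a morphism between the two Čech nerves.

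At each simplicial level $k$, the term $\spec(R)\times\G_m^k=\spec(R[t_1^{\pm1},\ldots,t_k^{\pm1}])$ acquires an $\N$-grading by keeping the original grading on $R$ and placing each toric coordinate $t_i$ in degree zero; this is the unique convention under which every face and degeneracy map of the Čech nerve is graded, and with it the degree-zero part becomes exactly $R_0[t_1^{\pm1},\ldots,t_k^{\pm1}]=\spec(R_0)\times\G_m^k$. Applying Proposition \ref{etgr} with $\M=\Z_p(n)$, pulled back from $\spec(\Z[1/p])$, therefore yields an isomorphism at each level $k$.

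Since the isomorphism in Proposition \ref{etgr} is pullback along the natural projection to the degree-zero part, it is functorial under graded ring maps, and so the level-wise isomorphisms assemble into an isomorphism of cosimplicial diagrams. Passing to the totalization gives the desired isomorphism, which is precisely the strong weighted homotopy invariance of étale cohomology. I do not anticipate a real obstacle — the only subtlety is the simplicial compatibility of the level-wise isomorphisms, but this is immediate from the naturality of $\pi^*$ and of the construction in Proposition \ref{etgr}.
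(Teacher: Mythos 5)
Your proposal is correct and is exactly the approach the paper takes: the paper's proof reads in full ``This follows immediately from Proposition \ref{etgr} by descent,'' and your Čech-nerve elaboration — placing the toric coordinates in degree zero, checking that the face/degeneracy maps are then graded, and applying Proposition \ref{etgr} levelwise — is precisely the argument being invoked.
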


\begin{proof}
This follows immediately from Proposition \ref{etgr} by descent. 
\end{proof}

Finally we discuss the \'etale cohomology of the adic generic fiber, generalizing \cite[Remark 5.3.7]{prismartin}. Therefore let $\FF(R)=\RG_\et(\spec(\hat{R}[1/p]),\Z_p(n))$ as in Example \ref{etadic}, where $n\in\Z$ is fixed. Recall from \cite[\S 9]{bs19} that, for $R$ over a perfectoid, 
\begin{equation}\label{etcomp}
    \RG_\et(\spec(\hat{R}[1/p]),\Z_p)=(\Prism_R[1/I]_p^\wedge)^{\varphi=1}=(\Prism_{R,\perf}[1/I]_p^\wedge)^{\varphi=1}.
\end{equation}
In what follows we will use this to control $\FF$. Before this, let us extend the functor $\Prism_{-,\perf}$ to arbitrary rings. 

\begin{definition}[Perfect prismatic cohomology]
Recall from \cite[Lemma 8.8]{bs19} that perfectoid rings form a basis of the topology $\arc_p$ on $\Ring$. For $m\in\Z$, let $\Prism^{[m]}_{-,\perf}$ denote the $\arc_p$ sheaves on $\Ring$ whose values on a perfectoid ring $R$ is $I^m\Prism_R$. Obviously, they are always coconnective and depend on the $\pi_0$ of a ring. By \cite[Proposition 8.10, Corollary 8.11]{bs19}, $\Prism_{-,\perf}=\Prism^{[0]}_{-,\perf}$ is well-defined and coincides with the notion there on rings over perfectoids. Let $\Prism_{-,\perf}[1/I]=\colim_m\Prism^{[m]}_{-,\perf}$ as a functor. By Lemma \ref{colimlimcoco}, it is automatically an $\arc_p$ sheaf, and we also have
$$\RG_\et(\spec(\hat{R}[1/p]),\Z_p)=(\Prism_{R,\perf}[1/I]_p^\wedge)^{\varphi=1}$$
for any ring $R$. 
\end{definition}

Now let $R=\bigoplus_{i\in\N}R_i$ be a graded ring, considered as acted by $\G_m$ naturally by the grading. 

\begin{proposition}\label{perfwt}
The perfect prismatic cohomology satisfies strong weighted homotopy invariance. Namely,
$$\Prism_{\spec(R)/\G_m,\perf}=\Prism_{\spec(R_0)/\G_m,\perf}.$$
\end{proposition}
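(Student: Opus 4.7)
The plan is to adapt the blow-up argument of Proposition \ref{etgr}, reducing the claim to a line-bundle invariance statement for $\Prism_{-,\perf}$. Since $\Prism_{-,\perf}$ depends only on $\pi_0$ and satisfies $\arc_p$-descent by construction, we may assume $R$ is classical. The augmentation $R \twoheadrightarrow R_0$ provides a $\G_m$-equivariant section of the structure map, giving a retraction $\spec(R_0)/\G_m \rightleftarrows \spec(R)/\G_m$; it thus suffices to show the projection $\spec(R)/\G_m \to \spec(R_0)/\G_m$ induces an isomorphism on $\Prism_{-,\perf}$.

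Mirroring Proposition \ref{etgr}, let $X = \bl_{\spec(R_0)}\spec(R)$ with exceptional divisor $D = \proj(R)$; then $X$ is the total space of the tautological line bundle $\O(-1) \to D$, and the $\G_m$-action on $\spec(R)$ lifts naturally to $X$ and $D$. After passing to $\G_m$-quotients, the problem reduces to a line-bundle invariance: for a line bundle $\pi \colon L \to S$, the pullback $\Prism_{S,\perf} \to \Prism_{L,\perf}$ should be an isomorphism, analogous to Lemma \ref{eta1}. By $\arc_p$-descent one further reduces to the trivial bundle over a perfectoid base $T$, i.e., to showing that $\Prism_{T,\perf} \to \Prism_{T[t],\perf}$ is an isomorphism. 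One would compute the target via the $\arc_p$-cover by the perfectoidification $T\langle t^{1/p^\infty}\rangle$, whose prismatic cohomology is simply its $A_\inf$, and verify that the resulting $\arc_p$-\v{C}ech complex collapses to $\Prism_{T,\perf}$.

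The main obstacle is twofold. First, $\Prism_{-,\perf}$ lacks a coefficient or $j_!$-type formalism, so the blow-up reduction must be phrased carefully---perhaps via the observation that $X \to \spec(R)$ is an $\arc_p$-cover and a direct analysis of the resulting \v{C}ech nerve, rather than by adapting the exact-triangle reasoning of Proposition \ref{etgr} literally. Second, the line-bundle invariance fails for classical prismatic cohomology and holds only after $\arc_p$-sheafification, demanding a Frobenius-equivariant analysis of $A_\inf$ on the perfectoidification of $T[t]$ (whose tilt is $T^\flat\langle t^{1/p^\infty}\rangle$). A possible fallback is to combine the étale comparison $(\Prism_{-,\perf}[1/I]_p^\wedge)^{\varphi=1} = \RG_\et(\spec(\hat{R}[1/p]), \Z_p)$ with the established étale invariance of Lemma \ref{eta1}, though recovering the full $\Prism_{-,\perf}$ from its Frobenius-fixed part is itself delicate.
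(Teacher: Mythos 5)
Your plan diverges fundamentally from the paper's, and I believe it breaks at the crucial step. Mirroring Proposition \ref{etgr} would reduce the statement to a plain (unweighted) line-bundle invariance: $\Prism_{S,\perf}\to\Prism_{L,\perf}$ is an isomorphism for a line bundle $L\to S$, i.e.\ $\A^1$-invariance of perfect prismatic cohomology after reducing to the trivial bundle. But this is a strictly stronger assertion than the weighted one being proved, and it is false (or at best unproven): notice that the paper proves only \emph{weighted} invariance for the étale cohomology of the adic generic fiber (Corollary \ref{adicwt}), never plain $\A^1$-invariance, precisely because the adic disc carries nontrivial higher $p$-adic étale cohomology. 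The reason the blow-up argument of Proposition \ref{etgr} successfully trades a weighted statement for an unweighted one is the $j_!$-exact triangle and the étale Poincaré duality of $\P^1$, both of which isolate the open complement $U$ and let you conclude because $\Z_p$-étale cohomology over $\Z[1/p]$ genuinely \emph{is} $\A^1$-invariant. Neither mechanism exists for $\Prism_{-,\perf}$, which is exactly the first obstacle you flag; but removing that obstacle wouldn't help, because the target statement you would land on is false. Your fallback via the Frobenius fixed points of the étale comparison cannot recover $\Prism_{-,\perf}$ itself, as you note.

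The paper's actual proof takes a different and more robust route that never leaves the weighted world. It reduces by $\arc_p$-descent to $R_0$ perfectoid, passes to \v{C}ech nerves, and replaces $R$ by a finitely presented derived complete intersection over $R_0$, so that both $R_0$ and $R$ are homotopically bounded and $p$-quasisyntomic. At that point Lemma \ref{qsynlimiso} (built on the Hodge cohomology case, Corollary \ref{grh}) already gives the weighted invariance of the \emph{unperfected} absolute prismatic cohomology $\Prism_{\spec(R)/\G_m}=\Prism_{\spec(R_0)/\G_m}$. One then passes to the Frobenius perfection: reducing mod $p$, the Frobenius is functorially nullhomotopic in positive homotopical degrees, so Lemma \ref{colimlimcoco} lets the $\colim_\varphi$ slide past the cosimplicial totalizations coming from the $\G_m$-quotient, yielding the equality mod $p$ and then integrally by $p$-completeness. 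The key idea your proposal misses is that the weighted invariance of $\Prism_{-,\perf}$ should be \emph{inherited from} the weighted invariance of the unperfected theory via perfection and coconnectivity, rather than attacked geometrically via blow-up.
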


\begin{proof}
Since $\spec(R)/\G_m$ and $\spec(R_0)/\G_m$ are stacks over $R_0$, both sides are $\arc_p$ sheaves with respect to $R_0$, so we can assume that $R_0$ is perfectoid. Consider the \v{C}ech nerves of $\spec(R)\to\spec(R)/\G_m$ and $\spec(R_0)\to\spec(R_0)/\G_m$, from which we know that $\Prism_{\spec(R)/\G_m,\perf}=\lim_{i\in\Delta}\Prism_{\spec(R)\times\G_m^i,\perf}$ and similarly for $\Prism_{\spec(R_0)/\G_m,\perf}$. Therefore: 
\begin{itemize}
    \item We can assume that $R$ is classical. 
    \item By Lemma \ref{colimlimcoco}, we can pass to limit and assume that $R$ is finitely presented over $R_0$. Say $R=\pi_0(R_0[x_1,\ldots,x_a]/(f_1,\ldots,f_b))$, where $x_1,\ldots,x_a$ and $f_1,\ldots,f_b$ are homogeneous in positive grade. 
    \item We can replace $R$ by the non-classical $R_0[x_1,\ldots,x_a]/(f_1,\ldots,f_b)$. 
\end{itemize}
Now we are in the case that $R_0$ and $R$ are homotopically bounded $p$-quasisyntomic rings: $R_0$ is perfectoid and hence $p$-quasisyntomic, and $R$ is a derived complete intersection over $R_0$. Therefore by Lemma \ref{qsynlimiso}, we have $\Prism_{\spec(R)/\G_m}=\Prism_{\spec(R_0)/\G_m}$. Reducing modulo $p$, since the Frobenius is functorially nullhomotopic in positive homotopical degrees (see the proof of \cite[Lemma 8.4]{bs19}, or use \cite[Remark 11.8]{witt}, since $\Prism_-$ actually has derived rings as values), by Lemma \ref{colimlimcoco} we have $\Prism_{\spec(R)/\G_m,\perf}/p=\Prism_{\spec(R_0)/\G_m,\perf}/p$. Finally by $p$-completeness of both sides we get the desired equality. 
\end{proof}

\begin{remark}
By similar limiting and replacement argument, then using Remark \ref{qsyndescent} to descend from something over a perfectoid, one can prove in steps that:
\begin{itemize}
    \item The functors $\Prism_{-,\perf}$ commutes with filtered colimits of rings. 
    \item For any ring $S$ we have $\Prism_{S,\perf}=\colim_\varphi\Prism_S$, where the colimit is taken in the category of $\Prism_S$-modules that are complete along $\Prism_S\to\overline{\Prism}_S/p$.
    \item Equation (\ref{etcomp}) holds for rings not necessarily over a perfectoid. 
\end{itemize}
\end{remark}

\begin{corollary}\label{adicwt}
The \'etale cohomology of the adic generic fiber satisfies strong weighted homotopy invariance. Namely,
$$\FF(\spec(R)/\G_m)=\FF(\spec(R_0)/\G_m).$$
\end{corollary}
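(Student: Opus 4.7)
The plan is to deduce Corollary \ref{adicwt} from Proposition \ref{perfwt} via the prismatic--étale comparison. By the remark preceding the corollary, for any ring $R$ we have
$$\RG_\et(\spec(\hat{R}[1/p]),\Z_p) = \bigl(\Prism_{R,\perf}[1/I]_p^\wedge\bigr)^{\varphi=1},$$
and this promotes to a $\Z_p(n)$-twisted version
$$\FF(R) = \bigl(\Prism_{R,\perf}[1/I]_p^\wedge\{n\}\bigr)^{\varphi=1}$$
via the Breuil--Kisin interpretation of Tate twists (both sides agree for $R$ over a perfectoid, and each is an $\arc_p$ sheaf, so they agree generally). Both sides are then accessible big étale sheaves on $\Ring$, so by Remark \ref{valsta} the same formula $\FF(X) = (\Prism_{X,\perf}[1/I]_p^\wedge\{n\})^{\varphi=1}$ holds for every stack $X$.

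Given this, it suffices to produce a canonical Frobenius-equivariant identification between $\Prism_{\spec(R)/\G_m,\perf}[1/I]_p^\wedge\{n\}$ and $\Prism_{\spec(R_0)/\G_m,\perf}[1/I]_p^\wedge\{n\}$. Proposition \ref{perfwt} provides such an identification at the level of $\Prism_{-,\perf}$ itself. By inspecting its proof—in particular the descent reduction to the quasisyntomic setting combined with the functorial nullhomotopy of Frobenius mod $p$—one sees the same argument applies verbatim to each twist $\Prism^{[m]}_{-,\perf}$, yielding
$$\Prism^{[m]}_{\spec(R)/\G_m,\perf} = \Prism^{[m]}_{\spec(R_0)/\G_m,\perf}$$
for every $m\in\Z$. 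Passing to the filtered colimit in $m$ (which, by coconnectivity of each $\Prism^{[m]}_{-,\perf}$ together with Lemma \ref{colimlimcoco}, commutes with the totalizations implicit in evaluating on the stacks $\spec(R)/\G_m$) gives the desired equality after inverting $I$. The further operations of $p$-completion, tensoring with the invertible line bundle $A\{n\}$, and taking the equalizer with $\id$ under $\varphi$ are all formal and preserve the isomorphism.

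The main obstacle I expect is the bookkeeping in the second paragraph: verifying that the isomorphism of Proposition \ref{perfwt} is genuinely Frobenius-equivariant and extends to the twisted sheaves $\Prism^{[m]}_{-,\perf}$ compatibly with the natural maps $\Prism^{[m+1]}_{-,\perf}\to\Prism^{[m]}_{-,\perf}$ defining the colimit. No new ideas beyond those already used in Proposition \ref{perfwt} should be required, but one must carefully track that each step in the proof there—the $\arc_p$-descent to the perfectoid case, the reduction to homotopically bounded quasisyntomic rings, and the application of Lemma \ref{qsynlimiso} together with the Frobenius nullhomotopy mod $p$—respects both the Frobenius structure and the twist by $m$. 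Once this verification is in hand, the corollary follows immediately by combining the two displayed formulas.
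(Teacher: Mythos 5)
Your approach is essentially the paper's: reduce by $\arc_p$-descent, use the prismatic--\'etale comparison, combine Proposition \ref{perfwt} with the $\colim$--$\lim$ swap furnished by coconnectivity and Lemma \ref{colimlimcoco}. There is, however, one genuine difference in how you handle the Tate twist, and one point of imprecision in the write-up worth flagging.

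On the twist: the paper, after the $\arc_p$-reduction to $R_0$ over $\Z_p^\cyc$, observes that $\Z_p(n)\cong\Z_p$ on $\spec(\Q_p^\cyc)$, hence immediately reduces to $n=0$ and only ever needs the untwisted comparison $\RG_\et(\spec(\hat S[1/p]),\Z_p)=(\Prism_{S,\perf}[1/I]_p^\wedge)^{\varphi=1}$. You instead work directly with a twisted formula $\FF(R)=(\Prism_{R,\perf}[1/I]_p^\wedge\{n\})^{\varphi=1}$. This should be true, but it forces you to pin down the Frobenius on the Breuil--Kisin twist $\{n\}$ (via the identification $A\{n\}^{(1)}\cong I^{-n}A\{n\}$) after inverting $I$, and to verify that the resulting equalizer recovers the \'etale Tate twist --- extra bookkeeping the paper's reduction to $n=0$ sidesteps entirely, and which you assert rather than carry out. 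Both routes arrive at the same place; the paper's is more economical.

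On the imprecision: you claim early on that ``the same formula $\FF(X)=(\Prism_{X,\perf}[1/I]_p^\wedge\{n\})^{\varphi=1}$ holds for every stack $X$'' by Remark \ref{valsta}. Beware that there are two readings of $\Prism_{X,\perf}[1/I]_p^\wedge$ for a stack $X$: (i) evaluate the big sheaf $\Prism_{-,\perf}[1/I]_p^\wedge$ on $X$ (i.e.\ take the limit over affines of the already-inverted objects), or (ii) form $\Prism_{X,\perf}$ as a value on $X$ and then invert $I$ and $p$-complete. Remark \ref{valsta} gives you (i) for free, but to connect to Proposition \ref{perfwt} you need (ii), and equating them is exactly the content of the Lemma \ref{colimlimcoco} swap --- so you are implicitly using the swap before you state you are using it. As written your argument is slightly circular in exposition, though not in substance; rearranging so that the Čech-nerve computation and the $\colim_m$--$\lim_{\Delta^\op}$ interchange come first, and only then invoking the affine comparison formula on each simplicial level, would make the logic airtight and would also match the order of operations in the paper's proof.

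Finally, your plan to prove $\Prism^{[m]}_{\spec(R)/\G_m,\perf}=\Prism^{[m]}_{\spec(R_0)/\G_m,\perf}$ for all $m$ by rerunning the proof of Proposition \ref{perfwt} is more work than needed: once you have reduced to $R_0$ over a perfectoid, $I$ is principal on $\Prism_{R_0,\perf}$, so $\Prism^{[m]}_{S,\perf}$ differs from $\Prism_{S,\perf}$ by the invertible factor $\xi^m$ functorially in $S$ over $R_0$, and the $m=0$ case of Proposition \ref{perfwt} already gives all $m$ at once.
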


\begin{proof}
By \cite[Corollary 6.17]{arc}, both sides are $\arc_p$ sheaves with respect to $R_0$. Therefore we can assume that $R_0$ is over $\Z_p^\cyc$. Since $\Z_p(n)\cong\Z_p$ on $\spec(\Q_p^\cyc)$, we can assume $n=0$. Again use \v{C}ech nerves to compute both sides. Since $\RG_\et(\spec(\hat{S}[1/p]),\Z_p)=(\Prism_{S,\perf}[1/I]_p^\wedge)^{\varphi=1}$ and $\Prism_{S,\perf}$ is coconnective for any ring $S$, the desired equality follows from Proposition \ref{perfwt} and Lemma \ref{colimlimcoco}. 
\end{proof}

\begin{theorem}\label{synwt}
The syntomic cohomology satisfies strong weighted homotopy invariance. Namely, for all $n\in\Z$,
$$\RG_\syn(\spec(R)/\G_m,\Z_p(n))=\RG_\syn(\spec(R_0)/\G_m,\Z_p(n)).$$
\end{theorem}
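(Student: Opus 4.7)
The plan is to apply Lemma \ref{limiso} directly, taking the diagram to consist of the single map $\spec(R_0)/\G_m \to \spec(R)/\G_m$ induced by the inclusion $R_0 \hookrightarrow R$ and its section $R \twoheadrightarrow R_0$. Under this setup, the natural map \eqref{maptolim} becomes the pullback $\FF(\spec(R)/\G_m) \to \FF(\spec(R_0)/\G_m)$, and the goal is to verify its being an isomorphism for $\FF = \Z_p(\star)$. All the hypotheses of Lemma \ref{limiso} are already on the table, so the theorem will follow formally.

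First, Corollary \ref{grh} establishes strong weighted homotopy invariance for $\FF = \LO^\star$, which by the first half of Lemma \ref{limiso} gives the desired isomorphism for the $p$-completed piece $\FF(R) = \RG_\syn(\spf(\hat{R}), \Z_p(\star))$ of the arithmetic fracture sequence in Example \ref{syncoh}. To upgrade this to the full integral $\Z_p(\star)$, I will use the second option offered by Lemma \ref{limiso}, namely checking strong weighted homotopy invariance for both $R \mapsto \RG_\et(R[1/p], \Z_p(\star))$ and $R \mapsto \RG_\et(\hat{R}[1/p], \Z_p(\star))$. The latter is exactly Corollary \ref{adicwt}. For the former, note that $p$ has degree zero, so inverting it preserves the $\N$-grading and satisfies $(R[1/p])_0 = R_0[1/p]$; thus Corollary \ref{etwt} applied to the graded $\Z[1/p]$-algebra $R[1/p]$ yields precisely what is needed.

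Feeding these three inputs (Corollaries \ref{grh}, \ref{etwt}, \ref{adicwt}) into Lemma \ref{limiso} concludes the proof. The real work has been absorbed into the earlier corollaries: the cotangent-complex computation for the Hodge case, the blow-up plus projective bundle formula for the étale case, and the $\arc_p$-descent to perfectoids for the adic generic fiber. Consequently, there is no genuine obstacle at this final step; the theorem is a clean synthesis of the preceding material via the syntomic fracture square.
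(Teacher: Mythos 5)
Your proposal is correct and matches the paper's proof exactly: the paper likewise combines Lemma~\ref{limiso} with Corollaries~\ref{grh}, \ref{etwt}, and \ref{adicwt}, taking the single-arrow diagram and using the second branch of Lemma~\ref{limiso} for the \'etale inputs. Your explicit observation that inverting $p$ preserves the $\N$-grading with $(R[1/p])_0 = R_0[1/p]$, so that Corollary~\ref{etwt} applies to the functor $R\mapsto\RG_\et(R[1/p],\Z_p(\star))$, is a useful detail the paper leaves implicit.
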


\begin{proof}
Combine Lemma \ref{limiso}, Corollary \ref{grh}, Corollary \ref{etwt}, and Corollary \ref{adicwt}. 
\end{proof}

\begin{remark}
The failure of strong weighted homotopy invariance of the absolute prismatic cohomology $\Prism_-$ results from the failure of Lemma \ref{qsynlimiso} for general stacks. Probably, this is because we are using the wrong category of values, and may be remedied by viewing $\Prism_-$ as taking values in the category of ``absolute $F$-gauges'', along with a careful study of the strong weighted homotopy invariance of the (non-Hodge-completed) derived de Rham cohomology.
\end{remark}

\section{Thom classes}

We now produce the theory of Thom classes for cohomologies that satisfy a coconnectivity condition. In this and the next sections we will often encounter relative cohomology, so for a big \'etale sheaf $\FF$ with values in a pointed category $\CC$: 
\begin{itemize}
    \item For stacks $X$ and $Y$ with a map $Y\to X$ understood, we let $\FF(X,Y)$ denote $\fib(\FF(X)\to\FF(Y))$.
    \item For a pointed stack $*\to X$, we let $\tilde\FF(X)$ denote $\FF(X,*)$. 
\end{itemize}

Throughout this section we are in the following situation:

\begin{situation}\label{chernsit}
Let $\FF=\bigoplus_{n\in\Z}\FF^n$ be a big \'etale sheaf with values in $\Gr\CAlg(\Sp)$. Fix a point $c_1\in\Omega^\infty\FF^1(\B\G_m)$, call it the first Chern class, and suppose that:
\begin{enumerate}
    \item $\FF$ satisfies the projective bundle formula. More precisely, for any $r\in\N$ and any rank $r$ vector bundle $V\to X$, $(c_1(\O(1))^i)_{i=0}^{r-1}$ induces an isomorphism
    $$\bigoplus_{i=0}^{r-1}\FF^{\star-i}(X)\cong\FF^\star(\P(V)).$$
    \item $\FF$ satisfies the strong weighted homotopy invariance of Definition \ref{wtinv}. 
    \item\label{cocoassumption} $\FF^{\le0}(\spec(\Z))$ is coconnective. 
\end{enumerate}
\end{situation}

Note that with the projective bundle formula and the strong weighted homotopy invariance in hand, the proof of \cite[Theorem 9.3.1]{apc} actually shows that: 
\begin{itemize}
    \item $\FF$ has Chern classes, i.e.\ for $r\in\N$ and a stack $X$ with $\FF^{\le0}(X)$ coconnective, 
    $$\pi_*\FF(X\times\B\GL_r)=(\pi_*\FF(X))[c_1,\ldots,c_r]$$
    as doubly graded rings, where $c_i\in\pi_0\FF^i(\B\GL_r)$. These Chern classes are additive, i.e.\ for $r,s\in\N$, the direct sum map $\B\GL_r\times\B\GL_s\to\B\GL_{r+s}$ induces the map $c_i\mapsto\sum_{j=0}^ic'_jc''_{i-j}$ on homotopy groups. 
\end{itemize}

The prototypical examples are $\FF^n(R)$ being $\LO_R^n[n]$, $\RG_\et(R[1/p],\Z_p(n))[2n]$, and $\RG_\syn(R,\Z_p(n))[2n]$, by results of \cite[\S 9.1]{apc} and the previous section. The main result of this section can be summarized as follows: 

\begin{theorem}\label{thom}
To every stack $X$ and every rank $r$ vector bundle $V\to X$, one can assign the Thom class
$$\Th_V^\FF\in\Omega^\infty\FF^r(V,V\setminus0),$$
where $0$ denotes the zero section of $V$. The assignment is uniquely determined (up to contractible ambiguity) by the following requirements: 
\begin{description}
    \item[Functoriality] For $f\colon Y\to X$ a morphism of stacks, we have $\Th_{f^*V}^\FF=f^*\Th_V^\FF$. Rigorously speaking, let $\Bun_r$ denote the category
    $$\{V\to X\mid\text{$X$ is a stack, $V$ is a rank $r$ vector bundle over $X$}\}$$
    with morphisms the pullback squares. Then $\Th^\FF$ is a natural transformation $*\to\Omega^\infty\FF^r(V,V\setminus0)$ in the functor category $\Fun(\Bun_r^\op,\Ani)$.
    \item[Normalization] After restricting along the zero section $0\colon X\to V$ and taking $\pi_0$, $\Th_V^\FF$ becomes the Chern class $c_r(V)$. 
\end{description}
Moreover, the Thom classes are natural in $\FF$ and have the following properties:
\begin{description}
    \item[Thom Isomorphism] Promote $V$ to the vector bundle $V/\G_m\to X/\G_m=X\times\B\G_m$ by scalar multiplication. Then $\Th^\FF_{V/\G_m}$ induces an isomorphism
    $$\FF^\star(X\times\B\G_m)\cong\FF^{\star+r}(V/\G_m,\P(V)).$$
    \item[Additivity] If $V=V'\oplus V''$, then $\Th_V^\FF=\Th_{V'}^\FF\boxtimes\Th_{V''}^\FF$, i.e.\ the product of the pullback of $\Th_{V'}$ in $\Omega^\infty\FF^{r'}(V,V\setminus V'')$ and the pullback of $\Th_{V''}$ in $\Omega^\infty\FF^{r''}(V,V\setminus V')$, where $r'=\rk(V')$, $r''=\rk(V'')$.
\end{description}
\end{theorem}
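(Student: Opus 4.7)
The plan is to adapt the approach of the lecture notes \cite{dr}, constructing $\Th^\FF$ via the auxiliary vector bundle $V/\G_m \to X\times\B\G_m$ (with $V/\G_m \cong V\otimes\L$ where $\L$ is the tautological line bundle on $\B\G_m$) and pulling back along the quotient $V\to V/\G_m$. By Yoneda, since $(E_r\to\B\GL_r)$ is terminal in $\Bun_r$, specifying a natural transformation $*\to\Omega^\infty\FF^r(V,V\setminus 0)$ in $\Fun(\Bun_r^\op,\Ani)$ amounts to choosing a single point in $\Omega^\infty\FF^r(E_r,E_r\setminus 0)$; this reduces existence and uniqueness to the universal case.

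The core computation runs as follows. Strong weighted homotopy invariance (Definition~\ref{wtinv}), applied to the $\N$-graded $X$-algebra $\operatorname{Sym}V^\vee$, gives $\FF^\star(V/\G_m)\cong\FF^\star(X\times\B\G_m)\cong\FF^\star(X)[[t]]$, with $t=c_1$ of the tautological line bundle in degree $1$. The projective bundle formula applied to $\P(V)\to X$ yields $\FF^\star(\P(V))\cong\FF^\star(X)[[t]]/f(t)$, where $f(t)=t^r+c_1(V)t^{r-1}+\cdots+c_r(V)$ is the Chern polynomial and $t$ restricts to $c_1(\O(1))$ along $\P(V)\to X\times\B\G_m$. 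The resulting fiber sequence identifies $\FF^\star(V/\G_m,\P(V))\cong f(t)\cdot\FF^\star(X\times\B\G_m)$ as a free rank-one module on the generator $f(t)$ in cohomological degree $r$. I define $\Th_{V/\G_m}^\FF$ as a lift of this generator to $\Omega^\infty\FF^r(V/\G_m,\P(V))$ and $\Th_V^\FF$ as its pullback along $V\to V/\G_m$, using that the preimage of $\P(V)\subset V/\G_m$ is $V\setminus 0\subset V$. Normalization is then immediate: by the splitting principle, $c_r(V/\G_m)=c_r(V\otimes\L)=\prod_i(\alpha_i+t)=f(t)$ for $\alpha_i$ the Chern roots of $V$, so the restriction of $\Th_{V/\G_m}^\FF$ to the zero section is $f(t)$, and further specializing at $t=0$ recovers $c_r(V)$. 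The Thom isomorphism is the free-module identification itself.

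For uniqueness, I analyze the fiber of $\Omega^\infty\FF^r(E_r,E_r\setminus 0)\to\Omega^\infty\FF^r(\B\GL_r)$ over $c_r(E_r)$. The Chern-class presentation $\pi_\ast\FF^\star(\B\GL_r)\cong\pi_\ast\FF^\star(\spec\Z)[[c_1,\ldots,c_r]]$, combined with the coconnectivity hypothesis $\FF^{\le 0}(\spec\Z)\in\Sp_{\le 0}$, forces $\FF^{-i}(\B\GL_r)\in\Sp_{\le 0}$ for all $i\ge 0$, which controls the higher homotopy in the relevant fiber. Additivity then follows formally: for $V=V'\oplus V''$, the Whitney sum formula $f_{V'\oplus V''}(t)=f_{V'}(t)f_{V''}(t)$ shows that $\Th_{V'}^\FF\boxtimes\Th_{V''}^\FF$ satisfies the normalization with $c_r(V)=c_{r'}(V')c_{r''}(V'')$, hence coincides with $\Th_V^\FF$ by uniqueness. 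The hard part will be making this uniqueness step precise: the relevant cohomology $\FF^r(V/\G_m,\P(V))$ decomposes as an infinite product of factors $\FF^{-i}(X)$ for $i\ge 0$, and while coconnectivity kills their higher homotopy, the $\pi_0$-contributions for $i\ge 1$ must be shown to collapse when passing to $\FF^r(V,V\setminus 0)$, essentially because the pullback $V\to V/\G_m$ sets $t=0$. Executing this rigorously requires careful comparison of the two fiber sequences for $(V/\G_m,\P(V))$ and $(V,V\setminus 0)$ in the absence of $\A^1$-invariance for $\FF$.
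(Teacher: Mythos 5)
Your proposal takes a genuinely different route from the paper, and you have correctly diagnosed where it is incomplete. The gap you flag at the end is real and is exactly the crux: without $\A^1$-invariance, there is no a priori control on the pullback $\FF^r(V/\G_m,\P(V))\to\FF^r(V,V\setminus 0)$, so identifying $\FF^\star(V/\G_m,\P(V))$ as a free rank-one module on $f(t)$ does not by itself give you control on $\FF^r(\E_r,\E_r\setminus 0)$, which is what you need for both existence (is $c_r$ really a well-defined point there?) and uniqueness (is the anima $0$-truncated?). In degree $r$ your group $\FF^r(V/\G_m,\P(V))$ has $\pi_0$ equal to an infinite product $\prod_{i\ge 0}\pi_0\FF^{-i}(X\times\B\G_m)$, and you have no argument that all the $i\ge 1$ factors die after pulling back to $V$.

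The paper avoids this entirely by \emph{computing} $\pi_*\FF(\E_r,\E_r\setminus 0)$ rather than working through $V/\G_m$. The chain is: $\E_r\setminus 0\cong\B\aff_{r-1}$ (Proposition~\ref{ba}, because $\GL_r$ acts transitively on $\G_a^r\setminus 0$ with stabilizer the affine group $\aff_{r-1}$), then $\FF(\B\aff_{r-1})\cong\FF(\B\GL_{r-1})$ (Proposition~\ref{coha}, using Proposition~\ref{isyn}, both consequences of weighted homotopy invariance applied to $\G_a^r/G$ for $G\supset\G_m$ central), and then the restriction $\pi_*\FF(\E_r)\to\pi_*\FF(\E_r\setminus 0)$ becomes the power-series map $\pi_*\FF(\spec\Z)[[c_1,\ldots,c_r]]\to\pi_*\FF(\spec\Z)[[c_1,\ldots,c_{r-1}]]$ sending $c_r\mapsto 0$. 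This is surjective, so $\pi_*\FF(\E_r,\E_r\setminus 0)=c_r\cdot\pi_*\FF(\spec\Z)[[c_1,\ldots,c_r]]$ (Proposition~\ref{cr0}). Now in grade $r$ only monomials $c_r c_1^{a_1}\cdots c_r^{a_r}\cdot x$ with $x\in\pi_*\FF^{\le 0}(\spec\Z)$ contribute, so the coconnectivity hypothesis immediately shows $\FF^r(\E_r,\E_r\setminus 0)$ is coconnective, hence $\Omega^\infty$ of it is $0$-truncated and $c_r$ sits there as an honest unique point. This is the step your approach is missing: you need a direct description of $\FF(\E_r\setminus 0)$ (not of $\FF(\P(V))$), and the group-theoretic identification of $\E_r\setminus 0$ is the mechanism for making weighted homotopy invariance bite. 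If you want to salvage the $V/\G_m$ route, you would in effect have to re-prove Proposition~\ref{cr0} anyway, at which point the detour becomes redundant.

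Two smaller remarks. First, the paper's Thom isomorphism is proved \emph{after} the universal class is constructed, by reducing to line bundles via additivity and the projective bundle formula; your plan conflates the construction of $\Th_{V/\G_m}$ with the Thom isomorphism. Second, your normalization check via Chern roots is fine but unnecessary in the paper's formulation: once $\Omega^\infty\FF^r(\E_r,\E_r\setminus 0)$ is known to be $0$-truncated and $\pi_*\FF(\E_r,\E_r\setminus 0)\hookrightarrow\pi_*\FF(\B\GL_r)$ is injective, the normalization condition picks out $c_r$ on the nose with no splitting-principle computation needed.
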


Note that the category $\Bun_r$ has a final object, i.e.\ the universal rank $r$ vector bundle which we denote by $\E_r\to\B\GL_r$. So by the Yoneda Lemma, in order to prove Theorem \ref{thom}, we only need to treat the universal case, i.e.\ $X=\B\GL_r$ and $V=\E_r$. Therefore let us study the cohomology of $\E_r$ and $\E_r\setminus0$. Recall that by definition $\E_r=\G_a^r/\GL_r$, where $\GL_r$ acts on $\G_a^r$ tautologically. 

\begin{proposition}\label{ba}
As stacks over $\B\GL_r$ we have $\E_r\setminus0\cong\B\aff_{r-1}$ where
$$\aff_{r-1}=\G_a^{r-1}\rtimes\GL_{r-1}=\begin{bmatrix}
1&\G_a^{r-1}\\
0&\GL_{r-1}
\end{bmatrix}\subseteq\GL_r.$$
\end{proposition}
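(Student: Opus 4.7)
The approach is to identify the underlying $\mathrm{GL}_r$-variety $\mathbb{G}_a^r\setminus 0$ with the homogeneous space $\mathrm{GL}_r/\mathrm{Aff}_{r-1}$, and then to pass to the stacky quotient by the remaining $\mathrm{GL}_r$-action to obtain $B\mathrm{Aff}_{r-1}$.

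First I would verify that the stabilizer of the standard basis vector $e_1=(1,0,\ldots,0)^\top\in\mathbb{G}_a^r$ under the standard $\mathrm{GL}_r$-action is exactly $\mathrm{Aff}_{r-1}$: a matrix $g\in\mathrm{GL}_r$ sends $e_1$ to its first column, so $g$ fixes $e_1$ iff its first column equals $e_1$, which is precisely the stated block-triangular condition. The next step is to show that the orbit map
$$q\colon\mathrm{GL}_r\longrightarrow\mathbb{G}_a^r\setminus 0,\qquad g\longmapsto g\cdot e_1$$
exhibits $\mathbb{G}_a^r\setminus 0$ as the fppf quotient $\mathrm{GL}_r/\mathrm{Aff}_{r-1}$ for the right action of $\mathrm{Aff}_{r-1}$ on $\mathrm{GL}_r$. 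The fibers of $q$ are precisely right $\mathrm{Aff}_{r-1}$-cosets by the stabilizer computation, so it suffices to produce local sections. On the standard Zariski cover $U_i=\{v\in\mathbb{G}_a^r\setminus 0:v_i\in R^\times\}$, one defines a section by sending $v$ to the matrix whose first column is $v$ and whose remaining columns are $e_1,\ldots,\widehat{e_i},\ldots,e_r$; expanding the determinant along the $i$\textsuperscript{th} row (which has a single nonzero entry $v_i$ in the first column) shows this matrix has determinant $\pm v_i$, hence lies in $\mathrm{GL}_r$ over $U_i$.

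Granted this identification $\mathbb{G}_a^r\setminus 0\cong\mathrm{GL}_r/\mathrm{Aff}_{r-1}$, which is manifestly equivariant for the remaining left $\mathrm{GL}_r$-action, I would conclude by passing to stack quotients:
$$\mathcal{E}_r\setminus 0=(\mathbb{G}_a^r\setminus 0)/\mathrm{GL}_r\cong(\mathrm{GL}_r/\mathrm{Aff}_{r-1})/\mathrm{GL}_r\cong B\mathrm{Aff}_{r-1},$$
where the final equivalence uses that $\mathrm{GL}_r$ acts freely on itself by left translation with quotient a point. The identification is over $B\mathrm{GL}_r$ because on the right-hand side the structure map $B\mathrm{Aff}_{r-1}\to B\mathrm{GL}_r$ is induced by the inclusion $\mathrm{Aff}_{r-1}\hookrightarrow\mathrm{GL}_r$ (i.e.\ the functor sending an $\mathrm{Aff}_{r-1}$-torsor to its induced $\mathrm{GL}_r$-torsor), while on the left-hand side it is the evident projection $(\mathbb{G}_a^r\setminus 0)/\mathrm{GL}_r\to\mathrm{pt}/\mathrm{GL}_r$; these agree under the identification of quotients above.

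The only nontrivial content is the torsor claim in the first step, and since the Zariski-local sections can be written down explicitly as above, I do not anticipate any serious obstacle. Everything else is formal manipulation of quotient stacks.
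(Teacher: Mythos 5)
Your proof is correct and follows essentially the same route as the paper's (one-line) proof: identify $\G_a^r\setminus0$ as the homogeneous space $\GL_r/\aff_{r-1}$ via the transitive action with stabilizer $\aff_{r-1}$, then pass to the stack quotient by $\GL_r$. The extra details you supply (explicit Zariski-local sections of the orbit map) are a reasonable expansion of what the paper leaves implicit.
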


\begin{proof}
This is simply because $\E_r\setminus0=(\G_a^r\setminus0)/\GL_r$, while $\GL_r$ acts transitively on $\G_a^r\setminus0$ with point stabilizer $\aff_{r-1}$.
\end{proof}

\begin{proposition}\label{isyn}
Let $X$ be a stack and $G\subseteq\GL_{r,X}$ be a subgroup smooth over $X$ that contains the central $\G_{m,X}\to\GL_{r,X}$. Then the projection $\G_{a,X}^r/G\to\B G$ induces an isomorphism $\FF(\B G)\cong\FF(\G_{a,X}^r/G)$. In particular, taking $G=\GL_{r,X}$ we have that the projection $X\times\E_r\to X\times\B\GL_r$ induces an isomorphism $\FF(X\times\B\GL_r)\cong\FF(X\times\E_r)$.
\end{proposition}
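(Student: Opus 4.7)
The plan is to use the central subgroup $\G_m \subseteq G$ to factor the $G$-quotient in two stages, reducing via Čech descent to an instance of strong weighted homotopy invariance at each simplicial level. First, set $G' := G/\G_m$, which is a smooth group scheme over $X$ because both $\G_m$ and $G$ are smooth. Since $\G_m$ is normal in $G$, associativity of stack quotients gives canonical identifications
\begin{equation*}
\B G = (\B\G_m)/G'\qquad\text{and}\qquad\G_{a,X}^r/G=(\G_{a,X}^r/\G_m)/G',
\end{equation*}
where the $G'$-actions descend from the ambient $G$-actions (well-defined because $\G_m \subseteq G$ is central). Writing each quotient stack as the geometric realization of its action groupoid simplicial object and using that $\FF$ sends colimits of stacks to limits, these identifications yield
\begin{align*}
\FF(\B G) &= \lim_{n\in\Delta^\op}\FF(\B\G_m \times_X G'^n),\\
\FF(\G_{a,X}^r/G) &= \lim_{n\in\Delta^\op}\FF((\G_{a,X}^r/\G_m)\times_X G'^n).
\end{align*}

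Next, at each simplicial level $n$, I would verify that the map $\FF(\B\G_m \times_X G'^n)\to\FF((\G_{a,X}^r/\G_m)\times_X G'^n)$ induced by the zero section is an isomorphism. This is exactly strong weighted homotopy invariance (Definition \ref{wtinv}) applied to the trivial rank-$r$ vector bundle $\G_a^r\times_X G'^n\to G'^n$: its $\G_m$-quotient is $(\G_{a,X}^r/\G_m)\times_X G'^n$, while $G'^n/\G_m=\B\G_m\times_X G'^n$. Since the zero section is fixed by any linear action on the fibers, these level-wise isomorphisms are $G'$-equivariant and assemble into an isomorphism of cosimplicial objects; taking the limit over $\Delta^\op$ yields the claim.

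The hard part I anticipate is not the homotopical content (which reduces cleanly to strong weighted homotopy invariance) but the geometric bookkeeping: justifying the associativity identifications above and correctly interpreting the $G'$-action on $\B\G_m$, which in general encodes the extension class of $1\to\G_m\to G\to G'\to 1$ via 2-isomorphism data (reflecting that $\B G\to\B G'$ is a possibly nontrivial $\B\G_m$-gerbe). Once these identifications are in place, the rest is a formal descent argument.
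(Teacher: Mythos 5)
Your proposal is correct and takes essentially the same approach as the paper: the paper also sets $H = G/\G_m$, observes that $\B G \to \B H$ is a gerbe, and descends along it, reducing (for each $Y \to \B H$ factoring through the atlas) to the weighted homotopy invariance of the zero section of a trivial rank-$r$ bundle mod $\G_m$. Your version simply makes the descent explicit as a limit over the \v{C}ech nerve of the atlas $X\to\B G'$, and you are right to flag (and the paper elides) that the naive identification $\B G \cong (\B\G_m)/G'$ requires care about the gerbe class — but since the level-wise comparison maps are all pulled back from the single global map $\G^r_{a,X}/G \to \B G$, they assemble compatibly and the argument goes through.
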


\begin{proof}
The proposition basically follows from the proof of \cite[Corollary 9.2.10]{apc}, but the statement there does not contain the case here, so we redo the proof. 

Let everything be over $X$. Let $H=G/\G_m$. Then $H$ is also smooth, and
$$\begin{tikzcd}
\B\G_m\ar[r]\ar[d]&\B G\ar[d]\\
*\ar[r]&\B H
\end{tikzcd}$$
is a fiber square. View $\B G$ as over $\B H$. If we can prove that for every stack $Y$ over $X$ and every $X$-map $Y\to\B H$, the base change of $\G_a^r/G\to*/G$ to $Y$ induces an isomorphism after taking $\FF$, then the proposition will follow by descent. Now the base change is nothing but $\G_{a,Y}^r/\G_{m,Y}\to Y/\G_{m,Y}$, where $\G_{m,Y}$ acts on $\G_{a,Y}^r$ by scalar multiplication, so we are done by weighted homotopy invariance. 
\end{proof}

\begin{proposition}\label{coha}
For any stack $X$, the natural map $\GL_{r-1}\to\aff_{r-1}$ induces an isomorphism 
$$\FF(X\times\B\aff_{r-1})\cong\FF(X\times\B\GL_{r-1}).$$
\end{proposition}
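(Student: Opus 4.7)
The plan is to identify $\B\aff_{r-1}$ with the complement of the zero section in the universal rank-$r$ vector bundle via Proposition \ref{ba}, and then compute the $\FF$-cohomology via the projective bundle formula on the projectivization combined with the Gysin cobase change for a $\G_m$-torsor. Under the identification $\B\aff_{r-1}\cong\E_r\setminus 0=(\G_a^r\setminus 0)/\GL_r$, the map $\B\GL_{r-1}\to\B\aff_{r-1}$ induced by the block-diagonal inclusion $\GL_{r-1}\hookrightarrow\aff_{r-1}\subseteq\GL_r$ corresponds to $V\mapsto(V\oplus\O,(0,1))$, augmenting a rank-$(r-1)$ bundle $V$ by a trivial line bundle with its canonical unit section. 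In particular, the composite $\B\GL_{r-1}\to\E_r\setminus 0\to\B\GL_r$ (obtained by forgetting the section) is the block-diagonal $V\mapsto V\oplus\O$.

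To compute $\FF$, I would use the scaling $\G_m$-torsor $p\colon\E_r\setminus 0\to\P\E_r$, classified by $\O(-1)$, together with the fact that $\P\E_r$ is a $\P^{r-1}$-bundle over $\B\GL_r$. The projective bundle formula from Situation \ref{chernsit} gives
\[
\FF(X\times\P\E_r)=\FF(X\times\B\GL_r)[t]/\bigl(\textstyle\sum_{i=0}^r(-1)^ic_it^{r-i}\bigr),
\]
with $t=c_1(\O(1))$. The Gysin cobase-change formula $\FF(L^*)\cong\FF(Y)/(c_1(L))$ for $\G_m$-torsors then yields
\[
\FF(X\times(\E_r\setminus 0))=\FF(X\times\P\E_r)/(t)=\FF(X\times\B\GL_r)/(c_r),
\]
where the last equality comes from setting $t=0$ in the projective bundle relation.

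Next, by the Chern-class additivity noted right after Situation \ref{chernsit}, the block-diagonal pullback $\FF(X\times\B\GL_r)\to\FF(X\times\B\GL_{r-1})$ sends $c_r\mapsto 0$ (since $V\oplus\O$ has $c_r=0$ for $V$ of rank $r-1$) and $c_i\mapsto c_i$ for $i<r$, inducing an isomorphism $\FF(X\times\B\GL_r)/(c_r)\cong\FF(X\times\B\GL_{r-1})$. Composing with the earlier identifications yields the desired isomorphism, and tracing the maps shows that it is induced by the stated inclusion $\GL_{r-1}\to\aff_{r-1}$.

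The main obstacle is the Gysin cobase-change formula $\FF(L^*)\cong\FF(Y)/(c_1(L))$ for a $\G_m$-torsor $L^*\to Y$ classified by a line bundle $L$. I would establish this by flat descent of $\FF$ applied to the pullback square with corners $L^*,\ast,Y,\B\G_m$, combined with the computation $\FF(\ast)=\FF(\B\G_m)/(c_1)$ (itself a consequence of the projective bundle formula applied to $\B\G_m$).
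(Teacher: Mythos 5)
Your route — identifying $\B\aff_{r-1}\cong\E_r\setminus 0$ and then computing via the projective bundle formula together with a Gysin step — is genuinely different from the paper's, but it hinges on a ``Gysin cobase-change formula'' $\FF(L^*)\cong\FF(Y)/(c_1(L))$ for $\G_m$-torsors $L^*\to Y$ that is not available here and in fact is false in this setting. Flat descent along $L^*\to Y$ (or along $*\to\B\G_m$) gives a \emph{limit} description of $\FF(Y)$ from the \v{C}ech nerve of the torsor; it does not identify $\FF(L^*)$ with the relative tensor product $\FF(Y)\otimes_{\FF(\B\G_m)}\FF(*)$. That identification is an Eilenberg--Moore type base-change statement needing separate hypotheses, and it fails concretely: take $\FF$ to be Hodge cohomology, $Y=\P^1$, $L=\O(-1)$, so $L^*=\A^2\setminus 0$. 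Then $\FF(\P^1)/(c_1(\O(-1)))$ is just $\Z$ in Hodge degree $0$, while $\FF^0(\A^2\setminus 0)=\RG(\A^2\setminus 0,\O)$ already has $\pi_0=\Z[x,y]$ (and a large $\H^1$), so $\FF(L^*)\not\cong\FF(Y)/(c_1(L))$.

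The structural reason is that Situation \ref{chernsit} only gives the projective bundle formula and \emph{weighted} homotopy invariance; a Gysin sequence for ordinary (un-quotiented) open complements of zero sections would additionally require $\FF(L)\cong\FF(Y)$, i.e.\ genuine $\A^1$-invariance, which Hodge, de Rham, prismatic, and syntomic cohomology do not satisfy. This is exactly why the paper never leaves the $\G_m$-equivariant world: it writes $\aff_{r-1}=\G_a^{r-1}\rtimes\GL_{r-1}$, so $\B\aff_{r-1}=\B\G_a^{r-1}/\GL_{r-1}$, and then checks the map to $\B\GLrminus$ is an isomorphism on $\FF$ \v{C}ech-term by \v{C}ech-term using Proposition \ref{isyn}, where the central $\G_m\subseteq\GL_{r-1}$ is always present to invoke weighted invariance. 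If you want to salvage your line of attack you would need a Gysin statement in the weighted ($\G_m$-quotiented) picture, but that is essentially the Thom isomorphism, which in the paper's logical order is established \emph{after} Proposition \ref{coha} (via Propositions \ref{ba}, \ref{isyn}, \ref{coha}, and \ref{cr0}), so you would be arguing in a circle.
\newcommand{\GLrminus}{\GL_{r-1}}
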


\begin{proof}
Let everything be over $X$ to simplify notations. By $\aff_{r-1}=\G_a^{r-1}\rtimes\GL_{r-1}$ we have $\B\aff_{r-1}=\B\G_a^{r-1}/\GL_{r-1}$, where $\GL_{r-1}$ tautologically acts on $\G_a^{r-1}$. Under this identification, $\B\GL_{r-1}\to\B\aff_{r-1}$ corresponds to the map $*/\GL_{r-1}\to\B\G_a^{r-1}/\GL_{r-1}$ defined as the canonical covering $*\to\B\G_a^{r-1}$ quotient by $\GL_{r-1}$. By descent, we only need to prove that the \v{C}ech nerves of $*/\GL_{r-1}\to\B\G_a^{r-1}/\GL_{r-1}$ and $*/\GL_{r-1}\to*/\GL_{r-1}$ have the same cohomology on each term, i.e.\ for each $k\in\N$ the map $(\G_a^{r-1})^k/\GL_{r-1}\to*/\GL_{r-1}$ induces an isomorphism after applying $\FF$, but this follows immediately from Proposition \ref{isyn}.
\end{proof}

\begin{proposition}\label{cr0}
For any stack $X$ with $\FF^{\le0}(X)$ coconnective, the natural restriction map $\pi_*\FF(X\times\E_r)\to\pi_*\FF(X\times(\E_r\setminus0))$ is the map
$$(\pi_*\FF(X))[c_1,\ldots,c_r]\to(\pi_*\FF(X))[c_1,\ldots,c_{r-1}]$$
that leaves $c_1,\ldots,c_{r-1}$ unchanged and maps $c_r$ to $0$. In particular, it is surjective, and thus
\begin{align*}
    \pi_*\FF(X\times\E_r,X\times(\E_r\setminus0))&=\ker(\pi_*\FF(X\times\E_r)\to\pi_*\FF(X\times(\E_r\setminus0)))\\
    &=c_r(\pi_*\FF(X)[c_1,\ldots,c_r]).
\end{align*}
\end{proposition}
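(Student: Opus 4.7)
The plan is to reduce the computation to an easily understood map of classifying stacks and then apply the Chern class formula together with additivity. First I would collect the identifications already established in the preceding propositions. Proposition \ref{isyn} (applied with $G=\GL_r$) gives an isomorphism $\FF(X\times\B\GL_r)\cong\FF(X\times\E_r)$, while Proposition \ref{ba} identifies $\E_r\setminus0$ with $\B\aff_{r-1}$ and Proposition \ref{coha} then gives $\FF(X\times(\E_r\setminus0))\cong\FF(X\times\B\GL_{r-1})$. Combined with the Chern class formula stated after Situation \ref{chernsit}, these identifications yield
$$\pi_*\FF(X\times\E_r)=(\pi_*\FF(X))[[c_1,\ldots,c_r]],\qquad\pi_*\FF(X\times(\E_r\setminus0))=(\pi_*\FF(X))[[c_1,\ldots,c_{r-1}]].$$

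Next I would trace through what the restriction map becomes under these identifications. Unwinding the propositions, the open immersion $\E_r\setminus0\hookrightarrow\E_r$, transported to $\B\GL_{r-1}\to\B\GL_r$, is induced by the group homomorphism $\GL_{r-1}\to\aff_{r-1}\hookrightarrow\GL_r$, where the first map is the canonical section (coming from $\aff_{r-1}=\G_a^{r-1}\rtimes\GL_{r-1}$) and the second is the tautological block-upper-triangular inclusion. The composition is thus
$$g\longmapsto\begin{bmatrix}1&0\\0&g\end{bmatrix},$$
which corresponds, via the classifying interpretation, to sending a rank $r-1$ bundle $E$ to $\O\oplus E$.

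Now I would apply additivity of Chern classes recorded just after Situation \ref{chernsit}: the total Chern class pulls back as $c(\E_r)\mapsto c(\O)\cdot c(\E_{r-1})=c(\E_{r-1})$, since the trivial line bundle has all higher Chern classes zero. Degree by degree this gives $c_i\mapsto c_i$ for $i\le r-1$ and $c_r\mapsto 0$, proving the first assertion. For the ``in particular'' part, the resulting map of power series rings is evidently surjective, so the long exact sequence of homotopy groups attached to the fiber sequence
$$\FF(X\times\E_r,X\times(\E_r\setminus0))\to\FF(X\times\E_r)\to\FF(X\times(\E_r\setminus0))$$
degenerates into short exact sequences, identifying $\pi_*\FF(X\times\E_r,X\times(\E_r\setminus0))$ with the kernel, which is visibly the ideal $c_r\cdot(\pi_*\FF(X))[[c_1,\ldots,c_r]]$.

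The only step that requires genuine care is the verification that the composite isomorphism furnished by Propositions \ref{isyn}, \ref{ba}, and \ref{coha} really identifies the open immersion $\E_r\setminus0\hookrightarrow\E_r$ with the block-diagonal inclusion $\B\GL_{r-1}\hookrightarrow\B\GL_r$; once one writes down the relevant $2$-commutative diagram of quotient stacks and tracks the stabilizer inclusion $\aff_{r-1}\subseteq\GL_r$, however, this is essentially a formal check, and the rest of the proof is driven entirely by additivity of Chern classes.
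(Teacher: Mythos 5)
Your proposal is correct and follows essentially the same approach as the paper: identify the restriction map with $\B\GL_{r-1}\to\B\GL_r$ via Propositions \ref{ba}, \ref{isyn}, and \ref{coha}, interpret it as adding a trivial line bundle, and invoke additivity of Chern classes. Your version simply spells out the stabilizer bookkeeping more explicitly, but the substance is identical.
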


\begin{proof}
By Proposition \ref{ba}, Proposition \ref{isyn} and Proposition \ref{coha}, the map $X\times(\E_r\setminus0)\to X\times\E_r$ is cohomologically the same as the map $X\times\B\GL_{r-1}\to X\times\B\GL_r$ induced by adding a trivial line bundle. So the proposition follows from the additivity formula of Chern classes.
\end{proof}

\begin{proof}[Proof of Theorem \ref{thom}]
By Proposition \ref{cr0},
$$\pi_*\FF(\E_r,\E_r\setminus0)=c_r(\pi_*\FF(\spec(\Z))[c_1,\ldots,c_r]).$$
Since $c_i\in\pi_0\FF^i$, among the right hand side only $\pi_*\FF^{\le0}(\spec(\Z))$ may contribute to $\pi_*\FF^r(\E_r,\E_r\setminus0)$. By assumption (\ref{cocoassumption}) of Situation \ref{chernsit}, this implies that $\FF^r(\E_r,\E_r\setminus0)$ is coconnective, so $\Omega^\infty\FF^r(\E_r,\E_r\setminus0)$ is a $0$-truncated anima, in which $c_r$ is naturally a point. Therefore by normalization, the Thom class of the universal bundle $\E_r\to\B\GL_r$ can be and must be defined as $c_r\in\Omega^\infty\FF^r(\E_r,\E_r\setminus0)$, which is unique up to contractible choice. Now the Thom class of a general rank $r$ vector bundle is uniquely determined by the universal case. 

To prove the additivity, one reduces to the universal case, where it follows from the additivity of the Chern class. To prove the Thom isomorphism,  note by weighted homotopy invariance that $V/\G_m\to\B\G_m\times X$ induces an isomorphism $\FF(\B\G_m\times X)\cong\FF(V/\G_m)$, while $(V/\G_m)\setminus(0/\G_m)=\P(V)$ is the projective bundle. Now by additivity and the projective bundle formula, the Thom isomorphism reduces to the case $r=1$, where it follows immediately by the projective bundle formula. 
\end{proof}

\begin{remark}
    It seems magical that, starting with Chern classes in the homotopy groups, we are able to get the desired Thom class up to contractible ambiguity. This is in fact because the assumption (\ref{cocoassumption}) of Situation \ref{chernsit} essentially guarantees that the ``candidate space'' $\Omega^\infty\FF^r(\E_r,\E_r\setminus0)$ of the Thom class is discrete, so once we have specified the connected component that the class lives in, we have specified the class up to contractible ambiguity. 
\end{remark}

\begin{remark}\label{unithomadd}
Alternatively, one can impose the normalization only for line bundles but add the additivity of Thom classes to get the uniqueness statement, because by additivity of Chern classes, the natural map
$$\pi_*\FF(\E_r,\E_r\setminus0)\to\pi_*\FF(\E_1^r,\E_1^r\setminus0)$$
is injective, sending $c_r$ to the box product of the $r$ different $c_1$'s on the right. 
\end{remark}

\begin{remark}[Naturality]\label{natthom}
By uniqueness, the Thom class $\Th^\FF$ is natural in $\FF$, with respect to maps that preserve $c_1$. In particular, it is natural along the map
$$\Z_p(n)\to\fil^n\Prism_-\{n\}\to\fil^n\widehat{\dr_-}\to\widehat{\LO^n_-}[-n]$$
from the syntomic cohomology to the $p$-completed Hodge cohomology, by \cite[Theorem 7.6.2]{apc}. 
\end{remark}

\begin{remark}[Identifying $c_1^\hod$ and $\Th_1^\hod$]\label{c1hod}
Take $\FF^n(R)=\LO^n_R[n]$. By Example \ref{bgh}, $\FF^1(\B\G_m)=\Z$, in which $1$ corresponds to the Hodge first Chern class originally defined by the map $\dlog\colon\G_m\to\LO^1$, thanks to the proof of Proposition \ref{wt1f}. Therefore the Thom class is also $1\in\Z$, under the identification of Example \ref{a1h}. 
\end{remark}

\begin{remark}
Our way of stating Situation \ref{chernsit} and Theorem \ref{thom} is convenient for later uses in the paper, but far from optimal: 
\begin{itemize}
    \item We can use general base stacks in place of $\spec(\Z)$. 
    \item Formulated properly, the values of $\FF$ need not be graded. See \cite[\S 3]{p1mot}.
    \item We only need the smooth weighted homotopy invariance rather than the strong one, except for the Thom isomorphism. 
    \item With $\P^1$-homotopy techniques as in \cite{p1mot}, one should be able to get rid of all the coconnectivity assumptions in this section. This can be for example applied to the Hodge cohomology over an animated ring. 
\end{itemize}
\end{remark}

\section{Cycle classes}\label{cycle}

In this section we construct cycle classes, by reducing to Thom classes with deformation to the normal cone, following \cite[Lecture 15]{dr}. In \S\ref{subsectiondnc}, we define the weighted deformation to the normal cone for a general closed immersion, and prove that for any regular immersion, the cohomology of this deformation space coincides with that of its special fiber, for several cohomology theories. These results are then used in \S\ref{subsectioncycle} to generate cycle classes from Thom classes. 

\subsection{Deformation to the normal cone}\label{subsectiondnc}
Classically, deformation to the normal cone is a construction that deforms a regular immersion into the zero section of its normal cone. Here, with derived algebraic geometry in hand, we are able to perform this construction for all closed immersions of derived stacks, and it will behave well especially for derived regular immersions. 

Before going into the construction, we first study closed immersions of derived stacks in general. 

\begin{definition}[Animated pairs]
A \emph{pair} is a map $A\to B$ of animated rings with $\pi_0A\to\pi_0B$ surjective. Let $\Pair$ denote the category of pairs, i.e.\ the full category of $\Fun(\Delta^1,\Ring)$ spanned by the pairs. We say that a pair $A\to B$ is \emph{classical} if both $A$ and $B$ are classical. This coincides with the classical notion of a pair: a ring $A$ with an ideal $I=\ker(A\to B)$ of it.  
\end{definition}

\begin{remark}[Colimits in $\Pair$]\label{coliminpair}
Let $(A_i\to B_i)_{i\in I}$ be a diagram in $\Pair$. Then its colimit in $\Fun(\Delta^1,\Ring)$ is $\colim A_i\to\colim B_i$. Since $\pi_0\colon\Ring\to\Ring^\heartsuit$ preserves colimits and colimits in $\Ring^\heartsuit$ preserve surjections, we see that $(\colim A_i\to \colim B_i)\in\Pair$, so it is also the colimit of $(A_i\to B_i)_{i\in I}$ in $\Pair$. 

Alternatively, one can take the colimit of $(A_i\to B_i)_{i\in I}$ in two steps: first take $A=\colim A_i$ and take base changes $B_i'=B_i\otimes_{A_i}A$, then take $B=\colim B_i'$ in the category of $A$-algebras, and the resulting $A\to B$ will be the desired colimit. 
\end{remark}

\begin{remark}[Projective generators of $\Pair$]\label{pgp}
Consider two objects $\Z[x]\to\Z$ and $\Z[y]\to\Z[y]$ in $\Pair$, where the maps are $x\mapsto0$ and $\id$, respectively. It is easy to see that $\Z[x]\to\Z$ corepresents the functor $(A\to B)\mapsto \fib(A\to B)$, while $\Z[y]\to\Z[y]$ corepresents the functor $(A\to B)\mapsto A$. Since both functors commute with sifted colimits, both objects are projective, and hence so are their finite products $\Z[x_1,\ldots,x_r,y_1,\ldots,y_s]\to\Z[y_1,\ldots,y_s]$ for all $r,s\in\N$. Also, since the functor $\Pair\to\Ani\times\Ani$, $(A\to B)\mapsto(A,\fib(A\to B))$ is conservative, the two objects generate $\Pair$. Let
$$\Pair_\cc=\{\Z[x_1,\ldots,x_r,y_1,\ldots,y_s]\to\Z[y_1,\ldots,y_s]\mid r,s\in\N\}$$
be the full subcategory of $\Pair$ spanned by these finite products. Then by \cite[Proposition 5.5.8.22]{htt}, $\Pair=\PS(\Pair_\cc)$, namely the animation of $\Pair_\cc$. 
\end{remark}

\begin{remark}[$n$-truncated objects in $\Pair$]
By Remark \ref{pgp} and \cite[Remark 5.5.8.26]{htt}, for $n\in\N$, an object $(A\to B)\in\Pair$ is $n$-truncated if and only if both $A$ and $\fib(A\to B)$ are $n$-truncated as animas. Therefore, a $0$-truncated object in $\Pair$ is not always classical. See also \cite[Remark 3.9]{mao}. 
\end{remark}

\begin{definition}[Closed immersions]
Let $Y\to X$ be a map of stacks. We say that it is a \emph{closed immersion} if it is corepresentable by pairs, or in other words, for any ring $A$ and any map $\spec(A)\to X$, the pullback $Y\times_X\spec(A)$ is some $\spec(B)$ with $\pi_0A\twoheadrightarrow\pi_0B$. In this case we also say that $Y$ is a \emph{closed substack} of $X$. 
\end{definition}

\begin{definition}[Regular immersions]
We say that a pair $A\to B$ is \emph{regular of codimension $r$} if Zariski locally it is a base change of the pair $\Z[x_1,\ldots,x_r]\to\Z$, $x_1,\ldots,x_r\mapsto0$. We say that a closed immersion $Y\to X$ of stacks is a \emph{regular immersion of codimension $r$} if it is corepresentable by regular pairs of codimension $r$; we say that it is a \emph{regular immersion} if it is a disjoint union of regular immersions of codimension $r$ for different $r\in\N$. 
\end{definition}

\begin{definition}[The classifying stack of pairs]\label{pair}
Consider the functor $\Pair\to\Ring$, $(A\to B)\mapsto A$. It is a cocartesian fibration with small fibers, whose cocartesian edges are
$$\{(A\to B)\to(A'\to B')\mid B'=B\otimes_AA'\}.$$
Therefore, by the Grothendieck construction in \cite[\S 3.2]{htt}, it corresponds to a functor $\Ring\to\Cat$, taking $A$ to the category of all pairs $A\to B$. 

Let $\pair\colon\Ring\to\Ani$ be the functor taking $A$ to the anima of pairs $A\to B$, corresponding under the Grothendieck construction to the subcategory of $\Pair$ that contains only the cocartesian edges. Then it is obviously a stack, which we call \emph{the classifying stack of pairs} or \emph{the classifying stack of closed immersions}. Denote the universal closed immersion by $\closed\to\pair$. Then by definition,
$$\closed(A)=\{\text{a pair $A\to B$ along with a section $B\to A$}\},$$
i.e.\ a closed immersion $\spec(B)\to\spec(A)$ with a point of $\spec(B)$. Denote the universal open complement by $\open\to\pair$. Then similarly
$$\open(A)=\{\text{a pair $A\to B$ along with a section $\spec(A)\to\spec(A)\setminus\spec(B)$}\},$$
which implies that $\open=*$, since the section forces $B=0$. 

For $r\in\N$, we can also define the classifying stacks of regular immersions of codimension $r$ in exactly the same way, which we denote by $\pair_r$, and similarly $\closed_r$ and $\open_r$ for the universal closed and open substacks. By definition, $\pair_r$ is the image of the \'etale sheaf map $\A^r\to\pair$ classifying the standard immersion $*\to\A^r$. Note that $\open_r$ is always $*$, except for $r=0$ where it is empty. 

The tensor product of $A$-algebras gives $\pair(A)$ an $\mathbb{E}_\infty$-monoid structure, making $\pair$ an $\mathbb{E}_\infty$-monoid stack. Note that the point $*=\open\to\pair$ is a zero element of this monoid; in other words, $\pair$ is an $\mathbb{E}_\infty$-monoid in the category of pointed stacks, with respect to the smash product. 
\end{definition}

\begin{remark}
By base change of the cotangent complex, for $Y\to X$ a regular immersion (of codimension $r$), it is easy to see that $\L_{Y/X}[-1]$ is locally free (of rank $r$) on $Y$. 
\end{remark}

Now we construct the derived Rees algebra. The following proposition is \cite[Proposition 13.3]{cplx}.

\begin{proposition}[cf.\ {\cite[Corollary 3.54]{mao}}]\label{rees}
Let $(A\to B)\in\Pair$. Consider the initial animated filtered $A$-algebra $I^\bullet$ equipped with a map of animated $A$-algebras $B\to I^0/I^1$. Then we have $I^n=A$ for $n\le0$, and $I^0/I^1=B$; in particular $I=I^1$ is the fiber of $A\to B$. Moreover, we have $I/I^2=\L_{B/A}[-1]$ and $I^n/I^{n+1}=\lsym_B^n(I/I^2)$ for $n\ge0$. The formation of $I^\bullet$ preserves all colimits as a functor $\Pair\to\Fil\Ring$, commutes with base change in $A$, and if $A\to B$ is classical and is a local complete intersection, then $I^\bullet$ is classical and agrees with the usual filtered $A$-algebra of powers of $I$. 
\end{proposition}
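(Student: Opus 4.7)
The plan is to exhibit $I^\bullet$ as a left adjoint, compute it explicitly on the projective generators $\Pair_\c$ of Remark \ref{pgp}, and then extend all claims to general pairs by animation.

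First, I would note that the category of filtered animated $A$-algebras $J^\bullet$ equipped with a map of animated $A$-algebras $B \to J^0/J^1$ is presentable, and its forgetful functor to $\Pair$ preserves limits and filtered colimits; by the adjoint functor theorem this produces a left adjoint $(A\to B) \mapsto I^\bullet$, which automatically preserves all colimits. Since by Remark \ref{pgp} $\Pair = \PS(\Pair_\c)$, the functor $I^\bullet$ is determined by its restriction to the free pairs $P_{r,s} = (\Z[x_1,\ldots,x_r,y_1,\ldots,y_s]\to\Z[y_1,\ldots,y_s])$ with $x_i \mapsto 0$.

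Next, on $P_{r,s}$ I would identify $I^\bullet$ with the usual $(x_1,\ldots,x_r)$-adic filtration: a filtered $A$-algebra map into $J^\bullet$ compatible with a given $B\to J^0/J^1$ is equivalent, by the universal property of the polynomial ring, to a choice of lifts of each $x_i$ to $J^1$, and the space of such lifts is contractible precisely because the composite $x_i \mapsto J^0 \to J^0/J^1$ factors through $B$ and is zero. On $P_{r,s}$ the stated formulas then hold classically: $I^n = A$ for $n\le 0$, $I^0/I^1 = B$, $I^1$ is the fiber of $A\to B$, the module $I/I^2 = \bigoplus_i B\cdot x_i$ is free of rank $r$ over $B$ and equals $\L_{B/A}[-1]$ because $B$ is a Koszul-regular quotient, and $I^n/I^{n+1} = \mathrm{Sym}_B^n(I/I^2) = \lsym_B^n(I/I^2)$.

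Finally, I would extend to arbitrary pairs by animation. Each claimed identity is an equality of two sifted-colimit-preserving functors $\Pair \to \CC$ (for the appropriate target): the $I^\bullet$ side preserves all colimits by construction, and extracting a filtration term, a fiber, or a graded piece preserves sifted colimits; the target side uses that $\L_{B/A}$ and $\lsym_B^n$ preserve sifted colimits in $(A\to B)$ by definition of the derived cotangent complex and the derived symmetric power. Since both sides agree on $\Pair_\c$ by the previous paragraph, they agree on all of $\Pair$. Compatibility with base change in $A$ reduces similarly, via the same animation argument, to the polynomial case where it is visible, and the final classical statement follows because a classical LCI pair is Zariski locally a base change of $(\Z[x_1,\ldots,x_r]\to\Z)$. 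The main delicate point will be the universal-property computation on $P_{r,s}$: one must check that the classical $I$-adic filtration genuinely corepresents the animated functor, which amounts to verifying that the mapping space of lifts of the $x_i$ to $J^1$ is contractible and not merely nonempty, so that the resulting filtered map is canonical up to contractible ambiguity.
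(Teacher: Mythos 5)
The paper does not prove this proposition: it is imported wholesale from \cite[Proposition 13.3]{cplx} (see also \cite[Corollary 3.54]{mao}), so there is no internal proof to compare against. I will therefore assess your proposal on its own terms.

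Your strategy---existence via the adjoint functor theorem, an explicit computation on the projective generators $\Pair_\c$ of Remark \ref{pgp}, and then animation---is the right one, and is essentially what the cited references do. Two points want tightening. First, the left adjoint is cleanest to set up as the left adjoint of the limit- and filtered-colimit-preserving functor $G\colon\Fil\Ring\to\Pair$, $J^\bullet\mapsto(J^0\to J^0/J^1)$; the phrase ``forgetful functor to $\Pair$'' out of the category of filtered $A$-algebras with a $B\to J^0/J^1$ does not quite parse for fixed $(A,B)$, though the intent is clear. Second, and more importantly, the way you propose to close the delicate point on $P_{r,s}$ is not quite right. The space of lifts of a fixed $x_i\in J^0$ along $J^1\to J^0$ is \emph{not} contractible merely because its image in $J^0/J^1$ is nullhomotopic; it is a torsor over $\Omega(J^0/J^1)$. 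The correct argument is to compute both mapping spaces directly and see that they agree: on one side, $I^\bullet_{\mathrm{cl}}$ is the free filtered animated ring on the $x_i$ in weight $1$ and the $y_j$ in weight $0$, so
$$\Hom_{\Fil\Ring}(I^\bullet_{\mathrm{cl}},J^\bullet)\simeq\prod_i(J^1)^{\mathrm{und}}\times\prod_j(J^0)^{\mathrm{und}};$$
on the other side, by the corepresentability in Remark \ref{pgp},
$$\Hom_{\Pair}(P_{r,s},G(J^\bullet))\simeq\prod_i\fib(J^0\to J^0/J^1)^{\mathrm{und}}\times\prod_j(J^0)^{\mathrm{und}},$$
and $\fib(J^0\to J^0/J^1)=J^1$ tautologically. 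One then checks that the natural transformation induced by $G$ matches these identifications, giving the desired equivalence; this is where the vanishing of $x_i$ in $B$ actually enters, via the identification $I^1_{\mathrm{cl}}=\fib(A\to B)$. With this fix the rest of your plan (animation of each identity as an equality of sifted-colimit-preserving functors, base change via the pushout description of Remark \ref{coliminpair}, LCI agreement by writing a regular sequence as a base change of $\Z[x_1,\ldots,x_r]\to\Z$) goes through as stated.
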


\begin{remark}
Proposition \ref{rees} can be generalized to general morphisms in $\Ring$, not necessarily surjective on $\pi_0$, but one needs to use derived rings as in \cite[Example 4.3.1, Construction 4.3.4]{rak20}, in place of animated rings. It is similar to but different from \cite[\S 5.3]{rak20}. 
\end{remark}

In the following definitions, the notation $I^n$ is as in Proposition \ref{rees}. 

\begin{definition}[Rees algebra and deformation to the normal cone]\label{dtnc}
For a pair $A\to B$, its \emph{Rees algebra} is the graded $A[t]$-algebra
$$\R_{B/A}=\bigoplus_{n\in\Z}I^{-n}t^n,$$
where $t$ is called the \emph{Rees parameter}. There is an obvious graded map $\R_{B/A}\to B[t]$, which we denote by $\R(A\to B)\in\Pair$. By Proposition \ref{rees}, $\R$ commutes with all colimits, when viewed as a functor from pairs to graded pairs over the graded ring $\Z[t]$. The \emph{(weighted) deformation to the normal cone} of $A\to B$ is the closed immersion of Artin stacks obtained by quotienting $\R(A\to B)$ by the $\G_m$-action defined by the grading, i.e.\ the map
$$\spec(B)\times\A^1/\G_m=\spec(B[t])/\G_m\to\spec(\R_{B/A})/\G_m,$$
viewed as a map of stacks over $\A^1/\G_m$. Its open fiber (the fiber over $\G_m/\G_m\to\A^1/\G_m$) is $\spec(B)\to\spec(A)$, while its closed fiber (the fiber over $0/\G_m\to\A^1/\G_m$) is $\spec(B)/\G_m\to\spec(\lsym_B^\star(I/I^2))/\G_m$, which is the quotient by the $\G_m$-multiplication on the normal cone of $\spec(B)\to\spec(A)$.

This construction clearly satisfies \'etale descent and commutes with base change of $A$, and hence can be globalized and classified. More precisely, $(A\to B)\mapsto\spec(\R_{B/A})/\G_m$ defines a stack over $\pair\times\A^1/\G_m$, denoted by $\DNC$, whose fiber over $\closed\times\A^1/\G_m$ is $\closed\times\A^1/\G_m$ itself. Its restriction to $\pair\times\G_m/\G_m$ is $\id_\pair$, while its restriction to $\pair\times0/\G_m$ is the quotient by $\G_m$ of the normal cone $\underline{\spec}_\pair(\lsym^\star(\I_\closed/\I_\closed^2))$ of $\closed\to\pair$. For a closed immersion $Y\to X$ of stacks, its deformation to the normal cone can be constructed as the base change of $\closed\times\A^1/\G_m\to\DNC$ along the classifying map $X\to\pair$. 
\end{definition}

\begin{definition}[Animated blowup]
The \emph{blowup} of a pair $A\to B$ is the $A$-scheme $\bl_B(A)=\proj\left(\bigoplus_{n\in\N}I^n\right)$. The natural ``inclusion'' map $\bigoplus_{n\in\N}I^{n+1}\to\bigoplus_{n\in\N}I^n$ defines a section of the line bundle $\O(-1)$, whose zero locus is by definition $\proj\left(\bigoplus_{n\in\N}I^n/I^{n+1}\right)$, which we call the \emph{exceptional divisor}. Note that it is the projectivized normal cone $\proj(\lsym_B^\star(I/I^2))$ of $\spec(B)$ in $\spec(A)$. Similarly, this construction can also be globalized and classified. For a closed immersion $Y\to X$ of stacks, we denote its blowup by $\bl_Y(X)$. 
\end{definition}

\begin{remark}\label{blowupanddnc}
The two concepts are closely related. Namely, one can check that:
\begin{itemize}
    \item $\bl_B(A)=(\spec(\R_{B/A})\setminus(\spec(B[t])\cup\spec(A)))/\G_m$.
    \item $\spec(\R_{B/A})=\bl_B(A[t])\setminus\bl_B(A)$, where the map $A[t]\to A$ sends $t$ to $0$.
\end{itemize}
\end{remark}

Next we study the cohomology of the deformation to the normal cone. Fix an $r\in\N$, let $Y\to X$ be a regular immersion of stacks of codimension $r$, and let $\Y\to\X$ be its deformation to the normal cone, with closed fiber $\Y_0\to\X_0$. 

We begin with the Hodge cohomology. Recall that stacks affine over $\B\G_m$ correspond to graded rings, whose quasicoherent sheaves correspond to graded modules, and taking global section corresponds to taking the \th{$0$} grade. 

\begin{proposition}\label{cpt}
Let $A\to B$ be a regular pair of codimension $r$. For any $n\in\Z$, the local cohomology $\RG_{\spec(B[t])}(\LO^n_{\R_{B/A}/\Z[t]})$ and $\RG_{\spec(B)}(\LO^n_{\lsym_B^\star(I/I^2)/\Z})$ are both nonnegatively graded, and the base change map along $A[t]\to A$, $t\mapsto0$ identifies the \th{$0$} grade parts of them. 
\end{proposition}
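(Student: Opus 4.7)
My plan is to reduce to a direct Koszul computation in the universal regular pair $(\Z[x_1, \ldots, x_r], \Z)$ via base change, and then propagate to the general case. Since the statement is Zariski-local on $\spec A$, I may choose a regular sequence $f_1, \ldots, f_r$ in $A$ with $B = A/(f)$, so that the pair $(A, B)$ is the base change of the universal pair along $\Z[x] \to A$, $x_i \mapsto f_i$. By Proposition \ref{rees}, $\R := \R_{B/A} = A \otimes^L_{\Z[x]} \Z[t, s_1, \ldots, s_r]$, and analogously $C := \lsym^\star_B(I/I^2) = A \otimes^L_{\Z[x]} \Z[y_1, \ldots, y_r]$ (using $I/I^2 = \L_{B/A}[-1]$). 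The zero sections $\spec(B[t]) \subset \spec(\R)$ and $\spec(B) \subset \spec(C)$ are cut out, respectively, by the regular sequences $(s_i)$ and $(y_i)$.

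In the universal case, $\R = \Z[t, s_1, \ldots, s_r]$ is a polynomial ring over $\Z[t]$ (with $\deg t = 1$, $\deg s_i = -1$), so $\L_{\R/\Z[t]}$ is free of rank $r$ with generators $ds_i$ in grade $-1$, making $\LO^n_{\R/\Z[t]}$ free of rank $\binom{r}{n}$ in grade $-n$. The stable Koszul complex yields $\RG_{(s)}(\R) = H^r_{(s)}(\R)[-r]$, spanned over $\Z[t]$ by Laurent monomials $s_1^{-a_1} \cdots s_r^{-a_r}$ with $a_i \ge 1$, which lie in grades $\sum a_i \ge r$. Hence $\RG_{(s)}(\LO^n_{\R/\Z[t]})$ lies in grades $\ge r - n$, which is $\ge 0$ for $n \le r$ (and $\LO^n = 0$ for $n > r$). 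Its $0$th grade is nonzero only when $n = r$, and equals $\Z \cdot s_1^{-1} \cdots s_r^{-1} \cdot ds_1 \wedge \cdots \wedge ds_r$ in cohomological degree $r$. A parallel computation for $C = \Z[y_1, \ldots, y_r]$ produces $\Z \cdot y_1^{-1} \cdots y_r^{-1} \cdot dy_1 \wedge \cdots \wedge dy_r$, and the two agree under the canonical isomorphism $\R/(t) \cong C$ sending $s_i \mapsto y_i$.

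For general $A$, both $\L_{\R/\Z[t]}$ and $\L_{C/\Z}$ fit in parallel transitivity triangles ending in $\L_{A/\Z[x]} \otimes_A \R$ and $\L_{A/\Z[x]} \otimes_A C$, respectively, with universal factors $\R^r$ and $C^r$ in grade $-1$ as before. Taking wedges yields a Koszul filtration on $\LO^n$ whose associated graded pieces are of the form $\LO^a(\text{universal factor}) \otimes \LO^b(\L_{A/\Z[x]} \otimes_A -)$ for $a + b = n$, and since the universal local cohomology contributes in grades $\ge r - a$, only the piece with $a = r$, $b = n - r$ affects the $0$th grade; this piece matches identically on both sides under $s_i \leftrightarrow y_i$. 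The main obstacle will be organizing this Koszul filtration on wedge powers of fiber sequences in the derived category and verifying that local cohomology respects it; once this bookkeeping is settled, the general case follows from the universal computation by base change.
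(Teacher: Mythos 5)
Your approach is correct but takes a genuinely different route from the paper's. You compute the universal codimension-$r$ case $(\Z[x_1,\ldots,x_r]\to\Z)$ in one shot via the stable Koszul complex, and then push the result to a general $A$ using the transitivity triangle $\L_{\Z[t,s]/\Z[t]}\otimes\R\to\L_{\R/\Z[t]}\to\L_{A/\Z[x]}\otimes_A\R$ (and its mate on the $C$-side) together with the induced Koszul filtration on wedge powers; the grade estimate $\ge r-a$ then isolates the top piece $a=r$ as the only contributor to grade $0$, which visibly matches under $s_i\leftrightarrow y_i$. The paper instead exploits the projective generators of $\Pair$ from Remark~\ref{pgp}: after noting that the direct sum $\bigoplus_n\RG(\LO^n[-n])$ commutes with sifted colimits in $A$ as a $\Z[x]$-algebra and takes coproducts of pairs to tensor products, it reduces all the way down to the two base pairs $\Z[x]\to\Z$ ($r=1$) and $\Z[y]\to\Z[y]$ ($r=0$), and then just writes out the tiny local cohomology groups. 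The paper's route avoids any filtration on $\LO^n$ entirely, at the price of invoking the animation structure of $\Pair$; your route is more hands-on and makes the grade-$0$ class explicit, at the price of the bookkeeping you flag (organizing the filtration on derived wedge powers, and checking that $\RG_{(s)}$, being exact and given by a colimit of Koszul complexes, preserves it and commutes with $-\otimes_A\R$). That bookkeeping does go through: the filtration with graded pieces $\Lambda^a(\R^r\langle -1\rangle)\otimes_A\LO^{n-r+a}_{A/\Z[x]}$ is standard, local cohomology is exact so it respects the filtration, and the resulting grade estimate is exactly as you say. One small caveat: in the localization step you should phrase the input derivedly — the pair $A\to B$ is Zariski-locally a derived base change of $(\Z[x_1,\ldots,x_r]\to\Z)$ along some $\Z[x]\to A$, rather than ``$B=A/(f)$ for a regular sequence $f$'' — since $A$ is an animated ring; this is what makes the identifications $\R_{B/A}=A\otimes^{\mathrm{L}}_{\Z[x]}\Z[t,s]$ and $\L_{\R/\Z[t,s]}=\L_{A/\Z[x]}\otimes^{\mathrm{L}}_A\R$ available via base change of the Rees algebra and the cotangent complex (not via flatness, as $\Z[x]\to\Z[t,s]$ is not flat).
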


\begin{proof}
Localizing, we can assume that $A\to B$ is a base change of $\Z[x_1,\ldots,x_r]\to\Z$. Consider the direct sums
$$\bigoplus_{n\in\Z}\RG_{\spec(B[t])}(\LO^n_{\R_{B/A}/\Z[t]}[-n])\in\D(\Z[t])$$
and
$$\bigoplus_{n\in\Z}\RG_{\spec(B)}(\LO^n_{\lsym_B^\star(I/I^2)/\Z}[-n])\in\D(\Z)$$
instead of individual complexes. Then they commute with sifted colimits in $A$ as $\Z[x_1,\ldots,x_r]$-algebras, and take coproducts of pairs to tensor products, so one reduces to the case $r$ is either $1$ or $0$ and $A\to B$ is either $\Z[x]\to\Z$ or $\Z[y]\to\Z[y]$, respectively. Now $\LO^n$ is nonzero only when $n=0$ or $n=1$. 

In the $\Z[x]\to\Z$ case, $\R_{B/A}=\Z[xt^{-1},t]$ with the surjection to $B[t]=\Z[t]$ killing $xt^{-1}$, so one computes
$$\RG_{\spec(B[t])}(\R_{B/A})=\RG_{xt^{-1}}(\Z[xt^{-1},t])=\bigoplus_{k=1}^\infty\Z[t]t^kx^{-k}[-1],$$
$$\RG_{\spec(B)}(\lsym_B^\star(I/I^2))=\RG_{xt^{-1}}(\Z[xt^{-1}])=\bigoplus_{k=1}^\infty\Z t^kx^{-k}[-1],$$
$$\RG_{\spec(B[t])}(\L_{\R_{B/A}/\Z[t]})=\RG_{xt^{-1}}(\Z[xt^{-1},t]\d(xt^{-1}))=\bigoplus_{k=1}^\infty\Z[t]t^kx^{-k}\d(xt^{-1})[-1],$$
$$\RG_{\spec(B)}(\L_{\lsym_B^\star(I/I^2)/\Z})=\RG_{xt^{-1}}(\Z[xt^{-1}]\d(xt^{-1}))=\bigoplus_{k=1}^\infty\Z t^kx^{-k}\d(xt^{-1})[-1].$$
In the $\Z[y]\to\Z[y]$ case, $A=B$, $I=0$ and $\R_{B/A}=B[t]=\Z[y,t]$, so the local cohomology does nothing, and one computes
$$\RG_{\spec(B[t])}(\R_{B/A})=\Z[y,t],$$
$$\RG_{\spec(B)}(\lsym_B^\star(I/I^2))=\Z[y],$$
$$\RG_{\spec(B[t])}(\L_{\R_{B/A}/\Z[t]})=\Z[y,t]\d y,$$
$$\RG_{\spec(B)}(\L_{\lsym_B^\star(I/I^2)/\Z})=\Z[y]\d y.$$
In all these cases one easily verifies the proposition. 
\end{proof}

\begin{corollary}\label{0hg}
For every $n\in\N$, the inclusion $\X_0\to\X$ induces an isomorphism
$$\RG_\Y(\X,\LO^n_\X)\cong\RG_{\Y_0}(\X_0,\LO^n_{\X_0}).$$
\end{corollary}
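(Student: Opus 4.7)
The plan is to reduce to the affine local case and then invoke Proposition \ref{cpt}. Both local cohomology along a closed substack and the Hodge sheaves satisfy Zariski descent on $X$, so one may assume $X = \spec(A)$ and $Y = \spec(B)$ for a regular pair $A \to B$ of codimension $r$ with ideal $I = \fib(A \to B)$. In this setting, by Definition \ref{dtnc} the deformation to the normal cone is $\X = \spec(\R_{B/A})/\G_m$ with closed substack $\Y = \spec(B[t])/\G_m$, and the closed fiber is $\X_0 = \spec(\lsym_B^\star(I/I^2))/\G_m$ with $\Y_0 = \spec(B)/\G_m$.

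The next step is to translate the $\G_m$-quotients into graded algebra. Under the standard equivalence between quasicoherent sheaves on a stack $Z/\G_m$ and $\Z$-graded quasicoherent sheaves on $Z$, taking global sections corresponds to extracting the $0$th graded piece, and this dictionary is compatible with local cohomology along a closed $\G_m$-invariant substack. Viewing $\X$ as living over $\A^1/\G_m$ and $\X_0$ over $\B\G_m$, the relative Hodge sheaf $\LO^n_\X$ pulls back along the $\G_m$-torsor $\spec(\R_{B/A}) \to \X$ to $\LO^n_{\R_{B/A}/\Z[t]}$ with its natural grading, and likewise $\LO^n_{\X_0}$ pulls back to $\LO^n_{\lsym_B^\star(I/I^2)/\Z}$. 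Hence the two sides of the claim become
\begin{align*}
\RG_\Y(\X, \LO^n_\X) &= \gr^0 \RG_{\spec(B[t])}\bigl(\LO^n_{\R_{B/A}/\Z[t]}\bigr), \\
\RG_{\Y_0}(\X_0, \LO^n_{\X_0}) &= \gr^0 \RG_{\spec(B)}\bigl(\LO^n_{\lsym_B^\star(I/I^2)/\Z}\bigr).
\end{align*}

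Finally, Proposition \ref{cpt} asserts that these two graded local cohomologies are nonnegatively graded and share the same $0$th graded piece, and the restriction map induced by setting the Rees parameter $t = 0$ (i.e., the graded surjection $\R_{B/A} \to \lsym_B^\star(I/I^2)$) realizes this identification. Extracting the $0$th graded piece then yields the corollary. The substantive computational input was already carried out in Proposition \ref{cpt}; the only remaining work is the formal bookkeeping of the $\G_m$-equivariance and Zariski descent, which should be routine.
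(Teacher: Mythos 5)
Your argument has a genuine gap at the step where you identify the pullback of $\LO^n_\X$ along the torsor $\spec(\R_{B/A})\to\X$ with $\LO^n_{\R_{B/A}/\Z[t]}$. In the corollary (and in its use in Lemma \ref{limiso}), $\LO^n_\X$ denotes the \emph{absolute} wedge power $\LO^n_{\X/\Z}$, following the convention in Example \ref{hodgecoh}. By Proposition \ref{wt1f}, the pullback of $\L_\X=\L_{\X/\Z}$ to $\spec(\R_{B/A})$ is $\fib(\L_{\R_{B/A}/\Z}\to\R_{B/A})$, which is not $\L_{\R_{B/A}/\Z[t]}$: the latter is what you get from the \emph{relative} cotangent complex $\L_{\X/(\A^1/\G_m)}$, which is a different object. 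Concretely, the two differ by the nontrivial contribution of $f^*\L_{\A^1/\G_m}\cong\Z[-1]$ (Example \ref{a1h}), and correspondingly on the closed fiber by $f_0^*\L_{\B\G_m}$ (Example \ref{bgh}). Proposition \ref{cpt} is a statement about $\LO^n_{\R_{B/A}/\Z[t]}$ and $\LO^n_{\lsym_B^\star(I/I^2)/\Z}$, so invoking it directly only handles $\LO^n_{\X/(\A^1/\G_m)}$ versus $\LO^n_{\X_0/\B\G_m}$, which is not the assertion.

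The paper's proof deals with this mismatch: it uses the fiber sequences $f^*\L_{\A^1/\G_m}\to\L_\X\to\L_{\X/(\A^1/\G_m)}$ and $f_0^*\L_{\B\G_m}\to\L_{\X_0}\to\L_{\X_0/\B\G_m}$ to filter $\LO^n_\X$ (resp.\ $\LO^n_{\X_0}$) by $\LO^i_{\X/(\A^1/\G_m)}\otimes f^*\LO^j_{\A^1/\G_m}$ with $i+j=n$, and then matches the local cohomology of each filtered graded piece. The graded piece coming from the relative cotangent complex is controlled by Proposition \ref{cpt}, while the $\LO^j_{\A^1/\G_m}$ and $\LO^j_{\B\G_m}$ factors both reduce to $\LL^j(\Z[-1])$ and can be matched after appropriate base change. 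Your reduction to the affine case and your translation of $\G_m$-quotients into $0$th grades of graded modules are both fine; what is missing is this filtration step that separates the relative and base contributions before applying Proposition \ref{cpt}.
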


\begin{proof}
Both sides are sheaves with respect to $X$, so one can assume that $X=\spec(A)$ is affine, and hence so is $Y=\spec(B)$. Let $f\colon\X\to\A^1/\G_m$ and $f_0\colon\X_0\to\B\G_m$ be the structure maps. Then we have the fiber sequences
$$f^*\L_{\A^1/\G_m}\to\L_\X\to\L_{\X/(\A^1/\G_m)}$$
and
$$f_0^*\L_{\B\G_m}\to\L_{\X_0}\to\L_{\X_0/\B\G_m},$$
which imply that $\LO_\X^n$ is filtered by $\LO_{\X/(\A^1/\G_m)}^i\otimes f^*\LO_{\A^1/\G_m}^j$ with $i+j=n$, and similarly for $\X_0$. Now by Example \ref{a1h}, $\RG_\Y(\LO_{\X/(\A^1/\G_m)}^i\otimes f^*\LO_{\A^1/\G_m}^j)$ is computed by taking the \th{$0$} grade part of
$$\RG_{\spec(B[t])}(\LO^n_{\R_{B/A}/\Z[t]})\otimes_{\Z[t]}\LL_{\Z[t]}^j(\Z[-1]),$$
while by Example \ref{bgh}, $\RG_{\Y_0}(\LO_{\X_0/\B\G_m}^i\otimes f_0^*\LO_{\B\G_m}^j)$ is computed by taking the \th{$0$} grade part of
$$\RG_{\spec(B)}(\LO^n_{\lsym_B^\star(I/I^2)})\otimes_\Z\LL_\Z^j(\Z[-1]),$$
and by Proposition \ref{cpt} it is not hard to see that the two are isomorphic via the natural map. 
\end{proof}

We then study the \'etale cohomology. 

\begin{proposition}\label{etdnc}
Suppose $X$ and $Y$ are over $\Z[1/p]$. Let $\tilde{\Y}\to\tilde{\X}$ be the deformation to the normal cone without quotient by $\G_m$, and $\tilde{\Y}_0\to\tilde{\X}_0$ be the fiber over $0\to\A^1$. Then for every \'etale $\Z_p$-sheaf $\M$ on $X$, the inclusion $\tilde{\X}_0\to\tilde{\X}$ induces an isomorphism on local cohomology
$$\RG_{\et,\tilde{\Y}}(\tilde{\X},\tilde{f}^*\M)\cong\RG_{\et,\tilde{\Y}_0}(\tilde{\X}_0,\tilde{f}_0^*\M).$$
Here $\tilde{f}\colon\tilde{\X}\to X$ and $\tilde{f}_0\colon\tilde{\X}_0\to\tilde{\X}\to X$ are the natural maps. 
\end{proposition}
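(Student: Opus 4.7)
The plan is to show both sides equal $\RG_\et(Y,\M|_Y)(-r)[-2r]$, using absolute purity for regular closed immersions together with $\A^1$-invariance (Lemma \ref{eta1}).

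First, I observe that both $\tilde{\Y}\hookrightarrow\tilde{\X}$ and $\tilde{\Y}_0\hookrightarrow\tilde{\X}_0$ are regular immersions of codimension $r$: the former because Zariski-locally on $X$ the pair $Y\to X$ is standard, i.e.\ a base change of $\spec(\Z)\to\spec(\Z[x_1,\ldots,x_r])$, so the Rees algebra becomes a polynomial ring $\Z[u_1,\ldots,u_r,t]$ (with $u_i=x_it^{-1}$) in which $\tilde{\Y}=\spec(\Z[t])$ is cut out by $u_1,\ldots,u_r$; the latter because $\tilde{\X}_0$ is a rank-$r$ vector bundle over $\tilde{\Y}_0=Y$ with $\tilde{\Y}_0$ the zero section. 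Moreover, the composites $\tilde{f}|_{\tilde{\Y}}\colon\tilde{\Y}\to X$ and $\tilde{f}_0|_{\tilde{\Y}_0}\colon\tilde{\Y}_0\to X$ both factor through $Y\hookrightarrow X$: the first via the projection $p\colon\tilde{\Y}=Y\times\A^1\to Y$, the second via the identification $\tilde{\Y}_0=Y$.

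Second, applying absolute purity to both regular immersions yields
\begin{align*}
\RG_{\et,\tilde{\Y}}(\tilde{\X},\tilde{f}^*\M) & \cong \RG_\et(Y\times\A^1,p^*(\M|_Y))(-r)[-2r], \\
\RG_{\et,\tilde{\Y}_0}(\tilde{\X}_0,\tilde{f}_0^*\M) & \cong \RG_\et(Y,\M|_Y)(-r)[-2r].
\end{align*}
By Lemma \ref{eta1} applied to the trivial line bundle $Y\times\A^1\to Y$, the pullback along $p$ gives an isomorphism $\RG_\et(Y,\M|_Y)\cong\RG_\et(Y\times\A^1,p^*(\M|_Y))$, so the two expressions agree. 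The compatibility of the resulting isomorphism with the restriction map induced by $\tilde{\X}_0\hookrightarrow\tilde{\X}$ follows from the naturality of Gysin and base change for local cohomology along the Tor-independent cartesian square with corners $\tilde{\Y}_0$, $\tilde{\Y}$, $\tilde{\X}_0$, $\tilde{\X}$.

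The main obstacle is justifying absolute purity for the regular immersions $\tilde{\Y}\hookrightarrow\tilde{\X}$ and $\tilde{\Y}_0\hookrightarrow\tilde{\X}_0$ in the derived and possibly non-Noetherian generality of the paper. One may either appeal to a derived version of Gabber's theorem for quasi-lci closed immersions, or reduce to the classical Noetherian regular case via the standard local model above and transport the isomorphism back by descent and left Kan extension.
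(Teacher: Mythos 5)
There is a genuine gap in your argument, and it is exactly the one you flag as "the main obstacle": absolute purity is not available in the generality of this proposition, and neither of your proposed workarounds resolves this.

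Gabber's absolute purity theorem asserts $i^!\M\cong i^*\M(-r)[-2r]$ for a closed immersion $i\colon Z\to W$ of codimension $r$ when \emph{both} $Z$ and $W$ are regular (excellent, Noetherian) schemes with $\ell$ invertible. The hypothesis that the immersion is regular (i.e.\ Koszul-regular, locally a base change of $\spec(\Z)\to\spec(\Z[x_1,\ldots,x_r])$) is strictly weaker: here $X$, hence $\tilde{\X}$ and $\tilde{\X}_0$, is an arbitrary stack over $\Z[1/p]$ and need not be regular, Noetherian, or even classical. For such an ambient scheme there is no isomorphism $i^!\M\cong i^*\M(-r)[-2r]$ in general, so the two displayed identifications in your second step are unjustified. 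Your second workaround --- compute in the universal model $\spec(\Z)\to\spec(\Z[x_1,\ldots,x_r])$ and transport by base change --- does not obviously go through either, because local \'etale cohomology $\RG_{\et,Z}(W,-)=\fib(\RG_\et(W,-)\to\RG_\et(W\setminus Z,-))$ fails to commute with arbitrary base change in $W$ (the open pushforward $j_*$ fails smooth base change for non-smooth base change maps), so knowing purity over the regular model does not give it after pulling back to a non-regular $X$; and the left Kan extension route would require commutation with sifted colimits of rings, which is not established for relative \'etale cohomology of a fixed sheaf $\M$ on $X$ in this form.

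The paper's own argument is engineered to avoid invoking absolute purity at all. After reducing to the affine Henselian case by excision, it rewrites $\tilde{\X}$ in terms of the blowup $\bl_Y(\A^1_X)$ (Remark \ref{blowupanddnc}), compactifies $\A^1$ to $\P^1$, and then uses only \emph{relative} purity, namely $\pi_P^*\M\cong\pi_P^!\M(-1)[-2]$ for the smooth proper morphism $\pi_P\colon\P^1_X\to X$, to compute the contribution at $\infty$. Relative purity for smooth morphisms holds without regularity or Noetherianity hypotheses on $X$. Gabber's affine proper base change theorem (applied to the Henselian pair) then lets one contract the proper fibers over $X$ to their fibers over $Y$, and both sides of the claimed isomorphism collapse to the local cohomology of the projectivized normal bundle. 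This is genuinely different from a purity computation on the deformation space itself, and it is what makes the proof work for arbitrary ambient stacks.
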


\begin{proof}
The proposition is clearly \'etale local on $X$, so we can assume that $X=\spec(A)$ is affine, and hence so is $Y=\spec(B)$. Then $\tilde{\X}=\spec(\R_{B/A})$, $\tilde{\Y}=\spec(B[t])$, $\tilde{\X}_0=\spec(\lsym_B^\star(I/I^2))$, $\tilde{\Y}_0=\spec(B)$, as in Definition \ref{dtnc}. By excision, we can assume that the pair $A\to B$ is Henselian, i.e.\ the classical pair $\pi_0A\to\pi_0B$ is Henselian. 

Now by Remark \ref{blowupanddnc} and excision, the left hand side is
$$\RG_\et(\bl_Y(\A^1_X),\bl_Y(\A^1_X)\setminus\spec(\A^1_Y),g^*\M),$$
where $g\colon\bl_Y(\A^1_X)\to X$ is the projection. To compute it, we consider the compactification $j\colon\bl_Y(\A^1_X)\to\bl_Y(\P^1_X)$, which also compactifies $\A^1_Y$ to $\P^1_Y$. Let $\pi$ denote the projections $\bl_Y(\P^1_X)\to X$, and $i$ denote the complement of $j$. Since the inclusion $Y\to\A^1_X$ has image in $0\to\A^1_X$, the blowup changes nothing near $\infty\to\P^1_X$. Therefore by the \'etale Poincar\'e duality and by excision again, we have $i^!\pi^*\M=\M(-1)[-2]$. Hence
\begin{align*}
    \RG_\et(\bl_Y(\A^1_X),g^*\M)&=\RG_\et(\bl_Y(\P^1_X),j_*g^*\M)\\
    &=\RG_\et(\bl_Y(\P^1_X),\cofib(i_!i^!\pi^*\M\to\pi^*\M))\\
    &=\cofib(\RG_\et(X,\M(-1)[-2])\to\RG_\et(\bl_Y(\P^1_X),\pi^*\M)).
\end{align*}
Note that $\bl_Y(\P^1_X)$ is proper over $X$, whose fiber over $Y$ is $\P^1_Y\sqcup_Y\P(\NN_{Y/X}\oplus\O_Y)$. Here $\NN_{Y/X}$ denotes the normal bundle and $\P$ means projectivization. Therefore by Gabber's affine proper base change \cite[\texttt{09ZI}]{stacks}, 
$$\RG_\et(\bl_Y(\P^1_X),\pi^*\M)=\RG_\et(\P^1_Y\sqcup_Y\P(\NN_{Y/X}\oplus\O_Y),\bar{\pi}^*\M),$$
where $\bar{\pi}$ denotes the corresponding projection. Take the pushout out of $\RG_\et$, and note that $\RG_\et(X,\M(-1)[-2])=\RG_\et(Y,\M(-1)[-2]|_Y)$ cancels the $\P^1_Y\sqcup_Y-$ again by the affine proper base change, we conclude that
$$\RG_\et(\bl_Y(\A^1_X),g^*\M)=\RG_\et(\P(\NN_{Y/X}\oplus\O_Y),\pi_0^*\M),$$
where $\pi_0$ denotes the projection $\P(\NN_{Y/X}\oplus\O_Y)\to X$. On the other hand, note that $\bl_Y(\A^1_X)\setminus\spec(\A^1_Y)$ is the total space of $\O(-1)$ on $\bl_Y(X)$, so by Lemma \ref{eta1} and the affine proper base change,
$$\RG_\et(\bl_Y(\A^1_X)\setminus\spec(\A^1_Y),\pi^*\M)=\RG_\et(\bl_Y(X),\beta^*\M)=\RG_\et(\P(\NN_{Y/X}),\beta_0^*\M),$$
where $\beta\colon\bl_Y(X)\to X$ and $\beta_0\colon\P(\NN_{Y/X})\to X$ are the projections. Recall that the total space of $\O(-1)$ on $\P(\NN_{Y/X})$ is $\P(\NN_{Y/X}\oplus\O_Y)\setminus0$, so we finally have
\begin{align*}
    &\quad\ \RG_\et(\bl_Y(\A^1_X),\bl_Y(\A^1_X)\setminus\spec(\A^1_Y),g^*\M)\\
    &=\fib(\RG_\et(\P(\NN_{Y/X}\oplus\O_Y),\pi_0^*\M)\to\RG_\et(\P(\NN_{Y/X}),\beta_0^*\M))\\
    &=\RG_\et(\P(\NN_{Y/X}\oplus\O_Y),\P(\NN_{Y/X}\oplus\O_Y)\setminus0,\pi_0^*\M),
\end{align*}
but this is exactly the right hand side
$$\RG_\et(\tilde{\X}_0,\tilde{\X}_0\setminus\tilde{\Y}_0,\tilde{f}_0^*\M)=\RG_\et(\NN_{Y/X},\NN_{Y/X}\setminus0,\tilde{f}_0^*\M),$$
by excision. 
\end{proof}

\begin{corollary}\label{etwtdnc}
For every $n\in\Z$, the inclusion $\X_0\to\X$ induces an isomorphism
$$\RG_{\et,\Y}(\X,\Z_p(n))\cong\RG_{\et,\Y_0}(\X_0,\Z_p(n)).$$
\end{corollary}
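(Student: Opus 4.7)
The plan is to reduce the statement at the $\G_m$-quotient level to Proposition \ref{etdnc} at the un-quotiented level, via Čech descent along the covering $\tilde\X \to \X$.

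First I would reduce to the affine case. Both sides are étale sheaves on $X$, so one may assume $X=\spec(A)$ and $Y=\spec(B)$, in which case $\X=\spec(\R_{B/A})/\G_m$, $\Y=\spec(B[t])/\G_m$, $\X_0=\spec(\lsym_B^\star(I/I^2))/\G_m$, and $\Y_0=\spec(B)/\G_m$, with respective covers $\tilde\X$, $\tilde\Y$, $\tilde\X_0$, $\tilde\Y_0$ by affine schemes. (We may freely assume $X$, $Y$ are over $\Z[1/p]$, matching the hypothesis of Proposition \ref{etdnc}; this is the only setting where the Tate twist is meaningful as an étale sheaf on $\X$.)

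Next, I would compute both local étale cohomologies via the Čech nerve of the smooth covering by the scheme $\tilde\X$. By the standard expression of cohomology on a quotient stack as a cosimplicial limit along the bar construction (Remark \ref{valsta}), and since local cohomology along a closed substack is a fiber and hence commutes with limits,
$$\RG_{\et,\Y}(\X,\Z_p(n)) = \lim_{k\in\Delta}\RG_{\et,\tilde\Y\times\G_m^k}(\tilde\X\times\G_m^k,\Z_p(n)),$$
and identically for the $\X_0$-side.

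The key step is that, for each $k$, $\tilde\X\times\G_m^k$ is precisely the deformation to the normal cone of the regular immersion $Y\times\G_m^k\to X\times\G_m^k$, because the Rees algebra commutes with base change (Proposition \ref{rees}), and its closed fiber over $0\in\A^1$ is $\tilde\X_0\times\G_m^k$. Thus Proposition \ref{etdnc}, applied level-wise with $\M=\Z_p(n)$, yields term-wise isomorphisms
$$\RG_{\et,\tilde\Y\times\G_m^k}(\tilde\X\times\G_m^k,\Z_p(n))\xrightarrow{\sim}\RG_{\et,\tilde\Y_0\times\G_m^k}(\tilde\X_0\times\G_m^k,\Z_p(n))$$
functorially in $k\in\Delta$; passing to $\lim_\Delta$ delivers the corollary. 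The one point that requires care is justifying that Čech descent for the smooth $\G_m$-cover applies at the level of local étale cohomology on stacks, but this is standard from the definition of $\D_\et(-,\Z_p)$ on stacks via Remark \ref{valsta} together with compatibility of fibers with limits, so no new input beyond Proposition \ref{etdnc} is needed.
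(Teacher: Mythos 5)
Your argument is correct and matches the paper's: the paper's proof of Corollary~\ref{etwtdnc} is the single line ``This follows immediately from Proposition~\ref{etdnc} by descent,'' and your proposal is precisely a spelled-out version of that descent argument via the \v{C}ech nerve of the smooth $\G_m$-torsor $\tilde\X\to\X$, using Proposition~\ref{rees} to identify each level of the nerve as the deformation to the normal cone of $Y\times\G_m^k\to X\times\G_m^k$. The one detail worth making explicit is that the levelwise inclusions $\tilde\X_0\times\G_m^k\to\tilde\X\times\G_m^k$ are literally the pullbacks of the single map $\X_0\to\X$ along the cover, so the levelwise isomorphisms of Proposition~\ref{etdnc} automatically assemble into a map of cosimplicial objects, which is what lets you pass to the limit; and the initial reduction to $X$ affine, while harmless, is not actually needed since Proposition~\ref{etdnc} is already stated for arbitrary stacks over $\Z[1/p]$.
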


\begin{proof}
This follows immediately from Proposition \ref{etdnc} by descent. 
\end{proof}

We finally study the \'etale cohomology of the generic fiber. As before, we use the \'etale comparison of the prismatic cohomology to control it. 

\begin{proposition}
The inclusion $\X_0\to\X$ induces an isomorphism
$$\Prism_{\X,\X\setminus\Y,\perf}\cong\Prism_{\X_0,\X_0\setminus\Y_0,\perf}.$$
\end{proposition}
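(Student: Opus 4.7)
The strategy is to adapt the proof of Proposition \ref{etdnc} to the setting of perfect prismatic cohomology. The key formal properties we would use are: (i) $\Prism_{-,\perf}$ is an $\arc_p$ sheaf (so satisfies flat descent and, since proper surjections are $v$-covers and hence arc covers, also proper descent); (ii) it satisfies strong weighted homotopy invariance (Proposition \ref{perfwt}); and (iii) these two together yield a projective bundle formula for $\Prism_{-,\perf}$, and hence an analog of Poincar\'e duality at infinity of the form $i^!\pi^*(-)\cong (-)\{-1\}[-2]$ for the inclusion $i\colon\{\infty\}\hookrightarrow\P^1$ over a base.

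First, by $\arc_p$ descent I would reduce to the affine setting $X=\spec(A)$, $Y=\spec(B)$. Following the blueprint of Proposition \ref{etdnc}, I would then introduce the compactification $j\colon \bl_Y(\A^1_X)\hookrightarrow \bl_Y(\P^1_X)$, using that $\bl_Y(\P^1_X)\to X$ is proper with fiber over $Y$ equal to $\P^1_Y\sqcup_Y \P(\NN_{Y/X}\oplus\O_Y)$, and that $\bl_Y(\A^1_X)\setminus\spec(\A^1_Y)$ is the total space of $\O(-1)$ on $\bl_Y(X)$, whose fiber over $Y$ is $\P(\NN_{Y/X})$.

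Running the fiber-sequence manipulations of Proposition \ref{etdnc} with $\Prism_{-,\perf}$ in place of $p$-adic \'etale cohomology (proper base change along $\bl_Y(\P^1_X)\to X$ via (i), the projective bundle formula in place of the \'etale Poincar\'e duality computation of $i^!\pi^*\M$, and the same formula for the total-space-of-$\O(-1)$ step) should identify both $\Prism_{\tilde{\X},\tilde{\X}\setminus\tilde{\Y},\perf}$ and $\Prism_{\tilde{\X}_0,\tilde{\X}_0\setminus\tilde{\Y}_0,\perf}$ (where $\tilde{\X},\tilde{\Y}$ denote the pre-$\G_m$-quotient deformation space as in Proposition \ref{etdnc}) with the common object $\Prism_{\P(\NN_{Y/X}\oplus\O_Y),\,\P(\NN_{Y/X}\oplus\O_Y)\setminus 0,\,\perf}$. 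Taking $\G_m$-descent via the \v{C}ech nerve of $\tilde{\X}\to\X$ then descends the statement to $\X$.

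The main obstacle is establishing the projective bundle formula and the Poincar\'e-duality-at-infinity analog for $\Prism_{-,\perf}$ using only $\arc_p$ descent and strong weighted homotopy invariance, since a direct appeal to prismatic Poincar\'e duality would be circular. The projective bundle formula should follow from applying Proposition \ref{perfwt} to the presentation $\P^n=(\A^{n+1}\setminus 0)/\G_m$, together with the fiber sequence relating $\Prism_{\A^{n+1},\perf}$ to $\Prism_{\A^{n+1}\setminus 0,\perf}$, along the lines of the argument in \cite[\S 9]{apc} for \'etale and Hodge cohomologies; the duality statement at $\{\infty\}$ then follows formally by taking fibers of the projective bundle formula for $\P^1$.
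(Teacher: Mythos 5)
Your approach is genuinely different from the paper's, and it has a real gap. The paper's proof is much shorter and avoids the machinery you are trying to set up: by $\arc_p$ descent it reduces to $X=\spec(R)$ with $R$ perfectoid, notes that $Y$ is then a derived complete intersection in $R$ and hence homotopically bounded and quasi-lci, and invokes Lemma \ref{qsynlimiso} to transfer Corollary \ref{0hg} (the Hodge deformation-to-the-normal-cone comparison, proved by explicit computation with the Rees algebra) to the \emph{unperfected} prismatic cohomology; finally Lemma \ref{colimlimcoco} applied to the Frobenius colimit $\Prism_{-,\perf}/p = \colim_\varphi \Prism_{-}/p$ passes to the perfected theory. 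In short: the paper never needs a projective bundle formula or a proper base change theorem for $\Prism_{-,\perf}$; everything is routed through the Hodge computation.

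The gap in your plan is point (iii). You assert that weighted homotopy invariance plus $\arc_p$ descent yields a projective bundle formula and a ``Poincar\'e duality at infinity'' statement $i^!\pi^*(-)\cong(-)\{-1\}[-2]$ for $\Prism_{-,\perf}$. The Chern-class and projective-bundle machinery of \cite[\S 9]{apc} (and Situation \ref{chernsit} of this paper) is built for \emph{graded} theories carrying a first Chern class $c_1\in\pi_0\FF^1(\B\G_m)$, and $\Prism_{-,\perf}$ is not a graded theory of this form and does not obviously carry such a class. The twist $\{-1\}$ in your formula is not defined notation for the functor $\Prism_{-,\perf}$: the paper only defines $I^m$-twists $\Prism^{[m]}_{-,\perf}$ and never a Breuil--Kisin twist acting on it, precisely because those structures are only available after comparing with relative prismatic cohomology over a perfect prism --- which is exactly what the paper's reduction to the perfectoid case is for. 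So as written, step (iii) is not a formal consequence of (i) and (ii); it is the missing content of the proposition. Moreover, the descent claim in (i) (proper surjections are arc covers, hence descent) gives hyperdescent along proper \emph{covers}, not the base-change-to-closed-fiber statement used at the Gabber step of Proposition \ref{etdnc}; arc sheaves do satisfy a Milnor-excision/formal-gluing statement that plays that role, but you would need to invoke it carefully rather than collapse it to ``proper descent.'' If you want to salvage your geometric route, you would essentially have to redo the reductions in the proof of Proposition \ref{perfwt} to reduce to a perfectoid base first and then compare with the graded theory there, at which point you have reproduced the paper's argument in a longer form.
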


\begin{proof}
The proof is similar to that of Proposition \ref{perfwt}. Both sides are $\arc_p$ sheaves over $X$, so we can assume that $X=\spec(R)$ is the spectrum of a perfectoid ring, and then $Y=\spec(S)$ is the spectrum of an $R$-algebra. Now $S=R/(x_1,\ldots,x_r)$ is a derived complete intersection in $R$. Then $S$ is homotopically bounded and quasi-lci over $R$, so by Lemma \ref{qsynlimiso} and Corollary \ref{0hg}, we have $\Prism_{\X,\X\setminus\Y}=\Prism_{\X_0,\X_0\setminus\Y_0}$. Finally Lemma \ref{colimlimcoco} gives us the desired equality. 
\end{proof}

\begin{corollary}\label{adicdnc}
Let $\FF$ be the big \'etale sheaf $R\mapsto\RG_\et(\spec(\hat{R}[1/p]),\Z_p(n))$, where $n\in\Z$ is fixed. Then the inclusion $\X_0\to\X$ induces an isomorphism
$$\FF(\X,\X\setminus\Y)\cong\FF(\X_0,\X_0\setminus\Y_0).$$
\end{corollary}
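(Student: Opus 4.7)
The plan is to mirror the proof of Corollary \ref{adicwt} (and to use the preceding proposition on perfect prismatic cohomology) almost verbatim. First, by \cite[Corollary 6.17]{arc}, both sides of the claimed isomorphism are $\arc_p$ sheaves on the base $X$. This lets me pass to a perfectoid cover of $X$, and in particular assume that $X$ lies over $\spec(\Z_p^\cyc)$. Over such a base the Tate twist $\Z_p(n)$ is canonically trivialized on the adic generic fiber, so without loss of generality $n=0$.

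Next, I would invoke the étale comparison in the form
\[
\RG_\et(\spec(\hat{S}[1/p]),\Z_p) = (\Prism_{S,\perf}[1/I]_p^\wedge)^{\varphi=1},
\]
which the remark after the definition of $\Prism_{-,\perf}$ asserts for an arbitrary ring $S$. Applied to the Čech nerves computing $\FF(\X)$, $\FF(\X\setminus\Y)$, $\FF(\X_0)$, and $\FF(\X_0\setminus\Y_0)$, this expresses both $\FF(\X,\X\setminus\Y)$ and $\FF(\X_0,\X_0\setminus\Y_0)$ as the $\varphi=1$ equalizer of $\Prism_{-,\perf}[1/I]_p^\wedge$ applied to the relevant pairs of stacks. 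By the preceding proposition we already have
\[
\Prism_{\X,\X\setminus\Y,\perf} \cong \Prism_{\X_0,\X_0\setminus\Y_0,\perf},
\]
so it suffices to check that the operations ``invert $I$'', ``$p$-complete'', and ``take $\varphi=1$ equalizer'' are compatible with the fiber sequence $\Prism_{-,\perf}(\X,\X\setminus\Y) \to \Prism_{-,\perf}(\X) \to \Prism_{-,\perf}(\X\setminus\Y)$ on each side.

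The only point requiring care is the compatibility of inverting $I$ (a filtered colimit of sheaves $\Prism^{[m]}_{-,\perf}$) with the limits that produce $\FF(\X,\X\setminus\Y)$ from its values on the terms of the Čech nerve. But $\Prism^{[m]}_{-,\perf}$ is coconnective on every ring, so Lemma \ref{colimlimcoco} (applied just as at the end of the proof of Corollary \ref{adicwt}) lets the filtered colimit pass through the cosimplicial totalization. Once this is granted the isomorphism is formal, and the corollary follows. The main obstacle, if any, is really just bookkeeping this commutation of limits and colimits; the hard input, namely the isomorphism on perfect prismatic cohomology of the pair $(\X,\X\setminus\Y)$, is already in hand.
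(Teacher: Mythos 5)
Your proposal is correct and follows essentially the same route as the paper: passing to $\arc_p$ descent over a perfectoid base to reduce to $n=0$, invoking the étale comparison $\FF(U)=(\Prism_{U,\perf}[1/I]_p^\wedge)^{\varphi=1}$, applying the preceding proposition on perfect prismatic cohomology of $(\X,\X\setminus\Y)$, and using Lemma~\ref{colimlimcoco} together with coconnectivity to commute the filtered colimit $[1/I]$ past the \v{C}ech totalizations. The only cosmetic difference is that you spell out the commutation bookkeeping slightly more explicitly than the paper does.
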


\begin{proof}
The proof is similar to that of Corollary \ref{adicwt}. Both sides are $\arc_p$ sheaves over $X$, so we can assume that $X=\spec(R)$ for a $\Z_p^\cyc$-algebra $R$. Since $\Z_p(n)\cong\Z_p$ on $\spec(\Q_p^\cyc)$, we can assume $n=0$. Also, both are quotients of qcqs schemes by $\G_m$, so we can use \v{C}ech nerves to compute them. Since $\FF(U)=(\Prism_{U,\perf}[1/I]_p^\wedge)^{\varphi=1}$ for any qcqs scheme $U$ (which follows from the affine case by taking a finite limit), the desired equality follows from Proposition \ref{perfwt} and Lemma \ref{colimlimcoco}. 
\end{proof}

Finally, we can prove our theorem on the syntomic cohomology of the deformation to the normal cone. 

\begin{theorem}\label{syndnc}
For every integer $n$, the inclusion $\X_0\to\X$ induces an isomorphism
$$\RG_\syn(\X,\X\setminus\Y,\Z_p(n))\cong\RG_\syn(\X_0,\X_0\setminus\Y_0,\Z_p(n)).$$
\end{theorem}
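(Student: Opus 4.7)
The plan is to mimic the proof of Theorem \ref{synwt}: combine a relative analog of Lemma \ref{limiso} with the three relative-cohomology inputs already at our disposal, namely Corollary \ref{0hg} for Hodge, Corollary \ref{etwtdnc} for étale $\Z_p(\star)$, and Corollary \ref{adicdnc} for étale cohomology of the adic generic fiber. Each of these produces precisely the relative (equivalently, local-cohomology) isomorphism we need for its respective sheaf theory on the pair $(\X, \X\setminus\Y)$ versus $(\X_0, \X_0\setminus\Y_0)$.

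The main observation is that Lemma \ref{limiso} admits a straightforward relative version, in which $\FF(X)$ is replaced by $\FF(X,U) = \fib(\FF(X) \to \FF(U))$ for an open immersion $U \to X$. Indeed, the entire proof of Lemma \ref{limiso} uses only fiber sequences, iterated extensions, direct sums, and limits of big étale sheaves: the graded-piece identification $\gr_n \OD_- = \LO_-^n[-n]$, the fiber sequence $\gr^m \Prism_R\{n\} \to \fil_m \widehat{\OD_R} \to \fil_{m-1} \widehat{\OD_R}$, the Nygaard completion as a limit of finite extensions of these graded pieces, and finally the equalizer defining syntomic cohomology combined with the fiber sequence $j_!\Z_p(n) \to \Z_p(n) \to \Z_p(n)(\hat{X})$ from Example \ref{syncoh}. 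All of these operations commute with taking fibers along $(\X,\X\setminus\Y) \to (\X_0,\X_0\setminus\Y_0)$, so the argument lifts verbatim to the relative setting.

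With this in hand, feeding the three corollaries as the relative-isomorphism hypotheses yields the theorem. The only wrinkle is the $j_!\Z_p(\star)$ input to the syntomic equalizer: for a general stack $X$ over $\spec(\Z)$, $j_!\Z_p(n)(X)$ computes the étale cohomology of $X \times_{\spec(\Z)} \spec(\Z[1/p])$ with $\Z_p(n)$-coefficients, and since the formation of the deformation to the normal cone commutes with base change along $\spec(\Z[1/p]) \to \spec(\Z)$, we may replace $X$ and $Y$ by their base changes to $\Z[1/p]$ and apply Corollary \ref{etwtdnc} directly. I do not anticipate any substantive obstacle; the proof is essentially a formal consequence of the preceding sections, with the bookkeeping of ``relativizing'' Lemma \ref{limiso} being the only mildly nontrivial step.
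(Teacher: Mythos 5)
Your proposal identifies exactly the same three ingredients as the paper's proof, which is simply: ``Combine Lemma \ref{limiso}, Corollary \ref{0hg}, Corollary \ref{etwtdnc}, and Corollary \ref{adicdnc}.'' The substance of your argument is correct, but two comments are in order.

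First, the ``relative version of Lemma \ref{limiso}'' you propose is a detour that the paper does not need: Lemma \ref{limiso} is already stated for an arbitrary finite diagram of stacks mapping to $X$, and the relative statement $\FF(\X,\X\setminus\Y)\cong\FF(\X_0,\X_0\setminus\Y_0)$ is equivalent to the square
\[
\begin{tikzcd}
\FF(\X)\ar[r]\ar[d]&\FF(\X\setminus\Y)\ar[d]\\
\FF(\X_0)\ar[r]&\FF(\X_0\setminus\Y_0)
\end{tikzcd}
\]
being cartesian, i.e.\ to $\FF(\X)\to\lim\FF(X_i)$ being an isomorphism for the three-object span $\X_0\leftarrow\X_0\setminus\Y_0\to\X\setminus\Y$ over $\X$. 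So the lemma applies directly, and ``relativizing'' it is unnecessary bookkeeping (though harmless, since the proof of Lemma \ref{limiso} does indeed commute with finite limits as you say).

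Second, your treatment of $j_!\Z_p(\star)$ contains a genuine error, though it does not affect the conclusion. It is \emph{not} true that $j_!\Z_p(n)(X)$ computes the \'etale cohomology of $X\times_{\spec(\Z)}\spec(\Z[1/p])$: for $j\colon\spec(R[1/p])\to\spec(R)$, the complex $\RG_\et(\spec(R),j_!\Z_p(n))$ is the compactly-supported version, fitting into a fiber sequence
\[
\RG_\et(\spec(R),j_!\Z_p(n))\to\RG_\et(\spec(R[1/p]),\Z_p(n))\to\RG_\et(\spec(\hat{R}[1/p]),\Z_p(n))
\]
(this is precisely why the ``either/or'' in Lemma \ref{limiso} is phrased as it is). Fortunately you do not need the $j_!$ branch at all: since you already cite Corollaries \ref{etwtdnc} and \ref{adicdnc}, which supply exactly the two functors $R\mapsto\RG_\et(R[1/p],\Z_p(\star))$ and $R\mapsto\RG_\et(\hat{R}[1/p],\Z_p(\star))$ appearing in the ``or'' alternative of Lemma \ref{limiso}, the whole ``only wrinkle'' paragraph is superfluous. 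Dropping it and applying Lemma \ref{limiso} directly to the span above gives the paper's proof.
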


\begin{proof}
Combine Lemma \ref{limiso}, Corollary \ref{0hg}, Corollary \ref{etwtdnc}, and Corollary \ref{adicdnc}. 
\end{proof}

\begin{remark}
We wonder whether the same holds for general closed immersions, not necessarily regular. For the \'etale cohomology the same proof seems to show the general statement, while for the Hodge cohomology and the \'etale cohomology of the generic fiber we have no idea. 
\end{remark}

\subsection{Constructing cycle classes}\label{subsectioncycle}
We now use what we have computed in the previous subsection to construct cycle classes from Thom classes. 

\begin{theorem}[Characteristic classes of regular pairs]\label{charclasspair}
Let $\CC$ be a pointed presentable category and let $\FF$ be a big \'etale sheaf with values in $\CC$. Fix $r\in\N$. Let $\E_r$ denotes the universal rank $r$ vector bundle over $\B\GL_r$ and $0_r$ denotes the zero section. Then since $0_r\to\E_r$ is a regular immersion of codimension $r$, it corresponds to a map $\beta_r\colon\E_r\to\pair_r$ with $\beta_r^{-1}(\closed_r)=0_r$. Assume that for any regular immersion $Y\to X$ of stacks of codimension $r$, we have
$$\FF(\X,\X\setminus\Y)=\FF(\X_0,\X_0\setminus\Y_0),$$
where $\Y\to\X$ is the deformation to the normal cone of $Y\to X$ and $\Y_0\to\X_0$ is its closed fiber. Then $\beta_r$ induces an isomorphism
$$\beta_r^*\colon\FF(\pair_r,\open_r)\cong\FF(\E_r,\E_r\setminus0_r).$$
In other words, all the $\FF$-characteristic classes of pairs that vanish on the open loci come from those of zero sections in rank $r$ vector bundles. 
\end{theorem}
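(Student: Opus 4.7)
The plan is to apply the DNC hypothesis to the universal regular immersion $\closed_r \to \pair_r$ and exploit naturality under pullback along $\beta_r$, reducing the claim to a Thom isomorphism statement via Theorem~\ref{thom}.

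First, I would check that $\closed_r \to \pair_r$ is itself a regular immersion of codimension $r$: for any test map $\spec(A) \to \pair_r$ classifying a regular pair $A \to B$, the fiber of $\closed_r$ over $\spec(A)$ is precisely $\spec(B)$. Let $\Y^u \hookrightarrow \X^u$ denote its DNC family, so the closed fiber is $\closed_r/\G_m \hookrightarrow N/\G_m$ where $N$ is the universal rank $r$ conormal bundle on $\closed_r$. The hypothesis yields an isomorphism
\[
\FF(\X^u, \X^u \setminus \Y^u) \cong \FF(N/\G_m, N/\G_m \setminus \closed_r/\G_m),
\]
and applying the hypothesis similarly to $0_r \to \E_r$ gives $\FF(\X^e, \X^e \setminus \Y^e) \cong \FF(\E_r/\G_m, \E_r/\G_m \setminus 0_r/\G_m)$. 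Since $0_r = \beta_r^{-1}(\closed_r)$ and DNC commutes with base change, $\X^e = \E_r \times_{\pair_r} \X^u$, and the two isomorphisms are compatible with the map induced by $\beta_r$ by functoriality.

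The Thom isomorphism of Theorem~\ref{thom} then identifies the right-hand sides with shifts of $\FF(\closed_r \times \B\G_m)$ and $\FF(\B\GL_r \times \B\G_m)$ respectively, and the comparison map becomes the pullback along the natural map $\B\GL_r = 0_r \to \closed_r$; this map is a section of the classifying map $\gamma\colon \closed_r \to \B\GL_r$ of $N$, coming from the canonical identification $\E_r \cong \beta_r|_{0_r}^*N$ of normal bundles (the normal bundle of the zero section of a vector bundle being the bundle itself). This reduces the comparison to a Chern-class computation in the formalism of Situation~\ref{chernsit}.

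The main obstacle is bridging the total-DNC cohomology with $\FF(\pair_r, \open_r)$ (and similarly $\FF(\X^e, \X^e \setminus \Y^e)$ with $\FF(\E_r, \E_r \setminus 0_r)$), since the stated hypothesis only equates the total DNC with its closed fiber rather than with its general fiber. My expected approach is to iterate the hypothesis, or to use the Thom isomorphism combined with the strong weighted homotopy invariance built into Situation~\ref{chernsit} to compute both the general-fiber and the total-DNC cohomologies explicitly and check they agree.
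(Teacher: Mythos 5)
Your setup is correct and matches the paper's: $\closed_r\to\pair_r$ is indeed a regular immersion of codimension $r$ (its fiber over a test point classifying a regular pair $A\to B$ is $\spec(B)$), and you correctly apply the hypothesis to this universal immersion and to $0_r\to\E_r$, noting the compatibility via base change along $\beta_r$. You also correctly identify the central obstacle: the hypothesis only says $\FF(\X,\X\setminus\Y)\cong\FF(\X_0,\X_0\setminus\Y_0)$, a comparison between the total deformation space and its \emph{closed} fiber, while the quantity you ultimately want, $\FF(\pair_r,\open_r)$, lives on the \emph{open} fiber.

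However, your proposed resolutions of this obstacle do not work. First, invoking the Thom isomorphism of Theorem~\ref{thom} and strong weighted homotopy invariance is illegitimate here: Theorem~\ref{charclasspair} is stated for \emph{any} big \'etale sheaf $\FF$ with values in a pointed presentable category subject only to the DNC hypothesis --- Situation~\ref{chernsit} is not assumed. (This generality matters; the theorem is deliberately separated from the Chern-class formalism so that it can feed the construction of cycle classes rather than presuppose them.) Second, even if those tools were available, the closed-fiber identifications they would give do not by themselves bridge to the open fiber $\FF(\pair_r,\open_r)$; and ``iterating the hypothesis'' has no clear meaning here, since iterating DNC produces nothing new.

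The missing idea is a \emph{retraction}. The open immersion $\pair_r\to\DNC_r$ (inclusion of the fiber over $\G_m/\G_m$) composed with the projection $\DNC_r\to\pair_r\times\A^1/\G_m\to\pair_r$ is the identity, so $\pair_r$ is a retract of $\DNC_r$. After taking $\FF$ relative to open loci and using the hypothesis-provided isomorphism $\FF(\DNC_r,\open_{\DNC_r})\cong\FF(\DNC_{r,0},\open_{\DNC_{r,0}})$ together with the classifying map $\DNC_{r,0}\to\E_r$ of the weighted normal bundle, this retraction diagram exhibits $\FF(\pair_r,\open_r)$ as a retract of $\FF(\E_r,\E_r\setminus 0_r)$, giving $\beta_r^*$ a one-sided inverse. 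You then still need a second step to upgrade this to a two-sided inverse: one base-changes the entire diagram along $\beta_r$ from $\pair_r$ to $\E_r$ and uses Lemma~\ref{dncvb} (the DNC of the zero section of a vector bundle $V$ is the zero section of $V(-1)$ over $X\times\A^1/\G_m$) to produce a classifying map $\DNC_r\times_{\pair_r}\E_r\to\E_r$ exhibiting $\E_r$ itself as a retract, forcing the relevant endomorphism of $\FF(\E_r,\E_r\setminus 0_r)$ to be the identity. Neither the retraction trick nor this second step appears in your proposal, so as written the argument has a genuine gap at exactly the point you flagged as the main obstacle.
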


\begin{proof}
Let $\DNC_r\to\pair_r\times\A^1/\G_m$ denote the deformation to the normal cone of $\pair_r$, and $\open_{\DNC_r}$ denote the complement of $\closed_r\times\A^1/\G_m$ in $\DNC_r$. Let $\DNC_{r,0}$ and $\open_{\DNC_{r,0}}$ denote the fiber of $\DNC_r$ and $\open_{\DNC_r}$, respectively. By construction, $\DNC_{r,0}$ is the weighted normal bundle of $\closed_r\to\pair_r$, so there is a natural classifying map $\DNC_{r,0}\to\E_r$. Consider the diagram
$$\begin{tikzcd}
    &\DNC_{r,0}\ar[r]\ar[d]&\E_r\ar[d]\\
    \pair_r\ar[r]&\DNC_r\ar[r]&\pair_r
\end{tikzcd}$$
where the lower left map is the natural open immersion identifying $\pair_r$ with the fiber over $\G_m/\G_m$ of $\DNC_r$. Then it exhibits $\pair_r$ as a retract of $\DNC_r$. Now take $\FF$ relative to the open parts. Since the central vertical map becomes an isomorphism after this operation, the diagram exhibits $\FF(\pair_r,\open_r)$ as a retract of $\FF(\E_r,\E_r\setminus0_r)$, i.e.\ $\beta_r^*$ has a left inverse. 

Note that the above diagram gives an endomorphism of $\FF(\E_r,\E_r\setminus0_r)$ by splicing $\beta_r\colon\E_r\to\pair_r$ to its lower left corner. We will finish the proof by showing that it is the identity, since then the left inverse of $\beta_r^*$ will also be the right inverse. To this end, we can base change the above diagram from $\pair_r$ to $\E_r$ and get the diagram
$$\begin{tikzcd}
    &\DNC_{r,0}\times_{\pair_r}\E_r\ar[r]\ar[d]&\E_r\\
    \E_r\ar[r]&\DNC_r\times_{\pair_r}\E_r\ar[ur,dashed]
\end{tikzcd}$$
where the upper right map is not the projection but the classifying map of the weighted normal bundle. Now by Lemma \ref{dncvb} below, there is a map indicated by the dashed arrow that makes the diagram commutes and exhibits $\E_r$ as a retract of $\DNC_r\times_{\pair_r}\E_r$, therefore completing the proof. 
\end{proof}

\begin{lemma}\label{dncvb}
The deformation to the normal cone of the zero section of a vector bundle $V$ on a stack $X$ is the zero section of the vector bundle $V(-1)$ on $X\times\A^1/\G_m$, where $(-1)$ denotes the twist by the inverse of the universal bundle on $\B\G_m$. Therefore, it restricts to $V$ itself on the open fiber, and restricts to the weighted normal bundle on the closed fiber. 
\end{lemma}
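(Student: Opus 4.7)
The plan is to reduce to the local trivial case via descent, then compute the Rees algebra directly. Since the formation of $\R_{B/A}$ commutes with base change in $A$ by Proposition~\ref{rees}, the entire deformation to the normal cone construction is compatible with base change in $X$, and the statement is \'etale-local on $X$. Hence I may assume $X = \spec(R)$ is affine and $V = \A^r_X$ is the trivial rank $r$ bundle, so the zero section corresponds to the classical regular pair $A = R[x_1, \ldots, x_r] \to B = R$ with augmentation ideal $I = (x_1, \ldots, x_r)$.

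Since this pair is a classical local complete intersection, Proposition~\ref{rees} identifies $I^\bullet$ with the usual $I$-adic filtration, and the Rees algebra is the graded subring $\bigoplus_n I^{-n} t^n \subset R[x_1, \ldots, x_r][t^{\pm 1}]$. Setting $y_i := x_i t^{-1}$ of grade $-1$, a direct check grade-by-grade gives
\begin{equation*}
    \R_{B/A} = R[t, y_1, \ldots, y_r]
\end{equation*}
as a $\Z$-graded $R$-algebra, with $t$ of grade $+1$ and $y_i$ of grade $-1$; the original generators $x_i = y_i t$ become redundant in grade $0$. Indeed, each monomial $y^\alpha t^k$ corresponds under this identification to $x^\alpha t^{k - |\alpha|} \in I^{|\alpha|} t^{k - |\alpha|} = \R_{k - |\alpha|}$, and this gives a grade-preserving bijection of $R$-bases.

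Consequently, $\spec(\R_{B/A})/\G_m \to \spec(R[t])/\G_m = X \times \A^1/\G_m$ is the total space of a rank $r$ vector bundle whose fiber coordinates $y_1, \ldots, y_r$ lie in weight $-1$; this is by construction exactly the vector bundle $V(-1) = V \otimes \O(-1)$. The closed immersion $\spec(B[t])/\G_m \to \spec(\R_{B/A})/\G_m$ is cut out by $y_1 = \cdots = y_r = 0$, so it is the zero section of $V(-1)$, proving the first assertion. The two fiber statements are then immediate: the line bundle $\O(-1)$ is trivial over $\G_m/\G_m = *$, so $V(-1)$ restricts to $V$ there; and over $0/\G_m = \B\G_m$, $V(-1)$ restricts to $V$ equipped with its weight $-1$ $\G_m$-action, which is the weighted normal bundle of Definition~\ref{dtnc} since $\NN_{X/V} = V$ and its weighting is induced by the grading of $\lsym(I/I^2)$ with $I/I^2 = V^\vee$ in grade $+1$. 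The only mild care required is to align the sign conventions for $\O(n)$ on $\B\G_m$ with the grading of the Rees algebra, but this is immediate bookkeeping once the explicit presentation is in hand.
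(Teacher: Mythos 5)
Your proof is correct and essentially follows the same route as the paper: reduce to the trivial rank $r$ bundle over an affine base, compute the Rees algebra explicitly to identify it with the symmetric algebra on $V^\vee(1)$, and then glue using compatibility with linear changes of coordinates. The only minor difference is that the paper reduces to rank one via products (the DNC of $\Z[x]\to\Z$ is $\Z[xt^{-1},t]\to\Z[t]$, then apply compatibility with products of pairs), whereas you invoke the classical l.c.i. case of Proposition~\ref{rees} directly to write down $\R_{B/A}=R[t,y_1,\ldots,y_r]$ with $y_i=x_it^{-1}$ all at once; this is a presentational rather than substantive difference.

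One small point worth tightening: the globalization step needs the isomorphism $\spec(\R_{B/A})/\G_m \cong V(-1)$ to be canonical, not just to exist locally. This holds because the identification $y_i = x_it^{-1}$ is manifestly equivariant under the linear transition functions of the vector bundle (if $x_i' = \sum_j a_{ij}x_j$ then $y_i' = \sum_j a_{ij}y_j$), which is the content of the paper's phrase ``compatible with change of coordinates.'' You implicitly rely on this when asserting the statement is étale-local on $X$; it would be worth saying explicitly.
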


\begin{proof}
Recall that the deformation to the normal cone of $\Z[x]\to\Z$ is $\Z[xt^{-1},t]\to\Z[t]$. By its compatibility of base change and products, we know that the deformation to the normal cone of $B[x_1,\ldots,x_r]\to B$ is $B[x_1t^{-1},\ldots,x_rt^{-1},t]\to B[t]$, which is the zero section of the $(-1)$-twisted trivial bundle on $\spec(B)\times\A^1/\G_m$ as described. Note that this identification is compatible with change of coordinates, so we can globalize it and conclude. 
\end{proof}

\begin{remark}\label{relcharpair}
The proof of Theorem \ref{charclasspair} actually proves a stronger statement: Let $\FF$ be a big \'etale sheaf over $\pair_r$ that satisfies the same assumption with points taken over $\pair_r$. Then $\beta_r$ induces an isomorphism
$$\beta_r^*\colon\FF(\pair_r,\open_r)\cong\FF(\E_r,\E_r\setminus0_r),$$
still with points taken over $\pair_r$. 
\end{remark}

\begin{remark}\label{prodpair}
Let $\FF$ be as in Theorem \ref{charclasspair}, and $Y\to X$ be a map of stacks. Then $\GG(-)=\FF(X\times-,Y\times-)$ also satisfies the same assumption as $\FF$ does, since deformation to the normal cone commutes with $X\times-$ and $Y\times-$. Therefore
$$\FF((X,Y)\times(\pair_r,\open_r))\cong\FF((X,Y)\times(\E_r,\E_r\setminus0)),$$
where the product is understood as
$$(X_0,Y_0)\times(X_1,Y_1)=(X_0\times X_1,(Y_0\times X_1)\sqcup_{Y_0\times Y_1}(X_0\times Y_1)).$$
\end{remark}

In the rest of this subsection, let $\FF$ be as in Situation \ref{chernsit} and suppose $\FF$ satisfies the assumption of Theorem \ref{charclasspair} for all $r\in\N$. By the previous subsection, this holds for $\FF^n(R)$ being $\LO^n_R[n]$, $\RG_\et(R[1/p],\Z_p(n))[2n]$, and $\RG_\syn(R,\Z_p(n))[2n]$. 

\begin{definition}[Cycle classes]\label{defcyc}
For $r\in\N$, let $\cl_r^\FF\in\Omega^\infty\FF^r(\pair_r,\open_r)$ be the image of the universal Thom class $\Th_r^\FF\in\Omega^\infty\FF^r(\E_r,\E_r\setminus0_r)$ under the isomorphism in Theorem \ref{charclasspair}, called the \emph{universal cycle class} of codimension $r$. For a regular immersion $Y\to X$ of codimension $r$, let $\cl_{Y/X}^\FF\in\Omega^\infty\FF^r(X,X\setminus Y)$ be the pullback of $\cl_r^\FF$ along the classifying map $X\to\pair_r$, called the \emph{cycle class} of $Y$ in $X$. By abuse of notation, we also use $\cl_{Y/X}^\FF$ to denote its image in $\Omega^\infty\FF^r(X)$. 
\end{definition}

\begin{remark}[Relation with blowup]
Suppose the blowup formula holds, i.e.
$$\begin{tikzcd}
    \FF(D)&\FF(\tilde{X})\ar[l]\\
    \FF(Y)\ar[u]&\FF(X)\ar[l]\ar[u]
\end{tikzcd}$$
is a pullback square for every regular immersion $Y\to X$ of codimension $r$ with blowup $\tilde{X}$ and exceptional divisor $D$ (by \cite[Theorem 9.4.1]{apc} this is the case for syntomic cohomology). Suppose further that we have a decomposition
$$\FF^\star(\tilde{X})=\FF^\star(X)\oplus\bigoplus_{i=1}^{r-1}\FF^{\star-i}(Y)$$
functorial on $Y\to X$, extending the projective bundle formula of $D=\P(\NN_{Y/X})$. Denoting $U=X\setminus Y=\tilde{X}\setminus D$, one gets from above a similar decomposition
$$\FF^\star(\tilde{X},U)=\FF^\star(X,U)\oplus\bigoplus_{i=1}^{r-1}\FF^{\star-i}(Y).$$
In this case, $\cl_{Y/X}^\FF$ can also be defined as the image of $-c_1(-D)^r\in\Omega^\infty\FF^r(\tilde{X},U)$ under the projection to $\Omega^\infty\FF^r(X,U)$ (cf.\ \cite[Definition 1.1.2]{gabberpurity}). In fact, by functoriality and Theorem \ref{charclasspair}, it suffices to check the case $0_r\to\E_r$, which can be easily done by hand. 
\end{remark}

\begin{remark}[Uniqueness]\label{unicyc}
By construction, it is easy to see that the cycle class is uniquely determined by the following requirements:
\begin{description}
    \item[Functoriality] For $f\colon X'\to X$ a morphism of stacks with $Y'=Y\times_XX'$, we have $\cl_{Y'/X'}^\FF=f^*\cl_{Y/X}^\FF$. 
    \item[Normalization] For the zero section of a vector bundle, it is the Thom class in Theorem \ref{thom}. 
\end{description}
Therefore, the cycle class is natural in $\FF$ with respect to maps that preserve $c_1$, cf.\ Remark \ref{natthom}. 
\end{remark}

\begin{remark}[Multiplicativity]\label{mult}
By Theorem \ref{charclasspair} and the additivity of Thom classes, one easily deduces that $\cl_{r'}^\FF\boxtimes\cl_{r''}^\FF=m^*\cl_r^\FF$, where $r=r'+r''$ and $m\colon\pair_{r'}\times\pair_{r''}\to\pair_r$ represents the intersection. By Remark \ref{unithomadd} and Theorem \ref{charclasspair}, the cycle classes are uniquely determined by multiplicativity and the normalization in codimension $1$, which states that $\cl_{D/X}=c_1(\O(D))$ for a Cartier divisor $D\to X$. 
\end{remark}

The multiplicativity in the above remark has no homotopy coherence a priori, but the coconnectivity condition on $\FF$ actually forces the coherence. This will not be used in the sequel. 

\begin{lemma}\label{1hom}
Let $\CC^\otimes$ be a symmetric monoidal category and let $A,B\in\CAlg(\CC)$. Suppose for every $n\in\N$ that $\Hom_\CC(A^{\otimes n},B)$ is $0$-truncated. Then
$$\Hom_{\CAlg(\CC)}(A,B)=\Hom_{\CAlg(\h\CC)}(A,B)$$
and is $0$-truncated, where $\h\CC$ means the homotopy $1$-category of $\CC$. 
\end{lemma}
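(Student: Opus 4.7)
The plan is to resolve $A$ via the monadic bar construction over the free commutative algebra functor, express $\Hom_{\CAlg(\CC)}(A,B)$ as a totalization involving only mapping spaces $\Hom_\CC(A^{\otimes n},B)$, and then exploit the $0$-truncatedness hypothesis to conclude.

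First, I would use the free-forgetful adjunction $\lsym\colon\CC\rightleftarrows\CAlg(\CC)\colon U$ with underlying monad $T=U\lsym$ on $\CC$ given by $T(X)=\bigsqcup_{n\ge 0}(X^{\otimes n})_{h\Sigma_n}$. The monadic bar resolution expresses $A$ as $\colim_{[k]\in\Delta^\op}\lsym(T^k A)$ in $\CAlg(\CC)$, so by adjunction
$$\Hom_{\CAlg(\CC)}(A,B)\simeq\lim_{[k]\in\Delta}\Hom_\CC(T^k A,B).$$
Iterating the formula for $T$, each $T^k A$ is a small coproduct of objects of the form $(A^{\otimes n})_{hG}$ with $G$ a finite product of symmetric groups, so $\Hom_\CC(T^k A,B)$ is a small product of homotopy fixed-point anima $\Hom_\CC(A^{\otimes n},B)^{hG}$. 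By hypothesis each of these is a $\B G$-shaped limit of $0$-truncated anima, hence $0$-truncated; products and $\Delta$-limits preserve $0$-truncatedness, so $\Hom_{\CAlg(\CC)}(A,B)$ is $0$-truncated.

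Next, to identify the result with $\Hom_{\CAlg(\h\CC)}(A,B)$, I would run the parallel bar computation in $\h\CC$. Each cosimplicial term $\Hom_\CC(T^k A,B)$ is already a set and coincides with $\Hom_{\h\CC}(T^k A,B)$; the totalization is then the classical limit of sets, which computes $\Hom_{\CAlg(\h\CC)}(A,B)$ via the $1$-categorical monadic resolution of $A$ as a commutative monoid in $\h\CC$. Consequently the forgetful map $\Hom_{\CAlg(\CC)}(A,B)\to\Hom_{\CAlg(\h\CC)}(A,B)$ is a bijection.

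The main obstacle I anticipate is matching the bar totalization in $\h\CC$ with $\Hom_{\CAlg(\h\CC)}(A,B)$, since the $1$-categorical and $\infty$-categorical bar resolutions use different notions of colimit (in particular different treatments of $\Sigma_n$-coinvariants). A cleaner alternative would be an obstruction argument: any commutative monoid map in $\h\CC$ admits an $E_\infty$-lift to $\CC$ unique up to contractible ambiguity, since the anima of higher coherence homotopies is a limit of fibers of maps between the $\Hom_\CC(A^{\otimes n},B)^{hG}$, all of which are $0$-truncated and thus each fiber is either empty or contractible, with nonemptiness guaranteed by the fact that $f$ is a monoid map in $\h\CC$.
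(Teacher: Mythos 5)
Your bar-resolution route is a genuinely different approach from the paper's, and it has real gaps as stated, even though the strategy is reasonable in the presentable setting where the lemma is actually applied.

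The paper's proof works directly from Lurie's definition $\CAlg(\CC)=\Fun_{\Fin_*}(\Fin_*,\CC^\otimes)$: the mapping anima $\Hom_{\CAlg(\CC)}(A,B)$ is exhibited as a limit (indexed by arrows in $\Fin_*$) of anima of the form $\prod_{i=1}^m\Hom_\CC(A^{\otimes f^{-1}(i)},B)$, all of which are $0$-truncated by hypothesis. Since $0$-truncated anima are closed under limits, the total space is $0$-truncated, and since every term in the limiting diagram is a set, the $\infty$-categorical limit coincides with the $1$-categorical one computing $\Hom_{\CAlg(\h\CC)}(A,B)$. No hypotheses on $\CC$ beyond symmetric monoidality are used.

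Your approach, by contrast, requires additional structure that is not in the hypotheses of the lemma. To write $A\simeq\colim_{\Delta^\op}\lsym(T^kA)$ you need the free commutative algebra functor $\lsym$ to exist (so $\CC$ must have small colimits, in particular the coproducts and $\Sigma_n$-coinvariants entering $T$), you need $\otimes$ to commute with these colimits, and you need $\CAlg(\CC)\to\CC$ to be monadic (Barr--Beck--Lurie hypotheses on geometric realizations). These all hold when $\CC$ is presentably symmetric monoidal, which covers the paper's applications, but the lemma as stated is for an arbitrary symmetric monoidal $\infty$-category. More seriously, you correctly flag that the free functor in $\CC$ uses homotopy $\Sigma_n$-coinvariants $(X^{\otimes n})_{h\Sigma_n}$ while the free functor in $\h\CC$ uses strict coinvariants, so the two bar resolutions do not literally match. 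Your fallback obstruction argument is also imprecise: the fiber of a map between $0$-truncated anima at a point is $0$-truncated, but need not be ``empty or contractible''; what one needs is that the relevant coherence anima are loop spaces (hence contractible when $0$-truncated) or to organize the descent more carefully. The paper's approach sidesteps all of this by never leaving the $\Fin_*$-indexed description of $\CAlg$, where the $0$-truncation hypothesis plugs in directly.
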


\begin{proof}
Recall from \cite[Definition 2.1.3.1]{ha} that $\CAlg(\CC)=\Fun_{\Fin_*}(\Fin_*,\CC^\otimes)$, and $A,B\in\CAlg(\CC)$ are actually functors $A^\otimes,B^\otimes\colon\Fin_*\to\CC^\otimes$ that takes $\<1\>$ to $A,B\in\CC$. As $\Hom$ in the functor category, $\Hom_{\CAlg(\CC)}(A,B)$ is built up as a limit of $\Hom_{\CC^\otimes}^f(A^\otimes_{\<n\>},B^\otimes_{\<m\>})$ for $n,m\in\N$ and $f\colon\<n\>\to\<m\>$ in $\Fin_*$. Since by definition $\CC^\otimes\to\Fin_*$ is a cocartesian fibration, we have
$$\Hom_{\CC^\otimes}^f(A^\otimes_{\<n\>},B^\otimes_{\<m\>})=\Hom_{\CC^\otimes_{\<m\>}}(\otimes_f(A^\otimes_{\<n\>}),B^\otimes_{\<m\>})$$
in the notation of \cite[Remark 2.1.2.16]{ha}; then by \cite[Remark 2.1.2.19]{ha} one easily computes that
$$\Hom_{\CC^\otimes_{\<m\>}}(\otimes_f(A^\otimes_{\<n\>}),B^\otimes_{\<m\>})=\prod_{i=1}^m\Hom_\CC(A^{\otimes f^{-1}(i)},B),$$
which is $0$-truncated by assumption. Therefore, as a limit of $0$-truncated animas, $\Hom_{\CAlg(\CC)}(A,B)$ is also $0$-truncated. Finally we conclude by noting that $\Hom_{\CAlg(\h\CC)}(A,B)$ is computed by exactly the same limit. 
\end{proof}

Let $\pair_\reg\subseteq\pair$ denote the stack of regular pairs without a fixed dimension; then it inherits the $\mathbb{E}_\infty$-monoid structure of $\pair$ with respect to the smash product, as described at the end of Definition \ref{pair}. Let $\Sigma^\infty$ denote the left adjoint of the functor $\Omega^\infty$ from the category of $\Sp$-valued big \'etale sheaves to the category of pointed stacks, by abuse of notation. Then since $\pair_\reg=(\pair_0)_+\vee\bigvee_{r\in\Z_+}\pair_r$ is graded, $\Sigma^\infty\pair_\reg$ is naturally a graded $\mathbb{E}_\infty$-ring. 

\begin{proposition}
The multiplicativity in Remark \ref{mult} uniquely determines a map $\Sigma^\infty\pair_\reg\to\FF$ of big \'etale sheaves with values graded $\mathbb{E}_\infty$-rings. 
\end{proposition}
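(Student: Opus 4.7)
The plan is to apply Lemma \ref{1hom} with $A=\Sigma^\infty\pair_\reg$ and $B=\FF$ in the symmetric monoidal category $\CC$ of graded $\Sp$-valued big \'etale sheaves (with Day convolution on the $\Z$-grading). Since $\Sigma^\infty$ is symmetric monoidal and $\pair_\reg$ is an $\mathbb{E}_\infty$-monoid in pointed stacks, $\Sigma^\infty\pair_\reg$ is naturally an $\mathbb{E}_\infty$-algebra in $\CC$. The multiplicativity of Remark \ref{mult}, together with the unit in codimension zero, furnishes a commutative monoid map $\Sigma^\infty\pair_\reg\to\FF$ in $\h\CC$. If we verify that $\Hom_\CC((\Sigma^\infty\pair_\reg)^{\otimes n},\FF)$ is $0$-truncated for every $n\in\N$, then by Lemma \ref{1hom} this map lifts uniquely from $\CAlg(\h\CC)$ to $\CAlg(\CC)$, giving the desired $\mathbb{E}_\infty$-ring map.

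To verify the truncation condition, first decompose by gradings: for each $r\in\N$, the grade-$r$ component of $(\Sigma^\infty\pair_\reg)^{\otimes n}$ is the finite wedge $\bigvee_{r_1+\cdots+r_n=r}\Sigma^\infty(\pair_{r_1}\wedge\cdots\wedge\pair_{r_n})$, with $\pair_0$ pointified by adjoining a disjoint base point and smash products taken in pointed stacks. By adjunction, $\Hom_\CC$ into $\FF$ in each grade becomes a finite product of $\Omega^\infty\FF^r$ evaluated on such smash products, i.e., the relative cohomology of $\prod_i\pair_{r_i}$ with respect to the closed locus where at least one factor lies in the corresponding $\open_{r_i}$.

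Next, reduce each such relative cohomology to a Thom space computation. Iterate Theorem \ref{charclasspair} via its relative version Remark \ref{prodpair}, replacing one factor $\pair_{r_i}$ at a time with the universal vector bundle $\E_{r_i}\to\B\GL_{r_i}$ and the open complement with $\E_{r_i}\setminus 0_{r_i}$. This identifies the above with $\FF^r$ of the external smash product of the Thom pairs $(\E_{r_i},\E_{r_i}\setminus 0_{r_i})$. Applying the Thom isomorphism from Theorem \ref{thom} shifts it to $\FF^{r-\sum r_i}(\prod_i\B\GL_{r_i})$; since only tuples with $r=\sum r_i$ contribute in grade $r$, we land in $\FF^0(\prod_i\B\GL_{r_i})$. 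By the Chern-class calculation in Situation \ref{chernsit}, its homotopy groups reduce to the ``constant term'' $\pi_*\FF^0(\spec(\Z))$ in the power-series ring in the (positive-degree) Chern classes, and this is coconnective by assumption (3) of Situation \ref{chernsit}. Hence $\Omega^\infty$ of it is $0$-truncated, as required.

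The main obstacle is the iterated invocation of Theorem \ref{charclasspair}: one needs the deformation-to-the-normal-cone argument of \S\ref{subsectiondnc} to commute with base change to a product of other pair stacks and the relevant excision to propagate through a multivariable smash product. This is essentially the content of Remark \ref{prodpair}, but applying it $n$ times requires bookkeeping to deform each factor independently while holding the others fixed and to keep track of the Thom isomorphism along each successive reduction.
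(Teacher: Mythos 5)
Your proof follows the paper's route exactly: apply Lemma~\ref{1hom}, reduce to the statement $\tilde\FF^r(\bigwedge_i\pair_{r_i})\in\Sp_{\le0}$ for each grade, and use Remark~\ref{prodpair} to translate to the Thom pairs $\prod_i(\E_{r_i},\E_{r_i}\setminus 0_{r_i})$. The one step that needs a corrected reference is the final shift. You attribute it to the Thom isomorphism of Theorem~\ref{thom}, but that isomorphism concerns the $\G_m$-quotiented pair $(V/\G_m,\P(V))$, which is \emph{not} the pair $(\E_r,\E_r\setminus 0)$ appearing here. What actually supplies the needed computation is Proposition~\ref{cr0}, applied $n$ times, giving $\pi_*\FF\bigl(\prod_i(\E_{r_i},\E_{r_i}\setminus 0)\bigr)=c_{r_1}\cdots c_{r_n}\cdot\pi_*\FF(\spec(\Z))[[c_\bullet^{(1)},\ldots,c_\bullet^{(n)}]]$. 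Also note that the grade-$r$ part of this is not just the constant term $\pi_*\FF^0(\spec(\Z))$ but $\pi_*\FF^{\le0}(\spec(\Z))$ paired with Chern monomials of the complementary nonnegative grade; this is exactly why Situation~\ref{chernsit}(3) asks for coconnectivity of $\FF^{\le0}(\spec(\Z))$ and not merely of $\FF^0(\spec(\Z))$. With these two references corrected, the argument is the paper's proof.
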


\begin{proof}
By Lemma \ref{1hom}, it suffices to check for every $n\in\N$ that 
$$\Hom_{\Fun(\Ring,\Gr\Sp)}((\Sigma^\infty\pair_\reg)^{\otimes n},\FF)\in\Ani_{\le0}.$$
Passing to each grade and using the fact that $\Sigma^\infty$ takes the smash product to the tensor product, we need to check that for $r_1,\ldots,r_n\in\Z_+$ and $r=\sum_{i=1}^nr_i$,
$$\tilde\FF^r\left(\bigwedge_{i=1}^n\pair_{r_i}\right)\in\Sp_{\le0}.$$
Now by Remark \ref{prodpair}, 
$$\tilde\FF\left(\bigwedge_{i=1}^n\pair_{r_i}\right)=\FF\left(\prod_{i=1}^n(\E_{r_i},\E_{r_i}\setminus0)\right),$$
so the desired coconnectivity follows by applying Proposition \ref{cr0} $n$ times. 
\end{proof}

Cycle classes can be upgraded to Gysin maps as follows: 

\begin{definition}[Gysin maps]\label{cyclemap}
Fix $r\in\N$. Let $Y\to X$ be a regular immersion of codimension $r$, $\Y\to\X$ be its deformation to the normal cone and $\Y_0\to\X_0$ be the closed fiber. Then the \emph{Gysin map} of $Y\to X$ is constructed as the composition
$$\FF^\star(Y)\to\FF^\star(\Y_0)\to\FF^{\star+r}(\X_0,\X_0\setminus\Y_0)=\FF^{\star+r}(\X,\X\setminus\Y)\to\FF^{\star+r}(X,X\setminus Y),$$
where the first arrow comes from the projection $\Y_0=Y\times0/\G_m\to Y$, the second arrow is the Thom map, and the final arrow is restriction to the open fiber. By Theorem \ref{charclasspair}, $\cl^\FF_{Y/X}$ is the image of $1\in\Omega^\infty\FF^0(Y)$ under the Gysin map. By the definition of the Thom map, after composing with the restriction $\FF(X,X\setminus Y)\to\FF(Y)$, the Gysin map becomes multiplying $c_r(\NN_{Y/X})$. 
\end{definition}

We now identify the Gysin map for the Hodge cohomology. For a regular pair $R\to S$, the functor $i_*\colon\D(S)\to\D(R)$ commutes with colimits, preserves compact objects, and hence has a right adjoint $i^!\colon\D(R)\to\D(S)$ that preserves colimits. 

\begin{lemma}
$i^!\colon\D(R)\to\D(S)$ satisfies the projection formula $i^!-=i^!R\otimes_Si^*-$, and is compatible with base change of pairs. 
\end{lemma}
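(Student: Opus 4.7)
The plan is to extract everything from a single key observation: for a regular pair $R \to S$ of codimension $r$, the $R$-module $S$ is perfect. This is Zariski-local in $R$, and locally $S$ is obtained by base change from $\Z[x_1,\ldots,x_r] \to \Z$, whose Koszul complex gives a finite resolution by free modules. Hence $S \in \D(R)$ is compact and dualizable, with dual $S^\vee = \inhom_R(S,R)$. In particular $i_* \colon \D(S) \to \D(R)$ preserves compact objects, so its right adjoint $i^!$ preserves all colimits, and the standard adjunction identifies
\[
i^! M \;\cong\; \inhom_R(S, M)
\]
with its natural $S$-module structure (coming from the $S$-action on the source).

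For the projection formula, I would give two equivalent descriptions of the comparison map. The cleanest is to use dualizability directly: $\inhom_R(S, M) \cong S^\vee \otimes_R M$ as $S$-modules, so $i^! R = S^\vee$ and
\[
i^! R \otimes_S i^* M \;=\; S^\vee \otimes_S (S \otimes_R M) \;\cong\; S^\vee \otimes_R M \;\cong\; i^! M
\]
by associativity of tensor products. More invariantly, one first records the (trivial) projection formula for $i_*$, namely $i_*(N \otimes_S i^* M) \cong i_* N \otimes_R M$, which follows from $N \otimes_S (S \otimes_R M) = N \otimes_R M$. Taking adjoints produces a natural transformation $i^! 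R \otimes_S i^* M \to i^! M$ whose adjoint is (counit) $\otimes\, \id_M$. Both sides commute with colimits in $M$ (the right side by the first paragraph, the left obviously), so checking the isomorphism reduces to $M = R$, which is tautological.

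For compatibility with base change, let $R \to R'$ be arbitrary and set $S' = S \otimes_R R'$; then $R' \to S'$ is again a regular pair of codimension $r$, with associated closed immersion $i'$ and with base-change maps $j \colon \spec R' \to \spec R$, $j' \colon \spec S' \to \spec S$. Perfectness is preserved under base change, and the formation of duals commutes with base change, giving $(S')^\vee \cong S^\vee \otimes_R R'$. Combining with the explicit description above,
\[
i'^!(j^* M) \;\cong\; (S')^\vee \otimes_{R'} (M \otimes_R R') \;\cong\; S^\vee \otimes_R M \otimes_R R' \;\cong\; i^!M \otimes_S S' \;=\; j'^*(i^! M),
\]
which is the required base change isomorphism. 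No step here is genuinely difficult; the entire lemma is a formal consequence of perfectness of $S/R$, which itself is immediate from the Koszul presentation of the universal regular pair. The only mild subtlety to spell out carefully is the identification of $S$-module structures on both sides of the projection formula, which one handles either by invoking the universal property of $i^!$ as above or by tracing through the dualizability picture explicitly.
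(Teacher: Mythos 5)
Your proposal is correct and rests on the same key fact as the paper's proof: that $S$ is perfect (hence dualizable) in $\D(R)$ for a regular pair, which gives the explicit formula $i^!M \cong \inhom_R(S,M) \cong S^\vee \otimes_R M$ and reduces everything to associativity of tensor products and base change of duals. The paper's argument differs only cosmetically — it constructs the comparison map by adjointing the natural map $M = \inhom_R(R,M) \to i_*\inhom_S(i^!R,i^!M)$ and then, as you also do, checks on $M = R$ via commutation with colimits, while base change is verified after applying the conservative functor $i_*$ using the same identity $i_*i^! = S^\vee \otimes_R -$; your direct computation via $S^\vee$ is an essentially equivalent, and arguably cleaner, packaging of the same idea.
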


\begin{proof}
For $M\in\D(R)$, the natural map
$$M=\inhom_R(R,M)\to\inhom_R(i_*i^!R,M)=i_*\inhom_S(i^!R,i^!M)$$
gives by adjunction the desired map
$$i^!R\otimes_Si^*M\to i^!M.$$
Now it is an isomorphism when $M=R$, and both sides commute with colimits in $M$, so it is an isomorphism for all $M\in\D(R)$. For the compatibility with base change, one can check after $i_*$, which follows from the fact that $i_*i^!-=\inhom_R(S,-)=S^\vee\otimes_R-$, since by assumption $S$ is dualizable in $\D(R)$.
\end{proof}

Now consider a regular immersion $i\colon Y\to X$ of stacks of codimension $r$. Recall by Remark \ref{qcoh} that we still have the functor $i_*\colon\D(Y)\to\D(X)$. Since colimits of quasicoherent sheaves can be computed locally, $i_*$ still commutes with colimits, is compatible with base change, and thus has a right adjoint $i^!$ that also has these properties and satisfies the projection formula. 

\begin{lemma}\label{uppershriek}
There is a natural identification of $i^!\O_X=\det(\NN_{Y/X})[-r]$ compatible with base change and product of pairs, where $\NN_{Y/X}$ denotes the normal bundle. 
\end{lemma}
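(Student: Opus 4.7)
The plan is to reduce to the universal case by base change, then perform an explicit Koszul computation. By the previous lemma, $i^!$ satisfies the projection formula $i^! M = i^! \O_X \otimes_{\O_Y} i^* M$ and is compatible with base change of pairs, so the entire identification is determined by $i^! \O_X$, and I can work Zariski-locally. By definition of regular immersion of codimension $r$, I may assume $Y = \spec S$, $X = \spec R$, and $R \to S$ is obtained by base change from the universal pair $\Z[x_1,\ldots,x_r] \to \Z$. Both sides of the desired identification are compatible with base change (the left by the previous lemma, the right trivially, since the normal bundle pulls back), so it suffices to construct the identification canonically for the universal pair.

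Next, I would use a Künneth reduction. The universal pair $\Z[x_1,\ldots,x_r] \to \Z$ is the $r$-fold coproduct in $\Pair$ of the codimension-one pair $\Z[x] \to \Z$. Under the adjunction $i_* \dashv i^!$, we have $i_* i^! \O_X = \RHom_{\O_X}(i_*\O_Y, \O_X)$, and for coproducts of pairs this $\RHom$ satisfies a Künneth formula (tensor products of projective resolutions resolve tensor products). The normal bundle $\NN_{Y/X}$ of a product of pairs is the direct sum of the individual normal bundles, and $\det$ takes direct sums to tensor products, so the Künneth identification is precisely compatible with the statement. Hence it suffices to treat $r = 1$.

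For $r = 1$, the pair $\Z[x] \to \Z$ admits the two-term projective resolution $\Z[x] \xrightarrow{x} \Z[x]$, and applying $\Hom_{\Z[x]}(-,\Z[x])$ gives back the same complex, whose cohomology is $\Z$ in degree $1$. Thus $\RHom_{\Z[x]}(\Z,\Z[x]) = \Z[-1]$, and the generator of this top $\Ext$ is canonically the dual of the generator $x$ of $I/I^2$, i.e.\ canonically $\det(\NN_{Y/X})[-1]$. Tensoring $r$ copies by Künneth yields $\det(\NN_{Y/X})[-r]$ in the universal codimension-$r$ case, with the identification compatible with products by construction and with base change because the Koszul resolution is flat and hence preserved by derived base change.

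The main obstacle will be to verify that these local identifications really are \emph{canonical}, so that they glue along Zariski covers of a general regular immersion $Y \to X$ to give a global identification on stacks, and that the identification is natural in products of pairs at the level of homotopy-coherent data rather than just isomorphism classes. Once one checks that the Koszul-derived identification is equivariant under the group of automorphisms of the universal pair acting on the normal bundle (which acts by its determinant on $\Ext^r$, matching the action on $\det(\NN_{Y/X})$), the gluing is automatic because both sides are invertible in $\D(Y)$, reducing the coherence problem to a calculation with the Picard groupoid.
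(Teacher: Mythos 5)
Your proof takes a genuinely different route from the paper's, and while the core computation is sound, you correctly identify that the real work is the coherence/descent issue — which is exactly what the paper's argument is engineered to sidestep.

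You reduce to the local universal pair $\Z[x_1,\ldots,x_r]\to\Z$, perform the Koszul computation there, reduce further to $r=1$ by a K\"unneth argument, and then plan to descend the resulting identification to the stack $\closed_r$ by checking equivariance under automorphisms and appealing to invertibility. This is plausible but requires nontrivial care: the cover $\A^r\to\pair_r$ has automorphisms beyond $\GL_r$ (higher-order coordinate changes fixing the origin), and making the identification descend through the full \v{C}ech nerve of an \'etale sheaf image is a homotopy-coherence problem, not a $\pi_0$-level equivariance check. You flag this yourself as the ``main obstacle,'' and the sketch via the Picard groupoid is the right idea but is not carried out.

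The paper avoids local choices entirely. It first treats the case where $i\colon Y\to X$ is the zero section of a rank $r$ vector bundle, using the canonical (global, coordinate-free) Koszul resolution of $\O_Y$ by $\Lambda^\bullet(\NN^\vee)$; the identification $i^!\O_X\cong\det(\NN)[-r]$ there is already canonical and compatible with base change and products, with no descent needed. It then reduces the general regular immersion to this case via the weighted deformation to the normal cone: the deformation space is $\Y=Y\times\A^1/\G_m$, whose derived category is graded $\O_Y[t]$-modules, and the closed fiber is the vector bundle case; an invertible graded $\O_Y[t]$-module whose $t=0$ reduction in weight $r$ is $\det(\NN_{Y/X})[-r]$ is determined, and its restriction to the open fiber $Y\subset\Y$ gives the answer. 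Since the DNC is a functorial construction, this reduction automatically carries the coherence and compatibility-with-products requirements. In short: your approach works in the vector bundle case but the gluing to the universal regular immersion still needs the filling you acknowledge; the paper's DNC argument replaces that gluing with a canonical deformation, at the modest cost of having to analyze graded modules over $\A^1/\G_m$.
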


\begin{proof}
Computing locally, we see that $i^!\O_X$ is an invertible object in $\D(Y)$. To identify it, we first consider the case where $\pi\colon X\to Y$ is a rank $r$ vector bundle and $i\colon Y\to X$ is the immersion of the zero section. Let $\O=\O_Y$, $\NN=\NN_{Y/X}$, and $\AA=\lsym_\O^\star(\NN^\vee)=\pi_*\O_X$. Now the Koszul sequence
$$0\to\AA\otimes\Lambda^r\NN^\vee\to\cdots\to\AA\otimes\NN^\vee\to\AA\to\O\to0$$
gives a natural identification $\inhom_\AA(\O,\AA)=\det(\NN)[-r]$ as an $\AA$-module, so
$$i^!\O_X=\pi_*i_*i^!\O_X=\pi_*\inhom_\AA(\O,\AA)=\det(\NN)[-r]$$
as an $\O$-module. 

For a general $i\colon Y\to X$, we consider its deformation to the normal cone $\ii\colon\Y\to\X$, whose closed fiber $\ii_0\colon\Y_0\to\X_0$ is naturally the vector bundle $\NN_{Y/X}/\G_m$ on $\Y_0=Y\times\B\G_m$. Therefore by the previous paragraph, $\ii^!\O_\X$ is an invertible object in $\D(\Y)$, whose restriction on $\Y_0=Y\times\B\G_m$ is $\det(\NN_{Y/X})[-r]$ with $\G_m$ acting by weight $r$. But $\Y=Y\times\A^1/\G_m$, so $\D(\Y)$ can be viewed as the category of graded quasicoherent $\O_Y[t]$-modules. Now $\ii^!\O_\X$ is an invertible graded $\O_Y[t]$-modules which gives $\det(\NN_{Y/X})[-r]$ on degree $r$ after quotient by $t$. Hence
$$\ii^!\O_\X=\bigoplus_{n=r}^\infty\det(\NN_{Y/X})t^n[-r],$$
and obviously its restriction to the open part $Y\to\Y$ is $\det(\NN_{Y/X})$. 

The compatibility with base change and product follows from the compatibility of both the deformation to the normal cone and the Koszul sequence. 
\end{proof}

\begin{construction}
Recall from the shifted fiber sequence of cotangent complexes
$$\L_{Y/X}[-1]\to i^*\L_X\to\L_Y$$
that the normal bundle $\NN_{Y/X}$ can be identified as $\L_{Y/X}[-1]^\vee$. Since $\L_{Y/X}[-1]$ is locally free of rank $r$, the above sequence gives a natural map
$$\Lambda^r(\L_{Y/X}[-1])\otimes\LO^n_Y\to i^*\LO^{n+r}_X;$$
since $\Lambda^r(\L_{Y/X}[-1])=(i^!\O_X[r])^\vee$ by Lemma \ref{uppershriek}, we can take it to the right hand side, use the projection formula, and obtain a Gysin map
$$\LO^n_Y\to i^!\O_X\otimes\LO^{n+r}_X[r]=i^!\LO^{n+r}_X[r].$$
\end{construction}

\begin{proposition}\label{hodgecycle}
The Gysin map for the Hodge cohomology is the global section of the adjunction $i_*\LO^n_Y\to\LO^{n+r}_X[r]$ of the above map. Since $i_*\LO^n_Y$ restricts to $0$ on $X\setminus Y$, it actually maps to the relative Hodge cohomology of $(X,X\setminus Y)$. 
\end{proposition}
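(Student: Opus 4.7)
My plan is to reduce to the case of a vector bundle zero section via the deformation to the normal cone, and then verify the identification directly by unwinding the Koszul construction.

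First, I would observe that both maps extend to the deformation $\ii \colon \Y \to \X$ of $Y \to X$ over $\A^1/\G_m$. For the cycle map, this extension is exactly the factorization used in Definition \ref{cyclemap}. For the Gysin adjoint, one notes that $\ii$ is itself a regular immersion of codimension $r$, so the construction of the preceding paragraph applies verbatim; the resulting sheaf-level map $\ii_*\LO^n_\Y \to \LO^{n+r}_\X[r]$ restricts over the open fiber to the Gysin adjoint for $i$ and over the closed fiber to the Gysin adjoint for the zero section of the weighted normal bundle $\NN_{Y/X}/\G_m \to Y\times\B\G_m$. By Corollary \ref{0hg}, relative Hodge cohomology is invariant under this deformation, and so it suffices to identify the two maps in the case of the zero section of a vector bundle.

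In the vector bundle case $i \colon Y \to V = \spec_Y(\lsym_{\O_Y}\NN^\vee)$, the Gysin adjoint admits an explicit description via the Koszul resolution: $i_*\O_Y$ is resolved by $\pi^*\Lambda^\bullet \NN^\vee$, whose top term $\pi^*\det\NN^\vee[r]$ includes naturally into $\LO^r_V$ as the fiberwise top form, producing the Gysin adjoint $i_*\O_Y \to \LO^r_V[r]$. On the other hand, in this case the cycle map is by construction cup product with the Hodge Thom class $\Th_V^\hod$, restricted appropriately. To match the two, I would reduce to the universal bundle $\E_r \to \B\GL_r$ by naturality (already established for the Gysin adjoint via base-change compatibility of the cotangent fiber sequence and Lemma \ref{uppershriek}), and then to the line bundle case by multiplicativity (Remark \ref{unithomadd} for Thom classes, and the tensor-product decomposition of the Koszul resolution for the Gysin adjoint). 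In the line bundle case $V = \A^1/\G_m$, $Y = \B\G_m$, a direct computation using Examples \ref{a1h} and \ref{bgh} shows that the Koszul-induced map produces the generator of $\H^1(\A^1/\G_m,\A^1/\G_m\setminus\B\G_m;\LO^1)$ matching the identification $\Th_1^\hod = 1$ from Example \ref{c1hod}.

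The main obstacle is the first step: verifying that the Gysin adjoint for a regular immersion of $\A^1/\G_m$-relative stacks genuinely restricts to the Gysin adjoints of both fibers, and that Corollary \ref{0hg}'s invariance of relative Hodge cohomology transports the identification between them. A secondary subtlety is the multiplicativity of the Koszul-adjoint construction under external direct sums of vector bundles, needed for the reduction to the line bundle case; this follows in principle from the tensor product decomposition of Koszul resolutions combined with the determinant identification $\det(\NN\oplus\NN') \cong \det\NN\otimes\det\NN'$ from Lemma \ref{uppershriek}, but requires careful bookkeeping to ensure compatibility with the cotangent fiber sequence of a direct sum.
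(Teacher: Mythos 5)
Your proposal follows essentially the same route as the paper: reduce to the vector-bundle case via deformation to the normal cone and Corollary \ref{0hg}, then reduce to the line-bundle case using Remark \ref{unithomadd} and the compatibility of the Gysin construction with products (Lemma \ref{uppershriek}), and finally check the universal line bundle $\B\G_m \to \A^1/\G_m$ by explicit computation with Example \ref{a1h} and Remark \ref{c1hod}. The paper packages your first step slightly more cleanly by observing that the open-fiber inclusion and the projection exhibit the pair $(Y,X)$ as a retract of $(\Y,\X)$, and it replaces your reduction to the universal bundle by the observation that the Gysin map is a module map over the Hodge cohomology of the base, but these are presentational variants of the same argument.
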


\begin{proof}
Consider the deformation to the normal cone $\Y\to\X$ and its closed fiber $\Y_0\to\X_0$. Since the immersion of the open part exhibits the pair $Y\to X$ as a retract of $\Y\to\X$, it suffices to prove the proposition for $\Y\to\X$. Now by Corollary \ref{0hg} the relative Hodge cohomology of $(\X,\X\setminus\Y)$ coincides with that of $(\X_0,\X_0\setminus\Y_0)$, so it suffices to prove the same for $\Y_0\to\X_0$. 

Therefore, we can assume that $\pi\colon X\to Y$ is a rank $r$ vector bundle and $i\colon Y\to X$ is the immersion of the zero section, and try to prove that the map described above is the Thom map. In this case, the Hodge cohomology of $X$ is naturally a module of the Hodge cohomology of $Y$ by $\pi^*$, and the map described is a module map, so it suffices to prove that the image of $1$ is $\Th_{X/Y}$. By Remark \ref{unithomadd}, this reduces to checking the functoriality, the additivity, and the normalization for line bundles for this image. Now both the functoriality and the additivity follow from Lemma \ref{uppershriek}. For the normalization for line bundles, we work in the universal case $\B\G_m\to\A^1/\G_m$ by identifying quasicoherent sheaves on $\A^1/\G_m$ as graded $\Z[t]$-modules, where it is routine to check that the map $i_*\O_{\B\G_m}\to\L_{\A^1/\G_m}[1]$ is $1$ under the identification of Example \ref{a1h}, which coincides with $\Th_1^\hod$ by Remark \ref{c1hod}. 
\end{proof}

Similar method can be used to enhance the Gysin map of the $p$-completed topological cyclic homology to a motivic filtered map. First we recall the filtration: 

\begin{proposition}\label{motfiltc}
There is a natural exhaustive filtration $\fil^\bullet\TC(R)_p^\wedge$ on the $p$-completed $\TC$, functorial in $R\in\Ring$, such that:
\begin{enumerate}
    \item\label{grtc} $\gr^i\TC(R)_p^\wedge=\Z_p(i)(\hat{R})[2i]$. In particular, $\fil^0\TC(R)_p^\wedge=\TC(R)_p^\wedge$. 
    \item\label{filtcsifted} The functor $\fil^\bullet\TC_p^\wedge\colon\Ring\to\Fil(\Sp_p^\wedge)$ commutes with sifted colimits, and hence is left Kan extended from polynomial rings of finite type over $\Z$. 
    \item\label{filtcetsh} $\fil^\bullet\TC_p^\wedge$ is a big \'etale sheaf, and is even a $p$-quasisyntomic sheaf when restricted to classical $p$-bounded $p$-quasisyntomic rings. 
\end{enumerate}
\end{proposition}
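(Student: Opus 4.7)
The plan is to construct the filtration on polynomial rings of finite type over $\Z$, left Kan extend to all animated rings, and then verify the three properties on the extension.

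First, I would define $\fil^\bullet\TC(R)_p^\wedge$ on polynomial $\Z$-algebras $R$ of finite type using the Bhatt--Morrow--Scholze motivic filtration of \cite[\S 7]{bms2}. Since such an $R$ is classical, $p$-bounded, and even $p$-completely smooth, it admits $p$-quasisyntomic covers by quasiregular semiperfectoid rings; on such a ring $S$ the filtration is built from the Nygaard filtration on $\TC^-(S)_p^\wedge\simeq\Prism_S$ and the defining fiber sequence of $\TC$, and one descends to $R$. The $i$-th graded piece on $R$ is $\Z_p(i)(R)[2i]=\Z_p(i)(\hat R)[2i]$, the filtration is exhaustive, and $\fil^0=\TC(R)_p^\wedge$ because the negative graded pieces vanish.

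Next, I would define $\fil^\bullet\TC_p^\wedge\colon\Ring\to\Fil(\Sp_p^\wedge)$ as the left Kan extension of this functor along the inclusion of polynomial $\Z$-algebras of finite type. As these are compact projective generators of $\Ring$, the resulting functor automatically commutes with sifted colimits, establishing (\ref{filtcsifted}). Because $\gr^i\colon\Fil(\Sp_p^\wedge)\to\Sp_p^\wedge$ preserves all colimits, $\gr^i\fil^\bullet\TC_p^\wedge$ is the left Kan extension of $R\mapsto\Z_p(i)(\hat R)[2i]$ from polynomial rings; by \cite[Proposition 8.4.10]{apc}, this left Kan extension agrees with $R\mapsto\Z_p(i)(\hat R)[2i]$, giving (\ref{grtc}). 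Exhaustiveness and the identification $\fil^0=\TC(-)_p^\wedge$ transfer from polynomial rings to arbitrary animated rings since both $\colim_{i\to-\infty}\fil^i$ and $\fil^0$ commute with sifted colimits.

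For property (\ref{filtcetsh}), I would check sheafiness graded-piece-wise, using that the filtration is supported in nonnegative degrees and is exhaustive; each graded piece $\Z_p(i)(\hat{-})[2i]$ is a big \'etale sheaf (indeed an $\arc_p$ sheaf) by Example \ref{syncoh}. The main obstacle will be the stronger claim of $p$-quasisyntomic sheafiness on classical $p$-bounded $p$-quasisyntomic rings, which requires the left Kan extension from polynomial rings to agree with the original BMS2 descent-based filtration on such rings. I would prove this by establishing that the BMS2 filtration on $p$-quasisyntomic rings is itself left Kan extended from polynomial rings, which in turn reduces via graded pieces to the corresponding property of syntomic cohomology, available through \cite[Proposition 8.4.10]{apc} and the compatibility with filtered base change underpinning Lemma \ref{qsynlimiso}.
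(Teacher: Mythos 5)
Your construction is correct and gives the same filtration as the paper, but the two arguments choose opposite ``free'' properties and pay for the other. You left Kan extend from polynomial $\Z$-algebras of finite type, which are compact projective generators of $\Ring$; that makes (\ref{filtcsifted}) automatic, forces you to verify (\ref{grtc}) via \cite[Proposition 8.4.10]{apc}, and turns (\ref{filtcetsh}) into the genuine work of the argument: you must show that on classical $p$-bounded $p$-quasisyntomic rings your Kan-extended filtration recovers the BMS2 filtration (which is defined by quasisyntomic descent, not by animation). The paper instead left Kan extends from the subcategory of classical $p$-bounded $p$-quasisyntomic rings: there the restriction is tautologically the BMS2 filtration, so (\ref{filtcetsh}) is immediate, and the effort shifts to (\ref{grtc}) and (\ref{filtcsifted}), which the paper obtains from \cite[Proposition 7.4.8]{apc} together with the $p$-completion invariance of $\TC_p^\wedge$ and Clausen--Mathew--Morrow's theorem that $\TC_p^\wedge$ commutes with sifted colimits. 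These two routes are formally equivalent precisely because, once the proposition is established, the filtration is simultaneously left Kan extended from polynomial rings and restricts to BMS2 on quasisyntomic rings; what differs is which half of this is taken as the definition.

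Your proposed resolution of the remaining gap in (\ref{filtcetsh}) is the right idea and essentially reproduces what the paper bakes into its choice of Kan-extension source: since the filtration is concentrated in nonnegative degrees with $\fil^0=\TC_p^\wedge$, for each $n$ one has $\fil^n\TC = \fib(\TC\to\TC/\fil^n)$ with $\TC/\fil^n$ finitely filtered by $\Z_p(i)[2i]$ for $i<n$; the underlying object commutes with sifted colimits by \cite[Corollary 2.15]{cmm}, and each $\Z_p(i)(\hat{-})$ is left Kan extended from smooth $\Z$-algebras by \cite[Proposition 7.4.8]{apc} (or \cite[Proposition 8.4.10]{apc}), so each $\fil^n$ is too, giving the needed agreement with BMS2. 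One small point to make explicit in your write-up: on a polynomial ring $R=\Z[x_1,\ldots,x_m]$ itself, the BMS2 construction is to be read through $\hat{R}$, using $\TC(R)_p^\wedge\simeq\TC(\hat{R})_p^\wedge$; you gesture at this with the hats but it should be stated since $R$ is not $p$-complete.
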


\begin{proof}
Note that $\TC(R)_p^\wedge$ only depends on $\hat{R}$, since $\THH(R)_p^\wedge$ is so. Also by \cite[Corollary 2.15]{cmm}, the functor $\TC_p^\wedge\colon\Ring\to\Sp_p^\wedge$ commutes with sifted colimits, so we can construct the filtration by left Kan extension from classical $p$-bounded $p$-quasisyntomic rings, where it follows from \cite[\S 7.4]{bms2}, which automatically implies (\ref{filtcetsh}). Now (\ref{grtc}) and (\ref{filtcsifted}) follow from \cite[Proposition 7.4.8]{apc}. 
\end{proof}

Recall that $\TC(R)$ is actually an invariant of the category $\D(R)^\omega$ of perfect $R$-complexes, since $\THH$ is so, cf.\ \cite[\S 10.1]{univkt}. Hence if $R\to S$ is a regular pair, then there is a wrong-way map $\D(S)^\omega\to\D(R)^\omega$, since perfect $S$-complexes are perfect $R$-complexes. This induces a pushforward map $\TC(S)\to\TC(R)$, which can be globalized to a pushforward map $\TC(Y)\to\TC(X)$ associated to every regular immersion $Y\to X$ of stacks. Here the meaning of $\TC(X)$ is as in Remark \ref{valsta}. Since the composition $\D(S)^\omega\to\D(R)^\omega\to\D(\spec(R)\setminus\spec(S))^\omega$ is functorially zero, the map $\TC(Y)\to\TC(X)$ factors through $\TC(X,X\setminus Y)$. 

\begin{theorem}[The Gysin map for $\fil^\bullet\TC_p^\wedge$]\label{cyctc}
Fix $r\in\N$. There is a unique way (up to contractible choice) to enhance the map $\TC(Y)_p^\wedge\to\TC(X,X\setminus Y)_p^\wedge$ to a filtered map $\fil^\bullet\TC(Y)_p^\wedge\to\fil^{\bullet+r}\TC(X,X\setminus Y)_p^\wedge$ functorially for all regular immersions $Y\to X$ of codimension $r$. Moreover, its associated graded is the Gysin map for the syntomic cohomology of $p$-formal schemes as in Definition \ref{cyclemap}. 
\end{theorem}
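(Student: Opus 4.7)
The plan is to set this up as a contractibility assertion for a space-valued big \'etale sheaf on $\pair_r$, reduce via deformation to the normal cone (Theorem \ref{charclasspair}) to the universal zero section, and then close by obstruction theory using the Thom-class machinery of Section 3. Concretely, for each regular immersion $i\colon Y\to X$ of codimension $r$ of $p$-formal stacks, let $\mathcal{L}(i)$ denote the fiber, over the already-constructed pushforward $\TC(Y)_p^\wedge\to\TC(X,X\setminus Y)_p^\wedge$, of the forgetful map from filtered morphisms $\fil^\bullet\TC(Y)_p^\wedge\to\fil^{\bullet+r}\TC(X,X\setminus Y)_p^\wedge$ to their underlying morphisms. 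The theorem asserts that $\mathcal{L}$ is contractible-valued, functorially in $i$; this assembles into a space-valued big \'etale sheaf on $\pair_r$ in the sense of Remark \ref{relcharpair}.

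First I would verify that $\mathcal{L}$ satisfies the deformation-to-the-normal-cone invariance hypothesis of Theorem \ref{charclasspair}. Filtered Hom is an iterated limit of Hom-spaces between successive graded pieces; by Proposition \ref{motfiltc}(\ref{grtc}) the graded pieces of both sides are syntomic cohomologies of $\hat Y$ and of $(\hat X,\hat X\setminus\hat Y)$. Theorem \ref{syndnc} gives the required invariance on each such piece, and the exhaustiveness of the motivic filtration combined with the coconnectivity bounds of Remark \ref{qsyndescent} promote these to invariance for $\mathcal{L}$ itself. Applying Theorem \ref{charclasspair}/Remark \ref{relcharpair} then reduces the theorem to showing that $\mathcal{L}(0_r\to\E_r)$ is contractible, where $0_r\to\E_r$ is the universal zero section over $\B\GL_r$.

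In the universal vector bundle case, the candidate enhancement on $\gr^n$ is cup product with the syntomic Thom class $\Th_{\E_r}^\syn$ of Theorem \ref{thom}. Existence and uniqueness of a filtered lift assembling these graded candidates reduce, by obstruction theory along the filtration tower, to the vanishing of $\Hom_{\D(\Z_p)}(\Z_p(n)(\B\GL_r)[2n],\Z_p(n+r+k)(\E_r,\E_r\setminus 0_r)[2(n+r+k)-k])$ for all $n\in\Z$ and $k\ge 1$. Proposition \ref{cr0} (applied in Situation \ref{chernsit} to syntomic cohomology, which satisfies its hypotheses by Theorem \ref{synwt} and the previous sections) pins down $\Z_p(n+r+k)(\E_r,\E_r\setminus 0_r)[2(n+r+k)]$ as the free $\Z_p(\star)(\B\GL_r)$-module generated by $c_{r+k}$, and is concentrated in nonpositive cohomological degrees; the extra shift by $-k$ then kills the relevant Hom-group by coconnectivity, forcing $\mathcal{L}(0_r\to\E_r)$ to be contractible.

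The identification of the associated graded of this universal enhancement with the syntomic cycle map of Definition \ref{cyclemap} is then automatic: by construction the graded of the enhancement on $\gr^n$ is cup product with $\Th_{\E_r}^\syn$, which is the defining property of the syntomic cycle class for the zero section of a vector bundle (Remark \ref{unicyc}). The main obstacle will be the Hom-vanishing in the third paragraph, where one needs to combine the coconnectivity of $\Z_p(n)$ for all $n\in\Z$ with the bigraded formula coming from Proposition \ref{cr0} applied to the pair $(\E_r,\E_r\setminus 0_r)$; if the direct computation is unwieldy, a robust alternative is to proceed by naturality along the forgetful map $\Z_p(n)\to\widehat{\LO^n}[-n]$ (Remark \ref{natthom}) and reduce the obstruction vanishing to the Hodge case, where it follows from Proposition \ref{hodgecycle} and Corollary \ref{0hg}.
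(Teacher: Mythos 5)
The structural reduction (deformation to the normal cone, then the universal zero section of a vector bundle) matches the paper's, but your final obstruction step has a real gap: the vanishing you need cannot be deduced from coconnectivity of $\Z_p(n)$ alone. The crucial ingredient in the paper's argument is the \emph{connectivity} of $\fil^n\TC$: quasisyntomic-locally, $\fil^n\TC$ is $2n$-connective (this comes from \cite[\S 7.4]{bms2} and \cite[Theorem 14.1]{bs19}, and does not follow from coconnectivity of the syntomic graded pieces). The reason the relevant $\Hom$ groups vanish is that one is mapping out of something in $\Sp_{\ge 2n}$ into a finite extension of $\Z_p(i)[2i]\in\Sp_{\le 2i}$ with $i<n$; a map between two coconnective spectra can easily be nonzero, so your coconnectivity-only argument does not close. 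Moreover, your appeal to Proposition \ref{cr0} to conclude that $\Z_p(n+r+k)(\E_r,\E_r\setminus 0_r)[2(n+r+k)]$ is ``concentrated in nonpositive cohomological degrees'' overshoots: Situation \ref{chernsit} only controls $\FF^{\le 0}(\spec\Z)$, and the bidegree bookkeeping in Proposition \ref{cr0} shows that positive-weight pieces $\pi_*\FF^{n+k-|m|}(\spec\Z)$ with $n+k-|m|>0$ also contribute, and these are not constrained by the hypotheses.

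A second, smaller issue: applying Theorem \ref{charclasspair}/Remark \ref{relcharpair} directly to your sheaf $\mathcal{L}$ requires the deformation-to-the-normal-cone invariance for the \emph{infinite} filtered object $\fil^{\bullet+r}\TC(\X,\X\setminus\Y)$, not just for its graded pieces. Theorem \ref{syndnc} only gives the graded statement, and the motivic filtration on arbitrary stacks is not obviously complete, so the promotion you gesture at needs a genuine argument. The paper sidesteps this by reformulating the statement as a uniqueness-of-nullhomotopy claim for the composite
$$\fil^n\TC(Y)\to\TC(X,X\setminus Y)\to\TC(X,X\setminus Y)/\fil^{n'+r}\TC(X,X\setminus Y),$$
which only involves a \emph{finitely} filtered target, so that Theorem \ref{syndnc} applies term by term without completeness issues. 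The paper's reduction then passes through the Thom isomorphism, a \v{C}ech nerve for $Y\to Y\times\B\G_m$, sifted colimits to reach smooth $\Z$-algebras, and finally quasisyntomic descent, where the $2n$-connectivity of $\fil^n\TC$ is available. Your proposal would need to supply both the completeness argument for the filtered DNC invariance and the connectivity input from \cite{bms2}; as written, the obstruction step does not go through.
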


\begin{proof}
For notational simplicity, we omit all the $-_p^\wedge$, and use $\Z_p(i)$ to denote the functor $R\mapsto\Z_p(i)(\hat{R})$. It suffices to prove that for integers $n\ge n'$, the map
\begin{equation}\label{compo}
    \fil^n\TC(Y)\to\TC(Y)\to\TC(X,X\setminus Y)\to\TC(X,X\setminus Y)/\fil^{n'+r}\TC(X,X\setminus Y)
\end{equation}
has a unique functorial nullhomotopy. As before, let $\Y\to\X$ denote the deformation to the normal cone of $Y\to X$, and let $\Y_0\to\X_0$ denote its closed fiber. Note that the above map can be written as
\begin{align*}
    &\quad\ \!\fil^n\TC(Y)\to\fil^n\TC(\Y_0)\to\TC(\X_0,\X_0\setminus\Y_0)/\fil^{n'+r}\TC(\X_0,\X_0\setminus\Y_0)\\
    &\cong\TC(\X,\X\setminus\Y)/\fil^{n'+r}\TC(\X,\X\setminus\Y)\to\TC(X,X\setminus Y)/\fil^{n'+r}\TC(X,X\setminus Y),
\end{align*}
where the first arrow comes from the projection $\Y_0\to Y$, the second arrow is the map (\ref{compo}) for $\Y_0\to\X_0$, the isomorphism is due to Theorem \ref{syndnc} since $\TC/\fil^{n'+r}\TC$ is finitely filtered by $\Z_p(i)[2i]$, and the final arrow is restriction to the open fiber. Recall that $\X_0=\NN_{Y/X}/\G_m$ is the weighted normal bundle over $\Y_0=Y\times\B\G_m$. Therefore the theorem is reduced to the following statement: the map
$$\fil^n\TC(Y)\to\TC(V/\G_m,(V\setminus0)/\G_m)/\fil^{n'+r}\TC(V/\G_m,(V\setminus0)/\G_m)$$
has a unique nullhomotopy that is functorial in the rank $r$ vector bundle $V\to Y$. Now the right hand side is finitely filtered by
$$\Z_p(i+r)(V/\G_m,(V\setminus0)/\G_m)[2(i+r)]=\Z_p(i)(Y\times\B\G_m)[2i]$$
for $i<n'$ by the Thom isomorphism, so it suffices to show that for $i<n'$, any map
$$\fil^n\TC(Y)\to\Z_p(i)(Y\times\B\G_m)[2i]$$
that is functorial in $Y$ is uniquely nullhomotopic. By \'etale descent we can assume that $Y=\spec(R)$ is affine. Then by descent along the \v{C}ech nerve of $Y\to Y\times\B\G_m$, we reduce to show that, for $i<n'$ and any ring $A$ smooth over $\Z$, any map
$$\fil^n\TC(R)\to\Z_p(i)(R\otimes A)[2i]$$
that is functorial in $R$ is uniquely nullhomotopic. Since both sides commute with sifted colimits in $R$, we can assume that $R$ is also smooth over $\Z$. Now both sides satisfy quasisyntomic descent, and the theorem follows from the fact that quasisyntomic locally, $\fil^n\TC$ is $2n$-connective, but $\Z_p(i)[2i]$ lies in homotopical degree $2i<2n'\le2n$, cf.\ \cite[\S 7.4]{bms2} and \cite[Theorem 14.1]{bs19}. 

To see that the associated graded gives the Gysin map, by the same deformation to the normal cone argument, we only need to show that for a rank $r$ vector bundle $V\to Y$, it gives the syntomic Thom map
$$\Z_p(\star)(Y)[2\star]\to\Z_p(\star+r)(V,V\setminus0)[2(\star+r)]$$
when applied to the zero section $0\to V$. This is a map of $\Z_p(\star)(Y)[2\star]$-modules, so it suffices to show that the image of $1\in\Z_p(0)(Y)$ is $\Th_V^\syn\in\Z_p(r)(V,V\setminus0)[2r]$. By Remark \ref{unithomadd}, this reduces to the functoriality, the additivity, and the normalization for line bundles for this image, which we now check:
\begin{description}
    \item[Functoriality] This is clear by the functoriality of the Gysin map.
    \item[Additivity] By \cite[Theorem 1.7]{cmm}, the $p$-completed $\TC$ is the \'etale sheafification of the $p$-completed connective $K$-theory. Now $1\in K(Y)$ is just $[\O_Y]$, and the product is just the tensor product, so the additivity follows from the fact that if $V=V'\oplus V''$ is a direct sum, then $\O_0=\O_{V'}\otimes_{\O_V}\O_{V''}$, where we view $V'$ and $V''$ as closed substacks of $V$. 
    \item[Normalization for line bundles] As above, viewing $\TC$ as the \'etale $K$-theory, it suffices to show that for a Cartier divisor $D\to X$, the associated graded class of $[\O_D]\in\fil^1\TC(X)$ is $c_1(\O(D))\in\Omega^{\infty-2}\Z_p(1)(X)$. By \'etale descent, we can assume that $X=\spec(R)$ is affine. Then by \cite[Proposition 7.4.8]{apc}, we can assume that $R$ is smooth over $\Z$. Now by $p$-quasisyntomic descent, we can assume that $R$ is over $\Z_p^\cyc$ and is quasiregular semiperfectoid, where it follows from \cite[Theorem 6.7]{cycdtr} and \cite[Corollary 2.6.11, Proposition 7.5.2]{apc}. \qedhere
\end{description}
\end{proof}

\begin{remark}
The $p$-completion $K^\sel(R)_p^\wedge$ of the Selmer $K$-theory introduced in \cite[\S 2]{kartin} is an enhancement of $\TC(R)_p^\wedge$ that contains information about $R[1/p]$, cf.\ \cite[\S 6]{hyperetk}, and it should be motivic filtered by the non-$p$-formal syntomic cohomology sheaf $\Z_p(i)(R)[2i]$. We expect an analog of Theorem \ref{cyctc} in this setting. 
\end{remark}

\section{Poincar\'e duality}

We finally prove the prismatic Poincar\'e duality in this section. First we recall its Hodge version in \cite[Lecture 16]{dr}. Let $S$ be a stack and $X$ be a stack over $S$. Suppose the structure map $f\colon X\to S$ is representable by algebraic spaces, and is proper smooth of dimension $d\in\N$. Then the relative diagonal $\Delta\colon X\to X\times_SX$ is a regular immersion of codimension $d$. Note by \cite[\texttt{0A1P}]{stacks} that the relative Hodge cohomology $f_*(\LO_{X/S}^\star)[-\star]$ is dualizable in $\D(S)$. 

\begin{theorem}[Poincar\'e duality for Hodge cohomology]\label{hodgedual}
The image of the absolute Hodge cycle class $\cl^\hod_\Delta\in\Omega^{\infty-d}\RG(X\times_SX,\LO_{X\times_SX}^d)$ along
\begin{align*}
    &\quad\ \RG(X\times_SX,\LO_{X\times_SX}^\star)\to\RG(X\times_SX,\LO_{(X\times_SX)/S}^\star)\\
    &=\RG(S,(f\times_Sf)_*(\LO_{(X\times_SX)/S}^\star))=\RG(S,f_*(\LO_{X/S}^\star)\otimes f_*(\LO_{X/S}^\star))
\end{align*}
gives rise to a graded map
$$\O_S\to f_*(\LO_{X/S}^\star)\otimes f_*(\LO_{X/S}^\star)(d)[d],$$
where $(d)$ and $[d]$ denote the shift on the grade and the homological degree, respectively. This map is a perfect copairing, i.e.\ the dual map
$$p_*(\LO_{X/S}^\star)^\vee\to f_*(\LO_{X/S}^\star)(d)[d]$$
is an isomorphism. 
\end{theorem}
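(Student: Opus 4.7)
\emph{Strategy.} I would decompose the copairing via the graded structure and the relative Künneth formula for Hodge cohomology, reducing perfectness to a statement in each bigrade $(p,d-p)$, identify each such component with the classical Serre duality coevaluation using the explicit description of the Hodge cycle map via the Gysin construction (Proposition \ref{hodgecycle}), and finally invoke Serre duality.

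\emph{Step 1: Reduction to graded pieces.} The dualizability assertion \cite[\texttt{0A1P}]{stacks} together with flat base change along the two projections gives a relative Künneth isomorphism identifying $\RG_{(X\times_S X)/S}(\Omega^\star_{(X\times_S X)/S})[-\star]$ with $\RG_{X/S}(\Omega^\star_{X/S})[-\star]\otimes_{\O_S}\RG_{X/S}(\Omega^\star_{X/S})[-\star]$ as graded objects. Under this identification, the graded piece of the copairing in bigrade $(p,d-p)$ becomes, after shifting out the factor $(d)[d]$, a map
$$\O_S\to\RG_{X/S}(\Omega^p_{X/S})\otimes_{\O_S}\RG_{X/S}(\Omega^{d-p}_{X/S}).$$
It suffices to check that each such map is a perfect copairing for $0\le p\le d$.

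\emph{Step 2: Identification with Serre's coevaluation.} By Proposition \ref{hodgecycle}, the Hodge cycle class of $\Delta$ is the global sections of the adjunction $\Delta_*\O_X\to\Omega^d_{(X\times_S X)/S}[d]$ of the Gysin map. The Künneth decomposition $\Omega^d_{(X\times_S X)/S}=\bigoplus_{p+q=d}\mathrm{pr}_1^*\Omega^p_{X/S}\otimes\mathrm{pr}_2^*\Omega^q_{X/S}$ splits this into components $\Delta_*\O_X\to\mathrm{pr}_1^*\Omega^p_{X/S}\otimes\mathrm{pr}_2^*\Omega^{d-p}_{X/S}[d]$. Unpacking the Gysin construction: Lemma \ref{uppershriek} gives $\Delta^!\O_{X\times_S X}=\det(T_{X/S})[-d]=\omega_{X/S}^{-1}[-d]$, and the map in question is built from the canonical wedge product $\Omega^p_{X/S}\otimes\Omega^{d-p}_{X/S}\to\omega_{X/S}$ (tensored against $\omega_{X/S}^{-1}$). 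Applying $(f\times f)_*$ and using Künneth identifies this adjunction with the map
$$\O_S\to \RG_{X/S}(\Omega^p_{X/S})\otimes\RG_{X/S}(\Omega^{d-p}_{X/S})$$
coming from the composition $\O_S\to f_*\O_X\xrightarrow{\mathrm{coev}}f_*(\Omega^p_{X/S}\otimes(\Omega^p_{X/S})^\vee)$ together with the isomorphism $(\Omega^p_{X/S})^\vee\otimes\omega_{X/S}\cong\Omega^{d-p}_{X/S}$ and the Serre trace $f_*\omega_{X/S}[d]\to\O_S$; in other words, the copairing is the Serre-duality coevaluation for $\Omega^p_{X/S}$.

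\emph{Step 3: Invoking Serre duality.} For the proper smooth $f\colon X\to S$ of relative dimension $d$, one has $f^!\O_S=\omega_{X/S}[d]$, and the induced trace exhibits $\RG_{X/S}(\Omega^p_{X/S})$ and $\RG_{X/S}(\Omega^{d-p}_{X/S})[d]$ as mutual duals in $\D(S)$. Combined with Step 2, this shows that the bigrade $(p,d-p)$ component of our copairing is the coevaluation of this duality, which is by definition perfect. Summing over $p$ completes the proof.

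\emph{Main obstacle.} The technical heart lies in Step 2, namely matching the tautological Gysin map coming from our derived construction of cycle classes with the classical Grothendieck--Serre coevaluation. If a clean direct identification proves unwieldy, one can instead reduce, by base change and the conservativity of fiberwise evaluation on dualizable objects (both sides being dualizable by \cite[\texttt{0A1P}]{stacks} and the Künneth formula), to the case $S=\spec(k)$ for a field $k$; there Serre duality and the uniqueness of the trace up to scalar (followed by a computation on, say, $X=\P^d_k$ to pin down the constant) finish the job.
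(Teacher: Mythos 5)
The paper's own proof of this theorem is a one-line citation: ``The proof is the same as in \cite[Lecture 16]{dr}, with our Proposition \ref{hodgecycle} in place of its Lemma 2.'' So the paper delegates all details to the external lecture notes, and you have supplied a self-contained argument filling that gap. Your route --- K\"unneth decomposition into bigrades $(p,d-p)$, identification of each component of the copairing with Serre's coevaluation via the explicit Gysin description in Proposition \ref{hodgecycle} and Lemma \ref{uppershriek}, and finally Grothendieck--Serre duality for the proper smooth map $f$ --- is the standard way to do this, and you have correctly recognized that Proposition \ref{hodgecycle} plays exactly the role the paper assigns to it (it is what replaces ``Lemma~2''). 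So in substance this matches what the cited reference must be doing, and your version has the virtue of being spelled out.

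Two small remarks. First, in Step 1 you drop a shift: after stripping the $(d)$ from the target, the bigrade-$(p,d-p)$ piece of the copairing is $\O_S\to\RG_{X/S}(\Omega^p_{X/S})\otimes_{\O_S}\RG_{X/S}(\Omega^{d-p}_{X/S})[d]$, not with target unshifted as written; the $[d]$ correctly reappears in Step 3, so this is only a typo, but it is worth fixing since the whole point of Serre duality is to exchange the homological shift with the degree complementation. Second, your fallback in the ``main obstacle'' paragraph --- reduce by conservativity of fiberwise evaluation to $S=\spec(k)$, then use that the trace on a smooth proper $X/k$ is unique up to scalar and pin down the scalar on $\P^d_k$ --- is sound: both sides are perfect complexes over $S$ by \cite[\texttt{0A1P}]{stacks} and K\"unneth, so a map between them is an isomorphism if and only if it is so on derived fibers, by derived Nakayama. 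That reduction is the clean way to avoid chasing the precise identification of the tautological Gysin map with the coherent-duality coevaluation, and is a perfectly acceptable way to close the argument.
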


\begin{proof}
The proof is the same as in \cite[Lecture 16]{dr}, with our Proposition \ref{hodgecycle} in place of its Lemma 2. For convenience and completeness, we reproduce it here. 

Since the cycle class is the image of $1$ under the Gysin map, using Proposition \ref{hodgecycle} with $n=0$ we see that the copairing map in the theorem, in individual grades, can be described as follows: take \th{$d$} wedge power of the canonical map from the conormal to the cotangent
$$\LO_{X/S}^d=\Lambda^d(\L_{X/(X\times_SX)}[-1])\to\Delta^*\LO_{(X\times_SX)/S}^d;$$
note that $\Delta^!\O_{X\times_SX}=(\LO_{X/S}^d)^{-1}[-d]$; tensor this with both sides to get
$$\O_X[-d]\to\Delta^*\LO_{X\times_SX}^d\otimes\Delta^!\O_{X\times_SX}=\Delta^!\LO_{X\times_SX}^d;$$
then move the $\Delta^!$ to the left, apply $(f\times_Sf)_*$, and finally compose with the canonical map $\O_S\to f_*\O_X$ and the projection to one direct summand in K\"unneth to get
$$\O_S[-d]\to f_*\O_X[-d]\to(f\times_Sf)_*\LO_{X\times_SX}^d\to f_*\LO_{X/S}^i\otimes f_*\LO_{X/S}^j,$$
where $i$ and $j$ are fixed with $i+j=d$. To see that this is perfect, we apply Lemma \ref{clausen16.3} to reduce to perfectness of
$$\LO_{X/S}^d\to\Delta^*\LO_{X\times_SX}^d=\LL^d(\L_{X/S}\oplus\L_{X/S})\to\LO_{X/S}^i\otimes\LO_{X/S}^j,$$
where the final map is the projection to one direct summand; now this is a claim on quasicoherent sheaves over $X$, so it can be verified \'etale locally, where it follows by a straightforward calculation with local coordinates. 
\end{proof}

\begin{lemma}[{cf.\ \cite[Lemma 16.3]{dr}}]\label{clausen16.3}
    Suppose $M$ and $N$ are perfect complexes over $X$ that are in duality via a perfect copairing
    $$\LO_{X/S}^d[d]=f^!\O_S\to M\otimes N=\Delta^*(M\boxtimes_SN).$$
    Then $f_*M$ and $f_*N$ are in duality via the copairing
    $$\O_S\to f_*M\otimes f_*N$$
    given as follows: first tensor both sides by $\Delta^!\O_{X\times_SX}=(f^!\O_S)^{-1}$ to get
    $$\O_X\to\Delta^*(M\boxtimes_SN)\otimes\Delta^!\O_{X\times_SX}=\Delta^!(M\boxtimes_SN);$$
    then move the $\Delta^!$ to the left, apply $(f\times_Sf)_*$, and finally compose with the canonical map $\O_S\to f_*\O_X$ to get
    $$\O_S\to f_*\O_X\to(f\times_Sf)_*(M\boxtimes_SN)=f_*M\otimes f_*N.$$
\end{lemma}

\begin{proof}
    Take a pairing $N\otimes M\to f^!\O_S$ that forms a duality datum with the copairing. Move $f^!$ to the left and use the lax monoidal structure of $f^*$ to get
    $$f_*N\otimes f_*M\to f_*(N\otimes M)\to\O_S.$$
    It suffices to show that this pairing forms a duality datum with the copairing given above. To this end, we consider the composition
    $$f^!\O_S\otimes M\to(M\otimes N)\otimes M=M\otimes(N\otimes M)\to M\otimes f^!\O_S.$$
    Tensoring $\Delta^!\O_{X\times_SX}=(f^!\O_S)^{-1}$, we get
    $$M\to\Delta^!(M\boxtimes_S(N\otimes M))\to\Delta^!(M\boxtimes_Sf^!\O_S)=M.$$
    Moving $\Delta^!$ to the left and applying $(f\times_Sf)_*$, we get
    $$f_*M\to f_*M\otimes f_*(N\otimes M)\to f_*M\otimes f_*f^!\O_S\to f_*M,$$
    where the last map is the counit. Now since we start with a duality datum, the first composition is the map commuting $M$ and $f^!\O_S$, so the second and the third compositions are identity maps. Therefore, it suffices to see that the first map above is given by the composition
    $$f_*M\to(f_*M\otimes f_*N)\otimes f_*M=f_*M\otimes(f_*N\otimes f_*M)\to f_*M\otimes f_*(N\otimes M),$$
    where the first map is induced by the copairing we constructed. Since we are working with quasicoherent sheaves on $S$, it suffices to show the statement after restricted along every morphism $U\to S$ and composed with every map $\varphi\colon\O_U[n]\to f_*M|_U$ for every $n\in\Z$. Restriction commutes with everything here, so we can assume $U=S$. Then the statement follows from the functorialities of all the above constructions applied to the map $N[n]\to N\otimes M$ induced by $\varphi$. 
\end{proof}

Now let $(A,I)$ be a prism and take $S=\spf(\bar{A})$. To prove the Poincar\'e duality, we need to produce a copairing map using the syntomic cycle class. 

\begin{remark}\label{syntosect}
Let $R$ be a ring over $\bar{A}$. Note that the comparison map $\fil^\bullet\Prism_R\{d\}\to(\fil^\bullet\Prism_{R/A}^{(1)})\{d\}$ in \cite[Construction 5.6.1]{apc} is compatible with Frobenius, i.e.\ the diagram
$$\begin{tikzcd}
\fil^d\Prism_R\{d\}\ar[rr,"\varphi_R\{d\}"]\ar[d]&&\Prism_R\{d\}\ar[d]\\
(\fil^d\Prism_{R/A}^{(1)})\{d\}\ar[r,"\varphi_{R/A}\{d\}"']&\Prism_{R/A}\{d\}\ar[r,"-\otimes1"']&\Prism_{R/A}^{(1)}\{d\}
\end{tikzcd}$$
commutes. By Proposition \ref{glsect}, this gives rise to a comparison map
$$\Z_p(d)(R)\to\Hom_{\FGauge(A,I)}(A,\Prism_{R/A}\{d\}).$$
Also, it is not hard to see from \cite[Remark 5.5.8]{apc} that the diagram
$$\begin{tikzcd}
    \fil^d\Prism_R\{d\}\ar[r]\ar[d]&\widehat{\LO^d_R}[-d]\ar[d]\\
    (\fil^d\Prism_{R/A}^{(1)})\{d\}\ar[r]&\widehat{\LO^d_{R/\bar{A}}}[-d]
\end{tikzcd}$$
commutes. Therefore, for a regular immersion $Z\to Y$ of stacks over $\spf(\bar{A})$ of codimension $d$, the cycle class $\cl^\syn_{Y/X}\in\Omega^{\infty-2d}\Z_p(d)(X)$ gives rise to a map $A\to\RG_\Prism(X/A)\{d\}[2d]$ that sends $1$ to the Hodge cycle class after Hodge specialization. 
\end{remark}

Recall that $X$ is a stack over $S=\spf(\bar{A})$, and the map $X\to S$ is representable by algebraic spaces proper smooth of dimension $d$. Recall by Corollary \ref{cohdual} that $\RG_\Prism(X/A)$ is dualizable in $\FGauge(A,I)$. 

\begin{theorem}[Prismatic Poincar\'e duality]\label{prismdual}
By Remark \ref{syntosect}, the syntomic cycle class $\cl^\syn_\Delta$ of the relative diagonal $\Delta\colon X\to X\times_{\spf(\bar{A})}X$ induces a map of $F$-gauges
$$A\to\RG_\Prism((X\times_{\spf(\bar{A})}X)/A)\{d\}[2d]=\RG_\Prism(X/A)\otimes_A\RG_\Prism(X/A)\{d\}[2d].$$
This map is a perfect copairing, i.e.\ the dual map
$$\RG_\Prism(X/A)^\vee\to\RG_\Prism(X/A)\{d\}[2d]$$
is an isomorphism. 
\end{theorem}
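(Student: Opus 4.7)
The plan is to reduce the prismatic Poincar\'e duality to its Hodge counterpart (Theorem~\ref{hodgedual}) by a conservative specialization argument. By Corollary~\ref{cohdual}, both sides of the asserted duality are dualizable $F$-gauges. A morphism of $F$-gauges is an isomorphism iff the induced morphism of underlying filtered $A$-modules is (Remark~\ref{redeffg}), and by Proposition~\ref{cons} this in turn can be checked after applying both the Hodge--Tate and the de Rham specializations. So it suffices to prove that the dual map becomes an isomorphism in $\DF(\bar{A})$ after each of these two specializations.

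After either specialization, the source and target are dualizable objects of $\DF(\bar A)$, so by Corollary~\ref{abd} one may check the claim on associated graded. By Example~\ref{pc} together with Remark~\ref{grcp}, the associated graded of the Hodge--Tate (resp.\ de Rham) specialization of $\RG_\Prism(X/A)$ identifies, up to Breuil--Kisin twists absorbed by $\{d\}$, with the Hodge cohomology $\bigoplus_i \RG(X,\Omega^i_{X/\bar A})[-i]$; by K\"unneth (which commutes with specializations since prismatic cohomology is symmetric monoidal) the analogous statement holds for $\RG_\Prism((X\times_{\spf(\bar A)}X)/A)$. Thus on associated graded the map in question becomes a Hodge dual map, and only the identification of the input copairing remains.

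This identification is supplied by Remark~\ref{syntosect}: the syntomic cycle class $\cl^\syn_\Delta$ of the diagonal, under the comparison $\fil^d\Prism_-\{d\}\to\widehat{\LO^d_{-/\bar A}}[-d]$, specializes to the Hodge cycle class $\cl^\hod_\Delta$; the Hodge--Tate associated graded is controlled by the same map via Remark~\ref{grcp}. Tracing this through the K\"unneth decomposition and the cup product structure yields that the graded specializations of our prismatic copairing are precisely the Hodge copairing appearing in Theorem~\ref{hodgedual}. Theorem~\ref{hodgedual} then provides perfectness of the copairing on each graded piece, which by the reductions above yields the prismatic duality.

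The main obstacle I anticipate is the careful bookkeeping of twists: the Breuil--Kisin twist $\{d\}$ applied to the dualizing object, together with the $(I/I^2)^{\otimes\bullet}$ twist built into the Hodge--Tate specialization (Definition~\ref{htf}), must be arranged to match the grading shift induced by passing to associated graded, so that the pairing on the $i$\textsuperscript{th} graded piece realizes the Hodge-case pairing between $\RG(X,\Omega^i_{X/\bar A})$ and $\RG(X,\Omega^{d-i}_{X/\bar A})[d-i]$ landing in $\RG(X,\Omega^d_{X/\bar A})[d]$. Once this accounting is made consistent on both specializations, each graded piece is handled verbatim by Theorem~\ref{hodgedual}, and nothing further has to be verified.
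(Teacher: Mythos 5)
Your proposal is correct and follows essentially the same route as the paper's proof: reduce via the conservativity of the Hodge--Tate and de Rham specializations (Proposition~\ref{cons}), pass to associated graded pieces using the dualizability/finiteness criterion (the paper invokes Proposition~\ref{dual}, you invoke Corollary~\ref{abd}, which carry the same information), identify the graded pieces with Hodge cohomology and the specialized copairing with the Hodge cycle class via Remark~\ref{syntosect}, and conclude from Theorem~\ref{hodgedual}. Your extra attention to the intermediate step through Remark~\ref{redeffg} and to the bookkeeping of Breuil--Kisin and $I/I^2$ twists is a reasonable elaboration of details the paper leaves implicit, but does not change the argument.
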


\begin{proof}
By Proposition \ref{cons}, it suffices to prove that both the Hodge--Tate and the de Rham specializations of the above map are isomorphisms. By Proposition \ref{dual}, both specializations are complete with only finitely many nonzero graded pieces, so we can pass to the graded pieces, which are the Hodge cohomologies, and the theorem follows from Theorem \ref{hodgedual} and Remark \ref{syntosect}. 
\end{proof}

\begin{remark}[Trace maps]\label{prismtrace}
Theorem \ref{prismdual} gives a trace map $\RG_\Prism(X/A)\{d\}[2d]\to A$ as the dual of the structure map $A\to\RG_\Prism(X/A)$, which reduces to the Hodge trace map after specialization. Now if there were a suitably defined dualizable coefficient system $\EE$ on $X$, then the pairing $\EE\otimes\EE^\vee\to\O$ along with the trace map gives a canonical pairing
$$\RG_\Prism(X/A,\EE)\otimes\RG_\Prism(X/A,\EE^\vee)\{d\}[2d]\to\RG_\Prism(X/A)\{d\}[2d]\to A.$$
If one worked out the theory of relative $F$-gauges as sketched in Remark \ref{relcoef}, then it would be not hard to prove that the pairing is perfect by Hodge specialization; alternatively, one could also see that it is already perfect for $\EE$ being a dualizable prismatic crystal by Hodge--Tate specialization, via a careful study of the cohomology along $\wcart_{X/A}^\HT\to X$, cf.\ \cite[Proposition 5.12, Corollary 6.6]{pst}. 
\end{remark}

\begin{remark}[Comparisons]
The comparisons of Theorem \ref{prismdual} with the duality maps of several other cohomology theories should follow from the uniqueness of cycle classes in Remark \ref{unicyc} and Remark \ref{mult}. More specifically:
\begin{itemize}
    \item By the proof of Theorem \ref{prismdual}, it is already clear that the prismatic duality map specializes to the Hodge duality map given by the coherent duality and the de Rham duality map constructed in \cite[Lecture 16]{dr}. 
    \item For $I=pA$, Theorem \ref{prismdual} recovers the crystalline Poincar\'e duality in \cite[Theorem VII.2.1.3]{bertcrys} via \cite[Theorem 5.2]{bs19}. Indeed, under the K\"unneth formula, Berthelot's pairing map is the composition of the diagonal pullback with the trace map, so by \cite[Corollary VII.2.3.2]{bertcrys} its dual copairing is the cycle class of the diagonal, which is exactly the copairing here. 
    \item If one started with a perfect prism $(A,I)$ and a smooth and proper algebraic space over $\spec(\bar{A})$, rather than such a formal algebraic space over $\spf(\bar{A})$, then the syntomic cycle class would specialize to the \'etale one after inverting $p$, so the prismatic duality map would also specialize to the \'etale one after inverting $p$. We are not going into details here, since in future projects, we plan to compare the prismatic Poincar\'e duality with that of diamonds in \cite[Theorem 3.10.20]{mann}. 
\end{itemize}
\end{remark}

\begin{remark}
In future projects, we hope to explore generalizations of Theorem \ref{prismdual} in the following directions: 
\begin{itemize}
    \item to general coefficients for the prismatic cohomology as in Remark \ref{prismtrace}.
    \item to the absolute prismatic cohomology, or more generally to the coherent duality along the map $X^{\Prism''}\to S^{\Prism''}$ of prismatizations as in \cite{drinprism}.
    \item to non-proper maps, i.e.\ to a 6-functor formalism with a condensed lower shriek as in \cite{cond}. 
    \item to non-$p$-formal schemes, i.e.\ gluing with similar structure in the \'etale cohomology, as in \cite[\S 8.4]{apc}. 
\end{itemize}
\end{remark}

\printbibliography

\end{document}